\theoremstyle{definition}
\newtheorem{theorem}{Theorem}[section]
\newtheorem{prop}[theorem]{Proposition}
\newtheorem{definition}[theorem]{Definition}
\newtheorem{remark}[theorem]{Remark}
\newtheorem{lemma}[theorem]{Lemma}
\newtheorem{coro}[theorem]{Corollary}
\numberwithin{equation}{section}
\newcommand{\abs}[1]{\left\lvert#1\right\rvert}
\newcommand{\norm}[1]{\left\|#1\right\|}
\newcommand{\R}{\mathbb R}
\newcommand{\N}{\mathbb N}
\newcommand{\Z}{\mathbb Z}
\newcommand{\BE}{\mathbb{E}}
\newcommand{\BB}{\mathbb{B}}
\newcommand{\pv}{\mathrm{p.v.}}
\renewcommand{\epsilon}{\varepsilon}
\renewcommand{\S}{\mathcal S}
\newcommand{\dx}{\mathrm d}
\newcommand{\id}{\mathrm{Id}}
\newcommand{\tr}{\mathrm{Tr}}
\newcommand{\T}{\mathcal T}
\newcommand{\F}{\mathcal F}
\newcommand{\B}{\mathcal B}
\renewcommand{\L}{\mathcal L}
\renewcommand{\phi}{\varphi}
\DeclareMathOperator{\supp}{supp}
\begin{document}
\allowdisplaybreaks

\title[Kinetic maximal $L^p_\mu$-regularity]{Kinetic maximal $L^p$-regularity with temporal weights and application to quasilinear kinetic diffusion equations}

\date{\today}
\author{Lukas Niebel}
\email{lukas.niebel@uni-ulm.de}
\author{Rico Zacher$^*$}
\thanks{$^*$Corresponding author. The first author is supported by a graduate
scholarship (''Landesgraduiertenstipendium'') granted by the State of Baden-Wuerttemberg, Germany}
\email[Corresponding author]{rico.zacher@uni-ulm.de}
\address[Lukas Niebel, Rico Zacher]{Institut f\"ur Angewandte Analysis, Universit\"at Ulm, Helmholtzstra\ss{}e 18, 89081 Ulm, Germany.}

\maketitle

\begin{abstract}
	We introduce the concept of kinetic maximal $L^p$-regularity with temporal weights and prove that this property is satisfied for the (fractional) Kolmogorov equation. We show that solutions are continuous with values in the trace space and prove, in particular, that the trace space can be characterized in terms of anisotropic Besov spaces. We further extend the property of kinetic maximal $L^p_\mu$-regularity to the Kolmogorov equation with variable coefficients. Finally, we show how kinetic maximal $L^p_\mu$-regularity can be used to obtain local existence of solutions to a class of quasilinear kinetic equations and illustrate our result with a quasilinear kinetic diffusion equation.   
\end{abstract}
\vspace{1em}
{\centering \textbf{AMS subject classification.} 35K65, 35B65, 35K59 \par}
\vspace{1em}
\textbf{Keywords.} kinetic maximal $L^p$-regularity, Kolmogorov equation, anisotropic Besov spaces, instantaneous smoothing, quasilinear kinetic equations, short-time existence

\section{Introduction}
The main purpose of this article is to introduce and study the concept of kinetic maximal $L^p$-regularity with temporal weights for linear kinetic partial differential equations. The most prominent example for such an equation is the (fractional) Kolmogorov equation
\begin{equation} \label{eq:intkol}
	\begin{cases}
		\partial_t u + v \cdot \nabla_x u = -(-\Delta_v)^\frac{\beta}{2}u +f \\
		u(0) = u_0
	\end{cases}
\end{equation}
with $\beta \in (0,2]$. The Kolmogorov equation has been studied for almost a decade \cite{kolmogoroff_zufallige_1934} and has become the focus of attention of many mathematicians in the last 30 years \cite{armstrong_variational_2019,bramanti_invitation_2014,golse_harnack_2016,lanconelli_class_1994,lorenzi_analytical_2017,lunardi_schauder_2005}. This is mainly due to the fact that it serves as the prototype of a kinetic partial differential equation similar to the famous Boltzmann or Landau equation. As the name of the article suggests we are interested in solutions which are element of some Lebesgue space. The Kolmogorov equation in $L^p$-spaces can be studied by the methods in \cite{folland_estimates_1974} for $\beta = 2$ and is studied in \cite{chen_lp-maximal_2018} for $ \beta \in (0,2)$. Let $p \in (1,\infty)$, then, for all $f \in L^p((0,\infty);L^p(\R^{2n}))$ the solution $u$ of equation \eqref{eq:intkol} given in terms of the fundamental solution corresponding to $u_0 = 0$ satisfies the inequality
\begin{equation} \label{eq:intineq}
	\norm{\partial_t u + v \cdot \nabla_x u }_p + \norm{(-\Delta_v)^\frac{\beta}{2}u}_p  \lesssim \norm{f}_p.
\end{equation}
This inequality reminds of the property of maximal $L^p$-regularity. This observation is the motivation of our work.

Let us first recall the definition of maximal $L^p$-regularity with temporal weights \cite{pruss_maximal_2004,pruss_moving_2016}. Let $X$ be a Banach space and $A \colon D(A) \to X$ be the generator of a strongly continuous semigroup. For $T>0$ we introduce the Lebesgue spaces with temporal weights as $L^p_\mu((0,T);X)) = \{ u \colon (0,T) \to X \colon t^{1-\mu} \in L^p((0,T);X) \} $ equipped with the norm $\norm{u}_{p,\mu,X}^p := \int_0^T t^{p-\mu p} \norm{u(t)}_X^p \dx t$ for $p \in (1,\infty)$ and $\mu \in (1/p,1]$. We say that $A$, respectively the equation
\begin{equation} \label{eq:clasmaxreg}
	\begin{cases}
		\partial_t u = Au+f \\
		u(0) = u_0,
	\end{cases}
\end{equation}
admits maximal $L^p_\mu(X)$-regularity, if for all $f \in L^p_\mu((0,T);X)$ there exists a unique solution
\begin{equation} \label{eq:mrspaceclass}
	u \in \BE_\mu(0,T):= W^{1,p}_\mu((0,T);X) \cap L^p_\mu((0,T);D(A))
\end{equation}
	of the problem \eqref{eq:clasmaxreg} with $u_0 = 0$. This solution then satisfies the inequality
	\begin{equation} \label{eq:ineqmaxreg}
		\norm{\partial_t u}_{p,\mu,X}+\norm{Au}_{p,\mu,X} \lesssim \norm{f}_{p,\mu,X}.
	\end{equation}
The property of maximal $L^p$-regularity is a very useful and strong feature of an operator. 	

For example, under the assumption of maximal $L^p$-regularity, one can solve the initial value problem \eqref{eq:clasmaxreg} in $\BE_\mu(0,T)$ whenever $u_0$ is an element of the real interpolation space 
	$$X_{\gamma,\mu} := \tr (\BE_\mu(0,T)) = (X,D(A))_{\mu-1/p,p}, $$
and we have $u \in C([0,T];X_{\gamma,\mu})$. In particular, solutions of the homogeneous equation $\partial_t u = Au$, $u(0) = u_0$ define a semi-flow in the trace space $X_{\gamma,\mu}$. Maximal $L^p$-regularity thus gives a characterization of the solution $u$ of equation \eqref{eq:clasmaxreg} being an element of $\BE_\mu(0,T)$ in terms of Banach spaces for the data $f$ and $u_0$. 

However, it is a classical result, that under the assumption of maximal $L^p$-regularity the operator $A$ needs to be the generator of a holomorphic semigroup. It can be shown that this is not the case for the operator $-(-\Delta_v)^\frac{\beta}{2}- v \cdot \nabla_x$ in $L^q(\R^{2n})$, see \cite{niebel_kinetic_nodate}. Therefore, the concept of maximal $L^p$-regularity with base space $X = L^q(\R^{2n})$ is not applicable in the context of kinetic equations. 

 In this article, we introduce and investigate the concept of kinetic maximal $L^p$-regularity, which can be viewed as a suitable replacement of maximal $L^p$-regularity in the kinetic setting. Based on the inequality \eqref{eq:intineq} it seems to be natural to define the concept of kinetic maximal $L^p_\mu$-regularity as follows. We say that the fractional Laplacian $-(-\Delta_v)^\frac{\beta}{2}$ admits kinetic maximal $L^p_\mu(L^q)$-regularity if for all $f \in L^p_\mu((0,T);L^q(\R^{2n}))$ there exists a unique distributional solution $u \in L^p_\mu((0,T);L^q(\R^{2n}))$ of equation \eqref{eq:intkol} with $u(0) = 0$ such that 
$$\partial_t u + v \cdot \nabla_x u, (-\Delta_v)^\frac{\beta}{2} u \in L^p_\mu((0,T);L^q(\R^{2n})).$$
In this case we write $u \in \BE_\mu(0,T) := \T_\mu^p((0,T);L^q(\R^{2n})) \cap L^p_\mu((0,T);H_v^{\beta,q}(\R^{2n}))$, where $\T_\mu^p$ is used to describe the regularity of the kinetic term. See below for a precise definition of these spaces.

Obviously, the first question is whether the property of kinetic maximal $L^p_\mu(L^q)$-regularity is satisfied at all. In the special case $p = q = 2$, $\mu \in (1/2,1]$ this has been shown recently in \cite{niebel_kinetic_nodate} by the authors. For $p = q \in (1,\infty)$ and $\mu = 1$ the corresponding key estimate has been known since at least the findings in \cite{chen_lp-maximal_2018,folland_estimates_1974}. On the basis of this estimate we establish kinetic maximal $L^p(L^p)$-regularity and extend this result by proving kinetic maximal $L^p_\mu(L^q)$-regularity for $p,q \in (1,\infty)$ and $\mu \in (1/p,1]$. The proof of the results in \cite{folland_estimates_1974} relies on the structure of the homogeneous group on $\R^{2n+1}$ underlying the kinetic term $\partial_t + v \cdot \nabla_x$, which does not prioritize any of the variables $(t,x,v)$. Kinetic maximal $L^p_\mu(L^q)$-regularity explicitly splits the integrability with respect to the time variable $t$ and the space variables $x,v$. 

The most important tool in our article is the operator
\begin{equation*}
	[\Gamma u](t,x,v) = u(t,x+tv,v)
\end{equation*}
defined on functions $u \colon \R \times \R^n \times \R^n \to \R$. Applying the operator $\Gamma$ to a solution $u$ of the Kolmogorov equation leads to the non-autonomous problem
\begin{equation*}
\begin{cases}
	\partial_t w = -\Gamma (-\Delta_v)^\frac{\beta}{2} \Gamma^{-1} w + \Gamma f \\
	w(0) = u_0
	\end{cases}
\end{equation*} 
where $w = \Gamma u$. This observation is one of the key points in our proof of kinetic maximal $L^p(L^q)$-regularity for the (fractional) Kolmogorov equation. Similary as in the classical case \cite{pruss_maximal_2004}, we show that kinetic maximal $L^p_\mu(L^q)$-regularity is independent of the parameter $\mu \in (1/p,1]$, and thus follows from the unweighted case. 

It is natural to ask in which sense the initial value $u(0) = 0$ is attained. As the space $\BE_\mu(0,T)$ of kinetic maximal $L^p_\mu(L^q)$-regularity does not allow to control the regularity of the term $\partial_t u$, this cannot be answered by using standard techniques. However, using several operator theoretic properties of the operator $\Gamma$ including the crucial identity $\partial_t \Gamma u = \Gamma(\partial_t u + v \cdot \nabla_x u)$, which relates $\T_\mu^p$ with $W^{1,p}_\mu$, we show that
\begin{equation*}
	\T_\mu^p((0,T);L^q(\R^{2n})) \hookrightarrow C([0,T];L^q(\R^{2n})).
\end{equation*}
 This allows to define the trace space $X_{\gamma,\mu} = \tr( \BE_\mu(0,T))$ and to deduce that every function in $\BE_\mu(0,T)$ is continuous with values in the trace space $X_{\gamma,\mu}$ by an abstract argument. In the classical theory of maximal $L^p$-regularity the trace space is given by a real interpolation space of the base space $X$ and the domain $D(A)$. For example, in the case of the fractional heat equation on the space $L^q(\R^{2n})$ this leads to the Besov space $(L^q(\R^{2n}),H^{\beta,q}(\R^{2n}))_{\mu-1/p,p} = B^{\beta(\mu-1/p)}_{qp}(\R^{2n})$. This is no longer true in the kinetic setting in general. In this work we succeed in directly characterizing the trace space of $\BE_\mu(0,T)$ in terms of anisotropic Besov spaces. Motivated by the so-called kinetic regularization, formalized by Bouchut in \cite{bouchut_hypoelliptic_2002} for $L^p(L^p)$-spaces, it is natural to expect that the trace space needs to control some regularity in the $x$-variable, too. We generalize this kinetic regularization result to $L^p_\mu(L^q)$-spaces. It turns out that the trace space is isomorphic to the anisotropic kinetic Besov space ${^{\mathrm{kin}}B}_{qp}^{\mu-1/p,\beta}(\R^{2n})$, which can be viewed as the intersection of a Besov space of order $\frac{\beta}{\beta+1}(\mu-1/p)$ in the $x$-variable and a Besov space of order $\beta(\mu-1/p)$ in the $v$-variable. This allows us to fully characterize strong $L^p_\mu(L^q)$-solutions of the Cauchy problem for the (fractional) Kolmogorov equation in terms of the data $f$ and $u_0$.

The kinetic first order term $\partial_t + v \cdot \nabla_x$ limits the choice of suitable base spaces. So far we have restricted ourselves to the base space $L^q(\R^{2n})$. In the present article we do not only consider kinetic maximal $L^p_\mu$-regularity in the base space $L^q(\R^{2n})$ but even for the scale of spaces
\begin{equation*}
	X_\beta^{s,q} =  \left\{ f \in \S'(\R^{2n}) \; \colon \; \F^{-1} \left( \left(  (1+\abs{k}^2)^\frac{\beta}{2(\beta+1)}+(1+\abs{\xi}^{2})^\frac{\beta}{2} \right)^{s} \F(f)(k,\xi) \right) \in L^q(\R^{2n})   \right\}.
\end{equation*}
The definition of this space is again motivated by the kinetic regularization result. Observe that $X_\beta^{0,q} = L^q(\R^{2n})$. Most of the abstract properties like the embedding into some space of continuous functions in time hold when $L^q(\R^{2n})$ is replaced by $X^{s,q}_\beta$, too. We show that the (fractional) Kolmogorov equation admits kinetic maximal $L^p_\mu(X^{s,q}_\beta)$-regularity for all $s \ge 0$. This corresponds to strong solutions with possibly higher regularity. Together with the characterization of the trace space we are able to show that the $L^p_\mu(L^q)$-solution of the (fractional) Kolmogorov equation with initial datum in ${^{\mathrm{kin}}B}_{qp}^{\mu-1/p,\beta}(\R^{2n})$ regularizes instantaneously, i.e. is an element of $C^\infty((0,\infty) \times \R^{2n})$.

Concerning weak solutions, kinetic maximal $L^2_\mu$-regularity with base space $X^{-1/2,2}_\beta$ has been studied recently by the authors in \cite{niebel_kinetic_nodate}. We extend this result to kinetic maximal $L^p_\mu(X^{-1/2,2}_\beta)$-regularity for $p \in (1,\infty)$ and $\mu \in (1/p,1]$. However, kinetic maximal $L^p_\mu(X^{-1/2,q}_\beta)$-regularity for $q \neq 2$ remains an interesting open problem. 

An important application of maximal $L^p$-regularity is the treatment of quasilinear problems by linearization. For example one can prove local well-posedness. Indeed, one can show that under the assumptions
\begin{equation*}
	(A,F) \in C^{1-}_{\mathrm{loc}}(X_{\gamma,\mu} ; \B(D(A),X)\times X),
\end{equation*}
$u_0 \in X_{\gamma,\mu}$ and that $A(u_0)$ admits maximal $L^p_\mu$-regularity, the Cauchy problem
\begin{equation*}
	\begin{cases}
		\partial_t u = A(u) u + F(u) \\
		u(0) = u_0
	\end{cases}
\end{equation*}
admits a unique solution $u \in \BE_\mu(0,T)$ for some $T>0$ with $\BE_\mu(0,T)$ as in equation \eqref{eq:mrspaceclass}, see \cite{pruss_moving_2016}. We transfer this result to a wide class of quasilinear kinetic equations. This allows us to prove short time existence of $L^p_\mu(L^q)$-solutions to the kinetic quasilinear diffusion problem 
\begin{equation*}
	\begin{cases}
			\partial_t u + v \cdot \nabla_x u = \nabla_v  \cdot (a(u) \nabla_v u) \\ 
			u(0) = u_0,
	\end{cases}
\end{equation*}
where the nonlinearity $a \in C^2_b(\R; \mathrm{Sym}(n))$. In order to apply our general result we extend kinetic maximal $L^p_\mu(L^q)$-regularity of the Laplacian $\Delta_v$ to second order differential operators in non-divergence form $a \colon \nabla_v^2$ with variable coefficient $a$. For $a = a(t,x,v)$ our analysis shows that it is natural to impose that $a$ is bounded and that $\Gamma a $ is uniformly continuous. In particular, we do not only study the concept of maximal $L^p_\mu$-regularity for the fractional Laplacian in velocity but for a general class of possibly non-autonomous operators.

The plan of the present article is as follows. We introduce the concept of kinetic maximal $L^p_\mu$-regularity in Section \ref{sec:kinmaxreg}. Here, we also provide a deeper study of the corresponding function spaces and investigate some abstract consequences of the kinetic maximal $L^p_\mu$-regularity property. Moreover, we state how existing results regarding global $L^p$-estimates for the (fractional) Kolmogorov equation fit into our setting. In Section \ref{sec:regtransfer} we extend the principle of regularity transfer for kinetic equations to the $L^p_\mu(L^q)$-setting. Next, we extend the well-known $L^p(L^p)$-estimates for the (fractional) Kolmogorov equation to the case of different integrability exponents in time and space. In particular, we prove in Section \ref{sec:LpLq} that the Kolmogorov equation admits kinetic maximal $L^p(L^q)$-regularity. In Section \ref{sec:kinmaxlpmulq} we prove the equivalence of kinetic maximal $L^p$-regularity and kinetic maximal $L^p$-regularity with certain temporal weights. We apply this theorem to the (fractional) Kolmogorov equation. Section \ref{sec:kolcoeff} extends kinetic maximal $L^p_\mu(L^q)$-regularity of the Kolmogorov equation to related operators with time- and space dependent coefficients. An abstract result on how kinetic maximal $L^p_\mu$-regularity can be used to solve quasilinear kinetic equations is proven in Section \ref{sec:absquasi}. In the last Section \ref{sec:quasilindiff} we apply the latter abstract result to prove short-time existence for a quasilinear kinetic diffusion equation. In the appendix we have collected some technical results concerning Fourier multipliers and Besov spaces. Moreover, we prove $L^p_\mu(L^q)$-regularity estimates for an initial value problem, which will be used to characterize the initial value regularity of strong solutions of the (fractional) Kolmogorov equation.
 
 In our calculations we denote by the letter $c$ a universal positive constant which can change from line to line. Whenever we write $\lesssim$ we estimate only by a universal constant. Estimates are proven always for smooth functions first and the general case follows by an approximation argument. Furthermore, let us fix some notation. For $s \in \R$ and $q \in (1,\infty)$ we define one Bessel potential space to measure the regularity in the position variable $x$,
\begin{equation*}
	H_x^{s,q}(\R^{2n}) = \left\{ f \in \S' \; \colon \; \F^{-1}((1+\abs{k}^{2})^\frac{s}{2} \F(f)(k,\xi)) \in L^q(\R^{2n})   \right\},
\end{equation*}
and one for the regularity in the velocity variable $v$,
\begin{equation*}
	H_v^{s,q}(\R^{2n}) = \left\{ f \in \S' \; \colon \; \F^{-1}((1+\abs{\xi}^{2})^\frac{s}{2} \F(f)(k,\xi)) \in L^q(\R^{2n})   \right\}.
\end{equation*}
Both of these spaces are Banach spaces when equipped with the obvious norm. The operators $D_x^s := (-\Delta_x)^{s/2} \colon H_x^{s,q}(\R^{2n}) \to L^q(\R^{2n})$ and $D_v^s := (-\Delta_v)^{s/2} \colon H_v^{s,q}(\R^{2n}) \to L^q(\R^{2n})$ are well-defined and bounded. The vector space of symmetric matrices $a \in \R^{n \times n}$ will be denoted by $\mathrm{Sym}(n)$. For two matrixes $a,b \in \R^{n \times n}$ we write $a \colon b = \tr(AB)$.

\section{Kinetic Maximal $L^p_\mu$-regularity with temporal weights} \label{sec:kinmaxreg}

In this section we define the concept of kinetic maximal $L^p_\mu$-regularity. Let $\beta \in (0,2]$, $s \in \R$, $p,q \in (1, \infty)$, $\mu \in \left( \frac{1}{p},1 \right]$ and $T \in (0,\infty)$. We introduce the space
\begin{equation*}
	X_\beta^{s,q} =  \left\{ f \in \S'(\R^{2n}) \; \colon \; \F^{-1} \left( \left(  (1+\abs{k}^2)^\frac{\beta}{2(\beta+1)}+(1+\abs{\xi}^{2})^\frac{\beta}{2} \right)^{s} \F(f)(k,\xi) \right) \in L^q(\R^{2n})   \right\}
\end{equation*}
equipped with the respective norm $\norm{\cdot}_{X_\beta^{s,q}}$. In particular, $X_\beta^{0,q} = L^q(\R^{2n})$ and $X_\beta^{s,q} = H_x^{s\frac{\beta}{\beta+1},q}(\R^{2n}) \cap H_v^{s\beta,q}(\R^{2n})$ for all $s \ge 0$. 

 Given any Banach space $X$, $p \in (1,\infty)$, $T \in (0,\infty]$ and $\mu \in (1/p,1]$ we define the time-weighted $L_\mu^p(X)$ space as
\begin{equation*}
	L_\mu^p((0,T);X) = \{ f \colon (0,T) \to X \colon f \text{ measurable and } \int_0^T t^{p-p\mu} \norm{f(t)}_X^p \dx t < \infty \}.
\end{equation*}
Equipped with the norm $\norm{f}_{p,\mu,X}^p = \int_0^T t^{p-p\mu} \norm{f(t)}_X^p \dx t$ the vector space $L_\mu^p((0,T);X)$ is a Banach space. Here, we will mainly consider the Lebesgue space $L^p_\mu ((0,T);X_\beta^{s,q})$ and additionally define the vector space
\begin{equation*}
	\T^{p}_\mu((0,T);X_\beta^{s,q}) = \{ f \in L^p_\mu ((0,T);X_\beta^{s,q}) \;  \colon \; \partial_t f +v \cdot \nabla_x f \in L^p_\mu ((0,T);X_\beta^{s,q}) \}
\end{equation*}
equipped with the norm $\norm{u}_{\T^p_\mu(X_\beta^{s,q})} = \norm{u}_{p,\mu,X_\beta^{s,q}}+ \norm{\partial_t u +v\cdot \nabla_x u}_{p,\mu,X_\beta^{s,q}}$. If $\mu = 1$ we drop the subscript in our notation. 

We introduce the mapping $\Gamma$ acting on functions $u \colon \R \times \R^{2n} \to \R$ given by $[\Gamma u](t,x,v) = u(t,x+tv,v)$ for all $t \in \R$ and any $x,v \in \R^n$. For functions $u = u(x,v)$ we write $[\Gamma(t)u](x,v) = u(x+tv,v)$ for $t \in \R$ and all $x,v \in \R^n$. The following results are the natural generalization of the results given in \cite{niebel_kinetic_nodate} for the case $p = q = 2$.  

\begin{prop} \label{prop:GamLpisom}
	For all $p,q \in (1,\infty)$, any $\mu \in (1/p,1]$ and all $T \in (0,\infty]$ the mapping
	\begin{equation*}
		\Gamma \colon L^p_\mu((0,T); L^q(\R^{2n})) \to L^p_\mu((0,T); L^q(\R^{2n})) 
	\end{equation*}
	is a well-defined isometric isomorphism. The inverse operator is given by $[\Gamma^{-1}u](t,x,v) = u(t,x-tv,v) $. Moreover, the group $(\Gamma(t))_{t \in \R} \colon L^q(\R^{2n}) \to L^q(\R^{2n})$ of isometries is strongly continuous. 
\end{prop}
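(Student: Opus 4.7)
The plan is to proceed in four stages, with the core observation being that for each fixed $t \in \R$, the map $(x,v) \mapsto (x+tv,v)$ is a bijection of $\R^{2n}$ with Jacobian determinant $1$. First, I would show that $\Gamma(t) \colon L^q(\R^{2n}) \to L^q(\R^{2n})$ is an isometry for each fixed $t$: the change of variables $y = x + tv$ in the inner integral (with $v$ fixed) gives
\begin{equation*}
\int_{\R^{2n}} \abs{u(x+tv,v)}^q \, \dx x \, \dx v = \int_{\R^{2n}} \abs{u(y,v)}^q \, \dx y \, \dx v.
\end{equation*}
A direct substitution confirms $[\Gamma(t)\Gamma(s)f](x,v) = f(x+(t+s)v,v) = [\Gamma(t+s)f](x,v)$, so $(\Gamma(t))_{t \in \R}$ is a one-parameter group of isometries with $\Gamma(t)^{-1} = \Gamma(-t)$. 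In particular, substituting $-t$ for $t$ in the definition of $\Gamma$ identifies $[\Gamma^{-1}u](t,x,v) = u(t,x-tv,v)$.

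Second, I would establish strong continuity of $(\Gamma(t))_{t \in \R}$ on $L^q(\R^{2n})$. By the group and isometry properties it is enough to prove $\norm{\Gamma(t)f - f}_{L^q} \to 0$ as $t \to 0$. For $f \in C_c(\R^{2n})$ this is immediate from uniform continuity combined with dominated convergence (the supports $\supp \Gamma(t)f$ stay in a fixed compact set for $|t|$ bounded). A standard $3\epsilon$-argument using the density of $C_c(\R^{2n})$ in $L^q(\R^{2n})$, together with the uniform bound $\norm{\Gamma(t)}_{L^q \to L^q} = 1$, extends the convergence to arbitrary $f \in L^q(\R^{2n})$.

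Third, I would verify that $\Gamma$ acts isometrically on $L^p_\mu((0,T);L^q(\R^{2n}))$. Applying the previous step pointwise in $t$ yields $\norm{[\Gamma u](t)}_{L^q} = \norm{\Gamma(t) u(t)}_{L^q} = \norm{u(t)}_{L^q}$ for almost every $t$, and multiplying by the weight $t^{p-p\mu}$ and integrating gives $\norm{\Gamma u}_{p,\mu,L^q} = \norm{u}_{p,\mu,L^q}$. The analogous identity for $\Gamma^{-1}$ together with the already-established $\Gamma \Gamma^{-1} = \Gamma^{-1}\Gamma = \id$ at the pointwise level then shows that $\Gamma$ is an isometric isomorphism of $L^p_\mu((0,T);L^q(\R^{2n}))$ onto itself.

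The only mildly delicate point, and in my view the main technical obstacle, is Bochner measurability of $t \mapsto \Gamma(t) u(t)$. This does not follow trivially from strong continuity of $(\Gamma(t))$ combined with measurability of $u$, because $t \mapsto \Gamma(t)$ need not be strongly measurable as a map into $\B(L^q)$ with the uniform norm. The cleanest way is to realize $u$ via a jointly measurable representative on $(0,T) \times \R^{2n}$; then $(t,x,v) \mapsto u(t,x+tv,v)$ is jointly measurable as the composition of $u$ with the measurable bijection $(t,x,v) \mapsto (t,x+tv,v)$, and Fubini combined with the already established fiberwise $L^q$-integrability produces a Bochner measurable representative of $\Gamma u$. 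Once this is in place, all remaining claims are routine.
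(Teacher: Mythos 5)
Your proposal is correct and follows essentially the same route as the paper: the isometry and inverse are obtained by the measure-preserving substitution $y=x+tv$ in the $x$-integral, and strong continuity of $(\Gamma(t))_{t\in\R}$ is proved by an approximation (density of $C_c$ plus the uniform isometry bound), which is exactly the argument the paper delegates to Metafune. Your extra care about Bochner measurability of $t\mapsto\Gamma(t)u(t)$ is fine (and could alternatively be handled by approximating $u$ by simple functions and using strong continuity of $t\mapsto\Gamma(t)g$ for fixed $g$), but it does not change the approach.
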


\begin{proof}
	The first property follows immediately by substitution in the $x$-integral. The strong continuity follows by an approximation argument and a proof can be found in \cite[Proposition 2.2]{metafune_lp-spectrum_2001}. 
\end{proof}

Given any Banach space $X$ we define
\begin{equation*}
	W_\mu^{1,p}((0,T);X) = \{ u \colon (0,T) \to X \; \colon \; u, \partial_t u \in L^p_\mu((0,T);X)) \}.
\end{equation*}

\begin{prop} \label{prop:TWtoWW}
	For all $p,q \in (1,\infty)$, $\mu \in (1/p,1]$, $T \in (0,\infty)$ and $k \in \Z$ the mapping
	\begin{equation*}
		\Gamma \colon \T_\mu^{p}((0,T);W^{k,q}(\R^{2n})) \to W^{1,p}_\mu((0,T); W^{k,q}(\R^{2n}))
	\end{equation*}
	is a well-defined isomorphism. Moreover, we have 
	\begin{equation*}
		\partial_t \Gamma u = \Gamma(\partial_t u + v \cdot \nabla_x u)
	\end{equation*}
	in the sense of distributions for all $u \in \T_\mu^{p}((0,T);W^{k,q}(\R^{2n}))$. Finally, the group 
	$$(\Gamma(t))_{t \in \R} \colon W^{k,q}(\R^{2n}) \to W^{k,q}(\R^{2n})$$ 
	is strongly continuous and the same holds true for the inverse group. 
\end{prop}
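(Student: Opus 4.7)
My plan is to reduce the whole statement to the identity $\partial_t \Gamma u = \Gamma(\partial_t u + v \cdot \nabla_x u)$ together with commutation formulas between $\Gamma$ and spatial derivatives; both are just the chain rule applied to the shear $(t,x,v) \mapsto (t, x+tv, v)$, and everything else is bookkeeping on top of Proposition \ref{prop:GamLpisom}. I would treat $k \geq 0$ explicitly and recover $k < 0$ by duality.

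First I would verify the identity for smooth, compactly supported $u$ by direct calculation, then extend it to $u \in \T^p_\mu((0,T); W^{k,q}(\R^{2n}))$ in the distributional sense by testing against $\phi \in C^\infty_c((0,T) \times \R^{2n})$: using that $\Gamma$ is an $L^p_\mu(L^q)$-isometry and that $\Gamma^{-1}\phi$ is again a valid test function, the defining duality for $\partial_t u + v \cdot \nabla_x u$ (a genuine $L^p_\mu(W^{k,q})$ function by assumption) translates via the substitution $y = x - tv$ in the spatial integral into the defining duality for $\partial_t \Gamma u$.

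Second, the commutation rules
$$\partial_{x_i}\Gamma u = \Gamma \partial_{x_i} u, \qquad \partial_{v_i}\Gamma u = \Gamma \partial_{v_i}u + t\, \Gamma \partial_{x_i} u$$
iterate to show that every spatial derivative of $\Gamma u$ of order at most $k$ is a polynomial in $t$ (of degree $\leq k$) whose coefficients are $\Gamma$-transports of derivatives of $u$ of the same total order. Since $t$ ranges over the bounded interval $(0,T)$ and $\Gamma$ preserves the $L^p_\mu(L^q)$-norm, this yields $\Gamma u \in L^p_\mu((0,T); W^{k,q})$ with a quantitative bound. Combined with the identity, the temporal regularity of $\Gamma u$ is controlled by the kinetic derivative of $u$, giving $\Gamma u \in W^{1,p}_\mu((0,T); W^{k,q})$. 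Bijectivity follows by running the same argument for $\Gamma^{-1}$: for $w \in W^{1,p}_\mu(W^{k,q})$ the function $u := \Gamma^{-1}w$ lies in $L^p_\mu(W^{k,q})$ by the analogous commutation rules, and inverting the identity gives $\partial_t u + v \cdot \nabla_x u = \Gamma^{-1}\partial_t w \in L^p_\mu(W^{k,q})$.

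Strong continuity of $(\Gamma(t))_{t \in \R}$ on $W^{k,q}(\R^{2n})$, and of its inverse, is an immediate consequence of the same commutation rules: each $\partial^\alpha \Gamma(t)u$ is a polynomial in $t$ whose coefficients are $\Gamma(t)\partial^\beta u$, and every such term is continuous in $t$ with values in $L^q$ by Proposition \ref{prop:GamLpisom}. The one delicate point I anticipate is the distributional extension of the chain-rule identity in the first step, since it requires pairing the unbounded multiplier $v$ against the distribution $u$; the compact support of the test function makes this harmless, but it must be spelled out carefully. Once the identity is in place, all remaining assertions are essentially formal.
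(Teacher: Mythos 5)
Your argument is correct, and it is essentially the proof the paper has in mind: the paper simply cites the $p=q=2$ case from the authors' earlier work, which rests on the same chain-rule identity $\partial_t\Gamma u=\Gamma(\partial_t u+v\cdot\nabla_x u)$, the commutation relations $\partial_{x_i}\Gamma=\Gamma\partial_{x_i}$, $\partial_{v_i}\Gamma=\Gamma\partial_{v_i}+t\,\Gamma\partial_{x_i}$, and the isometry of $\Gamma$ from Proposition \ref{prop:GamLpisom}. Your distributional extension via the test functions $\Gamma^{-1}\phi$ and the reduction of $k<0$ to $k\ge 0$ by duality (using that the adjoint of $\Gamma(t)$ is $\Gamma(-t)$, together with density of $L^q$ in $W^{k,q}$ for the strong continuity) are routine and close the remaining details.
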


\begin{proof}
	The proof follows along the lines of the proof of \cite[Proposition 5.3]{niebel_kinetic_nodate}.
\end{proof}

\begin{lemma} \label{lem:WintoC}
	Let $p \in (1,\infty)$, $\mu \in (1/p,1]$, $T \in (0,\infty]$ and let $X$ be any Banach space. The trace at $t= 0$ of every function $u \in W^{1,p}_\mu((0,T); X)$ is well-defined. Moreover, the embedding
	\begin{equation*}
		W^{1,p}_\mu((0,T);X) \hookrightarrow C([0,T];X)
	\end{equation*} 
	holds continuously. 
\end{lemma}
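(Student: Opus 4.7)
The plan is the standard one for classical $W^{1,p}_\mu$ embeddings with time weights, adapted to an arbitrary Banach space. By density it suffices to work with $u \in C^1([0,T];X)$ first and then pass to the limit. For such $u$ and any $0 < s < t < T$ one has the fundamental theorem of calculus identity
\begin{equation*}
    u(t) - u(s) = \int_s^t \partial_r u(r) \, \dx r.
\end{equation*}

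Applying Hölder's inequality with the weight $r^{1-\mu}$ and its conjugate and exploiting $\mu > 1/p$, which gives $(\mu-1)p' > -1$, I would estimate
\begin{equation*}
    \norm{u(t)-u(s)}_X \le \Bigl(\int_s^t r^{(\mu-1)p'}\,\dx r\Bigr)^{1/p'} \norm{\partial_r u}_{p,\mu,X} \le C\,(t^{\mu-1/p}-s^{\mu-1/p})^{1/p'\cdot \text{const}} \norm{\partial_r u}_{p,\mu,X},
\end{equation*}
with the $L^{p'}$ factor tending to $0$ as $s,t \to 0^+$. Hence $(u(s))_{s>0}$ is Cauchy in $X$ as $s \to 0^+$, so its limit exists in $X$ and this limit is by definition the trace $u(0)$. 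The same estimate applied to arbitrary $0\le s < t \le T$ proves that $u \in C([0,T];X)$ once extended by this value at $0$.

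For the continuity of the embedding I need to control $\sup_{t \in [0,T]} \norm{u(t)}_X$ by $\norm{u}_{W^{1,p}_\mu(X)}$. Writing, for any $t \in [0,T]$ and any reference point $t_0 \in (0,T)$,
\begin{equation*}
    u(t) = u(t_0) + \int_{t_0}^t \partial_r u(r)\,\dx r,
\end{equation*}
I bound the integral term uniformly in $t,t_0 \in (0,T)$ by $C(T,\mu,p)\norm{\partial_r u}_{p,\mu,X}$ via the same Hölder estimate. To deal with $\norm{u(t_0)}_X$ I would average in $t_0$ over a fixed interval, e.g.\ $I=[T/4,T/2]$ for $T<\infty$, using that $\frac{1}{|I|}\int_I \norm{u(t_0)}_X\,\dx t_0 \le C(T,\mu,p) \norm{u}_{p,\mu,X}$, since the weight $t_0^{1-\mu}$ is bounded above and below on $I$ and we can apply Hölder in $t_0$. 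Combining gives the desired estimate $\norm{u}_{C([0,T];X)} \le C(T,\mu,p)\norm{u}_{W^{1,p}_\mu(X)}$, after which the result for general $u$ follows by approximation in $W^{1,p}_\mu((0,T);X)$ by smooth functions.

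The case $T = \infty$ requires only a minor modification: for any $t>0$ choose the reference interval $I = [t,2t]$ (or a fixed interval containing $t$ for small $t$) and repeat the argument, observing that the constants in the Hölder estimate remain uniform in $t$ on each compact subinterval $[0,T_0]$ of $[0,\infty)$, which is all that is needed for $C([0,\infty);X)$-membership. The only mild obstacle is bookkeeping the explicit dependence of the constant on $T$ and $\mu$; the crucial input is the integrability $r \mapsto r^{(\mu-1)p'} \in L^1_{\mathrm{loc}}([0,T))$, which is exactly the condition $\mu > 1/p$ and is responsible for the trace being defined at all.
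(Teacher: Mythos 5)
Your argument is correct, and it is essentially the argument behind the paper's proof: the paper disposes of this lemma by citing \cite[Proposition 2.1]{pruss_maximal_2004}, whose proof is exactly your combination of the fundamental theorem of calculus with H\"older's inequality against the weight, using that $\mu>1/p$ makes $r^{(\mu-1)p'}$ integrable at $0$, plus an averaging step to control $\sup_t\norm{u(t)}_X$ by the $W^{1,p}_\mu$-norm. The only loose ends (the garbled exponent in the modulus-of-continuity bound, and the fact that the reference-interval choice $[t,2t]$ only yields locally uniform bounds for $T=\infty$) are harmless, since the estimate you need is local and the lemma is applied in the paper only on finite intervals.
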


\begin{proof}
	\cite[Proposition 2.1]{pruss_maximal_2004}
\end{proof}

We introduce the subspace $_0W_\mu^{1,p}((0,T);X) = \{ u \in W_\mu^{1,p}((0,T);X) \; | \; u(0) = 0 \}$, where $X$ denotes any Banach space. 	

\begin{prop} \label{prop:TcontinLq}
	For all $p,q \in (1,\infty)$, $\mu \in (1/p,1]$, $T \in (0,\infty)$ and any $k \in \Z$ the trace at $t= 0$ of every function $u \in \T^p_\mu((0,T); W^{k,q}(\R^{2n}))$ is well-defined. Moreover, we have
	\begin{equation*}
		\T_\mu^p((0,T);W^{k,q}(\R^{2n})) \hookrightarrow C([0,T];W^{k,q}(\R^{2n}))
	\end{equation*}
	continuously. In particular, if $k \ge 0$ the embedding
	\begin{equation*}
		\T^p_\mu((0,T);W^{k,q}(\R^{2n})) \hookrightarrow C([0,T];L^q(\R^{2n}))
	\end{equation*}
	holds, too. 
\end{prop}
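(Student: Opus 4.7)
The plan is to reduce everything to the classical Sobolev embedding from Lemma \ref{lem:WintoC} via the operator $\Gamma$, which by Proposition \ref{prop:TWtoWW} converts $\T^p_\mu$-regularity into $W^{1,p}_\mu$-regularity.

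Given $u \in \T^p_\mu((0,T);W^{k,q}(\R^{2n}))$, I would first set $w := \Gamma u$. By Proposition \ref{prop:TWtoWW}, $w \in W^{1,p}_\mu((0,T); W^{k,q}(\R^{2n}))$ and the assignment $u \mapsto w$ is continuous with norm estimate $\norm{w}_{W^{1,p}_\mu(W^{k,q})} \lesssim \norm{u}_{\T^p_\mu(W^{k,q})}$. Lemma \ref{lem:WintoC} then gives $w \in C([0,T]; W^{k,q}(\R^{2n}))$ continuously, so the trace $w(0)$ is well-defined.

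The key step is to recover $u$ from $w$ while preserving continuity in $t$. Since $[\Gamma u](t, x+tv, v) = u(t, x+tv, v) \ldots$, unpacking the definition gives the pointwise identity $u(t) = \Gamma(-t) w(t)$ for every $t \in [0,T]$, where $[\Gamma(s) f](x,v) = f(x+sv,v)$. Because $\Gamma(0) = \id$, the natural candidate for the trace is $u(0) := w(0)$, which makes the map $u \mapsto u(0)$ well-defined. To check continuity of $u$ at some $t_0 \in [0,T]$ I would use the triangle inequality
\begin{equation*}
    \norm{u(t) - u(t_0)}_{W^{k,q}} \le \norm{\Gamma(-t)\bigl(w(t) - w(t_0)\bigr)}_{W^{k,q}} + \norm{\bigl(\Gamma(-t) - \Gamma(-t_0)\bigr) w(t_0)}_{W^{k,q}}.
\end{equation*}
The first summand equals $\norm{w(t) - w(t_0)}_{W^{k,q}}$ since $\Gamma(-t)$ acts as an isometry on $W^{k,q}(\R^{2n})$, and tends to $0$ as $t \to t_0$ by continuity of $w$. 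The second summand vanishes in the limit by the strong continuity of the group $(\Gamma(s))_{s \in \R}$ on $W^{k,q}(\R^{2n})$, which is precisely the last statement of Proposition \ref{prop:TWtoWW}. The same isometry property gives the norm bound
\begin{equation*}
    \sup_{t \in [0,T]} \norm{u(t)}_{W^{k,q}} = \sup_{t \in [0,T]} \norm{w(t)}_{W^{k,q}} \lesssim \norm{w}_{W^{1,p}_\mu(W^{k,q})} \lesssim \norm{u}_{\T^p_\mu(W^{k,q})},
\end{equation*}
which is the desired continuous embedding.

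The final "in particular" statement for $k \ge 0$ is then immediate from the inclusion $W^{k,q}(\R^{2n}) \hookrightarrow L^q(\R^{2n})$. I do not foresee any major obstacle here; the one thing to be careful about is that one must use the strong continuity of the group $\Gamma(s)$ on $W^{k,q}$ (not merely its uniform boundedness) to handle the second summand in the triangle inequality, but this is exactly what Proposition \ref{prop:TWtoWW} supplies.
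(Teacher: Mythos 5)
Your argument is exactly the paper's: the paper proves this proposition by transporting $u$ to $w=\Gamma u\in W^{1,p}_\mu((0,T);W^{k,q}(\R^{2n}))$ via Proposition \ref{prop:TWtoWW} and invoking Lemma \ref{lem:WintoC}, then returning with $u(t)=\Gamma(-t)w(t)$ and the strong continuity of the group. One small correction: for $k\neq 0$ the operators $\Gamma(-t)$ are \emph{not} isometries on $W^{k,q}(\R^{2n})$ (the shear mixes $x$- and $v$-derivatives), but they are bounded uniformly for $t\in[0,T]$, which is all your two estimates actually need, so replace the equalities by $\lesssim$ with a constant depending on $T$ and $k$.
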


\begin{proof}
	The proof follows analogously to the proof of \cite[Theorem 5.4]{niebel_kinetic_nodate} using Proposition \ref{prop:TWtoWW} and Lemma \ref{lem:WintoC}. 
\end{proof}

\begin{theorem} \label{thm:Xbsqemb}
	Let $\beta \in (0,2]$, $s \in \R$, $p,q \in (1,\infty)$, $\mu \in (1/p,1]$ and $T \in (0,\infty)$. Then, there exists $k \in \Z $ such that the embedding
	\begin{equation*}
		\T_{\mu}^p((0,T);{X}_\beta^{s,q}) \hookrightarrow C([0,T];W^{k,q}(\R^{2n}))
	\end{equation*}
	holds continuously. Moreover, if $s \ge 0$, then 
	\begin{equation*}
		\T_{\mu}^p((0,T);{X}_\beta^{s,q}) \hookrightarrow C([0,T];L^q(\R^{2n})).
	\end{equation*}
\end{theorem}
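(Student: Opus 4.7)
The plan is to reduce everything to Proposition \ref{prop:TcontinLq} by finding an integer $k$ (depending on $s$) for which $X_\beta^{s,q} \hookrightarrow W^{k,q}(\R^{2n})$ holds continuously. Such a Banach embedding transfers line by line to the $\T_\mu^p$-spaces, because the condition defining $\T_\mu^p$ is simply that both $u$ and $\partial_t u + v \cdot \nabla_x u$ lie in $L_\mu^p((0,T);X_\beta^{s,q})$, and the embedding applies to each summand individually.

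I would treat the case $s\ge 0$ separately first and take $k=0$. The paper already records the identification $X_\beta^{s,q} = H_x^{s\beta/(\beta+1),q}(\R^{2n}) \cap H_v^{s\beta,q}(\R^{2n})$ for $s\ge 0$, and both factors consist of Bessel potentials of $L^q$-functions with nonnegative regularity. Hence $X_\beta^{s,q} \hookrightarrow L^q(\R^{2n})$ immediately, so $\T_\mu^p((0,T);X_\beta^{s,q}) \hookrightarrow \T_\mu^p((0,T);L^q(\R^{2n}))$, and Proposition \ref{prop:TcontinLq} applied with $k=0$ gives the asserted embedding into $C([0,T];L^q(\R^{2n}))$.

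For general $s\in\R$ negative exponents are unavoidable. Writing $m(\tau,\xi) := (1+|\tau|^2)^{\beta/(2(\beta+1))} + (1+|\xi|^2)^{\beta/2}$ so that $\|u\|_{X_\beta^{s,q}} = \|\F^{-1}(m^s\F u)\|_q$, the embedding $X_\beta^{s,q} \hookrightarrow W^{k,q}(\R^{2n})$ is equivalent to saying that the Fourier symbol
\[
M_k(\tau,\xi) := (1+|\tau|^2+|\xi|^2)^{k/2}\, m(\tau,\xi)^{-s}
\]
is a bounded multiplier on $L^q(\R^{2n})$. Since $m$ is smooth and bounded below by $2$, the function $M_k$ is smooth everywhere, and its asymptotic behaviour is $|\tau|^{k-s\beta/(\beta+1)}$ along the $\tau$-axis and $|\xi|^{k-s\beta}$ along the $\xi$-axis. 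Choosing any $k\in\Z$ with $k \le \min\bigl(s\beta,\ s\beta/(\beta+1)\bigr)$, the symbol is bounded and the Mihlin-type derivative estimates $|\partial^\alpha M_k(\tau,\xi)| \lesssim (|\tau|+|\xi|)^{-|\alpha|}$ follow from a routine calculation, since each summand of $m$ is a classical Bessel-type symbol in its own block of variables, so the differentiations decouple. These are precisely the Fourier-multiplier tools announced for the appendix.

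With such $k$ fixed, the embedding $X_\beta^{s,q} \hookrightarrow W^{k,q}(\R^{2n})$ yields $\T_\mu^p((0,T);X_\beta^{s,q}) \hookrightarrow \T_\mu^p((0,T);W^{k,q}(\R^{2n}))$, and one invokes Proposition \ref{prop:TcontinLq} to conclude. The only genuinely technical step is the verification of the Mihlin condition for $M_k$; everything else is bookkeeping. The mild difficulty there is the mixed isotropic/anisotropic structure of $m$, but since $m$ is literally the sum of two pieces that each depend only on one block of variables, the derivative bounds split into two standard checks, which I expect to be the main—but entirely manageable—obstacle.
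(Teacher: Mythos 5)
Your argument coincides with the paper's proof: the paper likewise reduces the statement to Proposition \ref{prop:TcontinLq} via the embeddings $X_\beta^{s,q}\subset W^{k,q}(\R^{2n})$ for suitable $k\in\Z$ (and $X_\beta^{s,q}\subset L^q(\R^{2n})$ for $s\ge 0$), merely asserting these inclusions where you verify them by a multiplier argument. One small caveat: at the borderline choice $k\le\min(s\beta,\,s\beta/(\beta+1))$ the isotropic bound $|\partial^\alpha M_k(\tau,\xi)|\lesssim(|\tau|+|\xi|)^{-|\alpha|}$ can actually fail when $\beta<1$ (a $\tau$-derivative of $m$ only gains $|\tau|^{-1}$, not $(|\tau|+|\xi|)^{-1}$, in the regime $|\tau|\ll|\xi|$), but this is harmless because the mixed Marcinkiewicz-type condition of Theorem \ref{thm:kinmkihlin} does hold there, and in any case the theorem only requires \emph{some} $k\in\Z$, so one may simply take $k$ slightly more negative.
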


\begin{proof}
	For $s \ge -1/2$ we have $X_\beta^{s,q} \subset W^{-1,q}(\R^{2n})$, which shows the first embedding. The general case follows as for all $k \in \N_0$ and any $s \in [-k/2,\infty)$ we have $X_\beta^{s,q} \subset W^{-k,q}(\R^{2n})$. The second embedding follows from the fact that $X_\beta^{s,q} \subset L^q(\R^{2n})$ for all $s \ge 0$.
\end{proof}

We write $_0\T^p_\mu((0,T);Y) = \{ u \in \T^p_\mu((0,T);Y) \; | \; u(0) = 0 \}$ for $Y \in \{{X}_\beta^{s,q},W^{k,q}(\R^{2n}) \}$. For any Banach space $X$ we introduce the mapping $\Phi_\mu \colon L^p_\mu((0,T);X) \to L^p((0,T);X) $ defined as $[\Phi_\mu u](t) = t^{1-\mu}u(t)$ for $t>0$. This operator is very useful to relate weighted and unweighted $L^p$-spaces. We collect some properties of $\Phi_\mu$.

\begin{prop} Let $X$ be a Banach space. \label{prop:Phimuprop}
	\begin{enumerate}[(i)]
		\item $\Phi_\mu \colon  L^p_\mu((0,T);X) \to L^p((0,T);X) $  is an isometric isomorphism.
		\item $\Phi_\mu \colon _0W_\mu^{1,p}((0,T);X) \to  {_0 W}^{1,p}((0,T);X) $  is an isomorphism.
	\end{enumerate}
\end{prop}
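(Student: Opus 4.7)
For part (i), the argument is a direct norm computation. Since
\[
\|\Phi_\mu u\|_{L^p((0,T);X)}^p = \int_0^T \|t^{1-\mu}u(t)\|_X^p\,\dx t = \int_0^T t^{p(1-\mu)}\|u(t)\|_X^p\,\dx t = \|u\|_{p,\mu,X}^p,
\]
the map $\Phi_\mu$ is an isometry into $L^p((0,T);X)$. The pointwise inverse $[\Phi_\mu^{-1}v](t) = t^{\mu-1}v(t)$ satisfies the analogous identity in the reverse direction, establishing that $\Phi_\mu$ is an isometric isomorphism.

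For part (ii), I would first check that $\Phi_\mu$ maps $_0W_\mu^{1,p}$ into $_0W^{1,p}$. Given $u \in {_0 W}_\mu^{1,p}((0,T);X)$, the product rule gives the distributional identity
\[
\partial_t\bigl(t^{1-\mu}u(t)\bigr) = (1-\mu)\,t^{-\mu}u(t) + t^{1-\mu}\,\partial_t u(t)
\]
on $(0,T)$, where the second summand lies in $L^p((0,T);X)$ by part (i). The crucial estimate is for the first summand: I need to show
\[
\int_0^T t^{-p\mu}\|u(t)\|_X^p\,\dx t \lesssim \int_0^T t^{p-p\mu}\|\partial_t u(t)\|_X^p\,\dx t.
\]
Since $u \in C([0,T];X)$ with $u(0)=0$ by Lemma \ref{lem:WintoC}, I have $\|u(t)\|_X \le \int_0^t \|\partial_s u(s)\|_X\,\dx s$, and the weighted Hardy inequality
\[
\int_0^T\!\left(\int_0^t g(s)\,\dx s\right)^{\!p} t^{-p\mu}\,\dx t \le \Bigl(\tfrac{p}{p\mu-1}\Bigr)^{\!p} \int_0^T g(t)^p\, t^{p-p\mu}\,\dx t
\]
applied to $g(s) = \|\partial_s u(s)\|_X$ gives exactly the required bound. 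This is where the standing assumption $\mu > 1/p$ enters, ensuring $p\mu-1>0$ and thus the validity of the weighted Hardy inequality. For the trace at $t=0$: $t^{1-\mu}\|u(t)\|_X \to 0$ since $1-\mu\ge 0$ and $u(t)\to 0$ in $X$, so $\Phi_\mu u \in {_0 W}^{1,p}((0,T);X)$.

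For the reverse mapping, take $v \in {_0 W}^{1,p}((0,T);X)$ and compute
\[
\partial_t\bigl(t^{\mu-1}v(t)\bigr) = (\mu-1)\,t^{\mu-2}v(t) + t^{\mu-1}\,\partial_t v(t),
\]
where the $L^p_\mu$ norm of the second summand equals $\|\partial_t v\|_{L^p((0,T);X)}$ by part (i). The first summand has $L^p_\mu$ norm controlled by $\int_0^T t^{-p}\|v(t)\|_X^p\,\dx t$, which by the classical Hardy inequality (with $v(0)=0$) is bounded by $(p/(p-1))^p\|\partial_t v\|_{L^p((0,T);X)}^p$. The trace condition $\Phi_\mu^{-1}v(0)=0$ follows since $\|v(t)\|_X \le t^{1-1/p}\|\partial_t v\|_{L^p}$ gives $t^{\mu-1}\|v(t)\|_X \le t^{\mu - 1/p}\|\partial_t v\|_{L^p} \to 0$, again using $\mu > 1/p$. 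Combined with the bounded inverse theorem, both continuous bijections yield an isomorphism.

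The main technical point is the correct use of the weighted Hardy inequality in both directions, and noticing that the parameter restriction $\mu > 1/p$ is exactly the borderline condition for both applications; no truly difficult obstacle appears beyond this bookkeeping.
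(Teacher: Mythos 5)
Your proof is correct: part (i) is the direct norm identity, and in part (ii) the distributional product rule together with the weighted Hardy inequality (using $p\mu-1>0$) and the classical Hardy inequality in the reverse direction gives bounded maps both ways, with the trace conditions handled properly. The paper itself offers no argument here but simply cites \cite[Proposition 2.2]{pruss_maximal_2004}, and your argument is essentially the standard Hardy-inequality proof of that cited result, so it matches the intended approach.
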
 

\begin{proof}
	See \cite[Proposition 2.2]{pruss_maximal_2004}.
\end{proof}

\begin{coro} \label{cor:TmuT}
For all $p,q \in (1,\infty)$, $\mu \in (1/p,1]$, $T \in (0,\infty)$ and all $k \in \Z$ the following diagram commutes.
	\[ \psset{arrows=->, arrowinset=0.25, linewidth=0.6pt, nodesep=3pt, labelsep=2pt, rowsep=0.7cm, colsep = 1.1cm, shortput =tablr}
 \everypsbox{\scriptstyle}
 \begin{psmatrix}
 _0\T_\mu^{p}((0,T);W^{k,q}(\R^{2n})) & _0\T^{p}((0,T);W^{k,q}(\R^{2n})) \\
 _0W_\mu^{1,p}((0,T);W^{k,q}(\R^{2n})) & _0W^{1,p}((0,T);W^{k,q}(\R^{2n})).
 \ncline{1,1}{1,2}^{\Phi_\mu} 
 \ncline{1,1}{2,1} <{\Gamma }
 \ncline{1,2}{2,2} > {\Gamma}
 \ncline{2,1}{2,2}^{\Phi_\mu}
 \end{psmatrix}
 \]
 In particular, each of the operators in the diagram is a well-defined isomorphism.  
\end{coro}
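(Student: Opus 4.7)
The plan is to verify the commutativity of the diagram by a pointwise computation, identify three of the four arrows as isomorphisms directly from Propositions \ref{prop:TWtoWW} and \ref{prop:Phimuprop}, and then conclude that the fourth arrow is an isomorphism by composition.

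For commutativity, observe that $\Phi_\mu$ acts by multiplication with the scalar function $t \mapsto t^{1-\mu}$, whereas $\Gamma(t)$ acts only on the spatial variables for each fixed $t$. Hence, at least on smooth representatives, one has the pointwise identity
\begin{equation*}
	[\Phi_\mu \Gamma u](t,x,v) = t^{1-\mu} u(t, x+tv, v) = [\Gamma \Phi_\mu u](t,x,v),
\end{equation*}
which extends by density to all of $L_\mu^p((0,T);L^q(\R^{2n}))$, and in particular to each of the four function spaces appearing in the diagram.

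For three of the four arrows: the two vertical $\Gamma$-maps are isomorphisms between the unconstrained spaces $\T^p_\mu((0,T);W^{k,q}(\R^{2n}))$ and $W^{1,p}_\mu((0,T);W^{k,q}(\R^{2n}))$ (and analogously for $\mu = 1$) by Proposition \ref{prop:TWtoWW}. Since $\Gamma(0) = \id$, every function $u$ in the domain satisfies $[\Gamma u](0) = u(0)$, where both traces are well-defined by Proposition \ref{prop:TcontinLq} and Lemma \ref{lem:WintoC}; consequently $\Gamma$ restricts to an isomorphism between the corresponding zero-trace subspaces. The bottom horizontal arrow $\Phi_\mu \colon {}_0 W^{1,p}_\mu((0,T);W^{k,q}(\R^{2n})) \to {}_0 W^{1,p}((0,T);W^{k,q}(\R^{2n}))$ is an isomorphism by Proposition \ref{prop:Phimuprop}(ii).

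It remains to treat the top arrow $\Phi_\mu \colon {}_0 \T^p_\mu((0,T);W^{k,q}(\R^{2n})) \to {}_0 \T^p((0,T);W^{k,q}(\R^{2n}))$. Since the other three arrows are isomorphisms and the diagram commutes, the composition $\Gamma^{-1}\circ \Phi_\mu \circ \Gamma$ is an isomorphism from ${}_0 \T^p_\mu$ onto ${}_0 \T^p$, and by commutativity it agrees with the natural multiplication operator $\Phi_\mu$. The main obstacle one would face in treating this arrow directly is that
\begin{equation*}
	(\partial_t + v\cdot \nabla_x)(t^{1-\mu} u) = (1-\mu)\, t^{-\mu} u + t^{1-\mu}(\partial_t u + v\cdot \nabla_x u),
\end{equation*}
so checking that $\Phi_\mu u \in {}_0 \T^p$ requires a Hardy-type bound on the singular term $t^{-\mu}u$. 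This bound is effectively packaged inside Proposition \ref{prop:Phimuprop}(ii), and routing through the commutative diagram lets us reuse it rather than re-derive it here.
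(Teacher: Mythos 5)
Your proposal is correct and follows essentially the argument the paper intends: commutativity of $\Phi_\mu$ and $\Gamma$ is immediate (indeed it holds a.e.\ for $W^{k,q}$-valued functions directly, since $\Phi_\mu$ is scalar multiplication in $t$ and $\Gamma(t)$ acts only in the spatial variables, so no density argument is really needed), the vertical arrows restrict to the zero-trace subspaces because $\Gamma(0)=\id$ and the traces exist by Proposition \ref{prop:TcontinLq} and Lemma \ref{lem:WintoC}, and the top arrow is then an isomorphism as the composition $\Gamma^{-1}\circ\Phi_\mu\circ\Gamma$, which is exactly how the paper deduces the corollary from Propositions \ref{prop:TWtoWW} and \ref{prop:Phimuprop}. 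Your closing remark about the Hardy-type term $(1-\mu)t^{-\mu}u$ also matches how the paper exploits this diagram in Corollary \ref{coro:hardykin}.
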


\begin{coro} \label{coro:hardykin}
	Let all $p,q \in (1,\infty)$, $\mu \in (1/p,1]$, $T \in (0,\infty)$ and $k \in \Z$. For all $u \in {_0\T}_\mu^{p}((0,T);W^{k,q}(\R^{2n}))$ we have $t^{-\mu}u(t) \in L^p((0,T);W^{k,q}(\R^{2n}))$.
\end{coro}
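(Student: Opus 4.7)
The strategy is to push the claim from the kinetic space $_0\T^p_\mu$ over to the ordinary temporally weighted Sobolev space $_0W^{1,p}_\mu$ by means of the Galilean transform $\Gamma$, where the statement reduces to a weighted Hardy inequality that is standard in the theory of maximal $L^p_\mu$-regularity, and then to pull the resulting decay estimate back to $u$ through $\Gamma^{-1}$.

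Concretely, given $u \in {_0\T}^p_\mu((0,T); W^{k,q}(\R^{2n}))$, I would set $w = \Gamma u$. By Corollary \ref{cor:TmuT} this $w$ belongs to ${_0W}^{1,p}_\mu((0,T); W^{k,q}(\R^{2n}))$. The next step is the vector-valued weighted Hardy inequality: for any Banach space $X$ and any $f \in {_0W}^{1,p}_\mu((0,T); X)$ with $\mu \in (1/p,1]$, one has
\begin{equation*}
	\int_0^T \norm{t^{-\mu} f(t)}_X^p \, \dx t \lesssim \int_0^T t^{(1-\mu)p} \norm{\partial_t f(t)}_X^p \, \dx t.
\end{equation*}
This reduces to the scalar weighted Hardy inequality applied to the real-valued function $h(t) = \int_0^t \norm{\partial_t f(s)}_X \, \dx s$, which dominates $\norm{f(t)}_X$ because $f(0) = 0$; the condition $\mu > 1/p$ is exactly what is needed for the Hardy constant to be finite. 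Applied to $w$ it yields $t^{-\mu} w \in L^p((0,T); W^{k,q}(\R^{2n}))$.

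To conclude I would undo $\Gamma$. Since $\Gamma^{-1}$ acts only on the spatial variables, it commutes with multiplication by a scalar function of $t$ alone, so $t^{-\mu} u = \Gamma^{-1}(t^{-\mu} w)$. It remains to observe that $\Gamma^{-1}$ is bounded on $L^p((0,T); W^{k,q}(\R^{2n}))$: on the bounded interval $(0,T)$ this is immediate from the explicit formula $[\Gamma^{-1} g](t,x,v) = g(t, x-tv, v)$, the $L^q$-isometry of $x$-translations and the fact that the extra factors of $t$ produced by the chain rule are bounded in $t \in (0,T)$; the case $k < 0$ follows by duality. Applying $\Gamma^{-1}$ to $t^{-\mu} w$ thus gives $t^{-\mu} u \in L^p((0,T); W^{k,q}(\R^{2n}))$, which is the desired conclusion. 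The main input is Corollary \ref{cor:TmuT}; no real obstacle is anticipated, the only technical point being the routine verification that $\Gamma^{-1}$ is bounded on the unweighted $L^p$-space with values in $W^{k,q}$.
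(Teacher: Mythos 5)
Your argument is correct and takes essentially the same route as the paper: transfer the statement to $_0W^{1,p}_\mu((0,T);W^{k,q}(\R^{2n}))$ via $\Gamma$ (Corollary \ref{cor:TmuT}) and conclude with a Hardy-type inequality, using that $t^{-\mu}u = \Gamma^{-1}(t^{-\mu}\Gamma u)$ and that $\Gamma^{-1}$ is bounded on $L^p((0,T);W^{k,q}(\R^{2n}))$ for finite $T$. The only difference is inessential: the paper cites the unweighted case from \cite{pruss_maximal_2004} and transfers it with $\Phi_\mu$, whereas you prove the weighted vector-valued Hardy inequality directly (the condition $\mu>1/p$ entering exactly as you say), which amounts to the same estimate.
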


\begin{proof}
	A proof for $u \in {_0W}^{1,p}((0,T);W^{k,q}(\R^{2n}))$, i.e. $\mu = 1$, can be found in the proof of \cite[Proposition 2.2]{pruss_maximal_2004}. Using the mapping $\Phi_\mu$ one can transfer this to the weighted case and then using the mapping $\Gamma$ to the case that $u \in {_0\T}_\mu^{p}((0,T);W^{k,q}(\R^{2n}))$.
\end{proof}

We are now able to define what kinetic maximal $L^p_\mu(X_\beta^{s,q})$-regularity means for a family of operators $A(t) = (A(t))_{t \in [0,T]}$ with constant domain $D(A) \subset X_\beta^{s,q}$. We always assume that $D(A)$ is equipped with a norm equivalent to the graph norm of $A(0)$. We introduce 
$${\BE}_\mu(0,T) := {\BE}_\mu((0,T);X_\beta^{s,q}) := {\T}^{p}_\mu((0,T);X_\beta^{s,q}) \cap L_\mu^p((0,T);D(A)).$$
As $\T_\mu^{p}((0,T);X_\beta^{s,q}) \hookrightarrow C([0,T];W^{k,q})$ for some $k \in \Z$ the trace space 
$$X_{\gamma,\mu} := \tr(\BE_\mu((0,T);X_\beta^{s,q})) $$
is well-defined. As usual, we are going to equip the trace space with the norm 
\begin{equation*}
	\norm{g}_{X_{\gamma,\mu}} := \inf \{ \norm{u}_{\BE_\mu((0,T);X_\beta^{s,q})} \colon u(0) =  g, u \in \BE_\mu((0,T);X_\beta^{s,q}) \}.
\end{equation*}

\begin{definition}
	Let $\beta \in (0,2]$, $s \in \R$, $p,q \in (1,\infty)$, $\mu \in (\frac{1}{p},1]$ and $T \in (0,\infty)$. We assume $A(t) = (A(t))_{t \in [0,T]} \colon D(A) \to X_\beta^{s,q}$, to be a family of operators acting on functions $u \in D(A) 
	\subset X_\beta^{s,q} $ such that 
	\begin{equation*}
		t \mapsto A(t) \in L^1((0,T);\B(D(A);X_\beta^{s,q})) \cap \B(L^p_\mu((0,T);D(A));L^p_\mu((0,T);X^{s,q}_\beta)).
	\end{equation*}
	 We say that the family of operators $A(t) = (A(t))_{t \in [0,T]}$ satisfies the \textit{kinetic maximal $L^p_\mu(X_\beta^{s,q})$-regularity property} if for all $f \in L^p_\mu((0,T);X_\beta^{s,q})$ there exists a unique distributional solution $u \in  {_0 \BE}_\mu(0,T) := {_0 \BE}_\mu((0,T);X_\beta^{s,q}) := {_0\T}^{p}_\mu((0,T);X_\beta^{s,q}) \cap L_\mu^p((0,T);D(A))$ of the equation
		\begin{equation} \label{eq:kinacp}
		\begin{cases}
			\partial_t u + v\cdot \nabla_x u - A(t)u = f, \quad t \in (0,T) \\
			u(0) = 0.
		\end{cases}
	\end{equation}
\end{definition}

\begin{remark}
	\begin{enumerate}
		\item The condition on $f$ is also necessary. Indeed, if $u \in \BE_\mu(0,T)$ is a solution of the Kolmogorov equation, then $A(t) u \in L^p_\mu((0,T);X_\beta^{s,q}) $, whence $f \in L^p_\mu((0,T);X_\beta^{s,q})$. 
		\item Clearly, one can also choose a different scale of spaces than $X_\beta^{s,q}$ in the definition of kinetic maximal $L^p_\mu$-regularity. The choice of $X_\beta^{s,q}$ is motivated by the fractional Kolmogorov equation, where the operator acts only in the velocity variable. However, due to the kinetic regularity transfer one gains also some regularity in the $x$-variable, which explains the definition of $X_\beta^{s,q}$. This phenomenon will be investigated more closely in Section \ref{sec:regtransfer}.
		\item We recall that for all $s \in \R$ there exists $k \in \Z$ such that ${_0 \BE}_\mu((0,T);X_\beta^{s,q}) \hookrightarrow C([0,T];W^{k,q}(\R^{2n}))$, whence every solution $u$ of equation \eqref{eq:kinacp} attains the initial value zero in the sense of convergence in $W^{k,q}(\R^{2n})$.
		\item In the present article we concentrate on the case $s \ge 0$. However, for the fractional Kolmogorov equation we sketch a result of kinetic maximal $L^p_\mu(X_\beta^{s,2})$-regularity for all $s \ge -1/2$, i.e. when $q = 2$. The remaining case $q \neq 2$ and, in particular, the case of weak solutions, i.e. $s = -1/2$ is an interesting open problem.
	\end{enumerate}

\end{remark}

 Under additional assumptions on the operators $A(t)$ one can show that uniqueness of solutions to equation \eqref{eq:kinacp} in the space ${_0 \BE}_\mu((0,T);X_\beta^{s,q})$ is automatic.  
 
\begin{theorem} \label{thm:uniqueweak}
	Let $\beta \in (0,2]$, $s \in \R$, $p,q \in (1,\infty)$, $\mu \in (\frac{1}{p},1]$ and $T \in (0,\infty)$. We assume that there exists $k \in \Z$ such that for almost all $t  \in [0,T]$ the operator $\Gamma(t) A(t) \Gamma^{-1}(t)$ is dissipative in $W^{k,q}(\R^{2n})$ and that $X_\beta^{s,q} \subset W^{k,q}(\R^{2n})$. Then, any distributional solution $u \in {_0\BE}_\mu((0,T);X_\beta^{s,q})$ of equation \eqref{eq:kinacp} with $f = 0$ is equal to zero. 
\end{theorem}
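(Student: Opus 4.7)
The plan is to apply the operator $\Gamma$ to reduce equation~\eqref{eq:kinacp} to a non-autonomous abstract Cauchy problem on $W^{k,q}(\R^{2n})$, to which the dissipativity assumption can be applied directly.

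First I would set $w = \Gamma u$. Since by assumption $X_\beta^{s,q} \subset W^{k,q}(\R^{2n})$, the solution $u$ lies in ${_0\T}_\mu^{p}((0,T); W^{k,q}(\R^{2n}))$, so Corollary~\ref{cor:TmuT} yields $w \in {_0W}_\mu^{1,p}((0,T); W^{k,q}(\R^{2n}))$ with $w(0) = 0$. Combining the identity $\partial_t \Gamma u = \Gamma(\partial_t u + v \cdot \nabla_x u)$ from Proposition~\ref{prop:TWtoWW} with the equation $\partial_t u + v \cdot \nabla_x u = A(t) u$ produces
\begin{equation*}
\partial_t w(t) = B(t) w(t), \qquad B(t) := \Gamma(t) A(t) \Gamma(t)^{-1},
\end{equation*}
in $W^{k,q}(\R^{2n})$ for almost every $t \in (0,T)$. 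Since $u(t) \in D(A)$ for a.e.\ $t$, the function $w(t)$ belongs to $\Gamma(t) D(A) = D(B(t))$ for a.e.\ $t$, so the dissipativity hypothesis applies pointwise in time.

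The next step would be to exploit dissipativity to show that $t \mapsto \norm{w(t)}_{W^{k,q}}$ is non-increasing on $(0,T)$. Because $q \in (1,\infty)$, the space $W^{k,q}(\R^{2n})$ is uniformly smooth, so its norm is Gateaux-differentiable away from zero; writing $J_q$ for the duality mapping with gauge $s \mapsto s^{q-1}$, the Lumer--Phillips characterization of dissipativity provides $\langle B(t) y, J_q(y) \rangle \le 0$ for every $y \in D(B(t))$. Applying the chain rule to $t \mapsto \norm{w(t)}_{W^{k,q}}^q$ would then yield
\begin{equation*}
\frac{1}{q} \frac{d}{dt} \norm{w(t)}_{W^{k,q}}^q = \langle \partial_t w(t), J_q(w(t))\rangle = \langle B(t) w(t), J_q(w(t)) \rangle \le 0
\end{equation*}
for almost every $t \in (0, T)$. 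Since $w \in C([0, T]; W^{k,q}(\R^{2n}))$ with $w(0) = 0$ by Lemma~\ref{lem:WintoC}, this forces $w \equiv 0$, and therefore $u = \Gamma^{-1} w \equiv 0$.

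The main obstacle is the rigorous justification of the chain rule in this Banach-space setting: $w$ has only $W_\mu^{1,p}$-regularity, so one must argue, for instance by time-mollification combined with uniform smoothness of $W^{k,q}$, that $t \mapsto \norm{w(t)}_{W^{k,q}}^q$ is absolutely continuous on $[\varepsilon, T]$ for every $\varepsilon > 0$ with the displayed derivative formula holding almost everywhere; letting $\varepsilon \to 0$ and using continuity at $t = 0$ then closes the argument.
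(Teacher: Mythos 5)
Your argument is correct and follows essentially the same route as the paper: conjugation by $\Gamma$ (via Proposition \ref{prop:TWtoWW} and the inclusion $X_\beta^{s,q}\subset W^{k,q}(\R^{2n})$) reduces the problem to the non-autonomous equation $\partial_t w=\Gamma(t)A(t)\Gamma^{-1}(t)w$, $w(0)=0$, in $W^{1,p}_\mu((0,T);W^{k,q}(\R^{2n}))$, after which dissipativity yields $w\equiv 0$. The duality-map/norm-differentiation argument you sketch (including the chain-rule justification you flag) is precisely the content of \cite[Proposition 3.2]{arendt_lp-maximal_2007}, which the paper cites at this point instead of writing it out.
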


\begin{proof}
Using the mapping $\Gamma$ we can transfer the problem of uniqueness of equation \eqref{eq:kinacp} to the uniqueness of the following nonautonomous problem
	\begin{equation} \label{eq:nonautokol}
 		\begin{cases}
 			\partial_t w = \Gamma(t) A(t) \Gamma^{-1}(t) w, \quad t>0 \\
 			w(0) = 0, 
 		\end{cases}
 	\end{equation}
 	in the class of functions $w \in W^{1,p}_\mu((0,T);W^{k,q}(\R^{2n}))$ such that 
 	$$\Gamma(t) A(t) \Gamma^{-1}(t) w \in L^p_\mu((0,T);W^{k,q}(\R^{2n})).$$
 	Indeed, let $u \in {_0\BE}_\mu((0,T);X_\beta^{s,q})$ be a solution of equation \eqref{eq:kinacp} with $f = 0$, then $ u \in {_0\T}^{p}_\mu((0,T);W^{k,q}(\R^{2n})) \cap L_\mu^p((0,T);D(A))$, whence $\Gamma u \in W^{1,p}_\mu((0,T);W^{k,q}(\R^{2n}))$ as a consequence of Proposition \ref{prop:TWtoWW}. A direct calculation shows that the function $\Gamma u$ satisfies equation \eqref{eq:nonautokol} and $\Gamma(t) A(t) \Gamma^{-1}(t) \Gamma(t) u \in L^p_\mu((0,T);W^{k,q}(\R^{2n}))$, hence uniqueness of equation \eqref{eq:nonautokol} would imply that $\Gamma u = 0$ and in particular this would imply that $u = 0$ by Proposition \ref{prop:TWtoWW}.
 	  
 	 	The calculations in the proof of \cite[Proposition 3.2]{arendt_lp-maximal_2007} hold true in our setting, too, and prove the uniqueness of solutions to equation \eqref{eq:nonautokol} as a consequence of the dissipativity of $\Gamma(t) A(t) \Gamma^{-1}(t)$ in $W^{k,q}(\R^{2n})$. More details in the case $p = q = 2$ can be found in \cite{niebel_kinetic_nodate}.
\end{proof}

\begin{coro} \label{cor:dissoflapl}
	Let $\beta \in (0,2]$, $s \in \R$, $p,q \in (1,\infty)$, $\mu \in (1/p,1]$, $T \in (0,\infty)$. Then, any solution $u \in {_0\BE}_\mu((0,T);X_\beta^{s,q})$ of the (fractional) Kolmogorov equation, i.e. equation \eqref{eq:kinacp} with $A(t) = A = -(-\Delta_v)^\frac{\beta}{2}$, is equal to zero almost everywhere.
\end{coro}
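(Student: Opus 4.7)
The plan is to deduce this from Theorem~\ref{thm:uniqueweak} applied to the autonomous family $A(t)\equiv A=-(-\Delta_v)^{\beta/2}$. Its two hypotheses amount to exhibiting an integer $k$ with $X_\beta^{s,q}\subset W^{k,q}(\R^{2n})$, and showing dissipativity of the conjugated operator $B(t):=\Gamma(t)A\Gamma^{-1}(t)$ in that $W^{k,q}(\R^{2n})$ for a.e.\ $t\in[0,T]$. The embedding has already been established (for a suitable $k\le 0$) in the proof of Theorem~\ref{thm:Xbsqemb}, so the real content of the argument is the dissipativity.

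First I would identify $B(t)$ explicitly via Fourier analysis. Using $\widehat{\Gamma(t)f}(\eta,\xi)=\widehat{f}(\eta,\xi-t\eta)$ and $\widehat{\Gamma^{-1}(t)f}(\eta,\xi)=\widehat{f}(\eta,\xi+t\eta)$, conjugating the Fourier multiplier $-\abs{\xi}^\beta$ of $A$ yields
\begin{equation*}
    \widehat{B(t)u}(\eta,\xi)=-\abs{\xi-t\eta}^\beta\,\widehat{u}(\eta,\xi).
\end{equation*}
Thus $B(t)$ is a Fourier multiplier with a real, non-positive symbol, and in particular it commutes with every Bessel potential $(1-\Delta_x-\Delta_v)^{k/2}$.

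For dissipativity I would prefer a contraction-semigroup argument to duality-bracket calculations (which become awkward for $q\neq 2$). Since $A=-(-\Delta_v)^{\beta/2}$ generates the symmetric $\beta$-stable semigroup $(e^{sA})_{s\ge 0}$ given by convolution in $v$ with the probability kernel $p_s$, it is contractive on $L^q(\R^{2n})$. Combined with the fact that $\Gamma(t)$ is an $L^q$-isometric isomorphism (Proposition~\ref{prop:GamLpisom}), the family $e^{sB(t)}=\Gamma(t)e^{sA}\Gamma^{-1}(t)$ is a $C_0$-contraction semigroup on $L^q(\R^{2n})$; a direct change of variable in fact realizes it as the convolution
\begin{equation*}
    [e^{sB(t)}u](x,v)=\int_{\R^n}p_s(w)\,u(x-tw,v+w)\,\dx w
\end{equation*}
against a probability measure in $w$, which makes the contraction property transparent. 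Hence $B(t)$ is dissipative in $L^q(\R^{2n})$ by Lumer--Phillips, and because $e^{sB(t)}$ commutes with $(1-\Delta_x-\Delta_v)^{k/2}$ the contraction property persists on $H^{k,q}(\R^{2n})=W^{k,q}(\R^{2n})$ for every $k\in\Z$, yielding dissipativity of $B(t)$ in $W^{k,q}(\R^{2n})$.

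With both hypotheses verified, Theorem~\ref{thm:uniqueweak} forces $u=0$ a.e. The step I expect to need the most care is the bookkeeping for negative $s$, namely choosing the right $k\le 0$ so that $X_\beta^{s,q}\subset W^{k,q}(\R^{2n})$ while the contraction-semigroup argument genuinely survives on negative-order Bessel-potential spaces; phrasing everything through the commuting Fourier-multiplier picture is what keeps this manageable.
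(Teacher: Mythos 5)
Your proposal is correct and takes essentially the same route as the paper: the paper also deduces the corollary from Theorem \ref{thm:uniqueweak}, simply citing the well-known dissipativity of $-(-\Delta_v)^{\beta/2}$ in $W^{k,q}(\R^{2n})$ for every $k \in \Z$ (together with the embedding $X_\beta^{s,q} \subset W^{k,q}(\R^{2n})$ for suitable $k$). Your extra work — identifying $\Gamma(t)A\Gamma^{-1}(t)$ as the Fourier multiplier with symbol $-\abs{\xi-tk}^\beta$ and realizing its semigroup as an average of translations against the stable probability kernel — merely fills in the detail the paper leaves implicit, and in fact verifies the hypothesis of Theorem \ref{thm:uniqueweak} on the conjugated operator directly.
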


\begin{proof}
	This follows from the well-known fact that $ -(-\Delta_v)^\frac{\beta}{2}$ is dissipative in $W^{k,q}(\R^{2n})$ for any $k \in \Z$.
\end{proof}

\begin{coro}
	Let $\beta \in (0,2]$, $s \ge 0$, $p,q \in (1,\infty)$, $\mu \in (1/p,1]$, $T \in (0,\infty)$. If the family of operators $A(t)$ is dissipative in $L^q(\R^{2n})$, then any solution $u \in {_0\BE}_\mu((0,T);X_\beta^{s,q})$ of equation \eqref{eq:kinacp} is equal to zero almost everywhere.
\end{coro}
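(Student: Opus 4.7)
The plan is to deduce the corollary directly from Theorem \ref{thm:uniqueweak} by taking $k=0$. For $s\ge 0$ the representation $X_\beta^{s,q} = H_x^{s\beta/(\beta+1),q}(\R^{2n}) \cap H_v^{s\beta,q}(\R^{2n})$ recorded at the beginning of Section \ref{sec:kinmaxreg} gives immediately the inclusion $X_\beta^{s,q}\subset L^q(\R^{2n})$, so the second hypothesis of the theorem is fulfilled with $k=0$. The only thing left to check is that the conjugated family $\Gamma(t)A(t)\Gamma^{-1}(t)$ is dissipative in $L^q(\R^{2n})$ for almost every $t\in[0,T]$.

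The key input is Proposition \ref{prop:GamLpisom}, which says that $\Gamma(t)\colon L^q(\R^{2n})\to L^q(\R^{2n})$ is an isometric isomorphism with inverse $\Gamma(-t)$. What I would exploit is that dissipativity in a Banach space $X$ is stable under conjugation by isometric isomorphisms. Concretely, I would use the resolvent characterisation: an operator $B$ is dissipative on $X$ if and only if $\|(\lambda-B)u\|_X\ge \lambda\|u\|_X$ for all $\lambda>0$ and $u\in D(B)$. Setting $B=\Gamma(t)A(t)\Gamma^{-1}(t)$, any element of its domain is of the form $u=\Gamma(t)v$ with $v\in D(A(t))$, and by isometry of $\Gamma(t)$
\[
\bigl\|(\lambda-B)u\bigr\|_{L^q} = \bigl\|\Gamma(t)(\lambda-A(t))v\bigr\|_{L^q} = \bigl\|(\lambda-A(t))v\bigr\|_{L^q} \ge \lambda\|v\|_{L^q} = \lambda\|u\|_{L^q},
\]
where the inequality in the middle is dissipativity of $A(t)$ in $L^q(\R^{2n})$. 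Hence $\Gamma(t)A(t)\Gamma^{-1}(t)$ is dissipative in $L^q(\R^{2n})=W^{0,q}(\R^{2n})$.

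Having verified both hypotheses of Theorem \ref{thm:uniqueweak} (with $k=0$), I conclude that any $u\in {_0\BE}_\mu((0,T);X_\beta^{s,q})$ solving \eqref{eq:kinacp} with $f=0$ must vanish almost everywhere. There is essentially no obstacle here; the only point worth stressing is the stability of dissipativity under conjugation by isometric isomorphisms, which is precisely what makes the passage from $A(t)$ on $L^q$ to $\Gamma(t)A(t)\Gamma^{-1}(t)$ on $L^q$ painless. In contrast to Corollary \ref{cor:dissoflapl}, where dissipativity of $\Gamma(t)A\Gamma^{-1}(t)$ was immediate because the fractional Laplacian acts only in $v$ and commutes with $\Gamma(t)$, here the commutator with $\Gamma(t)$ need not vanish, and the isometry argument is what replaces this simpler observation.
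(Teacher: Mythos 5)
Your proof is correct and follows essentially the paper's own argument: take $k=0$ in Theorem \ref{thm:uniqueweak}, note $X_\beta^{s,q}\subset L^q(\R^{2n})$ for $s\ge 0$, and observe that dissipativity passes from $A(t)$ to $\Gamma(t)A(t)\Gamma^{-1}(t)$ because $\Gamma(t)$ is an isometric isomorphism of $L^q(\R^{2n})$ — your resolvent computation simply makes the paper's one-line observation explicit. Only your closing aside is slightly off: $-(-\Delta_v)^{\beta/2}$ does not commute with $\Gamma(t)$ (the conjugated operator has Fourier symbol $\abs{\xi-tk}^\beta$), so Corollary \ref{cor:dissoflapl} ultimately rests on the same conjugation-by-isometry (or direct dissipativity of the conjugated multiplier) fact rather than on commutation.
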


\begin{proof}
	We can choose $k = 0$ in Theorem \ref{thm:uniqueweak}. Indeed, if $A(t)$ is dissipative in $L^q(\R^{2n})$, then so is $\Gamma(t) A(t) \Gamma^{-1}(t)$.
\end{proof}

The following lemma gives a useful criterion to check whether a family of operators satisfies the kinetic maximal $L^p_\mu(X_\beta^{s,q})$-regularity property.

\begin{lemma} \label{lem:criteriamaxkinreg}
	Let $\beta \in (0,2]$, $s \in \R$, $p,q \in (1,\infty)$, $\mu \in (\frac{1}{p},1]$, $T \in (0,\infty)$ and $A(t) \colon D(A) \subset X_\beta^{s,q} \to X_\beta^{s,q}$ be a family of operators such that for all functions $f$ in a dense subset $D \subset L^p_\mu((0,T);X_\beta^{s,q})$, there exists a solution $u \in   {_0 \BE}_\mu((0,T);X_\beta^{s,q})$ of equation \eqref{eq:kinacp} and the estimate
	\begin{equation*}
		\norm{u}_{p,\mu,X_\beta^{s,q}}+\norm{\partial_t u+v \cdot \nabla_x u}_{p,\mu,X_\beta^{s,q}}+\norm{A(\cdot)u}_{p,\mu,X_\beta^{s,q}} \le c \norm{f}_{p,\mu,X_\beta^{s,q}}
	\end{equation*}
	is satisfied for some constant $c>0$. If $\Gamma(t) A(t) \Gamma^{-1}(t)$ is dissipative in some $W^{k,q}(\R^{2n}) \supset X_\beta^{s,q}$ for almost all $t \in [0,T]$, then $(A(t))_{t \in [0,T]}$ satisfies the kinetic maximal $L^p_\mu(X_\beta^{s,q})$-regularity property. 
\end{lemma}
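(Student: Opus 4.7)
The plan is to combine the assumed a priori estimate with Theorem \ref{thm:uniqueweak} via a standard density-and-completeness argument. Uniqueness is immediate: if two functions in ${_0\BE}_\mu((0,T);X_\beta^{s,q})$ both solve equation \eqref{eq:kinacp} for the same $f$, their difference solves \eqref{eq:kinacp} with zero data, and the dissipativity hypothesis on $\Gamma(t)A(t)\Gamma^{-1}(t)$ in some $W^{k,q}(\R^{2n})\supset X_\beta^{s,q}$ lets us invoke Theorem \ref{thm:uniqueweak} to conclude that the difference vanishes. Existence is the substantive part.

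For existence, I would fix $f \in L^p_\mu((0,T);X_\beta^{s,q})$, pick $(f_n) \subset D$ with $f_n \to f$ in this space, and let $u_n \in {_0\BE}_\mu((0,T);X_\beta^{s,q})$ be the solution guaranteed by hypothesis for $f_n$. Linearity of \eqref{eq:kinacp} implies that $u_n-u_m$ solves \eqref{eq:kinacp} with right-hand side $f_n-f_m$ and zero initial value, so the assumed a priori estimate gives
\begin{equation*}
\norm{u_n-u_m}_{\T^p_\mu(X_\beta^{s,q})} + \norm{A(\cdot)(u_n-u_m)}_{p,\mu,X_\beta^{s,q}} \le c\,\norm{f_n-f_m}_{p,\mu,X_\beta^{s,q}},
\end{equation*}
which tends to zero as $n,m\to\infty$. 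By completeness of $\T^p_\mu((0,T);X_\beta^{s,q})$ and of $L^p_\mu((0,T);X_\beta^{s,q})$, there exist $u\in\T^p_\mu((0,T);X_\beta^{s,q})$ and $g\in L^p_\mu((0,T);X_\beta^{s,q})$ with $u_n\to u$ and $A(\cdot)u_n\to g$. Since every $u_n$ vanishes at $t=0$ and $u_n\to u$ in $\T^p_\mu((0,T);X_\beta^{s,q})$, the continuous embedding of Theorem \ref{thm:Xbsqemb} into $C([0,T];W^{k,q}(\R^{2n}))$ (for a suitable $k\in\Z$) forces $u(0)=0$, hence $u\in{_0\T}^p_\mu((0,T);X_\beta^{s,q})$.

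To identify $g$ with $A(\cdot)u$ and to verify $u\in L^p_\mu((0,T);D(A))$, I would extract a subsequence along which $u_n(t)\to u(t)$ and $A(t)u_n(t)\to g(t)$ in $X_\beta^{s,q}$ for almost every $t\in(0,T)$. Each operator $A(t)\colon D(A)\to X_\beta^{s,q}$ is closed (being bounded out of the Banach space $D(A)$), so its graph is closed and we conclude $u(t)\in D(A)$ with $A(t)u(t)=g(t)$ for a.e.\ $t$, placing $u$ in ${_0\BE}_\mu((0,T);X_\beta^{s,q})$. Passing $n\to\infty$ in the distributional identity $(\partial_t+v\cdot\nabla_x)u_n - A(t)u_n = f_n$ (each term converges in $L^p_\mu((0,T);X_\beta^{s,q})$) shows that $u$ solves \eqref{eq:kinacp} with right-hand side $f$, completing existence.

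The main obstacle I anticipate is the identification step and, more subtly, the compatibility between the graph-norm description of $D(A)$ (relative to $A(0)$) and the $L^p_\mu$-control of the pointwise action $t\mapsto A(t)u(t)$ that the a priori estimate produces. The hypothesis $A\in L^1((0,T);\B(D(A);X_\beta^{s,q}))\cap\B(L^p_\mu(D(A));L^p_\mu(X_\beta^{s,q}))$ built into the definition of kinetic maximal $L^p_\mu$-regularity is exactly what ensures this compatibility, so the Cauchy-sequence limit really does land in ${_0\BE}_\mu((0,T);X_\beta^{s,q})$.
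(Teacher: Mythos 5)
Your overall strategy — approximate $f$ by $f_n$ in the dense set, use linearity plus the a priori bound to make $(u_n)$ Cauchy, pass to the limit, and obtain uniqueness from Theorem \ref{thm:uniqueweak} via the dissipativity hypothesis — is exactly the ``standard approximation argument'' that the paper invokes in its one-sentence proof, so in structure you match the intended argument. (Minor point: to apply the estimate to $u_n-u_m$ you implicitly need $f_n-f_m\in D$, i.e.\ $D$ a subspace, together with uniqueness so that the estimate, stated only for the constructed solution, transfers to $u_n-u_m$; in the applications $D=C_c^\infty$, so this is harmless.)

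The one step that does not hold as you justify it is the identification of the limit: the parenthetical claim that each $A(t)$ is closed ``being bounded out of the Banach space $D(A)$'' is false in general. Boundedness of $A(t)\colon D(A)\to X_\beta^{s,q}$ (with $D(A)$ carrying the graph norm of $A(0)$) does not make $A(t)$ a closed operator in $X_\beta^{s,q}$ with domain $D(A)$: take $A(t)=0$, or the inclusion $H^{2,q}\subset L^q$, to see that $u_n\to u$ in $X$ and $A(t)u_n\to g$ give no information about $u\in D(A)$. This is precisely the delicate point of the lemma, since the a priori estimate controls $\|A(\cdot)u\|_{p,\mu,X_\beta^{s,q}}$ but not, a priori, the $L^p_\mu((0,T);D(A))$-norm. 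Two standard repairs: in the autonomous case $A(t)=A(0)$ the operator $A(0)$ \emph{is} closed (its graph norm makes $D(A)$ complete), so your a.e.\ subsequence argument goes through; more generally, in the situations where the lemma is applied the estimate $\|u\|_{X_\beta^{s,q}}+\|A(t)u\|_{X_\beta^{s,q}}\gtrsim\|u\|_{D(A)}$ holds uniformly in $t$ (graph-norm equivalence, respectively uniform ellipticity), so the a priori bound makes $(u_n)$ Cauchy in $L^p_\mu((0,T);D(A))$; then $u\in L^p_\mu((0,T);D(A))$, and the assumed membership $A\in\B(L^p_\mu((0,T);D(A));L^p_\mu((0,T);X^{s,q}_\beta))$ gives $A(\cdot)u_n\to A(\cdot)u$, identifying $g=A(\cdot)u$ without any closedness claim. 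With that replacement your passage to the limit in the distributional identity, the conclusion $u(0)=0$ via the embedding into $C([0,T];W^{k,q}(\R^{2n}))$, and the uniqueness step are all fine.
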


\begin{proof}
	The claim follows using a standard approximation argument and the uniqueness proven in Theorem \ref{thm:uniqueweak}.
\end{proof}

\begin{theorem} \label{thm:kinmaxLpLp}
	Let $\beta \in (0,2]$ and $T \in (0,\infty)$, then, the operator $A(t) = A = -(-\Delta_v)^{\beta/2}$ satisfies the kinetic maximal $L^p(L^p)$-regularity property for all $p \in (1,\infty)$. Moreover for every $T_0 \in (0,\infty)$, the constant $C$ in the estimate
	\begin{equation*}
		\norm{u}_{{_0\BE}((0,T);L^p(\R^{2n}))} \le C\norm{\partial_t u+v \cdot \nabla_x u +(-\Delta_v)^{\beta/2} }_{p,}
	\end{equation*}
	can be chosen independently of $T \in (0,T_0]$.
\end{theorem}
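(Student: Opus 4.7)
The plan is to feed the known unweighted $L^p(L^p)$-estimate from \cite{folland_estimates_1974} (for $\beta=2$) and \cite{chen_lp-maximal_2018} (for $\beta\in(0,2)$) into the abstract criterion Lemma \ref{lem:criteriamaxkinreg}, and to derive uniformity in $T\in(0,T_0]$ from a short $\Gamma$-argument together with Corollary \ref{coro:hardykin} (here reduced to its $\mu=1$ instance).

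First I would fix a dense subset $D\subset L^p((0,T);L^q(\R^{2n}))$ consisting, e.g., of $f\in C_c^\infty((0,T)\times\R^{2n})$, extend $f$ by zero to $(0,\infty)\times\R^{2n}$, and define $u$ on $(0,\infty)\times\R^{2n}$ by convolution with the fundamental solution of $\partial_t+v\cdot\nabla_x+(-\Delta_v)^{\beta/2}$. By the cited results (which are formulated on all of $\R^{2n+1}$ via the underlying homogeneous group structure), $u$ satisfies
\begin{equation*}
  \norm{\partial_t u+v\cdot\nabla_x u}_{L^p((0,\infty);L^p)}+\norm{(-\Delta_v)^{\beta/2} u}_{L^p((0,\infty);L^p)}\le c\norm{f}_{L^p((0,\infty);L^p)},
\end{equation*}
with a constant $c$ depending only on $n,p,\beta$; restriction to $(0,T)$ preserves this bound with the same $c$.

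Next I would control $\norm{u}_{L^p((0,T);L^p)}$. Set $w=\Gamma u$; by Proposition \ref{prop:TWtoWW} one has $\partial_t w=\Gamma(\partial_t u+v\cdot\nabla_x u)$ in the sense of distributions, and $w(0)=0$ since $f$ is supported in $t>0$ and the fundamental solution is causal. Therefore, for a.e. $t\in(0,T)$,
\begin{equation*}
  \norm{w(t)}_{L^p(\R^{2n})}\le\int_0^t\norm{\Gamma(\partial_t u+v\cdot\nabla_x u)(s)}_{L^p(\R^{2n})}\,\dx s,
\end{equation*}
and by Proposition \ref{prop:GamLpisom} plus Minkowski/Hölder in $t$ this yields
\begin{equation*}
  \norm{u}_{L^p((0,T);L^p)}=\norm{w}_{L^p((0,T);L^p)}\le T\,\norm{\partial_t u+v\cdot\nabla_x u}_{L^p((0,T);L^p)}.
\end{equation*}
Combining the two estimates gives the full a priori bound
\begin{equation*}
  \norm{u}_{{_0\BE}((0,T);L^p)}\le C(T_0)\,\norm{f}_{L^p((0,T);L^p)}\quad\text{for all }T\in(0,T_0],
\end{equation*}
with $C(T_0)$ depending only on $n,p,\beta,T_0$. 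In particular the constant is uniform in $T\le T_0$, as claimed.

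Finally I would invoke Lemma \ref{lem:criteriamaxkinreg} with the dense set $D$ and the a priori estimate just obtained; the dissipativity hypothesis there reduces to $\Gamma(t)(-(-\Delta_v)^{\beta/2})\Gamma(t)^{-1}=-(-\Delta_v)^{\beta/2}$ being dissipative on $L^p(\R^{2n})$, which is standard (cf.\ Corollary \ref{cor:dissoflapl}). This delivers existence and uniqueness of a solution $u\in{_0\BE}((0,T);L^p(\R^{2n}))$ for every $f\in L^p((0,T);L^p(\R^{2n}))$, together with the stated estimate and the uniformity of $C$ in $T\in(0,T_0]$. The only delicate point is the transition from the Folland/Chen--Zhang estimate on the full group $\R^{2n+1}$ to the initial value formulation on $(0,T)$; the causal structure of the fundamental solution and the $\Gamma$-based Hardy argument above are precisely what is needed to bridge this gap without losing uniformity in $T$.
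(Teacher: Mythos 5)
Your proposal is correct and follows essentially the same route as the paper: it reduces to the known Folland--Stein / Chen--Zhang $L^p(L^p)$-estimates, bounds $\norm{u}_{p}$ by $CT\norm{f}_{p}$, and concludes via Lemma \ref{lem:criteriamaxkinreg} together with the dissipativity of $-(-\Delta_v)^{\beta/2}$ (Corollary \ref{cor:dissoflapl}). The only (harmless) deviation is that you obtain the zeroth-order bound through the $\Gamma$-transform and a Poincar\'e-type argument on $w=\Gamma u$ with $w(0)=0$, whereas the paper reads it off directly from the fundamental-solution representation; both yield the same factor $T$ and hence the uniformity of the constant for $T\in(0,T_0]$.
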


\begin{proof}
	The $L^p(L^p)$-regularity estimates for the operator $\Delta_v$ have been known for quite some time. The result can be proven for example using the singular integral theory on homogeneous groups as stated in \cite{folland_estimates_1974}. A proof of this result for a general class of Ornstein-Uhlenbeck operators can also be found in \cite{bramanti_global_2009}. The desired result in the case $\beta = 2$ is a special case of \cite[Theorem 3]{bramanti_global_2009}.
	
	The regularity estimates in the nonlocal case $\beta \in (0,2)$ and the existence of solutions for sufficiently nice $f$ have been proven recently in \cite{chen_lp-maximal_2018}.
	
	In particular, the regularity estimate, i.e. 
	\begin{equation*}
		\norm{(-\Delta_v)^\frac{\beta}{2}u}_{L^p((0,\infty);L^p(\R^{2n}))} \le C \norm{\partial_t u+v \cdot \nabla_x u +(-\Delta_v)^{\beta/2} }_{L^p((0,\infty);L^p(\R^{2n}))}
	\end{equation*}
	holds for all $\beta \in (0,2]$. 
	
	It remains to show that the solution $u$ is an element of $L^p((0,T);L^p(\R^{2n}))$. This follows directly from the representation of solutions in terms of the fundamental solution. However, we only get 
	\begin{equation*}
		\norm{u}_{p} \le C T \norm{\partial_t u+v \cdot \nabla_x u +(-\Delta_v)^{\beta/2} }_{p}.
	\end{equation*}
	Finally, by Corollary \ref{cor:dissoflapl} we may apply Lemma \ref{lem:criteriamaxkinreg} to obtain the claim.
\end{proof}

\begin{theorem} \label{thm:kinmaxweakL2}
	Let $\beta \in (0,2]$, $s \ge -1/2$ and $\mu \in (1/2,1]$. Then, $A = -(-\Delta_v)^\frac{\beta}{2}$ satisfies the kinetic maximal $L^2_\mu(X_\beta^{s,2})$-regularity property.
\end{theorem}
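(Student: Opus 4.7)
The plan is to exploit the Hilbert space structure of $L^2$ via Plancherel's theorem, thereby reducing the problem to a family of scalar ODEs parametrized by the Fourier variables. This extends the scheme developed in \cite{niebel_kinetic_nodate} for the special cases $s=0$ and $s=-1/2$ to the full range $s \ge -1/2$.

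First I would apply the Galilean change of variables $w = \Gamma u$, $g = \Gamma f$. By Proposition \ref{prop:TWtoWW} and Corollary \ref{cor:TmuT}, this reduces the Cauchy problem to
\begin{equation*}
\partial_t w + \Gamma(t)(-\Delta_v)^{\beta/2}\Gamma^{-1}(t) w = g, \quad w(0) = 0.
\end{equation*}
A direct computation in the Fourier variables $(k,\xi)$, using $\widehat{\Gamma(t)h}(k,\xi) = \hat h(k,\xi-tk)$, shows that $\Gamma(t)(-\Delta_v)^{\beta/2}\Gamma^{-1}(t)$ acts as the Fourier multiplier with symbol $\abs{\xi-tk}^\beta$. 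Consequently the transformed equation diagonalizes into the scalar ODE $\partial_t \hat w + \abs{\xi-tk}^\beta \hat w = \hat g$, with explicit solution
\begin{equation*}
\hat w(t,k,\xi) = \int_0^t \exp\Bigl(-\int_s^t \abs{\xi-\tau k}^\beta\, \dx\tau\Bigr) \hat g(s,k,\xi)\, \dx s.
\end{equation*}

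Next, using Plancherel and writing $m(k,\xi) = (1+\abs{k}^2)^{\beta/(2(\beta+1))} + (1+\abs{\xi}^2)^{\beta/2}$, the $X_\beta^{s,2}$-norm of $f=\Gamma^{-1}g$ equals $\bigl(\int m(k,\xi-tk)^{2s}\abs{\hat g(t,k,\xi)}^2\, \dx k\, \dx\xi\bigr)^{1/2}$. The central estimate
\begin{equation*}
\norm{(-\Delta_v)^{\beta/2} u}_{L^2_\mu((0,T);X_\beta^{s,2})} \lesssim \norm{f}_{L^2_\mu((0,T);X_\beta^{s,2})}
\end{equation*}
then becomes a weighted-in-time integral inequality at fixed $(k,\xi)$, with weight $t^{2-2\mu}$, applied to the convolution-type integral operator defined by the exponential kernel above. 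This can be carried out by a Young-type inequality on the half-line together with the elementary bound $\abs{\xi-tk}^\beta e^{-\int_s^t \abs{\xi-\tau k}^\beta\, \dx\tau} = -\partial_t e^{-\int_s^t \abs{\xi-\tau k}^\beta\, \dx\tau}$, which encodes the smoothing effect.

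The main obstacle will be the bookkeeping of the weight $m(k,\xi-tk)^{2s}$ along the time integration, since the shift $\xi\mapsto\xi-tk$ couples the spatial scales to the time variable. For $s\ge 0$ the weight is comparable to a sum of powers of $m$ and the estimate is direct; for $s\in[-1/2,0)$ one has to exploit that $m(k,\xi-tk)^{2s}\lesssim m(k,\xi-\tau k)^{2s}$ along the effective support of the exponential kernel, which is the heart of the weak-formulation argument in \cite{niebel_kinetic_nodate}. The temporal weight $\mu\in(1/2,1]$ is then absorbed using Corollary \ref{coro:hardykin} together with the isomorphism $\Phi_\mu$ from Proposition \ref{prop:Phimuprop}, reducing the weighted estimate to its unweighted counterpart. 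Having established existence of a solution from the explicit formula for Schwartz-class $g$, and the corresponding a priori estimate, I would conclude by invoking Lemma \ref{lem:criteriamaxkinreg}; uniqueness follows from the dissipativity of $-(-\Delta_v)^{\beta/2}$ in $L^2(\R^{2n})$ (Corollary \ref{cor:dissoflapl} for $s \ge 0$, and by duality for $s\in[-1/2,0)$).
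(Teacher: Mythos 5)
Your overall route is the right one, and in fact it is the route of the source the paper relies on: the paper does not prove Theorem \ref{thm:kinmaxweakL2} internally but simply cites \cite[Theorem 5.13]{niebel_kinetic_nodate}, whose argument is exactly the Galilean transform plus Plancherel reduction you describe. Your symbol computation ($\Gamma(t)(-\Delta_v)^{\beta/2}\Gamma^{-1}(t)$ is the multiplier $\abs{\xi-tk}^\beta$), the explicit exponential solution formula, the identification of the $X_\beta^{s,2}$-norm of $\Gamma^{-1}g$ with a weighted Plancherel integral, and the final appeal to Lemma \ref{lem:criteriamaxkinreg} together with Corollary \ref{cor:dissoflapl} are all correct (note that Corollary \ref{cor:dissoflapl} is stated for all $s\in\R$, so your duality detour for $s\in[-1/2,0)$ is unnecessary).

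There are, however, two genuine gaps in the write-up. First, the temporal weight is not "absorbed using Corollary \ref{coro:hardykin} together with the isomorphism $\Phi_\mu$": $\Phi_\mu$ is an isometry of the Lebesgue spaces, but conjugating the solution operator by it multiplies the kernel $\kappa(t,s)$ by $(t/s)^{1-\mu}$, and controlling the discrepancy $[(t/s)^{1-\mu}-1]\kappa(t,s)$ is precisely the content of Proposition \ref{prop:integralopest} and Theorem \ref{thm:1impliesmu}, which needs the decay $\norm{AT(t)}\lesssim t^{-1}$ (or, in your fixed-$(k,\xi)$ picture, a direct weighted Schur/Hardy estimate incorporating the factor $(t/s)^{1-\mu}$); Corollary \ref{coro:hardykin} by itself gives no such reduction. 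Second, the "Young-type inequality" does not apply as stated: with $a(\tau)=\abs{\xi-\tau k}^\beta$ the kernel $a(t)\exp(-\int_s^t a)$ is not a convolution kernel, and your identity $a(t)\exp(-\int_s^t a)=-\partial_t \exp(-\int_s^t a)$ only yields the bound $\int_s^T a(t)\exp(-\int_s^t a)\,\dx t\le 1$, i.e. one half of a Schur test. The substantive half, $\sup_t a(t)\int_0^t \exp(-\int_s^t a)\,\dx s\le C(\beta)$ uniformly in $(k,\xi)$ — which encodes the hypoelliptic averaging along the characteristic and, for $s<0$, the comparability of $m(k,\xi-tk)$ with $m(k,\xi-\tau k)$ on the effective support of the kernel — is exactly where the work lies and is asserted rather than proved. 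Both gaps are fixable within your framework, but as written the central weighted estimate is not established.
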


\begin{proof}
	\cite[Theorem 5.13]{niebel_kinetic_nodate}
\end{proof}

\begin{theorem} \label{thm:trace}
	For all $p,q \in [1,\infty)$, any $\mu \in (\frac{1}{p},1]$ and every $T  \in (0,\infty)$ the embedding  
	\begin{equation*}
		\BE_\mu((0,T);X_\beta^{s,q}) \hookrightarrow C([0,T]; X_{\gamma,\mu})
	\end{equation*}
	holds continuously. 
\end{theorem}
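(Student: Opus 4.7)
The plan is to prove the embedding by the standard two-step scheme for trace embeddings in maximal $L^p$-regularity theory -- a uniform bound on evaluations followed by continuity on a dense subclass -- adapted to the kinetic setting via the conjugations $\Gamma$ (Corollary~\ref{cor:TmuT}) and $\Phi_\mu$ (Proposition~\ref{prop:Phimuprop}).

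First I would show that for every $t_0 \in [0,T]$ the evaluation map $u \mapsto u(t_0)$ is bounded from $\BE_\mu(0,T)$ into $X_{\gamma,\mu}$, with a constant uniform in $t_0$. The case $t_0 = 0$ is simply the definition of the trace norm. For $t_0 \in (0,T]$ I consider the shifted function $w(\tau) := u(\tau + t_0)$ on $(0, T - t_0)$; since $\mu \le 1$ implies $\tau^{1-\mu} \le (\tau + t_0)^{1-\mu}$ and temporal translation commutes with the kinetic derivative $\partial_t + v\cdot \nabla_x$, one obtains $\|w\|_{\BE_\mu((0,T-t_0); X_\beta^{s,q})} \le \|u\|_{\BE_\mu((0,T); X_\beta^{s,q})}$. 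By definition, $w(0) = u(t_0)$ lies in $\tr\,\BE_\mu((0, T-t_0); X_\beta^{s,q})$. The standard fact that this trace space coincides with $X_{\gamma,\mu}$ with equivalent norms independently of the length of the interval is obtained via a bounded extension from $(0, T-t_0)$ to $(0, T)$: conjugating with $\Gamma$ we pass to $W^{1,p}_\mu((0,T-t_0); X_\beta^{s,q})$, where a classical Stein-type extension at the right endpoint (at which the weight is harmless) is available, and pulling back with $\Gamma^{-1}$ returns to $\BE_\mu$.

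Second, I prove continuity of $t \mapsto u(t)$ in $X_{\gamma,\mu}$ first on a dense subclass of $\BE_\mu(0,T)$ and then extend by an $\varepsilon/3$-argument based on the uniform bound above. A convenient dense subclass is $C^1([0,T]; X_\beta^{s,q} \cap D(A))$; density is verified by conjugating with $\Gamma$ and $\Phi_\mu$ to reduce to density of smooth functions in the unweighted Sobolev-Bochner space $W^{1,p}((0,T); X_\beta^{s,q}) \cap L^p((0,T); D(A))$, where standard mollification in time and approximation in $D(A)$ apply. For such a smooth representative $\tilde u$ and $t, t+h \in [0,T]$, the increment $\tilde u(t+h) - \tilde u(t)$ equals the value at $\tau = 0$ of the $\BE_\mu$-function $\tau \mapsto \tilde u(\tau + t + h) - \tilde u(\tau + t)$, suitably extended as in the first step. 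Its $\BE_\mu$-norm is $O(h)$ by the mean value theorem together with the integrability of $\tau^{p(1-\mu)}$ on $(0,T)$, guaranteed by $\mu > 1/p$. Hence $\tilde u \in C([0,T]; X_{\gamma,\mu})$, and the density/$\varepsilon/3$-argument transports continuity to all of $\BE_\mu(0,T)$.

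The main obstacle is the singular weight at $t = 0$, which prevents a direct use of the translation semigroup in $L^p_\mu$ and makes the extension step in the first part delicate. Both difficulties are neutralized by conjugating with $\Gamma$ and $\Phi_\mu$, which bring every assertion into the classical unweighted $W^{1,p}$-framework, where reflection, extension, and mollification work verbatim; pulling back via the bounded inverses $\Gamma^{-1}$ and $\Phi_\mu^{-1}$ then returns the results to $\BE_\mu(0,T)$, and continuity at the singular endpoint $t = 0$ follows automatically from the uniform trace bound and the density argument.
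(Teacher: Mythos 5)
Your skeleton (a uniform bound for the evaluations obtained by time translation, then continuity on a dense subclass plus an $\varepsilon/3$-argument) is the same Amann-type scheme the paper invokes via \cite[Theorem 5.9]{niebel_kinetic_nodate}, but the step that carries all of the kinetic difficulty is precisely the one you dispose of too quickly, and as written it fails. You claim that conjugating with $\Gamma$ turns $\BE_\mu((0,T-t_0);X_\beta^{s,q})$ into a classical weighted space on which a Stein-type extension at the right endpoint is available, and that pulling back with $\Gamma^{-1}$ "returns to $\BE_\mu$". This is not so. First, Proposition \ref{prop:TWtoWW} gives the isomorphism $\Gamma\colon \T^p_\mu \to W^{1,p}_\mu$ only with values in the isotropic spaces $W^{k,q}(\R^{2n})$; $\Gamma(t)$ is not bounded on $X_\beta^{s,q}$, since the shear $\xi \mapsto \xi-tk$ does not respect the anisotropic symbol (the $k$-order $\tfrac{\beta}{\beta+1}$ is strictly smaller than the $\xi$-order $\beta$). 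Second, and decisively, $\Gamma$ does not leave $L^p_\mu((0,T');D(A))$ invariant: for $D(A)=H^{\beta,q}_v(\R^{2n})$ one has $\nabla_v\,\Gamma(t)g=\Gamma(t)\bigl(t\nabla_x g+\nabla_v g\bigr)$, so $\Gamma(t)g\in D(A)$ would require $x$-regularity that elements of $D(A)$ do not possess. Hence $\Gamma\,\BE_\mu$ is a space with the time-dependent constraint $w(t)\in\Gamma(t)D(A)$, not a classical maximal-regularity space, and an extension of $\Gamma u$ pulled back by $\Gamma^{-1}$ has no reason to lie in $L^p_\mu((0,T);D(A))$. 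The obvious alternative, reflecting $u$ itself about $T-t_0$ with a cutoff, fails on the other component, because the kinetic derivative of the reflected function produces the uncontrolled term $2\,v\cdot\nabla_x u$. Thus the key assertion behind your first step -- that the trace spaces of $(0,T-t_0)$ and $(0,T)$ agree with norms equivalent uniformly in $t_0$ -- is not established, and with it the uniform evaluation bound.

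The same defect undercuts the second step. The class $C^1([0,T];X_\beta^{s,q}\cap D(A))$ is not even contained in $\BE_\mu(0,T)$: membership requires $v\cdot\nabla_x u\in L^p_\mu((0,T);X_\beta^{s,q})$, an unbounded-in-$v$ condition that regularity in $X_\beta^{s,q}\cap D(A)$ alone does not give (relatedly, $D(A)\not\subset X_{\gamma,\mu}$ in general, as the trace space carries $x$-regularity by Theorem \ref{thm:chartrace}). Moreover, your reduction of density to mollification in the unweighted setting again relies on the false claim that $\Gamma$ conjugates $\BE_\mu$ onto $W^{1,p}\cap L^p(D(A))$; mollification in time does not interact simply with the $t$-dependent domain constraint $\Gamma(t)D(A)$. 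To make the proof self-contained you would have to construct extensions and approximations compatible simultaneously with the kinetic derivative and with the $D(A)$-constraint; that is exactly the nontrivial kinetic content hidden behind the paper's citation, and it is missing here.
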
 

\begin{proof}
This follows similar as in the proof of \cite[Theorem 5.9]{niebel_kinetic_nodate} which is based on the proof of \cite[Proposition 1.4.2]{amann_linear_1995}.  
\end{proof}

An advantage of the introduction of temporal weights is that they allow to see nicely the instantaneous regularization. Indeed, let $0<\delta<T$, then 
\begin{equation*}
	\BE_\mu(0,T) \hookrightarrow \BE_1(\delta,T) \hookrightarrow C([\delta,T];X_{\gamma,1}).
\end{equation*}
This implies
\begin{equation*}
	\BE_\mu((0,T);X_\beta^{s,q}) \hookrightarrow C((0,T];X_{\gamma,1}).
\end{equation*}
In the classical theory the characterization of the nontrivial initial-value problem follows immediately from the maximal $L^p$-regularity property. A similar result holds true in the kinetic setting.

\begin{theorem} \label{thm:acpwiv}
		Let $\beta \in (0,2]$, $s \in \R$, $p,q \in (1,\infty)$, $\mu \in (\frac{1}{p},1]$ and $T \in (0,\infty)$. Suppose that the family of operators $(A(t))_{t \in [0,T]}$ satisfies the kinetic maximal $L^p_\mu(X_\beta^{s,q})$-regularity property on the interval $(0,T)$. Then, for all $f \in L^p_\mu((0,T);X_\beta^{s,q})$ and every $g \in X_{\gamma,\mu}$ there exists a unique solution $u \in   \BE_\mu((0,T);X_\beta^{s,q}) \cap C([0,T];X_{\gamma,\mu})$ of the Cauchy problem
		\begin{equation*}
		\begin{cases}
			\partial_t u + v\cdot \nabla_x u -A(t)u=  f, \quad t \in (0,T) \\
			u(0) = g.
		\end{cases}
	\end{equation*}
	In particular, the solution $u$ attains the initial value in the sense of convergence in the trace space $X_{\gamma,\mu}$.
\end{theorem}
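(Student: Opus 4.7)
The plan is to reduce the inhomogeneous initial-value problem to the homogeneous one ($u_0=0$) already covered by the kinetic maximal $L^p_\mu(X_\beta^{s,q})$-regularity property, via the standard lifting of the initial datum into the maximal regularity space.

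First, given $g \in X_{\gamma,\mu} = \tr(\BE_\mu((0,T);X_\beta^{s,q}))$, by definition of the trace space there exists a function $\tilde{u} \in \BE_\mu((0,T);X_\beta^{s,q})$ with $\tilde{u}(0)=g$ and
$$\|\tilde{u}\|_{\BE_\mu((0,T);X_\beta^{s,q})} \le 2 \|g\|_{X_{\gamma,\mu}}.$$
Define
$$\tilde{f} := f - \bigl(\partial_t \tilde{u} + v\cdot\nabla_x \tilde{u} - A(t)\tilde{u}\bigr).$$
Since $\tilde{u} \in \T_\mu^p((0,T);X_\beta^{s,q}) \cap L^p_\mu((0,T);D(A))$ and $A(\cdot)$ maps $L^p_\mu((0,T);D(A))$ into $L^p_\mu((0,T);X_\beta^{s,q})$ by assumption, we have $\tilde{f} \in L^p_\mu((0,T);X_\beta^{s,q})$. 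Applying the kinetic maximal $L^p_\mu(X_\beta^{s,q})$-regularity property to $\tilde{f}$, we obtain a unique
$$u_2 \in {_0\BE}_\mu((0,T);X_\beta^{s,q})$$
solving $\partial_t u_2 + v\cdot\nabla_x u_2 - A(t) u_2 = \tilde{f}$ with $u_2(0)=0$. Setting $u := \tilde{u} + u_2$ yields an element of $\BE_\mu((0,T);X_\beta^{s,q})$ solving the full Cauchy problem with $u(0)=g$. The continuity $u \in C([0,T];X_{\gamma,\mu})$ and the attainment of the initial datum in $X_{\gamma,\mu}$ follow at once from Theorem \ref{thm:trace}.

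For uniqueness, if $u^{(1)}, u^{(2)} \in \BE_\mu((0,T);X_\beta^{s,q})$ are two solutions, then $w := u^{(1)} - u^{(2)}$ lies in $\BE_\mu((0,T);X_\beta^{s,q})$ and satisfies the homogeneous kinetic equation with right-hand side $0$ and initial value $0$ in $X_{\gamma,\mu}$. In particular, $w \in {_0\BE}_\mu((0,T);X_\beta^{s,q})$, so uniqueness of the $u_0=0$ problem (part of the kinetic maximal $L^p_\mu(X_\beta^{s,q})$-regularity definition) gives $w=0$.

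The only subtle point is checking that the initial trace of $w$ vanishes in a sense strong enough to land in the functional class ${_0\BE}_\mu$, which asks $w(0)=0$ in $W^{k,q}(\R^{2n})$ via the embedding of Theorem \ref{thm:Xbsqemb}; this is automatic because $X_{\gamma,\mu} \hookrightarrow W^{k,q}(\R^{2n})$ by the same embedding applied at $t=0$. No further obstacle arises, as the existence, uniqueness, and continuity are each a direct consequence of the already-established properties of $\BE_\mu$ and the trace space.
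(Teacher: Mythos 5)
Your proposal is correct and follows essentially the same route as the paper: lift the datum $g$ to some $\tilde u \in \BE_\mu((0,T);X_\beta^{s,q})$ with $\tilde u(0)=g$, solve the zero-initial-value problem with the corrected right-hand side using the kinetic maximal $L^p_\mu(X_\beta^{s,q})$-regularity property, add the two, and obtain uniqueness and continuity into $X_{\gamma,\mu}$ from the same property and Theorem \ref{thm:trace}. Your extra verifications (that $\tilde f \in L^p_\mu((0,T);X_\beta^{s,q})$ via the standing assumption on $A(\cdot)$, and that the difference of two solutions genuinely lies in ${_0\BE}_\mu$) only make explicit what the paper leaves implicit.
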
 

\begin{proof}
	Let us prove uniqueness first. If $u,w$ were to be two solutions to the Cauchy problem with the same data, then $u-w$ is a solution to the Cauchy problem with vanishing data. Thus, $u-w$ must be zero by the kinetic maximal $L^p_\mu(X_\beta^{s,q})$-regularity property for $(A(t))_{t \in [0,T]}$. To prove existence, choose first $u \in \BE_\mu((0,T);X_\beta^{s,q})$ such that $ u(0) = g$, then there exists a function $w \in { _0\BE}_\mu((0,T);X_\beta^{s,q})$ such that $\partial_tw + v\cdot \nabla_x w = A(t)w -\partial_t u- v \cdot \nabla_x u+ A(t)u +f $. Consequently the function $u+w$ solves the Cauchy problem with inhomogeneity $f$ and initial datum $g$.
\end{proof}
 
 \begin{theorem}
	Let $\beta \in (0,2]$, $s \in \R$, $p,q \in (1,\infty)$, $\mu \in (\frac{1}{p},1]$ and $T \in (0,\infty)$. In the autonomous case $A(t) = A$ the kinetic maximal $L^p_\mu(X_\beta^{s,q})$-regularity property is independent of the choice of $T$. Under the assumption that $A$ satisfies kinetic maximal $L^p_\mu(X_\beta^{s,q})$-regularity on some interval $(0,T)$ with $T < \infty$, we have that $A$ admits kinetic maximal $L^p_\mu(X_\beta^{s,q})$-regularity on every interval $(0,T')$ with $T'<\infty$.  
\end{theorem}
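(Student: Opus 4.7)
The plan is to split the proof into the cases $T' > T$ (extension) and $T' < T$ (restriction), and in each case reduce to kinetic maximal $L^p_\mu(X_\beta^{s,q})$-regularity on the given interval $(0,T)$.

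\textit{Case $T' > T$:} I would argue by iteration using Theorem \ref{thm:acpwiv}. Partition $[0, T']$ into subintervals $I_i = [iT, \min((i+1)T, T')]$ for $i = 0, \ldots, N$ with $N = \lceil T'/T \rceil - 1$. On $I_0 = [0, T]$, apply the given kinetic maximal regularity directly to obtain $u_0 \in {_0\BE}_\mu((0,T);X_\beta^{s,q})$ solving the equation with data $f|_{(0,T)}$ and zero initial value. By the instantaneous regularization noted just before Theorem \ref{thm:acpwiv}, $u_0(T) \in X_{\gamma,1} \subset X_{\gamma,\mu}$. For $i \geq 1$, solve the Cauchy problem on $I_i$ with initial value $u_{i-1}(iT)$ and inhomogeneity $f|_{I_i}$; by autonomy this is equivalent after translation to a Cauchy problem on $[0,T]$, which is solvable and uniquely so by Theorem \ref{thm:acpwiv}. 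Glue the pieces: continuity at each $iT$ is built in from matched traces, and the glued function lies in $\BE_\mu((0, T'); X_\beta^{s,q})$ because the temporal weight $t^{p-p\mu}$ is bounded on each $I_i$ with $i \geq 1$. Uniqueness of the concatenated solution follows piece-by-piece from uniqueness on $(0,T)$ together with uniqueness in Theorem \ref{thm:acpwiv}.

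\textit{Case $T' < T$:} Existence is easy: extend $f$ by zero to $\tilde f \in L^p_\mu((0,T);X_\beta^{s,q})$, solve on $(0,T)$ by the given maximal regularity, and restrict the solution to $(0, T')$. The subtle point is uniqueness. Given a homogeneous solution $u \in {_0\BE}_\mu((0,T');X_\beta^{s,q})$, I would extend $u$ to a homogeneous solution $\tilde u \in {_0\BE}_\mu((0, T + T'); X_\beta^{s,q})$ on the longer interval, and then invoke Case $T' > T$ applied to the interval $(0, T + T')$ to conclude $\tilde u \equiv 0$, hence $u \equiv 0$ on $(0,T')$. The extension is constructed as $\tilde u = u$ on $[0, T']$ and $\tilde u(t) = v(t - T')$ on $[T', T + T']$, where $v \in \BE_\mu((0,T); X_\beta^{s,q})$ is the unique solution to the Cauchy problem on $(0,T)$ with zero inhomogeneity and initial value $u(T') \in X_{\gamma,1} \subset X_{\gamma,\mu}$ supplied by Theorem \ref{thm:acpwiv}. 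Continuity of $\tilde u$ at $T'$ (from $v(0) = u(T')$) guarantees no distributional jump there, and each piece solves the homogeneous equation by construction.

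The main obstacle is verifying that the extension $\tilde u$ really belongs to ${_0\BE}_\mu((0, T + T'); X_\beta^{s,q})$. On $[T', T + T']$ the weight $t^{p-p\mu}$ is comparable to a constant, so the issue reduces to unweighted $L^p$-control of $v$, of its kinetic derivative $\partial_t v + v \cdot \nabla_x v$, and of $Av$ on $[0,T]$. For $\mu = 1$ this is automatic since $\BE_\mu = \BE_1 = \BE$. For $\mu < 1$ one exploits the better initial value: since $u(T') \in X_{\gamma,1}$ (not merely $X_{\gamma,\mu}$), the solution $v$ actually sits in $\BE_1((0,T);X_\beta^{s,q})$. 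The required unweighted control can be obtained by combining the instantaneous regularization $\BE_\mu(0,T) \hookrightarrow \BE_1(\delta, T)$ noted right after Proposition \ref{prop:Phimuprop} (which yields unweighted bounds on $[\delta, T]$) with the continuity $\BE_\mu \hookrightarrow C([0,T];X_{\gamma,\mu})$ of Theorem \ref{thm:trace} and the inclusion $X_{\gamma,\mu} \hookrightarrow X_\beta^{s,q}$ (giving $L^\infty$, hence $L^p$, control near the origin).
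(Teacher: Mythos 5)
Your overall plan (zero-extension for $T'<T$, subdivide-and-glue via Theorem \ref{thm:acpwiv} for $T'>T$) is the same in spirit as the paper's, but as written it contains a step that fails when $\mu<1$, namely the weight bookkeeping in the gluing. For $i\ge 1$ the global weight $t^{p-p\mu}$ is comparable to a constant on $I_i$, so membership of the glued function in $\BE_\mu((0,T');X^{s,q}_\beta)$ there amounts to \emph{unweighted} $L^p$-integrability of $Au$ and of $\partial_t u+v\cdot\nabla_x u$; the piece you obtain from Theorem \ref{thm:acpwiv} after translation only lies in $\BE_\mu$ of the translated interval, i.e.\ it carries the degenerate weight restarted at the left endpoint $iT$, and since $L^p\subsetneq L^p_\mu$ near the origin for $\mu<1$ this is strictly weaker. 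Your justification "because the temporal weight is bounded on each $I_i$" runs the inclusion in the wrong direction. The same defect affects $\tilde u$ on $[T',T+T']$ in your restriction case, and the proposed repair does not close it: to produce $v\in\BE_1((0,T))$ from the datum $u(T')\in X_{\gamma,1}$ you would need \emph{unweighted} maximal $L^p$-regularity, which is not among the hypotheses (its equivalence with the $\mu$-weighted property is only Theorem \ref{thm:1impliesmu}, proved later under additional structural assumptions), while the fallback via $\BE_\mu\hookrightarrow C([0,T];X_{\gamma,\mu})$ only controls $\norm{v(t)}_{X^{s,q}_\beta}$ near $t=0$, not $\norm{Av(t)}$ or the kinetic derivative, which is what $\BE_1$-membership requires.

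There is also a circularity in your ordering: you derive restriction-case uniqueness from the extension case on $(0,T+T')$, but your extension case glues and proves uniqueness "piece-by-piece" by invoking Theorem \ref{thm:acpwiv} on subintervals of length $<T$ (the last one), and that theorem presupposes kinetic maximal regularity — in particular uniqueness — on intervals shorter than $T$, i.e.\ exactly the restriction case. The paper avoids this by settling $T'<T$ first (extend $f$ by zero, solve on $(0,T)$, restrict) and only then subdividing $(0,T')$ with $T'>T$ into pieces of length at most $T$. If you want to keep your structure, the uniqueness part can be repaired without shorter intervals by sliding a window of length exactly $T$: if $w\in{_0\BE}_\mu((0,S))$ solves the homogeneous problem, then $w=0$ on $(0,T)$ by hypothesis, and for $0<\tau<S-T$ the restriction of $w$ to $(\tau,\tau+T)$, translated back to $(0,T)$, vanishes near its left endpoint and does lie in ${_0\BE}_\mu((0,T))$ (here the global weight is bounded below, so the local weighted norm is controlled), whence uniqueness on length-$T$ windows propagates the vanishing; but the membership issue from the first paragraph still has to be addressed for the existence part.
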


\begin{proof}
	If $T'<T$ this follows by extending $f$ by $0$ on $(T',T)$. If $T>T'$ we divide the interval $(0,T')$ in subintervals of length smaller or equal than $T$ and apply the kinetic maximal $L^p_\mu(X_\beta^{s,q})$-regularity on every subinterval. To conclude we simply glue the solutions together using Theorem \ref{thm:acpwiv}. 
\end{proof}

\begin{remark} \begin{enumerate}
		\item Kinetic maximal $L^p_\mu(L^q)$-regularity for the Kolmogorov equation \linebreak makes only sense for $T<\infty$ due to the fact that the Kolmogorov semigroup is not exponentially stable.
		\item Kinetic maximal $L^p_\mu(X_\beta^{s,q})$-regularity of the family of operators $(A(t))_{t \in [0,T]}$ is equivalent to the isomorphism property of the bounded linear operator
		\begin{equation*}
			P \colon {_0\BE}_\mu((0,T);X_\beta^{s,q}) \to L^p_\mu((0,T);X_\beta^{s,q}), \; Pu= \partial_t u + v \cdot \nabla_x u-A(t)u.
		\end{equation*}
		\item The ansatz of defining kinetic maximal $L^p_\mu(L^q)$-regularity can also be applied to different settings, where $\partial_t$ is replaced by some first order differential operator and maximal $L^p$-regularity is not available. The only requirements for the embedding into the space $C(L^q)$ are that the  characteristics of the first order differential operator generate a strongly continuous group in $L^q(\R^{2n})$ and that the respective operator is an isomorphism of $L^p((0,T);L^q(\R^{2n}))$. This applies for example to the more general Kolmogorov type operators $\partial_t + \langle Bx , \nabla \rangle$ considered in \cite{bramanti_global_2013,metafune_lp-spectrum_2001}. To treat also weak solutions, one needs to understand the equivalent of kinetic regularization to define a suitable replacement for the space $X_\beta^{s,q}$.
	\end{enumerate}
\end{remark}

\begin{lemma} \label{lem:multmaxkinreg}
	Let $\beta \in (0,2]$, $s \in \R$, $p,q \in (1,\infty)$, $\mu \in (1/p,1]$ and $T \in (0,\infty)$. For $\gamma >0$ define $S_\gamma u (t,x,v) = u(\gamma t,\gamma x, v)$. Let $A(t) = A$ be an operator satisfying kinetic maximal $L^p_\mu(X_\beta^{s,q})$-regularity such that $S_\gamma (Au) = A(S_\gamma u)$ for all $u \in L^p_\mu((0,T);D(A))$. Then, for all $\alpha >0$ the operator $\alpha A$ satisfies the kinetic maximal $L^p_\mu(X_\beta^{s,q})$-regularity property.
\end{lemma}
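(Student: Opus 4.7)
The plan is to reduce kinetic maximal $L^p_\mu(X^{s,q}_\beta)$-regularity for $\alpha A$ on $(0,T)$ to the same property for $A$ itself on the rescaled interval $(0, \alpha T)$. The latter is available because, by the preceding theorem, in the autonomous case kinetic maximal $L^p_\mu(X^{s,q}_\beta)$-regularity is independent of the length of the finite time interval.

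The key identity is a direct chain rule computation. Given $u$ defined on $(0, \alpha T) \times \R^{2n}$, set $w := S_\alpha u$ on $(0, T) \times \R^{2n}$. Differentiating $w(t,x,v) = u(\alpha t, \alpha x, v)$ gives $\partial_t w + v \cdot \nabla_x w = \alpha\, S_\alpha(\partial_t u + v \cdot \nabla_x u)$, while the standing hypothesis $A S_\alpha = S_\alpha A$ gives $A w = S_\alpha A u$. Combining these,
\begin{equation*}
    \partial_t w + v \cdot \nabla_x w - \alpha A w \;=\; \alpha\, S_\alpha\bigl(\partial_t u + v \cdot \nabla_x u - A u\bigr),
\end{equation*}
which is exactly why the choice $\gamma = \alpha$ works: the scaling factor produced by $\partial_t + v \cdot \nabla_x$ is forced to match the constant in front of $A$.

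Given $g \in L^p_\mu((0,T); X^{s,q}_\beta)$, I would therefore define $f := \alpha^{-1} S_\alpha^{-1} g$ on $(0, \alpha T)$, invoke kinetic maximal $L^p_\mu(X^{s,q}_\beta)$-regularity of $A$ on that interval to produce the unique $u \in {_0\BE}_\mu((0, \alpha T); X^{s,q}_\beta)$ solving $\partial_t u + v \cdot \nabla_x u - A u = f$ with $u(0)=0$, and set $w := S_\alpha u$. The identity above shows that $w \in {_0\BE}_\mu((0, T); X^{s,q}_\beta)$ solves the $\alpha A$-equation with source $g$ and trace $w(0)=0$. Uniqueness for $\alpha A$ reverses the same argument: any further solution $\tilde w$ yields $\tilde u := S_\alpha^{-1} \tilde w$ solving the $A$-equation with source $f$, hence $\tilde u = u$ by the assumed uniqueness, and therefore $\tilde w = w$.

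The only step requiring a brief verification is that $S_\alpha$ and $S_\alpha^{-1}$ induce bounded isomorphisms of the relevant spaces, mapping $L^p_\mu(X^{s,q}_\beta)$ and ${_0\BE}_\mu(X^{s,q}_\beta)$ on $(0,\alpha T)$ to the corresponding spaces on $(0,T)$. Temporal boundedness is a routine change of variables $t \mapsto \alpha t$ producing an explicit power of $\alpha$; spatial boundedness reduces, on the Fourier side, to the continuity of the dilation $\phi(x,v) \mapsto \phi(\alpha x, v)$ on the Bessel-type scale $X^{s,q}_\beta$ for all $s \in \R$. Preservation of the trace at $t=0$, of membership in $\T^p_\mu$, and of $Aw \in L^p_\mu(X^{s,q}_\beta)$ then follows from the scaling identity together with $A S_\alpha = S_\alpha A$. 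Beyond this bookkeeping I do not expect any substantial obstacle; the lemma is essentially a soft consequence of the scaling structure.
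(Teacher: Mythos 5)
Your proposal is correct and follows essentially the same route as the paper: the paper's proof consists only of the remark that the claim ``follows directly by applying the coordinate transformation $S_\gamma$'', and your write-up makes that precise with the choice $\gamma=\alpha$, the identity $\partial_t S_\alpha u + v\cdot\nabla_x S_\alpha u - \alpha A S_\alpha u = \alpha S_\alpha(\partial_t u + v\cdot\nabla_x u - Au)$, and the $T$-independence of the autonomous property to work on $(0,\alpha T)$. The remaining bookkeeping you flag (boundedness of $S_\alpha$ on the weighted spaces and on the scale $X_\beta^{s,q}$) is exactly what the paper leaves implicit.
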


\begin{proof}
	This follows directly by applying the coordinate transformation $S_\gamma$. 
\end{proof}

\section{Kinetic Regularization in $L^p_\mu(L^q)$}
\label{sec:regtransfer}

When considering the operator $A = -(-\Delta_v)^\frac{\beta}{2}$, which acts only in the velocity variable, it is natural to ask why we also consider $x$-regularity in the definition of the base space $X_\beta^{s,q}$ and not only the $v$-regularity. This is due to the so-called kinetic regularization phenomenon. For $p = q$ and $\mu = 1$ it is well-known that regularity in the $v$-variable implies regularity in the $x$-variable for solutions of kinetic equations. This has been proven in the seminal article by Bouchut \cite{bouchut_hypoelliptic_2002}. We will show that this extends to $p \neq q$ and $\mu \in (\frac{1}{p},1]$. Employing the mapping $\Phi_\mu$ the weighted case follows immediately from the case $\mu = 1$. Moreover, the case $p \neq q$ can be proven by essentially the same method as in \cite{bouchut_hypoelliptic_2002} using only some refined arguments. For the readers convenience we will give the proof anyway. We note that we have collected several useful tools regarding Fourier multipliers in the appendix.

\begin{prop} \label{prop:advbouchut}
		Let $\beta \ge 0$, $p,q \in (1,\infty)$, $\mu \in (1/p,1]$, $T \in (0,\infty]$. If 
		\begin{equation*}
			u \in \T^{p}_\mu((0,T);L^q(\R^{2n})) \text{ with } D_v^\beta u \in L^p_\mu((0,T);L^q(\R^{2n})),
		\end{equation*}
		then $ D_x^\frac{\beta}{\beta+1} u \in L^p_\mu((0,T);L^q(\R^{2n}))$.
	\end{prop}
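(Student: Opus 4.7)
\textbf{Proof proposal for Proposition \ref{prop:advbouchut}.}

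The plan is to first reduce the statement to the unweighted case $\mu=1$ and then establish the key a priori estimate via Fourier analysis along the lines of Bouchut, using refined multiplier arguments to cope with $p \neq q$. For the weight reduction, I set $w(t) = t^{1-\mu}u(t) = \Phi_\mu u$. Since $D_v^\beta$ and $D_x^{\beta/(\beta+1)}$ act only in $(x,v)$, they commute with the multiplication by $t^{1-\mu}$, so the conclusions for $w$ transfer directly to $u$. The transport $T = \partial_t + v \cdot \nabla_x$ satisfies $Tw = \Phi_\mu(Tu) + (1-\mu)t^{-\mu}u$; the first summand lies in $L^p(L^q)$ by hypothesis, and the second can be absorbed after a small-time cutoff together with Corollary \ref{coro:hardykin} (or, equivalently, by first splitting off a near-initial piece to which Corollary \ref{coro:hardykin} applies, and handling the remainder on $(\delta,T)$ where the weight is bounded). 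Thus it suffices to prove the statement for $\mu=1$.

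For the unweighted case, I extend $u$ in time to $\R$ via a smooth cutoff so that the norms of $u$, $Tu$, and $D_v^\beta u$ are controlled by the original ones, and reduce to the core inequality
\begin{equation*}
    \|D_x^{\beta/(\beta+1)} u\|_{L^p(\R; L^q(\R^{2n}))} \lesssim \|u\|_{L^p(L^q)} + \|Tu\|_{L^p(L^q)} + \|D_v^\beta u\|_{L^p(L^q)}
\end{equation*}
for $u$ sufficiently smooth and decaying. Taking partial Fourier transform in $(t,x)$ turns $T$ into multiplication by $i(\tau + v \cdot k)$ and keeps $D_v^\beta$ a genuine Fourier multiplier in $v$. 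For each fixed $(\tau,k)$ with $k\neq0$, split velocity space into the resonant slab $S_{k,\tau}^\lambda = \{v : |\tau+v\cdot k| \le \lambda\}$ of thickness $\lambda/|k|$ and its complement. Outside $S_{k,\tau}^\lambda$ the symbol $\tau+v\cdot k$ is elliptic and gives $|\tilde u|\lesssim |\widetilde{Tu}|/\lambda$; inside, fractional regularity in the direction $k/|k|$ yields $|\tilde u|\lesssim (\lambda/|k|)^{\beta}\,|\widetilde{D_v^\beta u}|$ after interpolation against a tangential contribution. Balancing the two bounds by $\lambda \sim |k|^{\beta/(\beta+1)}$ produces exactly the weight $|k|^{\beta/(\beta+1)}$ on the left-hand side. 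Summing dyadically in $|k|$ via a Littlewood-Paley decomposition in $x$ collects the pieces into the desired estimate.

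The main obstacle is the mixed-norm $L^p(L^q)$ character of the estimate, since for $p \neq q$ the standard Calderón-Zygmund theory in the joint variables $(t,x,v)$ (which Bouchut's original proof exploits) is unavailable. To handle this I plan to apply an operator-valued Mihlin multiplier theorem in the time variable, with the spatial symbol acting on $L^q(\R^{2n})$ and its operator norm controlled by checking Mihlin-type conditions in the $(x,v)$ variables uniformly in $\tau$. The pertinent tools for this anisotropic multiplier analysis, collected in the appendix of the paper and referenced in Section \ref{sec:regtransfer}, allow the resonant/non-resonant decomposition above to be carried out rigorously in the mixed-norm framework. After this is set up, a density argument in $\mathcal S(\R\times\R^{2n})$ extends the a priori bound to all $u$ in the hypothesized class, completing the proof.
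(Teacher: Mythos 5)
Your reduction to $\mu=1$ and the overall Bouchut-style strategy are reasonable in spirit, but the core of your unweighted argument has a genuine gap. The resonant/non-resonant decomposition with the slab $S^\lambda_{k,\tau}=\{v:\abs{\tau+v\cdot k}\le\lambda\}$ and the two pointwise bounds is the classical $L^2$ averaging heuristic, and the "inside" step is not a valid inequality: after Fourier transform in $(t,x)$ only, $\tilde u(\tau,k,\cdot)$ is still a function of $v$, and there is no pointwise (or multiplier) bound of the form $\abs{\tilde u}\lesssim(\lambda/\abs{k})^{\beta}\,\abs{\widetilde{D_v^\beta u}}$; the gain on the resonant slab is an integrated-in-$v$ phenomenon (Cauchy--Schwarz over a slab of width $\lambda/\abs{k}$), which is exactly the mechanism that is only available for exponent $2$. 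Moreover all the symbols you introduce (the slab indicators and the elliptic factor $1/(\tau+v\cdot k)$) depend on $v$, so they are not Fourier multipliers on $L^q(\R^{2n})$, and your proposed operator-valued Mihlin theorem in $\tau$ does not repair this: the sharp cutoffs are not even differentiable in $(\tau,k)$, and no $R$-boundedness of the resulting operator families on $L^q(\R^{2n})$ is verified. So for $p,q\neq2$ your plan does not produce the estimate; this is precisely the difficulty the statement is about. (A smaller point: in the weight reduction you invoke Corollary \ref{coro:hardykin} to control $(1-\mu)t^{-\mu}u$, but that corollary requires $u(0)=0$, which a general $u\in\T^p_\mu$ does not satisfy, so that step is also incomplete as written.)

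For comparison, the paper's proof avoids any sharp resonant decomposition. It mollifies in $v$ at the $k$-dependent scale $\epsilon(k)=\epsilon_0\abs{k}^{-1/(\beta+1)}$ (operator $R$) and controls $u-Ru$ globally by $D_v^\beta u$ through the $(k,\xi)$-multiplier $\Upsilon$; for $Ru$ it uses the smooth resolvent factor $\lambda(k)/(\lambda(k)+i(\omega+\langle v,k\rangle))$ with $\lambda(k)=\lambda_0\abs{k}^{\beta/(\beta+1)}$, writing $Ru=Qu+Wf$ with $f=\partial_tu+v\cdot\nabla_xu$. The $v$-dependence of that factor is removed by conjugating with $\Gamma$, which turns it into the $v$-independent time--space multiplier $\lambda(k)/(\lambda(k)+i\omega)$, so the mixed-norm Mihlin Theorem \ref{thm:kinmkihlin} gives boundedness of $Q$ on $L^p(L^q)$ — but not smallness. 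Smallness is borrowed from Bouchut's $L^2(L^2)$ bound $\norm{Qu}_{2,2}\lesssim(\lambda_0/\epsilon_0)^{1/2}\norm{u}_{2,2}$ by interpolation, and finally, since $D_x^s$ commutes with $Q$, the term $\norm{D_x^su}_{p,q}$ is absorbed on the left after choosing $\lambda_0/\epsilon_0$ small (for smooth $u$ first, then by approximation). These three devices — the $\Gamma$-conjugation to obtain genuine multipliers, the interpolation against the $L^2$ case to create a small constant, and the absorption of $\norm{D_x^su}_{p,q}$ — are what replace your resonant-set computation, and they are the ingredients missing from your proposal.
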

	
	\begin{proof}
		  Let $p,q \in (1,\infty)$, $\beta \in (0,\infty)$, $\mu = 1$ and define $s:= \frac{\beta}{\beta+1}$. Without loss of generality we may assume $T = \infty$. Let $\varphi \in C_c^\infty(\R^{n})$ such that $\int_{\R^n} \varphi \dx v = 1$ and $\int_{\R^n} v^\alpha \varphi(v) \dx v = 0$ for all $\alpha \in \N_0^n$ with $1\le \abs{\alpha}<  \beta$. We define $\varphi_\epsilon(v) = \frac{1}{\epsilon^n} \varphi(v/\epsilon)$ and introduce the operator 
		\begin{equation*}
			[Ru](t,x,v) = \F^{-1}_{t,x}([\varphi_{\epsilon(k)} \ast \F_{t,x}(u)](v))(t,x),
		\end{equation*}
		where
		\begin{equation*}
			\epsilon(k) = \epsilon_0 \abs{k}^{-\frac{1}{\beta+1}}
		\end{equation*}
		for some constant $\epsilon_0 >0$. We have
		\begin{equation*}
			\F_{t,x,v}(Ru)(\omega,k,\xi) = \hat{\varphi}(\epsilon(k)\xi) \F_{t,x,v}(u)(\omega,k,\xi).  
		\end{equation*}
		As in \cite{bouchut_hypoelliptic_2002} the function 
		\begin{equation*}
			\Upsilon(k,\xi) = \frac{1-\hat{\varphi}(\epsilon(k)\xi)}{\epsilon(k)^\beta \abs{\xi}^\beta}
		\end{equation*}
		defines an $L^q(\R^{2n})$ bounded Fourier multiplier. This shows 
		\begin{equation*}
			\norm{\epsilon_0^{-\beta} D_x^s(u-Ru)}_{p,q} \le C_1\norm{D_v^\beta u}_{p,q},
		\end{equation*}
		where the constant $C_1$ does not depend on $\epsilon_0$. Writing $u = u-Ru + Ru$ it remains to estimate $\norm{D_x^sRu}_{p,q}$. As $u \in \T^{p}((0,\infty);L^q(\R^{2n}))$ the function
		\begin{equation*}
			 f:= \partial_t u + v\cdot \nabla_x u
		\end{equation*} 
		belongs to $L^p((0,\infty);L^q(\R^{2n}))$. Applying the Fourier transformation in $(t,x)$ gives
		\begin{equation*}
			i(\omega+ \langle v,k \rangle) \F_{t,x} u = \F_{t,x} f.
		\end{equation*}
		We introduce the interpolation parameter $\lambda(k) = \lambda_0 \abs{k}^s$ for some constant $\lambda_0 >0$. We write
		\begin{equation*}
			\F_{t,x} (u) = \frac{\lambda(k)}{\lambda(k)+i(\omega+\langle v, k\rangle)} \F_{t,x}( u) + \frac{1}{\lambda(k)+i(\omega+\langle v, k\rangle)} \F_{t,x}( f)
		\end{equation*}
		and deduce
		\begin{align*}
			\F_{t,x}([Ru](\cdot,v))(\omega,k) &=\int_{\R^n} \frac{\lambda(k)}{\lambda(k)+i(\omega+\langle \eta, k\rangle)} \F_{t,x}(u(\cdot,\eta))(\omega,k) \varphi_{\epsilon(k)}(v-\eta) \dx \eta \\
			&+ \int_{\R^n}  \frac{1}{\lambda(k)+i(\omega+\langle \eta, k\rangle)} \F_{t,x}(f(\cdot,\eta))(\omega,k) \varphi_{\epsilon(k)}(v-\eta) \dx \eta \\
			&=: \F_{t,x}([Qu](\cdot,v))(\omega,k) +\F_{t,x}([Wf](\cdot,v))(\omega,k).
		\end{align*}
		The operator $Q$ can be written as $Q = RM$, where
		\begin{equation*}
			\F_{t,x}([Mu](\cdot,v))(\omega,k) = \frac{\lambda(k)}{\lambda(k)+i(\omega+\langle v, k\rangle)} \F_{t,x}(u(\cdot,v))(\omega,k)
		\end{equation*}
		and 
		\begin{equation*}
			\F_{t,x}(\Gamma[Mu])(\omega,k,v) = \frac{\lambda(k)}{\lambda(k) + i\omega}  [\F_{t,x}(\Gamma u)](\omega,k,v).
		\end{equation*}
		It is well-known that $(\omega,k) \mapsto \frac{\lambda(k)}{\lambda(k) + i\omega}$ defines a Fourier multiplier with constant independent of $\lambda_0$. Indeed, by Theorem \ref{thm:kinmkihlin} it defines a bounded $L^p((0,\infty);L^q(\R^{2n}))$ operator and the isometry property of $\Gamma$ gives $
			\norm{M u}_{p,q} \le C_2 \norm{u}_{p,q} $, where $C_2$ is independent of $\lambda_0$. Moreover, as in \cite{bouchut_hypoelliptic_2002} the symbol defining $R$, i.e. $(k,\xi) \mapsto \hat{\varphi}(\epsilon(k)\xi) $
		is a Fourier multiplier and hence defines a bounded $L^q(\R^{2n})$ operator with constant $C_3$ independent of $\epsilon_0>0$. We deduce
		\begin{equation*}
			\norm{Qu}_{p,q} = \norm{RMu}_{p,q} \le C_3\norm{Mu}_{p,q} \le  C_2C_3\norm{u}_{p,q}
		\end{equation*}
		for all $1 < p,q <\infty$. In \cite[Proposition 1.1]{bouchut_hypoelliptic_2002} it is shown that
		\begin{equation*}
			\norm{Qu}_{2,2} \lesssim C_4 \left(\frac{\lambda_0}{\epsilon_0} \right)^\frac{1}{2} \norm{u}_{2,2}
		\end{equation*}
		for yet another constant $C_4>0$ independent of $\epsilon_0,\lambda_0$. Interpolation of the mixed norm Lebesgue spaces in the latter two inequalities gives
		\begin{equation*}
			\norm{Qu}_{p,q} \le C_5 \left(\frac{\lambda_0}{\epsilon_0} \right)^{C_6} \norm{u}_{p,q}.
		\end{equation*}
		for some positive constants $C_5,C_6$ independent of $\epsilon_0,\lambda_0$. As $D_x^s$ commutes with $Q$ we deduce
		\begin{equation*}
			\norm{D_x^sQu}_{p,q} \le C_5 \left(\frac{\lambda_0}{\epsilon_0} \right)^{C_6} \norm{D_x^s u}_{p,q}.
		\end{equation*}
	We note that $\lambda_0 D_x^s Wf = Qf$, hence 
	\begin{equation*}
			\norm{D_x^s Wf}_{p,q} \le C_5 \frac{1}{\lambda_0} \left(\frac{\lambda_0}{\epsilon_0} \right)^{C_6} \norm{f}_{p,q}.
	\end{equation*} 
	Recalling that $u = Ru+u-Ru = Qu+Qf+u-Ru$ we conclude
	\begin{equation*}
		\norm{D_x^s u}_{p,q} \le C_5 \left(\frac{\lambda_0}{\epsilon_0} \right)^{C_6} \norm{D_x^s u}_{p,q} + C_5 \frac{1}{\lambda_0} \left(\frac{\lambda_0}{\epsilon_0} \right)^{C_6} \norm{f}_{p,q} + C_1 \epsilon_0^\beta \norm{D_v^\beta u}_{p,q}
	\end{equation*}
	and choosing $\lambda_0/\epsilon_0$ small enough we deduce the claim. 
	\end{proof}

\begin{remark}
	The proof given here holds true for the $L^{\bar{q}}$-space with mixed norm respective to the integrability exponents $\bar{q} = (q_1,\dots,q_{2n})$, too. 
\end{remark}

\section{Kinetic maximal $L^p(L^q)$-regularity for the Kolmogorov equation}
\label{sec:LpLq}
When dealing with the Kolmogorov equation it proves to be very useful to employ the representation formulae of solutions in terms of the fundamental solution. Let us consider a function $u \colon [0,\infty) \times \R^{2n} \to \R$, $u = u(t,x,v)$, which is a solution of the (fractional) Kolmogorov equation
\begin{equation*}
	\begin{cases}
		\partial_t u +v \cdot \nabla_x u = -(-\Delta_v)^{\frac{\beta}{2}} u +f \\
		u(0) = g
	\end{cases}
\end{equation*}
where $\beta \in (0,2]$ and with sufficiently smooth data $f$ and $g$. The Fourier transform in $(x,v)$  with respective Fourier variables $(k,\xi)$ of a function $u$ will be denoted by $\hat{u}$. At least formally applying the Fourier transform gives
\begin{equation*}
	\begin{cases}
		\partial_t \hat{u} - k  \cdot \nabla_\xi \hat{u} = -\abs{\xi}^\beta \hat{u} + \hat{f} \\
		\hat{u}(0) = \hat{g}.
	\end{cases}
\end{equation*}
In the following we will use the function $e_\beta \colon [0,\infty) \times \R^{2n} \to \R$ defined as
\begin{equation*}
	e_\beta(t,k,\xi) = \exp( -\int_0^t \abs{\xi+(t-\tau) k}^\beta \dx \tau)
\end{equation*}
for $\beta \in (0,2]$. A solution to the Fourier transformed Kolmogorov equation can be explicitly given by means of the method of characteristics as
\begin{equation} \label{eq:fouriersol}
	\hat{u}(t,k,\xi) = \hat{g}(k,\xi + tk) e_\beta(t,k,\xi) + \int_0^t e_\beta(t-s,k,\xi) \hat{f}(s,k,\xi+(t-s)k) \dx s
\end{equation}
for sufficiently nice functions $f$ and $g$. For the operator $A = A_\beta = -(-\Delta_v)^{\beta/2}$ with domain $D(A) = H^{\beta,q}_v(\R^{2n})$ we recall that 
\begin{align*}
	\BE_\mu((0,T);L^q(\R^{2n})) &=\T^{p}_\mu((0,T);L^q(\R^{2n})) \cap L^p_\mu((0,T);H^{\beta,q}_v(\R^{2n})) \\
	 &= \T^{p}_\mu((0,T);L^q(\R^{2n})) \cap L^p_\mu((0,T);X_\beta^{1,q})
\end{align*}
by Proposition \ref{prop:advbouchut}.

\begin{theorem} \label{thm:kinmaxLpLq}
	For all $p,q \in (1,\infty)$ the fractional Laplacian in velocity $A = -(-\Delta_v)^\frac{\beta}{2}$ satisfies the kinetic maximal $L^p(L^q)$-regularity property. Again, for every $T_0 \in (0,\infty)$, the constant $C$ in the estimate
	\begin{equation*}
		\norm{u}_{{_0\BE}((0,T);L^q(\R^{2n}))} \le C\norm{\partial_t u+v \cdot \nabla_x u +(-\Delta_v)^{\beta/2} }_{p,L^q(\R^{2n})}
	\end{equation*}
	can be chosen independently of $T \in (0,T_0]$.
\end{theorem}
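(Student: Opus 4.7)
My strategy is to reduce the $L^p(L^q)$ regularity assertion to the diagonal $L^p(L^p)$ case, already provided by Theorem \ref{thm:kinmaxLpLp}, by means of vector-valued harmonic analysis. Uniqueness of solutions in ${_0\BE}((0,T);L^q(\R^{2n}))$ is immediate from Corollary \ref{cor:dissoflapl}; by Lemma \ref{lem:criteriamaxkinreg} it then suffices to prove, for $f$ in a dense subspace such as $C_c^\infty((0,T)\times \R^{2n})$, the a priori estimate
\begin{equation*}
\norm{u}_{p,L^q} + \norm{\partial_t u + v \cdot \nabla_x u}_{p,L^q} + \norm{(-\Delta_v)^{\beta/2} u}_{p,L^q} \lesssim \norm{f}_{p,L^q},
\end{equation*}
with implicit constant independent of $T \in (0,T_0]$. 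For such $f$, the representation \eqref{eq:fouriersol} with $g = 0$ produces a smooth, suitably decaying solution; Young's inequality in time gives $\norm{u}_{p,L^q} \leq CT \norm{f}_{p,L^q}$, and the bound on the kinetic derivative follows from the equation once $(-\Delta_v)^{\beta/2} u$ is controlled. Thus the task reduces to establishing $\norm{(-\Delta_v)^{\beta/2} u}_{L^p(L^q)} \lesssim \norm{f}_{L^p(L^q)}$.

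To attack this estimate, I pass through $\Gamma$, which is an $L^p(L^q)$-isometry by Proposition \ref{prop:GamLpisom}. Setting $w = \Gamma u$ and $g = \Gamma f$, the quantity $\Gamma(-\Delta_v)^{\beta/2} u$ coincides with $Mg$, where $M$ is the operator
\begin{equation*}
\widehat{Mg}(t,k,\xi) = \int_0^t \abs{\xi - tk}^\beta \exp\Bigl(-\int_s^t \abs{\xi - rk}^\beta \dx r\Bigr) \hat g(s,k,\xi) \dx s.
\end{equation*}
In the diagonal case $p=q$, boundedness of $M$ on $L^p((0,\infty); L^p(\R^{2n}))$ is precisely Theorem \ref{thm:kinmaxLpLp}, resting on \cite{folland_estimates_1974,chen_lp-maximal_2018}. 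For the mixed-norm extension I regard $M$ as a singular integral in the $t$-variable with values in $\B(L^q(\R^{2n}))$ and apply vector-valued Calderón--Zygmund extrapolation; the latter is available because $L^q(\R^{2n})$ is a UMD space for $q \in (1,\infty)$.

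The main obstacle is the verification of the requisite operator-valued Hörmander condition on the kernel of $M$, which is non-convolution in $t$ because the $\Gamma$-twist couples $t$ with the velocity shift. I plan to handle this by exploiting the natural dilation symmetry of the fractional Kolmogorov equation: the transformations $S_\gamma u(t,x,v) = u(\gamma t, \gamma x, v)$ from Lemma \ref{lem:multmaxkinreg} commute with $\Gamma$ and interact with $(-\Delta_v)^{\beta/2}$ only through the rescaling factor already accounted for in that lemma. This scale invariance reduces the kernel estimates to a single reference scale, where the required smoothness and decay bounds follow by direct differentiation of $e_\beta$. Once the Laplacian estimate has been proven on $(0,\infty)$, uniformity in $T \in (0,T_0]$ is obtained by extending $f$ by zero outside $(0,T)$ and restricting the solution back to $(0,T)$; combining the resulting bound with the $O(T)$ bound on $\norm{u}_{p,L^q}$ and invoking Lemma \ref{lem:criteriamaxkinreg} yields the desired kinetic maximal $L^p(L^q)$-regularity.
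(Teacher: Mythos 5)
Your overall route is the same as the paper's: reduce to an a priori estimate for smooth compactly supported $f$ via Lemma \ref{lem:criteriamaxkinreg} and Corollary \ref{cor:dissoflapl}, conjugate by the isometry $\Gamma$, view the resulting twisted operator as a singular integral in $t$ with values in $\B(L^q(\R^{2n}))$, and extrapolate from the diagonal case of Theorem \ref{thm:kinmaxLpLp} using Calder\'on--Zygmund theory for operator-valued kernels (the paper invokes Rubio de Francia--Ruiz--Torrea). One small correction to your framing: the UMD property of $L^q$ is not the operative hypothesis here; what the extrapolation needs is boundedness at a single exponent (supplied by the case $p=q$) together with H\"ormander-type conditions on the operator-valued kernel, in both variables.

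The genuine gap is in the step you yourself flag as the main obstacle and then dispatch too quickly. The dilation $S_\gamma u(t,x,v)=u(\gamma t,\gamma x,v)$ of Lemma \ref{lem:multmaxkinreg} rescales only $(t,x)$ and leaves $v$ untouched, so it rescales the kinetic derivative while commuting with $(-\Delta_v)^{\beta/2}$; it cannot normalize the two-parameter, non-convolution kernel $\kappa(t,s)$ to ``a single reference scale,'' and in any case the H\"ormander conditions demand a quantitative decay $\norm{\kappa(t,s)-\kappa(t,s')}_{\B(L^q)}\le C\abs{t-s}^{-2}\abs{s-s'}$-type bounds (integrable over $\{2\abs{s-s'}\le\abs{t-s}\}$), not merely estimates at a fixed scale. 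What is actually needed, and what the paper does, is to write $\kappa(t,s)-\kappa(t,s')=\int_{s'}^{s}\partial_r(\cdot)\,\dx r$ (and the analogous difference in the first variable, where one must differentiate the prefactor $\abs{\xi-sk}^\beta$ as well, not only $e_\beta$), and then to show that each resulting symbol is an $L^q(\R^{2n})$-Fourier multiplier with norm $\le C(t-r)^{-2}$. This uses the invariance of the multiplier norm under dilations in $(k,\xi)$ and under the shear $\xi\mapsto\xi-tk$ (Lemmas \ref{lem:dilkinmik} and \ref{lem:gammamult}), the homogeneity of the prefactor, and, crucially, the lower bound $\int_0^1\abs{\xi+wk}^\beta\,\dx w\ge c(\abs{\xi}^\beta+\abs{k}^\beta)$ of Lemma \ref{lem:itsez}, which allows one to dominate $e_\beta$ by $\exp(-c\tau\abs{\xi}^\beta/2-c\tau\abs{k}^\beta/2)$ and extract the factor $\tau^{-2}$ via Mikhlin. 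Without an ingredient of this kind --- in particular without exponential decay in the $k$-variable, which is exactly where the hypoelliptic structure enters --- ``direct differentiation of $e_\beta$'' does not produce the required operator-norm decay, so your plan is incomplete at the technical heart of the proof; the remaining parts (the $O(T)$ bound on $\norm{u}_{p,L^q}$, the bound on the kinetic derivative from the equation, and uniformity in $T\in(0,T_0]$) are fine and agree with the paper.
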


\begin{proof}
First we observe the following property of $\Gamma(t)$ in connection with the Fourier transformation. For suitable $g \colon \R^{2n} \to \R $ we have
\begin{equation*}
	\F(\Gamma(t)g)(k,\xi) = \F(g)(k,\xi-tk)
\end{equation*}
for all $t \in \R$. The transformation on the right-hand side defines an operator similar to $\Gamma$.

	It suffices to show that the operator 
	\begin{align*}
		&S \colon L^p((0,T);L^q(\R^{2n})) \to L^p((0,T);L^q(\R^{2n})), \\
		&[Sf](t) = \F^{-1}\left(\abs{\xi}^\beta \int_0^t e_\beta(t-s,k,\xi) \hat{f}(s,k,\xi+(t-s)k)  \dx s\right)
	\end{align*}
	is bounded or equivalently that the operator
	\begin{align*}
		&\tilde{S} \colon L^p((0,T);L^q(\R^{2n})) \to L^p((0,T);L^q(\R^{2n})), \\ 
		&[\tilde{S}f](t) = \Gamma S f =  \int_0^t \F^{-1}\left(\abs{\xi-tk}^\beta  e_\beta(t-s,k,\xi-tk) \hat{f}(s,k,\xi) \right) \dx s
	\end{align*}
	is bounded. This is due to the isometry property of $\Gamma$ in $L^q(\R^{2n})$. 
	
	We introduce the Fourier multiplication operators $k(t,s) \colon L^q(\R^{2n}) \to L^q(\R^{2n})$ defined as
	\begin{equation*}
		\kappa(t,s)g = \F^{-1}(\abs{\xi-tk}^\beta  e_\beta(t-s,k,\xi-tk) \hat{g}(k,\xi)) \mathds{1}_{0 \le s \le t \le T},
	\end{equation*}
	then $\tilde{S}f = \int_0^Tk(t,s)f(s)\dx s$. According to Theorem \ref{thm:kinmaxLpLp} the operator $\tilde{S}$ is bounded for $p = q$. In spirit of \cite[Theorem 2]{rubio_de_francia_calderon-zygmund_1986} it suffices to show that
	\begin{equation} \label{eq:hoer1}
		\sup_{s,s' \in (0,T)} \int_{2\abs{s-s'}\le \abs{t-s}} \norm{\kappa(t,s)- \kappa(t,s')}_{\B(L^q(\R^{2n}))} \dx t < \infty
	\end{equation}
	and
	\begin{equation} \label{eq:hoer2}
		\sup_{s,s' \in (0,T)} \int_{2\abs{s-s'}\le \abs{t-s}} \norm{\kappa(s,t)- \kappa(s',t)}_{\B(L^q(\R^{2n}))} \dx t < \infty.
	\end{equation}
	Actually the results in \cite{rubio_de_francia_calderon-zygmund_1986} are formulated for operators on vector-valued $L^p(\R)$ spaces. However, they continue to hold in the case where $\R$ is replaced by $(0,T)$ as stated in \cite[p. 15]{rubio_de_francia_calderon-zygmund_1986}.
	
	We first prove that the condition in equation \eqref{eq:hoer1} is satisfied. 
	Let $s,s' \in (0,T)$ and $g$ a smooth function, then
	\begin{align*}
		&\F \left(\kappa(t,s)\hat{g}-\kappa(t,s')\hat{g} \right)(k,\xi) \\
		&= \abs{\xi-tk}^\beta \left[ e_\beta(t-s,k,\xi-tk)-e_\beta(t-s',k,\xi-tk) \right]\hat{g}(k,\xi) \\
		&= \abs{\xi-tk}^\beta \int_{s'}^s \partial_r e_\beta(t-r,k,\xi-tk) \hat{g}(k,\xi) \dx r  \\
		&= \int_{s'}^s \left[ \abs{\xi-tk}^{2\beta}-\beta  \abs{\xi-tk}^\beta \int_0^{t-r} \abs{\xi-(w+r)k}^{\beta-2} \langle \xi-(w+r)k,k \rangle \dx w \right] \\\
		& \quad \quad \quad e_\beta(t-r,k,\xi-tk) \hat{g}(k,\xi) \dx r \\
		&=: \int_{s'}^s m_1(t,r,k,\xi) \hat{g}(k,\xi) \dx r
	\end{align*}
	and
	\begin{align*}
		&\F(\kappa(s,t)\hat{g}-\kappa(s',t)\hat{g})(k,\xi) \\
		&=  \left[\abs{\xi-sk}^\beta e_\beta(s-t,k,\xi-sk)-\abs{\xi-s'k}^\beta e_\beta(s'-t,k,\xi-s'k) \right]\hat{g}(k,\xi) \\
		&=  \int_{s'}^s \partial_r \left( \abs{\xi-rk}^\beta  e_\beta(r-t,k,\xi-rk) \right) \hat{g}(k,\xi) \dx r  \\
		&=  \int_{s'}^s \left[-\abs{\xi-rk}^{\beta-2} \langle \xi-rk,k \rangle + \abs{\xi-rk}^{2\beta} \right] e_\beta(r-t,k,\xi-rk) \hat{g}(k,\xi) \dx r \\
		&=: \int_{s'}^s \left[ m_2(t,r,k,\xi) +m_3(t,r,k,\xi) \right] \hat{g}(k,\xi)  \dx r.
	\end{align*}
	It suffices to show that each of the above two integrands defines a bounded Fourier multiplier with constant $C(t-r)^{-2}$. It is then an easy calculation to show that the conditions in \cite[Theorem 2]{benedek_convolution_1962} are satisfied, see for example \cite[Lemma 11]{haak_maxregnonauto_2015}.

	Let us consider the multiplier $m_1$ first. By the considerations in Lemma \ref{lem:dilkinmik} and Lemma \ref{lem:gammamult} it suffices to study the following multiplier where we have substituted $\tau = t-r$, $\xi = \xi-tk$ and $k = \tau k$
	\begin{equation*}
		\left[ \abs{\xi}^{2\beta}-\beta \abs{\xi}^\beta \int_0^1 \abs{\xi+wk}^{\beta-2} \langle \xi+wk,k \rangle \dx w \right] \exp(-\tau \int_0^1 \abs{\xi+wk}^\beta \dx w),
	\end{equation*}	
	since
	\begin{align*}
		e_\beta(\tau,\frac{k}{\tau},\xi) &= \exp( -\int_0^\tau \abs{\xi+(\tau-r) \frac{1}{\tau} k}^\beta \dx r) = \exp( -\int_0^\tau \abs{\xi+w \frac{1}{\tau} k}^\beta \dx w)\\
		&=\exp(-\tau \int_0^1 \abs{\xi+wk}^\beta \dx w).
	\end{align*}
	Moreover, the prefactor of the exponential function is $2\beta$-homogeneous and hence it suffices to consider the multiplier
	\begin{equation*}
		\left[ \abs{\xi}^{2\beta}+ \abs{k}^{2\beta}\right] \exp(-\tau \int_1^2 \abs{\xi+wk}^\beta \dx w).
	\end{equation*}	
	Using Lemma \ref{lem:itsez} we can further reduce to problem and see that it suffices to study the classical Fourier multiplier
	\begin{equation*}
		\left[ \abs{\xi}^{2\beta}+ \abs{k}^{2\beta}\right] \exp(-\frac{c}{2}\tau \abs{\xi}^\beta-\frac{c}{2}\tau \abs{k}^\beta),
	\end{equation*}
	which is a bounded $L^q$-multiplier with constant $C(\beta,n)\tau^{-2}$. Combining the latter observations together with the fact that product of Fourier multipliers are Fourier multipliers we deduce the desired estimate for $m_1$. For the boundedness of $m_3$ one can argue exactly the same. For $m_2$ we note that the substitutions $\tau=r-t$, $\xi = \xi - rk$ and $k = kt$ lead to the multiplier
	\begin{equation*}
		\tau^{-1}\abs{\xi}^\frac{\beta}{2}\langle \xi,k \rangle \exp(-\tau \int_0^1 \abs{\xi+wk}^\beta \dx w),
	\end{equation*}
	which is by the same argument as above can be bounded by $C(n,\beta)\tau^{-2}$. We have shown that the operator $\tilde{S}$ satisfies the condition in Lemma \ref{lem:criteriamaxkinreg} and hence, we conclude that it is bounded. Finally, for all $f \in L^p((0,T);L^q(\R^{2n}))$ the solution $u$ of the Kolmogorov equation satisfies $u \in L^p((0,T);H_v^{\beta,q}(\R^{2n}))$. Moreover, as the Kolmogorov semigroup is contractive it follows immediately that $u \in L^p((0,T);L^q(\R^{2n}))$ as $T< \infty$ with the same estimate as in the proof of Theorem \ref{thm:kinmaxLpLp}, which shows that $u \in \BE((0,T);L^q(\R^{2n}))$.
\end{proof}

\begin{remark}
	It is classical that maximal $L^p$-regularity for autonomous operators is independent of $p$. Here we extend this principle to the kinetic case but only for the special case of the fractional Laplacian. Moreover, we make use of the fundamental solution. We have used a technique which was also applied in \cite{
	haak_maxregnonauto_2015,hieber_pseudo-differential_2000} to show the same result for maximal $L^p$-regularity in the setting of nonautonomous operators. 
\end{remark}

\begin{theorem}
	For all $p \in (1,\infty)$ and any $s \ge -1/2$ the fractional Laplacian $A = -(-\Delta_v)^\frac{\beta}{2}$ satisfies the kinetic maximal $L^p(X_\beta^{s,2})$-regularity property. 
\end{theorem}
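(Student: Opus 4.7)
The plan is to lift the $L^2$-based result of Theorem~\ref{thm:kinmaxweakL2} to general $p \in (1,\infty)$ by combining it with the $L^p(L^2)$-regularity from Theorem~\ref{thm:kinmaxLpLq} (case $q=2$) via complex interpolation of vector-valued Bochner spaces, and then closing the remaining range by duality. The starting observation is that $X_\beta^{s,2}$ is, via the Fourier transform, the weighted $L^2$-space on $\R^{2n}$ with weight $M_s(k,\xi)^2$, where $M_s(k,\xi) = ((1+|k|^2)^{\beta/(2(\beta+1))}+(1+|\xi|^2)^{\beta/2})^s$. Hence $(X_\beta^{s,2})_{s \in \R}$ is a complex interpolation scale with $[L^2(\R^{2n}), X_\beta^{s_0,2}]_\theta = X_\beta^{\theta s_0,2}$, and complex interpolation of Bochner spaces yields
\[
[L^{p_0}((0,T); L^2), L^2((0,T); X_\beta^{s_0,2})]_\theta = L^{p_\theta}((0,T); X_\beta^{\theta s_0,2}), \qquad \frac{1}{p_\theta} = \frac{1-\theta}{p_0} + \frac{\theta}{2}.
\]

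First I would apply this to the bounded linear operators representing the solution map, namely $f \mapsto u$, $f \mapsto (-\Delta_v)^{\beta/2} u$, $f \mapsto (\partial_t + v \cdot \nabla_x) u$, which are bounded on the two endpoints $L^{p_0}((0,T); L^2)$ and $L^2((0,T); X_\beta^{s_0,2})$ by Theorem~\ref{thm:kinmaxLpLq} and Theorem~\ref{thm:kinmaxweakL2} respectively. Given a target pair $(p, s)$ with $p \in (1,\infty)$ and $s \ge 0$, one can always choose $p_0 \in (1,\infty)$ on the appropriate side of $2$, $\theta \in (0,1)$ with $p_\theta = p$, and $s_0 = s/\theta \ge 0$, which covers the full range $p \in (1,\infty)$, $s \ge 0$.

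For $s \in [-1/2, 0)$ I would close the argument by duality. The formal $L^2$-adjoint of the fractional Kolmogorov operator $\partial_t + v \cdot \nabla_x + (-\Delta_v)^{\beta/2}$ is $-\partial_t - v \cdot \nabla_x + (-\Delta_v)^{\beta/2}$, and the change of variables $(t,x,v) \mapsto (T-t, -x, v)$ reverses the sign of both $\partial_t$ and $v \cdot \nabla_x$, bringing the adjoint problem back to a forward fractional Kolmogorov equation. Combined with Plancherel's identification $(X_\beta^{s,2})^* = X_\beta^{-s,2}$ via the $L^2$-pairing, this shows that, up to unitary equivalence, the adjoint of the solution operator on $L^p((0,T); X_\beta^{s,2})$ is the solution operator on $L^{p'}((0,T); X_\beta^{-s,2})$. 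Hence kinetic maximal $L^p(X_\beta^{s,2})$-regularity for $s \ge 0$ and all $p \in (1,\infty)$ transfers to kinetic maximal $L^{p'}(X_\beta^{-s,2})$-regularity for all $p' \in (1,\infty)$ and $-s \in (-\infty, 0]$, which in particular covers $-s \in [-1/2, 0)$ and completes the full range stated in the theorem.

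The main obstacle is the bookkeeping needed to guarantee that the complex interpolation and the duality are compatible with the full functional-analytic content of kinetic maximal regularity: one must verify that both the data spaces $L^p((0,T); X_\beta^{s,2})$ and the solution spaces ${_0\BE}((0,T); X_\beta^{s,2})$ interpolate correctly, so that the function produced by interpolation actually lies in the kinetic maximal regularity space (with the correct trace at $t=0$ and the correct kinetic regularity), rather than merely in an interpolation space of data. This can be handled by transferring the classical arguments of~\cite{pruss_maximal_2004} to the kinetic setting with the help of the intertwining operator $\Gamma$ from Proposition~\ref{prop:TWtoWW}.
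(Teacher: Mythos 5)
Your interpolation step covers only $s\ge 0$, and that range is not the essential content of the theorem (it is already contained in the $L^p(L^q)$-result and its higher-regularity consequences); the new case is $s\in[-1/2,0)$, and there your duality argument has a genuine gap. Kinetic maximal $L^p(X_\beta^{s,2})$-regularity amounts to boundedness of $Rf(t)=(-\Delta_v)^{\beta/2}\int_0^t T_\beta(t-r)f(r)\,\dx r$ on $L^p((0,T);X_\beta^{s,2})$. Its adjoint with respect to the $L^2((0,T)\times\R^{2n})$-pairing is $R^*g(r)=\int_r^T T_\beta(t-r)^*(-\Delta_v)^{\beta/2}g(t)\,\dx t$: the fractional Laplacian now sits on the \emph{other} side of the (adjoint) semigroup, and it does not commute with $T_\beta$, because the generator contains $v\cdot\nabla_x$ and $[\Delta_v,v\cdot\nabla_x]=2\nabla_x\cdot\nabla_v\neq 0$. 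The classical duality argument for maximal regularity works because the operator being measured is the generator itself, which commutes with its own semigroup; here it is only part of the generator, so after your time reversal and $x\mapsto -x$ the adjoint is \emph{not} unitarily equivalent to the solution operator of the $-s$ problem. What duality actually gives is: for the forward problem with source $(-\Delta_v)^{\beta/2}h$, the solution is bounded in $L^{p}(X_\beta^{s,2})$ by $\norm{h}_{L^p(X_\beta^{s,2})}$ — a negative-order estimate of exactly the kind you are trying to prove, not kinetic maximal regularity at level $-s$. A telling warning sign is that your argument, if it were correct, would yield all $s\le 0$ (you state $-s\in(-\infty,0]$) without ever using the negative-order $p=2$ input, i.e.\ a strictly stronger theorem than the one claimed; the restriction $s\ge -1/2$ enters precisely through the $L^2$-theory of \cite{niebel_kinetic_nodate} that must be extrapolated in $p$.

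For comparison, the paper treats all $s\ge -1/2$ at once: conjugating with $\Gamma$ and with the Fourier weight defining $X_\beta^{s,2}$, the statement reduces to boundedness on $L^p((0,T);L^2(\R^{2n}))$ of an operator with operator-valued kernel $\kappa(t,r)$ whose multiplier contains the ratio of the weights evaluated along the characteristics at times $t$ and $r$; the case $p=2$ is Theorem \ref{thm:kinmaxweakL2}, and the extension to all $p\in(1,\infty)$ is the vector-valued Calder\'on--Zygmund extrapolation already used in the proof of Theorem \ref{thm:kinmaxLpLq}, simplified because by Plancherel only $L^\infty$-bounds on the multipliers are needed. Your complex-interpolation argument for $s\ge 0$ (endpoints $L^{p_0}(L^2)$ from Theorem \ref{thm:kinmaxLpLq} and $L^2(X_\beta^{s_0,2})$ from Theorem \ref{thm:kinmaxweakL2}, closed via density and the uniqueness criterion of Lemma \ref{lem:criteriamaxkinreg}) is a legitimate alternative for that sub-range, but to obtain $s\in[-1/2,0)$ you would have to replace the duality step by an argument that genuinely controls the weight ratio along the kinetic flow, e.g.\ the kernel extrapolation above.
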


\begin{proof}
	Again, it suffices to show that the operator 
	\begin{align*}
		&{S} \colon L^p((0,T);X_\beta^{s,2}) \to L^p((0,T);X_\beta^{s,2}), \\
		&[{S}f](t,k,\xi) = \F^{-1}\left(\abs{\xi}^\beta \int_0^t \hat{f}(r,k,\xi+(t-r)k) e_\beta(t-r,k,\xi) \dx r\right)
	\end{align*}
	is bounded or equivalently that the operator
	\begin{align*}
		&\tilde{S} \colon L^p((0,T);L^2(\R^{2n})) \to L^p((0,T);L^2(\R^{2n})), \\ 
		&[\tilde{S}f](t) = \int_0^t \F^{-1}\left(\abs{\xi-tk}^\beta  e_\beta(t-r,k,\xi-tk) \frac{(1+\abs{k}^\frac{2\beta}{\beta+1}+\abs{\xi-tk}^{2\beta})^\frac{s}{2}}{(1+\abs{k}^\frac{2\beta}{\beta+1}+\abs{\xi-rk}^{2\beta})^\frac{s}{2}} \hat{f}(r,k,\xi)\right) \dx r \\
		&= \int_0^T \kappa(t,r)f(r) \dx r,
	\end{align*}
	is bounded. As the boundedness of $S$ is known for $p = 2$ by Theorem \ref{thm:kinmaxweakL2} the present theorem can be proven using the exact same technique as in the proof of Theorem \ref{thm:kinmaxLpLq}. However, due to the theorem of Plancherel one only needs to show the $L^\infty$ bound on the multipliers, which simplifies the proof. We omit the details. 
\end{proof}

\section{Kinetic maximal $L^p(X_\beta^{s,q})$-regularity with temporal weights}
\label{sec:kinmaxlpmulq}
An important result in the theory of maximal $L^p$-regularity with temporal weights is that it is equivalent to unweighted maximal $L^p$-regularity. Inspired by this result and the proof given in \cite{pruss_maximal_2004} we prove a generalization of this theorem to the case of kinetic maximal $L^p_\mu(X_\beta^{s,q})$-regularity. To use the ideas of the proof given in \cite{pruss_maximal_2004} we need a suitable representation of solutions. Such a representation can be given for example in terms of the fundamental solution. We note that as in the classical case we can only treat autonomous case $A(t) = A$. Let us first collect some useful tools. 

\begin{coro} \label{cor:EmuE}
	Let $\beta \in (0,2]$, $s \in \R$, $p,q \in (1,\infty)$, $\mu \in (1/p,1]$ and $T \in (0,\infty)$. The mapping $\Phi_\mu \colon \BE_\mu((0,T);X_\beta^{s,q}) \to \BE((0,T);X_\beta^{s,q}) $ is an isomorphism.
\end{coro}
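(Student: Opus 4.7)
The plan is to verify termwise that $\Phi_\mu$ maps the defining components of the $\BE_\mu$-norm into those of the $\BE$-norm, invoking the structural results already established. Since Proposition~\ref{prop:Phimuprop}(i) holds with an arbitrary Banach space as the target, $\Phi_\mu$ restricts to isometric isomorphisms
\begin{equation*}
L^p_\mu((0,T);X_\beta^{s,q}) \to L^p((0,T);X_\beta^{s,q}) \quad \text{and} \quad L^p_\mu((0,T);D(A)) \to L^p((0,T);D(A)),
\end{equation*}
covering the $L^p_\mu$-summand of $\T^p_\mu$ and the $L^p_\mu(D(A))$-component of $\BE_\mu$. The only nontrivial point is thus the preservation of the kinetic transport regularity.

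For this, I would exploit the Leibniz identity
\begin{equation*}
(\partial_t + v\cdot\nabla_x)\bigl(t^{1-\mu}u\bigr) = t^{1-\mu}(\partial_t + v\cdot\nabla_x)u + (1-\mu)\,t^{-\mu}u.
\end{equation*}
The first summand lies in $L^p((0,T);X_\beta^{s,q})$ by the isometry just mentioned. The second reduces the claim to the Hardy-type bound $t^{-\mu}u \in L^p((0,T);X_\beta^{s,q})$, which is the natural $X_\beta^{s,q}$-analogue of Corollary~\ref{coro:hardykin}. This analogue should be proved by the same two-step strategy: use $\Phi_\mu$ to reduce to the unweighted case $\mu=1$, and then use a $\Gamma$-conjugation to bring matters down to the ordinary scalar Hardy inequality for $L^p((0,T);Y)$ in a suitable Banach space $Y$. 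The inverse direction, from $\BE$ back to $\BE_\mu$ via $\Phi_\mu^{-1}v = t^{\mu-1}v$, is entirely symmetric thanks to the isometry of $\Phi_\mu^{-1}$ on the Lebesgue scale and an analogous Leibniz computation.

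The main obstacle is precisely the Hardy step in $X_\beta^{s,q}$: unlike $W^{k,q}(\R^{2n})$, the anisotropic space $X_\beta^{s,q}$ is not invariant under $\Gamma$, since the defining Fourier multiplier mixes the $k$- and $\xi$-scales that are shifted by $\Gamma(t)$ via $\xi \mapsto \xi - tk$. One cannot therefore apply Corollary~\ref{coro:hardykin} verbatim, and the argument must either reproduce the $\Gamma$-conjugation carefully at the $X_\beta^{s,q}$-level, tracking the $t$-dependent operator norms on the bounded interval $[0,T]$, or exploit the additional $L^p_\mu((0,T);D(A))$-regularity built into the $\BE_\mu$-norm to close the gap.
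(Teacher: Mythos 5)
Your reduction (the $\Phi_\mu$-isometry on the Lebesgue scale, the Leibniz identity, and the resulting Hardy-type bound $t^{-\mu}u\in L^p((0,T);X_\beta^{s,q})$) is exactly the mechanism behind the paper's own one-line proof, which just cites Proposition \ref{prop:Phimuprop} and Corollary \ref{cor:TmuT}; your remark that those results are formulated on the $W^{k,q}$-scale rather than on $X_\beta^{s,q}$ is a fair point about how terse that argument is. But as a proof your proposal is incomplete: everything except the Hardy step is an immediate consequence of Proposition \ref{prop:Phimuprop}(i), so the weighted Hardy estimate in $X_\beta^{s,q}$ \emph{is} the corollary, and you leave it unproved, ending with two possible repairs that you do not carry out. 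That is a genuine gap, not a presentational one. A further point neither you nor the statement makes explicit: the Hardy bound forces $u(0)=0$ (already $u(t)\equiv g$ with $g\neq 0$ Schwartz and $\mu<1$ gives $t^{-\mu}u\notin L^p$), so the corollary has to be read on ${_0\BE}_\mu$ and ${_0\BE}$, which is also how it is applied in Theorem \ref{thm:1impliesmu} and matches the fact that Proposition \ref{prop:Phimuprop}(ii), Corollary \ref{cor:TmuT} and Corollary \ref{coro:hardykin} are all zero-trace results.

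Concerning your two suggested repairs: the first cannot work, because for $s>0$ and $t\neq 0$ the operator $\Gamma(t)$ is not bounded on $X_\beta^{s,q}$ at all; testing on functions with Fourier support near $(k,\xi)=(K,0)$, $\abs{K}=R$, gives $\norm{\Gamma(t)g}_{X_\beta^{s,q}}/\norm{g}_{X_\beta^{s,q}}\sim t^{s\beta}R^{s\beta^2/(\beta+1)}\to\infty$, so there are no finite $t$-dependent operator norms on $[0,T]$ to track. The second suggestion is the viable one, but it needs to be executed: for $s\ge 0$ one has $X_\beta^{s,q}=H_x^{s\beta/(\beta+1),q}(\R^{2n})\cap H_v^{s\beta,q}(\R^{2n})$; since $D_x^{s\beta/(\beta+1)}$ commutes with $\partial_t+v\cdot\nabla_x$, the $x$-component of the Hardy bound follows from Corollary \ref{coro:hardykin} with $k=0$, whereas for the $v$-component one must show $D_v^{s\beta}u\in{_0\T}^p_\mu((0,T);L^q(\R^{2n}))$, i.e. control the commutator $[v\cdot\nabla_x,D_v^{s\beta}]u$, whose symbol is of order $s\beta-1$ in $\xi$ and $1$ in $k$; this is exactly where the $L^p_\mu((0,T);D(A))$-component of the $\BE_\mu$-norm (for the Kolmogorov operator $L^p_\mu((0,T);X_\beta^{s+1,q})$, by Proposition \ref{prop:advbouchut}) has to enter, through a borderline Young-type symbol estimate. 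Only in the case $s=0$, where $X_\beta^{0,q}=L^q(\R^{2n})$, does your argument close as written.
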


\begin{proof}
	This follows directly from Proposition \ref{prop:Phimuprop} and Corollary \ref{cor:TmuT}.
\end{proof}

\begin{prop} \label{prop:integralopest}
	Let $\beta \in (0,2]$, $s \in \R$, $p,q \in (1,\infty)$, $\mu \in \left( \frac{1}{p},1 \right]$, $T>0$ and $A \colon D(A) \subset X_\beta^{s,q} \to X_\beta^{s,q}$. We assume $K \in L^1_\mathrm{loc}((0,T);\mathcal{B}(X_\beta^{s,q}))$ to be such that $K(t)f \in D(A) $ for all $f \in X_\beta^{s,q}$ and $\norm{A K(t)}_{\B({X_\beta^{s,q}})} \le Mt^{-1}$ for some $M>0$ and almost all $t \in (0,T)$. Then, the linear operator $R \colon L^p((0,T);X_\beta^{s,q}) \to L^p((0,T);X_\beta^{s,q})$ defined by
	\begin{align*}
		[Rf](t) = \int_0^t K(t-s) \left[ \left( \frac{t}{s} \right)^{1-\mu}-1 \right] f(s) \dx s
	\end{align*}
	on functions $f \in L^p((0,T);X_\beta^{s,q})$ satisfies the estimate $\norm{A R f}_{p,X_\beta^{s,q}} \le c \norm{f}_{p,X_\beta^{s,q}}$ for all  $f \in L^p((0,T);X_\beta^{s,q})$.
	\end{prop}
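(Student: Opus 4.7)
The plan is to pass from the operator inequality for $R$ to a scalar convolution-type inequality, and then reduce it via a dilation substitution to an elementary integrability condition that is precisely secured by $\mu > 1/p$.

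First I would apply $A$ under the integral (using that $K(t)f \in D(A)$ by assumption), take $X_\beta^{s,q}$-norms pointwise in $t$, and use the hypothesis $\|AK(t-s)\|_{\B(X_\beta^{s,q})} \le M(t-s)^{-1}$ to arrive at the pointwise bound
\begin{equation*}
	\|A[Rf](t)\|_{X_\beta^{s,q}} \le M \int_0^t \frac{1}{t-s}\left[\left(\frac{t}{s}\right)^{1-\mu}-1\right] \|f(s)\|_{X_\beta^{s,q}} \, \dx s.
\end{equation*}
Thus the problem reduces to showing that the scalar integral operator with kernel $k(t,s) = (t-s)^{-1}[(t/s)^{1-\mu}-1]\mathds{1}_{0<s<t}$ is bounded on $L^p((0,T))$.

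Next I would perform the substitution $s = t\sigma$ with $\sigma \in (0,1)$, which yields the scale-invariant representation
\begin{equation*}
	\int_0^t k(t,s) g(s) \, \dx s = \int_0^1 \frac{\sigma^{-(1-\mu)}-1}{1-\sigma}\, g(t\sigma) \, \dx \sigma =: [Tg](t).
\end{equation*}
The key observation is that this is a superposition of dilations $g \mapsto g(\cdot \sigma)$, and the change of variables $r = t\sigma$ gives the bound $\|g(\cdot\sigma)\|_{L^p(0,T)} \le \sigma^{-1/p}\|g\|_{L^p(0,T)}$. Combining this with Minkowski's integral inequality yields
\begin{equation*}
	\|Tg\|_{L^p(0,T)} \le \|g\|_{L^p(0,T)} \int_0^1 \frac{\sigma^{-(1-\mu)}-1}{1-\sigma}\, \sigma^{-1/p}\, \dx\sigma.
\end{equation*}

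The remaining step is to verify that the scalar integral above is finite. Near $\sigma = 1$ the kernel $h(\sigma) = (\sigma^{-(1-\mu)}-1)/(1-\sigma)$ is bounded (it extends continuously by $1-\mu$), so there is no issue. Near $\sigma = 0$ the integrand behaves like $\sigma^{\mu-1-1/p}$, which is integrable precisely when $\mu - 1 - 1/p > -1$, i.e.\ $\mu > 1/p$ — the hypothesis of the proposition. Putting these pieces together yields the desired estimate $\|ARf\|_{p,X_\beta^{s,q}} \le c\|f\|_{p,X_\beta^{s,q}}$. I do not anticipate a serious obstacle here; the computation is essentially the same as in \cite{pruss_maximal_2004}, with the only point requiring attention being the careful use of the Bochner-valued Minkowski inequality (which is valid since $X_\beta^{s,q}$ is a Banach space) to push the $X_\beta^{s,q}$-norm inside the scalar kernel representation.
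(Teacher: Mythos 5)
Your argument is correct and is essentially the same one the paper relies on: the paper's proof is just a citation of \cite[Proposition 2.3]{pruss_maximal_2004}, and the proof of that result is precisely your reduction to the scalar kernel $(1-\sigma)^{-1}\left(\sigma^{-(1-\mu)}-1\right)$ via the substitution $s=t\sigma$, Minkowski's integral inequality with the dilation bound $\norm{g(\cdot\,\sigma)}_{L^p}\le \sigma^{-1/p}\norm{g}_{L^p}$, and the integrability condition $\mu>1/p$. The only point worth making explicit is that pulling $A$ through the Bochner integral uses the closedness of $A$ together with the integrability of $s\mapsto (t-s)^{-1}\left[(t/s)^{1-\mu}-1\right]\norm{f(s)}_{X_\beta^{s,q}}$ (bounded near $s=t$, and handled by H\"older near $s=0$ since $\mu>1/p$), which is routine.
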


\begin{proof}
	This is a special case of \cite[Proposition 2.3]{pruss_maximal_2004}. 
\end{proof}

\begin{theorem} \label{thm:1impliesmu}
	Let $s \in \R$, $p,q \in (1,\infty)$ and $\mu \in (\frac{1}{p},1]$. Assume that the dissipative operator $A \colon D(A) \subset X_\beta^{s,q}$ satisfies the following properties: 
	\begin{enumerate}
		\item The operator $A-v\cdot \nabla_x$, with domain
		\begin{equation*}
			\{ u \in X_\beta^{s,q} \colon Au, v\cdot \nabla_x u \in X_\beta^{s,q} \}
		\end{equation*}
		generates a strongly continuous bounded $C_0$-semigroup $T(t)$ in $X_\beta^{s,q}$.
		\item For all inhomogeneities $f \in C_c^\infty((0,T)\times \R^{2n})$ the solution of the Cauchy problem with zero initial datum can be given as
		\begin{equation*}
			u(t) = \int_0^t T(t-s)f(s) \dx s.
		\end{equation*}
		\item Assume that $T(t)g \in D(A)$ for all $g \in X_\beta^{s,q}$ and almost all $t>0$ and that the estimate
		\begin{equation*}
			\norm{AT(t)g}_{X_\beta^{s,q}} \lesssim t^{-1} \norm{g}_{X_\beta^{s,q}}
		\end{equation*}
		is satisfied. 
	\end{enumerate}
	Under these assumptions $A$ satisfies kinetic maximal $L^p(X_\beta^{s,q})$-regularity if and only if $A$ satisfies kinetic maximal $L^p_\mu(X_\beta^{s,q})$-regularity. 
\end{theorem}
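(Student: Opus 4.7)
The plan is to reduce the weighted setting to the unweighted one via the multiplication operator $\Phi_\mu$ and the auxiliary estimate of Proposition \ref{prop:integralopest}, in analogy with the classical argument of \cite{pruss_maximal_2004}. By Proposition \ref{prop:Phimuprop} and Corollary \ref{cor:EmuE}, $\Phi_\mu$ is an isometric isomorphism $L^p_\mu((0,T); X_\beta^{s,q}) \to L^p((0,T); X_\beta^{s,q})$ that restricts to an isomorphism ${_0\BE}_\mu \to {_0\BE}$. Writing $P_*$ for the operator sending the inhomogeneity $f$ to $Au$ through the representation $u(t) = \int_0^t T(t-s) f(s)\,ds$ from assumption (2), the two maximal regularity properties amount to boundedness of $P_*$ on $L^p$ and on $L^p_\mu$ respectively.

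The key algebraic identity is obtained as follows. For $f$ in a dense class of smooth inhomogeneities, multiply $[P_* f](t) = \int_0^t AT(t-s) f(s)\,ds$ by $t^{1-\mu}$, split $t^{1-\mu} = s^{1-\mu}[(t/s)^{1-\mu}-1] + s^{1-\mu}$ inside the integral, and recognize $s^{1-\mu} f(s) = (\Phi_\mu f)(s)$ to obtain
\[
[\Phi_\mu P_* f](t) = \int_0^t AT(t-s)\bigl[(t/s)^{1-\mu}-1\bigr](\Phi_\mu f)(s)\,ds + [P_*(\Phi_\mu f)](t).
\]
The first summand is exactly $[AR(\Phi_\mu f)](t)$ in the notation of Proposition \ref{prop:integralopest} with $K(t) = T(t)$; its hypothesis $\norm{AT(t)}_{\B(X_\beta^{s,q})} \le Mt^{-1}$ is precisely assumption (3). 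Substituting $g := \Phi_\mu f \in L^p$ rewrites the identity as $\Phi_\mu P_* \Phi_\mu^{-1} g = AR g + P_* g$.

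Since Proposition \ref{prop:integralopest} gives $\norm{AR g}_{p,X_\beta^{s,q}} \lesssim \norm{g}_{p,X_\beta^{s,q}}$, the identity instantly produces the desired equivalence: $P_*$ is bounded on $L^p$ if and only if $\Phi_\mu P_* \Phi_\mu^{-1}$ is bounded on $L^p$ if and only if, by the isometry of $\Phi_\mu$, $P_*$ is bounded on $L^p_\mu$. A density argument together with the dissipativity of $A$, which yields contractivity of $T(t)$ and hence $\norm{u(t)}_{X_\beta^{s,q}} \le \int_0^t \norm{f(s)}_{X_\beta^{s,q}}\,ds$ with a weighted Young inequality bound on $\norm{u}_{p,\mu,X_\beta^{s,q}}$, extends the a priori estimate to arbitrary inhomogeneities. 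The remaining term $\norm{\partial_t u + v\cdot\nabla_x u}_{p,\mu,X_\beta^{s,q}}$ is then controlled for free via the equation $\partial_t u + v\cdot\nabla_x u = Au + f$, and uniqueness in ${_0\BE}_\mu$ follows from Theorem \ref{thm:uniqueweak}. The main hurdle I anticipate is the rigorous justification of the identity beyond the dense class, and in particular ensuring that $R(\Phi_\mu f)(t) \in D(A)$ pointwise so that $AR$ is meaningful; both points are guaranteed by assumptions (2) and (3), and once secured the argument reduces to a clean application of Proposition \ref{prop:integralopest}.
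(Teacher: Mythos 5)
Your argument is correct and is essentially the paper's proof: the identity $\Phi_\mu P_*\Phi_\mu^{-1}g = ARg + P_*g$ obtained from the splitting $t^{1-\mu}=s^{1-\mu}[(t/s)^{1-\mu}-1]+s^{1-\mu}$ is exactly the decomposition $u=\Phi_\mu^{-1}[w_1+w_2]$ used there, with Proposition \ref{prop:integralopest} applied under assumption (3), Hardy-type bounds for the lower-order terms, and density plus dissipativity (Lemma \ref{lem:criteriamaxkinreg}/Theorem \ref{thm:uniqueweak}) to conclude, including the converse via the same identity.
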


\begin{proof}
	Suppose that $A$ satisfies the kinetic maximal $L^p(X_\beta^{s,q})$-regularity property. We note that $C_c^\infty((0,T) \times \R^{2n})$ is dense in $L^p_\mu((0,T);X_\beta^{s,q})$. Let $\mu \in (\frac{1}{p},1]$ and $f \in C_c^\infty((0,T)\times \R^{2n})$. We define
	\begin{equation*}
		u(t) = \int_0^t T(t-s)f(s) \dx s
	\end{equation*}
	for any $t \in (0,T)$. Extending $f$ by zero for $t > T$ and using Hardy's inequality we deduce
	\begin{align*}
		&\int_0^\infty \norm{t^{1-\mu} \int_0^t T(t-s) f(s) \dx s  }^p_{X_\beta^{s,q}} \dx t \\
		&\le \int_0^\infty \left( t^{1-\mu} \int_0^t \norm{f(s)}_{X_\beta^{s,q}} \dx s \right)^p \dx t \le c \int_0^\infty  t^{2p-\mu p}  \norm{f(t)}^p_{X_\beta^{s,q}} \dx t \le cT^p \norm{f}_{p,\mu,X_\beta^{s,q}}^p,
	\end{align*}
	whence $u \in L^p_\mu((0,T);X^{s,q}_\beta)$. As in \cite{pruss_maximal_2004} we write
	\begin{align*}
		u(t) &= t^{\mu-1} \int_0^t T(t-s)s^{1-\mu} f(s) \dx s \\
		&+t^{\mu -1} \int_0^t T(t-s) \left[ \left( \frac{t}{s}\right)^{1-\mu} -1 \right]s^{1-\mu} f(s) \dx s \\
		&= \Phi_\mu^{-1} \left[ \left( \partial_t + v \cdot \nabla_x - A \right)^{-1} \Phi_\mu f + R \Phi_\mu f \right] \\
		&=:\Phi_\mu^{-1} \left[ w_1+w_2 \right],
	\end{align*}
	with $R$ as in Proposition \ref{prop:integralopest}, where $K(\cdot) = AT(\cdot)$. As $\Phi_\mu f \in L^p((0,T);X_\beta^{s,q})$ we deduce that $w_1 \in {_0 \BE}((0,T);X_\beta^{s,q})$ by the assumed kinetic maximal $L^p(X_\beta^{s,q})$-regularity. 
	
	\noindent Next, we prove that $w_2 \in {_0 \BE}((0,T);X_\beta^{s,q})$. By assumption (3) we can apply Proposition \ref{prop:integralopest} to deduce that $A w_2 \in L^p((0,T);X_\beta^{s,q})$. Furthermore, we have 
	\begin{equation*} 
		(\partial_t + v \cdot \nabla_x) w_2 = Aw_2 + (1-\mu)t^{-\mu }\int_{0}^t T(t-s)f(s) \dx s,
	\end{equation*}
	with 
	\begin{align*}
		&\int_0^\infty \norm{t^{-\mu} \int_0^t T(t-s) f(s) \dx s  }^p_{X_\beta^{s,q}} \dx t \\
		&\le c\int_0^\infty \left( t^{-\mu} \int_0^t \norm{f(s)}_{X_\beta^{s,q}} \dx s \right)^p \dx t \le c\int_0^\infty  t^{p-\mu p}  \norm{f(t)}^p_{X_\beta^{s,q}} \dx t = c \norm{f}_{p,\mu,X_\beta^{s,q}}^p
	\end{align*}
	as the consequence of another application of Hardy's inequality. We conclude that $w_2=R\Phi_\mu f \in {_0\T_\mu^p}((0,T);X_\beta^{s,q})$, whence we deduce $w_2 \in  {_0 \BE}((0,T);X_\beta^{s,q})$. By Corollary \ref{cor:EmuE} we conclude $u \in {_0 \BE}_\mu((0,T);X_\beta^{s,q}) $. Consequently, $u$ is a solution to the Kolmogorov equation in the space of weighted kinetic maximal $L^p(X_\beta^{s,q})$-regularity and satisfies
	\begin{equation*}
		\norm{u}_{{_0 \BE}_\mu((0,T);X_\beta^{s,q})} \lesssim \norm{f}_{p,\mu,X_\beta^{s,q}}
	\end{equation*}
	by Corollary \ref{prop:Phimuprop}. Using Lemma \ref{lem:criteriamaxkinreg} it follows that $A$ satisfies the kinetic maximal $L^p_\mu(X_\beta^{s,q})$-regularity property. 
	
	The converse implication can be shown by writing 
	\begin{equation*}
		u = \Phi_\mu (\partial_t+v\cdot \nabla_x -A)^{-1} \Phi_\mu^{-1}f - \Phi_\mu R\Phi_\mu^{-1}f
	\end{equation*}
	and proceeding as above.    
\end{proof}
	
	\begin{remark}
		In the classical theory of maximal $L^p$-regularity one can show that every $L^p$-solution is a mild solution, hence is given by $u(t) = \int_0^t T(t-s)f(s) \dx s$. It is not known to the authors, whether one can show a similar result in the kinetic setting, too. Such a result could be used to drop assumption (2) in Theorem \ref{thm:1impliesmu}.
	\end{remark}
	
	Let us now prove that the Kolmogorov equation satisfies the property of kinetic maximal $L^p_\mu(L^q)$-regularity. By Theorem \ref{thm:kinmaxLpLq} it satisfies the kinetic maximal $L^p(L^q)$-regularity property. Therefore, it remains to check the assumptions of Theorem \ref{thm:1impliesmu}. We recall the kernel representation for the semigroup corresponding to the (fractional) Kolmogorov equation. We denote by $G_\beta$ the fundamental solution of the Kolmogorov equation. If $\beta = 2$ we know that
	\begin{equation*}
		G_2(t,x,v) = \frac{\gamma_n}{t^{2n}} \exp\left(- \frac{1}{t}\abs{v}^2+ \frac{3}{t^2} \langle {v},x\rangle - \frac{3}{t^3} \abs{x}^2 \right),
	\end{equation*}
where $\gamma_n = \sqrt{3}^n(2\pi)^{-n}$ and if $\beta \neq 2$ only a representation in Fourier variables is known. Henceforth, we are going to denote by $(T_\beta(t))_{t \ge 0}$ the Kolmogorov semigroup in $L^q(\R^{2n})$, i.e. 
\begin{align}
	[T_\beta(t)g](x,v) &= \int_{\R^{n}}\int_{\R^{n}} G_\beta(t,x-\tilde{x}-t\tilde{v},v-\tilde{v})g(\tilde{x},\tilde{v})\dx \tilde{v} \dx \tilde{x} \label{eq:repKol}
\end{align}
for all $g \in L^q(\R^{2n})$. More information on the Kolmogorov semigroup for $\beta = 2$ can be found in \cite[Chapter 10]{lorenzi_analytical_2017} and in \cite{chen_lp-maximal_2018} for $\beta \in (0,2)$. Using the results in \cite{chen_lp-maximal_2018} one can show that for $\beta  \in (0,2)$ the Kolmogorov equation induces a strongly continuous semigroup in $L^q(\R^{2n})$. Using this notation we can write every solution of the Kolmogorov equation for sufficiently smooth data as
\begin{equation*}
	u(t) = T_\beta(t)g + \int_0^t T_\beta (t-s)f(s) \dx s.
\end{equation*}
The following decay estimates show that assumption (3) of Theorem \ref{thm:1impliesmu} is satisfied for $\beta = 2$.

\begin{lemma} \label{lem:decayestimsemi}
	Let $q \in (1,\infty)$, $g \in L^q(\R^{2n})$, then $\Delta_v T_2(t)g \in L^q(\R^{2n})$ for all $t>0$ and
	\begin{equation*}
		\norm{\Delta_v T_2(t)g}_{q} \le Ct^{-1} \norm{g}_{q}.
	\end{equation*}
\end{lemma}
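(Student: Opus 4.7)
The plan is to combine the explicit kernel representation \eqref{eq:repKol} of $T_2(t)$ with a Young-type inequality for kinetic convolution. Since only the third argument of $G_2(t,x-\tilde{x}-t\tilde{v},v-\tilde{v})$ depends on $v$, one may differentiate under the integral sign to obtain
\begin{equation*}
	[\Delta_v T_2(t)g](x,v) = \int_{\R^n}\int_{\R^n} [\Delta_v G_2](t, x-\tilde{x}-t\tilde{v}, v-\tilde{v}) g(\tilde{x},\tilde{v}) \dx \tilde{v} \dx \tilde{x},
\end{equation*}
so the lemma reduces to two independent claims: an $L^1$--$L^q$ bound for kinetic convolution, and a quantitative $L^1$-estimate on the kernel $\Delta_v G_2(t,\cdot,\cdot)$.

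For the first I would show that for every $K \in L^1(\R^{2n})$ and $g \in L^q(\R^{2n})$ the map $g \mapsto \int K(x-\tilde{x}-t\tilde{v}, v-\tilde{v}) g(\tilde{x},\tilde{v}) \dx \tilde{v} \dx \tilde{x}$ is bounded on $L^q(\R^{2n})$ with operator norm at most $\norm{K}_1$. This is a consequence of Minkowski's integral inequality and the unit-Jacobian change of variables $(\tilde{x}',\tilde{v}') := (x-\tilde{x}-t\tilde{v},v-\tilde{v})$: after this substitution the argument of $g$ becomes an affine function of $(x,v)$ with unit Jacobian, so translation-invariance of $\norm{\cdot}_q$ makes the inner $L^q$-norm of $g$ independent of $(\tilde{x}',\tilde{v}')$ and Minkowski concludes.

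For the second step I would exploit the explicit Gaussian form. Completing the square in the exponent gives
\begin{equation*}
	G_2(t,x,v) = \frac{\gamma_n}{t^{2n}} \exp\left( -\frac{1}{t}\Big|v - \tfrac{3}{2t}x\Big|^2 - \frac{3}{4t^3}|x|^2 \right),
\end{equation*}
from which a direct computation yields $\Delta_v G_2 = \left(-\tfrac{2n}{t} + \tfrac{4}{t^2} \big|v - \tfrac{3}{2t}x\big|^2 \right) G_2$. Since $G_2(t,\cdot,\cdot)$ is a probability density and the second moment of $v-\tfrac{3}{2t}x$ against it is of order $t$ (a short Gaussian calculation), one obtains $\norm{\Delta_v G_2(t,\cdot,\cdot)}_1 \le Ct^{-1}$. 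Combining the two steps then gives the desired estimate.

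A conceptually cleaner alternative would be to first reduce to $t=1$ via the parabolic-kinetic scaling $(t,x,v)\mapsto(\lambda^2 t,\lambda^3 x,\lambda v)$ under which the Kolmogorov equation is invariant; this turns the problem into the $t$-independent assertion that $\Delta_v T_2(1)$ is bounded on $L^q(\R^{2n})$, which follows immediately from the kinetic Young step since $\Delta_v G_2(1,\cdot,\cdot)$ is a polynomial times a non-degenerate Gaussian on $\R^{2n}$ and therefore integrable. I expect the kinetic Young inequality to be the main technical point, though it really is only careful Jacobian bookkeeping combined with Minkowski.
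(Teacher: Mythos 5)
Your argument is correct and follows essentially the same route as the paper: differentiate the kernel representation \eqref{eq:repKol} under the integral, apply the generalized Young inequality for the kinetic convolution (the paper's Lemma \ref{lem:genYoung}, which your Minkowski-plus-unit-Jacobian argument reproves), and bound $\norm{\Delta_v G_2(t,\cdot,\cdot)}_{1}$ by $Ct^{-1}$ via Gaussian moment computations. Your completed-square form of $G_2$ and the kinetic-scaling reduction to $t=1$ are only cosmetic streamlinings of the paper's direct estimate of the same Gaussian integrals.
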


\begin{proof}
	We have
	\begin{align*}
		[\Delta_v T_2(t)g](x,v) &= \int_{\R^{n}} \int_{\R^{n}} G_2(t,x-\tilde{x}-t\tilde{v},v-\tilde{v}) \left[- \frac{4}{t^2}\abs{v-\tilde{v}}^2+ \frac{9}{t^4}\abs{x- \tilde{x}-t \tilde{v}}^2  \right.  \\
		&\hphantom{+\int_{\R^{n}} \int_{\R^{n}} G_2(t,x-\tilde{x}-t\tilde{v},v-\tilde{v}) +} \left.- \frac{12}{t^3} \langle v- \tilde{v},x-\tilde{x}-t\tilde{v} \rangle - \frac{2n}{t}  \right] g(\tilde{x},\tilde{v}) \dx \tilde{v} \dx \tilde{v}
	\end{align*}
	Using the generalized Young inequality from Lemma \ref{lem:genYoung} we conclude
	\begin{equation*}
		\norm{\Delta_v T_2(t)f}_{q,\R^{2n}} \le \int_{\R^n} \int_{\R^n} G_2(t,x,v) \left| -\frac{2}{t^2} \abs{v}^2-\frac{6}{t^3} \langle v,x \rangle + \frac{3}{t^4} \abs{x}^2 - \frac{2n}{t} \right| \dx x \dx v \norm{g}_{q,\R^{2n}}.
	\end{equation*}
	The Cauchy-Schwarz and Young's inequality imply
	\begin{equation*}
		\abs{G_2(t,x,v)} \le \frac{\gamma_n}{t^{2n}} \exp(-\frac{c}{t} \abs{v}^2- \frac{c}{t^3}\abs{x}^2)
	\end{equation*}
	for all $x,v \in \R^n$ and every $t>0$ for some constant $c>0$. We have
	\begin{align*}
		&\int_{\R^n} \int_{\R^n} G_2(t,x,v) \left|  -\frac{2}{t^2} \abs{v}^2-\frac{6}{t^3} \langle v,x \rangle + \frac{3}{t^4} \abs{x}^2 - \frac{2n}{t} \right| \dx x \dx v \\ & \le \frac{c}{t^2} \int_{\R^n} \int_{\R^n} G_2(t,x,v) |v|^2 \dx x \dx v+ \frac{c}{t^4} \int_{\R^n} \int_{\R^n} G_2(t,x,v) |x|^2 \dx x \dx v + \frac{2n}{t}= I_1+I_2+\frac{2n}{t}.
	\end{align*}
	The first integral can be estimated as 
	\begin{align*}
		I_1 &= \frac{c}{t^2} \int_{\R^n} \int_{\R^n} G_2(t,x,v) |v|^2 \dx x \dx v \le \frac{c}{t^{2+2n}} \int_{\R^n} \int_{\R^n} \exp(-\frac{c}{t} \abs{v}^2- \frac{c}{t^3}\abs{x}^2) |v|^2 \dx x \dx v \\
		&= \frac{c}{t^{2+2n}} \int_{\R^n} \exp(- \frac{c}{t^3}\abs{x}^2) \dx x \int_{\R^n} |v|^2 \exp(-\frac{c}{t} \abs{v}^2)  \dx v \\
		&= \frac{c}{t^{2+2n}} \int_0^\infty r^{n-1} \exp(-c\frac{r^2}{t^3}) \dx r \int_0^\infty r^{n+1} \exp(-c \frac{r^2}{t}) \dx r = ct^{-1}.
	\end{align*}
	Similarly the second integral can be estimated as $I_2 \le ct^{-1}$. This shows $\norm{ \Delta_v T_2(t)g}_{q,\R^{2n}} \le C t^{-1} \norm{g}_{q,\R^{2n}}$. 
\end{proof}

\begin{lemma}
	For all $q \in (1,\infty)$, $\beta \in (0,2)$ we have
	\begin{equation*}
		\norm{(-\Delta_v)^{\beta/2}T_\beta(t)g}_{q} \lesssim t^{-1} \norm{g}_{q}
	\end{equation*}
	for all $t>0$ and any $g \in L^q(\R^{2n})$.
\end{lemma}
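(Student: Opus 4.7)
The plan is to mimic the analysis of the kernel $\kappa(t,s)$ carried out in the proof of Theorem \ref{thm:kinmaxLpLq}, but evaluated at $s=0$, and then exploit the isometry property of $\Gamma(t)$. By the Fourier representation \eqref{eq:fouriersol} with $f=0$, one has
\begin{equation*}
\mathcal{F}\bigl((-\Delta_v)^{\beta/2} T_\beta(t) g\bigr)(k,\xi) = |\xi|^\beta \, \hat g(k,\xi+tk)\, e_\beta(t,k,\xi).
\end{equation*}
Since $\Gamma(t) \colon L^q(\R^{2n}) \to L^q(\R^{2n})$ is an isometry by Proposition \ref{prop:GamLpisom}, it is equivalent to bound the operator $\Gamma(t)(-\Delta_v)^{\beta/2} T_\beta(t)$ on $L^q(\R^{2n})$ by $Ct^{-1}$. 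Using $\mathcal{F}(\Gamma(t)h)(k,\xi) = \hat h(k,\xi - tk)$, its Fourier symbol is
\begin{equation*}
m_t(k,\xi) = |\xi - tk|^\beta \, e_\beta(t,k,\xi - tk).
\end{equation*}

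Next I would reduce the study of $m_t$ to a classical ``tensor'' multiplier via the same sequence of substitutions used in the proof of Theorem \ref{thm:kinmaxLpLq}. Substituting $\xi \mapsto \xi - tk$ using Lemma \ref{lem:gammamult} and then $k \mapsto tk$ using Lemma \ref{lem:dilkinmik}, and recalling that
\begin{equation*}
e_\beta\bigl(t, t^{-1}k, \xi\bigr) = \exp\!\Bigl(-t\!\int_0^1 |\xi + w k|^\beta \, dw\Bigr),
\end{equation*}
the analysis reduces to showing that
\begin{equation*}
\tilde m_t(k,\xi) = |\xi|^\beta \exp\!\Bigl(-t\!\int_0^1 |\xi+wk|^\beta \, dw\Bigr)
\end{equation*}
is an $L^q(\R^{2n})$-Fourier multiplier of norm $\lesssim t^{-1}$, uniformly in $t>0$.

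The final step is to invoke Lemma \ref{lem:itsez} to absorb the integral in the exponent into a product $\exp(-c t |\xi|^\beta)\exp(-c t |k|^\beta)$ for some $c>0$, up to losing a harmless $\beta$-homogeneous prefactor which can be dominated by $|\xi|^\beta + |k|^\beta$. One is then left with the classical Fourier multiplier
\begin{equation*}
\bigl(|\xi|^\beta + |k|^\beta\bigr)\exp\!\bigl(-c t |\xi|^\beta - c t|k|^\beta\bigr),
\end{equation*}
which by a standard Mikhlin-type estimate (a scaling argument reduces to $t=1$) is bounded on $L^q(\R^{2n})$ with constant $C(n,\beta,q)\, t^{-1}$. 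The main technical point to verify carefully is that Lemma \ref{lem:itsez} applies to the integrated exponent $\int_0^1 |\xi+wk|^\beta dw$ uniformly, and that the prefactor after all substitutions can indeed be controlled by $|\xi|^\beta + |k|^\beta$ modulo a bounded multiplier; all other steps are direct transcriptions of calculations already performed in the proof of Theorem \ref{thm:kinmaxLpLq}.
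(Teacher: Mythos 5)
Your argument is correct, but it follows a genuinely different route from the paper. The paper disposes of the case $\beta\in(0,2)$ in two lines: it invokes the kernel estimates of \cite[Remark 2.6]{chen_lp-maximal_2018} to obtain, exactly as in Lemma \ref{lem:decayestimsemi}, the full-Laplacian bound $\norm{\Delta_v T_\beta(t)g}_q \lesssim t^{-2/\beta}\norm{g}_q$, and then interpolates this with the contractivity of the Kolmogorov semigroup to get the fractional power bound with exponent $(t^{-2/\beta})^{\beta/2}=t^{-1}$. You instead work directly on the symbol $|\xi-tk|^\beta e_\beta(t,k,\xi-tk)$, conjugating with $\Gamma(t)$ and re-running the reduction machinery of Theorem \ref{thm:kinmaxLpLq} (Lemma \ref{lem:gammamult}, Lemma \ref{lem:dilkinmik}, Lemma \ref{lem:itsez}) to land on the multiplier $\bigl(|\xi|^\beta+|k|^\beta\bigr)\exp(-ct|\xi|^\beta-ct|k|^\beta)$, whose norm scales like $t^{-1}$. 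The two points you flag do go through: Lemma \ref{lem:itsez} applies after the harmless reflection $k\mapsto -k$, and the prefactor domination amounts to the boundedness of the ratio $|\xi|^\beta/(|\xi|^\beta+|k|^\beta)$, which is checked by the same Mikhlin-type computation the paper already uses (e.g.\ in Proposition \ref{prop:suffiv}); in fact you could skip the domination entirely, since $|\xi|^\beta e^{-ct|\xi|^\beta}$ alone is a multiplier of norm $\lesssim t^{-1}$ in $\xi$, tensored with the bounded multiplier $e^{-ct|k|^\beta}$ in $k$. What your approach buys is self-containedness — it avoids importing the kernel bounds from Chen--Zhang and the moment/interpolation step for fractional powers — at the cost of redoing the multiplier analysis; what the paper's approach buys is brevity, since the hard analytic input is outsourced to the cited reference and the interpolation with contractivity is standard.
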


\begin{proof}
	Similarly as in the proof of Lemma \ref{lem:decayestimsemi} one can show that
	\begin{equation*}
		\norm{\Delta_v T_\beta (t)g}_q \lesssim t^{-\frac{2}{\beta}} \norm{g}_q,
	\end{equation*}
	by using \cite[Remark 2.6]{chen_lp-maximal_2018}. The desired estimate follows then by interpolation of the latter estimate and the contractivity estimate for the Kolmogorov semigroup. 
\end{proof}

\begin{coro} \label{cor:kinmaxLpmuLqkol}
	For all $p,q \in (1,\infty)$, any $ \mu \in (\frac{1}{p},1]$ and all $\beta \in (0,2]$ the operator $A = -(-\Delta_v)^\frac{\beta}{2}$ satisfies the kinetic maximal $L^p_\mu(L^q)$-regularity property. 
\end{coro}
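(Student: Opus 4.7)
The plan is to apply Theorem \ref{thm:1impliesmu} with $s=0$ (so that $X_\beta^{0,q}=L^q(\R^{2n})$), using as our starting point the unweighted kinetic maximal $L^p(L^q)$-regularity which is already established in Theorem \ref{thm:kinmaxLpLq}. Dissipativity of $A=-(-\Delta_v)^{\beta/2}$ in $L^q(\R^{2n})$ is classical, so the main task reduces to verifying the three hypotheses of Theorem \ref{thm:1impliesmu} for the Kolmogorov semigroup $(T_\beta(t))_{t\ge 0}$.

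For hypothesis (1), I would recall that the operator $-(-\Delta_v)^{\beta/2}-v\cdot\nabla_x$ with its natural domain generates the strongly continuous contraction semigroup $T_\beta(t)$ on $L^q(\R^{2n})$ defined by the integral formula \eqref{eq:repKol}. Contractivity is immediate from Young's inequality together with the fact that $G_\beta(t,\cdot,\cdot)$ is a probability density, and strong continuity follows by a standard approximation argument, using either the explicit Gaussian kernel in the case $\beta=2$ (see \cite[Chapter 10]{lorenzi_analytical_2017}) or the Fourier-side representation and the corresponding estimates from \cite{chen_lp-maximal_2018} in the case $\beta\in(0,2)$. For hypothesis (2), the integral representation $u(t)=\int_0^t T_\beta(t-s)f(s)\,\mathrm ds$ is the standard mild formulation; for $f\in C_c^\infty((0,T)\times\R^{2n})$ it defines a smooth strong solution of the inhomogeneous Kolmogorov equation with zero initial datum, as is seen by differentiating under the integral using the smoothness of $G_\beta$ for $t>0$.

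For hypothesis (3), the decay bound $\|AT_\beta(t)g\|_q\lesssim t^{-1}\|g\|_q$ is precisely the content of the two lemmas immediately preceding the corollary: the Gaussian computation handles $\beta=2$, and interpolation between the bound $\|\Delta_v T_\beta(t)g\|_q\lesssim t^{-2/\beta}\|g\|_q$ (obtained from the Fourier-side kernel estimates in \cite[Remark 2.6]{chen_lp-maximal_2018}) and the $L^q$-contractivity $\|T_\beta(t)g\|_q\le\|g\|_q$ yields the case $\beta\in(0,2)$. In particular, $T_\beta(t)$ maps $L^q(\R^{2n})$ into $D(A)=H^{\beta,q}_v(\R^{2n})$ for every $t>0$ with the required $t^{-1}$-growth.

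With all three hypotheses in place and kinetic maximal $L^p(L^q)$-regularity known from Theorem \ref{thm:kinmaxLpLq}, Theorem \ref{thm:1impliesmu} directly upgrades this to kinetic maximal $L^p_\mu(L^q)$-regularity for every $\mu\in(1/p,1]$, which is the claim. No step is genuinely difficult here: the entire section has been organized so that this corollary is essentially a verification, and the only mildly delicate point - the singular decay estimate - has already been settled in the two preceding lemmas.
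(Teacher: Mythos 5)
Your proposal is correct and follows essentially the same route as the paper: the corollary is obtained by combining the unweighted result of Theorem \ref{thm:kinmaxLpLq} with Theorem \ref{thm:1impliesmu}, whose hypotheses are verified exactly as you describe — the semigroup representation \eqref{eq:repKol} for (1) and (2), and the two preceding lemmas (the Gaussian computation for $\beta=2$ and the interpolation argument for $\beta\in(0,2)$) for the decay estimate (3). No gaps; this is the intended argument.
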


\begin{coro} \label{cor:coordlaplace}
	For all $p,q \in (1,\infty)$, any $ \mu \in (\frac{1}{p},1]$ and every positive definite symmetric matrix $a \in \R^{n\times n}$ the operator $Au = a \colon \nabla_v^2 u$ with domain $D(A) = H^{2,q}_v(\R^{2n})$ satisfies the kinetic maximal $L^p_\mu(L^q)$-regularity property. In particular for all $T>0$ and any $u \in {_0 \BE}_\mu((0,T);L^q(\R^{2n}))$ we have
	\begin{equation*}
		\norm{u}_{{_0\BE}_\mu(0,T)} \le C(p,q,\mu,T) \lambda^{-1} \norm{\partial_t u + v \cdot \nabla_x u - Au}_{p,\mu,L^q(\R^{2n})},
	\end{equation*}
	where $\lambda>0$ is the smallest eigenvalue of $a$. Given any finite interval $[0,T_0]$ the constant $C=C(T_0)$ can be chosen independently of $T \in (0,T_0]$. 
\end{coro}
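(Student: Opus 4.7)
The plan is to reduce to the case $A = \Delta_v$, which is already covered by Corollary~\ref{cor:kinmaxLpmuLqkol}, via a linear change of variables adapted to $a$. Since $a$ is symmetric and positive definite with smallest eigenvalue $\lambda$, let $M := a^{1/2}$ denote its unique symmetric positive definite square root, so that $M = M^T$, $M^2 = a$, and $\norm{M^{-1}}^2 = \lambda^{-1}$ in the operator norm. I introduce $\tilde{u}(t, y, w) := u(t, My, Mw)$ and $\tilde{f}(t, y, w) := f(t, My, Mw)$; this defines a linear isomorphism on $L^p_\mu((0,T); L^q(\R^{2n}))$ which scales the $L^q$-norm uniformly by the constant Jacobian factor $\abs{\det M}^{-2/q}$.

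A chain-rule computation shows that under this transformation the time derivative is preserved, the kinetic term becomes $w \cdot \nabla_y \tilde{u}(t, y, w) = (v \cdot \nabla_x u)(t, My, Mw)$ (the symmetry $M = M^T$ is essential so that applying the same matrix $M$ to both $x$ and $v$ leaves the bilinear form invariant), and the second-order term is converted into the Laplacian via $\nabla_w^2 \tilde{u} = M (\nabla_v^2 u) M$ and $\tr(M (\nabla_v^2 u) M) = \tr(M^2 \nabla_v^2 u) = a \colon \nabla_v^2 u$. Consequently, $u \in {_0\BE}_\mu((0,T); L^q(\R^{2n}))$ solves $\partial_t u + v \cdot \nabla_x u - a \colon \nabla_v^2 u = f$ with $u(0) = 0$ if and only if $\tilde{u}$ solves the analogous equation with $\Delta_w$ in place of $a \colon \nabla_v^2$ and right-hand side $\tilde{f}$. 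Existence and uniqueness for the original problem therefore follow by applying Corollary~\ref{cor:kinmaxLpmuLqkol} to $\tilde{u}$ and inverting the change of variables.

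For the quantitative estimate, the Jacobian factor $\abs{\det M}^{-2/q}$ appears identically on both sides of the transformed inequality and cancels, yielding $L^p_\mu(L^q)$-bounds for $u$, for $\partial_t u + v \cdot \nabla_x u$, and for $Au$ itself with constants independent of $a$. The factor $\lambda^{-1}$ enters when one expresses the $H^{2,q}_v$-part of the $\BE_\mu$-norm through $\nabla_v^2 u$: inverting $\nabla_w^2 \tilde{u} = M (\nabla_v^2 u) M$ gives $\nabla_v^2 u(t, x, v) = M^{-1} (\nabla_w^2 \tilde{u})(t, M^{-1}x, M^{-1}v) M^{-1}$, and combined with $\norm{M^{-1}}^2 = \lambda^{-1}$ and the Calder\'on--Zygmund bound $\norm{\nabla_w^2 \tilde{u}}_{L^q} \lesssim \norm{\Delta_w \tilde{u}}_{L^q}$ (valid for $q \in (1, \infty)$), this produces $\norm{\nabla_v^2 u}_{p, \mu, L^q} \lesssim \lambda^{-1} \norm{f}_{p, \mu, L^q}$. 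The $T$-uniformity of the constant on $(0, T_0]$ is inherited directly from Corollary~\ref{cor:kinmaxLpmuLqkol}, which in turn draws on Theorem~\ref{thm:kinmaxLpLq}.

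The main obstacle is less conceptual than computational: one must apply the \emph{same} matrix $M = a^{1/2}$ to both the $x$- and $v$-variables (using different matrices in either variable would destroy the kinetic form $v \cdot \nabla_x$), and then carefully track the Jacobian factors together with the powers of $M^{-1}$ so that the $\lambda^{-1}$ dependence is properly localized to the second-derivative part of the norm. Dissipativity of $A$ in $L^q(\R^{2n})$, needed via Lemma~\ref{lem:criteriamaxkinreg} to secure uniqueness directly, is automatic since $a \colon \nabla_v^2$ is a symmetric positive semidefinite elliptic operator.
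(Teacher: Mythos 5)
Your proposal is correct and follows exactly the paper's route: the paper's proof is the one-line remark that the claim follows from the coordinate transformation $(t,x,v)\mapsto(t,a^{-1/2}x,a^{-1/2}v)$, which is precisely your change of variables $\tilde u(t,y,w)=u(t,a^{1/2}y,a^{1/2}w)$ reducing to Corollary \ref{cor:kinmaxLpmuLqkol}. Your write-up simply supplies the details (symmetry of $a^{1/2}$ preserving the kinetic term, cancellation of the Jacobian, and the $\lambda^{-1}$ bookkeeping via $\nabla_v^2u=a^{-1/2}(\nabla_w^2\tilde u)a^{-1/2}$) that the paper leaves implicit.
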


\begin{proof}
	This follows using the coordinate transformation $(t,x,v) \mapsto (t,A^{-\frac{1}{2}}x,A^{-\frac{1}{2}}v)$. 
\end{proof}

\section{Characterization of the trace space of strong solutions to the Kolmogorov equation}

The trace space for strong solutions of the Kolmogorov equation in the case $p = q = 2$ and $\mu \in (1/2,1]$ can be given in terms of anisotropic fractional Sobolev spaces. This has been shown in the article \cite{niebel_kinetic_nodate}. In this section we give a characterization of the trace space in terms of anisotropic Besov spaces in the general case.

 Let $\beta \in (0,2]$, $s \ge 0 $, $p,q \in (1, \infty)$, $\mu \in \left( \frac{1}{p},1 \right]$ and $T \in (0,\infty)$. For the Kolmogorov equation we recall that 
 \begin{align*}
 	\BE_\mu(0,T) =\BE_\mu((0,T);X_\beta^{s,q}) &= \T^{p}_\mu((0,T);X_\beta^{s,q}) \cap L^p_\mu((0,T);X_\beta^{s+1,q}) \\
 	&= \T^{p}_\mu((0,T);H^{s\beta,q}_v(\R^{2n})) \cap L^p_\mu((0,T);H^{(s+1)\beta,q}_v(\R^{2n}))
 \end{align*}
 by Proposition \ref{prop:advbouchut} for $s = 0$ and an induction argument, whence by interpolation for all $s \ge 0$.

First, we present one of the equivalent definitions of anisotropic Besov spaces as stated in \cite[Section 5]{dachkovski_anisotropic_2003}. Let $n \in \N$, $p,q \in (1,\infty)$, $\sigma \in \R$ and $\alpha \in [0,\infty)^{2n} $. Given $r \in \R$ and $z \in \R^{2n}$ we write $t^\alpha z = (t^{\alpha_1}z_1,\dots,t^{\alpha_{2n}}z_{2n})$ and $t^{r\alpha}z = (t^{r})^\alpha z$ for $t>0$. We denote by $\abs{(x,v)}_\alpha$ a homogeneous norm of a vector $(x,v) \in \R^{2n}$ with respect to the dilation induced by $\alpha$. For example we can choose
\begin{equation*}
	\abs{(x,v)}_\alpha = \sum_{i = 1}^{n} \abs{x_i}^{1/\alpha_i} + \abs{v_i}^{1/\alpha_{n+i}}.
\end{equation*}
Note however, that this is only quasi-distance but not a norm in the proper sense. Let us choose any functions $\varphi_0,\varphi \in \S(\R^{2n})$ satisfying the conditions
\begin{enumerate}
	\item $\varphi_0(\cdot) = \varphi_0(-\cdot)$ and $\varphi(\cdot) = \varphi(- \cdot)$,
	\item ${\hat{\varphi}_0(k,\xi)}>0 $ if $\abs{(k,\xi)}_\alpha < 2$ and ${\hat{\varphi}_0(k,\xi)} = 0 $ for $\abs{(k,\xi)}_\alpha > 2$,
	\item ${\hat{\varphi}(k,\xi)} >0 $ if $ \frac{1}{2} < \abs{(k,\xi)}_\alpha < 2$ and ${\hat{\varphi}(k,\xi)}  = 0$ else.
\end{enumerate}

We assume that $\varphi,\varphi_0$ are Schwartz, thus we can drop the additional assumptions given in \cite{dachkovski_anisotropic_2003}, where they only assume that $\varphi,\varphi_0 \in C^\infty$.
We define
	\begin{equation*}
		\norm{u | B_{qp}^{\sigma,\alpha}(\R^{2n})} =   \norm{\varphi_0 \ast u}_q + \left( \int_0^C \left( \frac{\norm{\varphi_t \ast u}_q}{t^{\sigma}}  \right)^p \frac{\dx t}{t}\right)^\frac{1}{p},
	\end{equation*}
	where we write $\varphi_t(x,v) = t^{-2n}\varphi(t^{-\alpha}(x,v))$ and $C>0$ is some constant. The anisotropic Besov space is defined as
	\begin{equation*}
	B_{qp}^{\sigma,\alpha}(\R^{2n}) = \{ f \in \S'(\R^{2n}) \colon \norm{f | B_{qp}^{\sigma,\alpha}(\R^{2n})}<\infty \}.
\end{equation*}
As in the isotropic setting this definition is independent of the choice of $\varphi,\varphi_0$ and $C$.

Clearly, the trace space of $\BE_\mu((0,T);X_\beta^{s,q})$ should also control some regularity in the $x$ variable. In view of the results in \cite{niebel_kinetic_nodate} a natural choice for the trace space seems to be
	\begin{equation*}
		\bigcap_{r = 1}^n B_{qp,x_r}^{\frac{\beta}{\beta+1}a} \cap B_{qp,v_r}^{\beta a}
	\end{equation*}
	for a suitable choice of $a>0$. Choosing $\alpha = (\frac{2(\beta+1)}{\beta+2},\dots,\frac{2(\beta+1)}{\beta+2},\frac{2}{\beta+2},\dots,\frac{2}{\beta+2})$ and $\sigma = \frac{2a\beta}{\beta+2}$ we have that 
	\begin{equation*}
		B_{qp}^{\sigma,\alpha} \cong \bigcap_{r = 1}^n B_{qp,x_r}^{\frac{\beta}{\beta+1}a} \cap B_{qp,v_r}^{\beta a}
	\end{equation*}
	as stated in \cite[Section 5.1.3]{triebel_theory_2006}. We introduce the kinetic Besov space of order $a$ and regularity scale corresponding to $\beta$ as
	\begin{equation*}
		{^{\mathrm{kin}}B}_{qp}^{a,\beta}(\R^{2n}) := B_{qp}^{\sigma,\alpha}(\R^{2n}).
	\end{equation*}
	Of particular interest is the case $\beta = 2$, where $\alpha = (\frac{3}{2},\dots,\frac{3}{2},\frac{1}{2},\dots,\frac{1}{2})$ and $\sigma = a$. The goal of this section is to prove the following theorem.
	
\begin{theorem} \label{thm:chartrace}
	For all $\beta \in (0,2]$, $s \ge 0$, any $p,q \in (1,\infty)$, every $\mu \in (\frac{1}{p},1]$, and all $T \in (0,\infty)$ we have 
	\begin{equation*}
		{^{\mathrm{kin}}B}_{qp}^{s+\mu-1/p,\beta}(\R^{2n}) \cong \tr \left( \BE_\mu((0,T);X_\beta^{s,q}) \right).
	\end{equation*}
	Moreover, there exists a constant $c>0$ such that 
	\begin{equation*}
		c^{-1}\norm{u \left| {^{\mathrm{kin}}B}_{qp}^{s+\mu-1/p,\beta}(\R^{2n}) \right.} \le \norm{u \left| \tr \left( \BE_\mu((0,T);X_\beta^{s,q}) \right) \right.} \le c\norm{u \left| {^{\mathrm{kin}}B}_{qp}^{s+\mu-1/p,\beta}(\R^{2n}) \right.}
	\end{equation*} 
	for all $u \in \tr \left( \BE_\mu((0,T);X_\beta^{s,q} \right)$.
\end{theorem}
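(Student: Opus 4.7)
The plan is to realize the trace map $\tr\colon \BE_\mu((0,T); X_\beta^{s,q}) \to X_{\gamma,\mu}$ concretely via the Kolmogorov semigroup $T_\beta$ and then identify the resulting function space with ${^{\mathrm{kin}}B}_{qp}^{s+\mu-1/p,\beta}$ through a Fourier-analytic semigroup characterization of anisotropic Besov spaces. The starting point is that $A = -(-\Delta_v)^{\beta/2}$ admits kinetic maximal $L^p_\mu(X_\beta^{s,q})$-regularity (Corollary \ref{cor:kinmaxLpmuLqkol} for $s=0$; the case $s \ge 0$ follows by the same Fourier multiplier argument since the symbol defining $X_\beta^{s,q}$ commutes with the multipliers appearing in the proof of Theorem \ref{thm:kinmaxLpLq}). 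By Theorem \ref{thm:acpwiv}, the solution operator $u_0 \mapsto T_\beta(\cdot) u_0$ is a bounded right inverse of $\tr$, so that
$$\norm{u_0}_{X_{\gamma,\mu}} \sim \norm{u_0}_{X_\beta^{s,q}} + \left(\int_0^T t^{(1-\mu)p}\, \norm{T_\beta(t) u_0}_{X_\beta^{s+1,q}}^p\, dt\right)^{1/p},$$
where the $\T^p_\mu$-part of the $\BE_\mu$-norm is eliminated using the PDE $\partial_t T_\beta u_0 + v\cdot\nabla_x T_\beta u_0 = -(-\Delta_v)^{\beta/2} T_\beta u_0$.

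The theorem therefore reduces to the semigroup characterization of the right-hand side as the anisotropic kinetic Besov norm. To compute the left-hand side I would apply the $L^q$-isometry $\Gamma(t)$, which simplifies the Fourier-side representation of $T_\beta$ to
$$\F[\Gamma(t) T_\beta(t) u_0](k,\xi) = \hat u_0(k,\xi)\, m_\beta(t,k,\xi), \qquad m_\beta(t,k,\xi) := \exp\Bigl(-\int_0^t \abs{\xi - \tau k}^\beta\, d\tau\Bigr).$$
A direct analysis of the exponent yields a two-sided anisotropic Gaussian-type bound $\abs{m_\beta(t,k,\xi)} \sim \exp(-c\, t\abs{\xi}^\beta - c\, t^{\beta+1}\abs{k}^\beta)$ (for $\beta = 2$ this comes from completing the square; for general $\beta$ from a direct estimate of the integrand after splitting into the regimes $\abs{\xi} \gtrsim t\abs{k}$ and $\abs{\xi} \lesssim t\abs{k}$). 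The scaling $t\abs{\xi}^\beta \sim 1$ and $t^{\beta+1}\abs{k}^\beta \sim 1$ exactly matches the anisotropy $\alpha = (\frac{2(\beta+1)}{\beta+2},\dots,\frac{2(\beta+1)}{\beta+2},\frac{2}{\beta+2},\dots,\frac{2}{\beta+2})$ defining ${^{\mathrm{kin}}B}_{qp}^{s+\mu-1/p,\beta}$.

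To convert this kernel information into a norm equivalence, I would insert an anisotropic Littlewood--Paley partition of unity adapted to $\alpha$. On each anisotropic dyadic annulus $\{\abs{(k,\xi)}_\alpha \sim 2^j\}$ the symbol $m_\beta(t,\cdot,\cdot)$ is essentially $\exp(-c\, 2^{j\gamma}\, t)$-sized with $\gamma = \tfrac{2\beta}{\beta+2}$, so the weighted time-integral combined with the factor $2^{j\gamma(s+1)}$ from the $X_\beta^{s+1,q}$-norm produces exactly the dyadic weight $2^{j\gamma(s+\mu-1/p)}$ that defines ${^{\mathrm{kin}}B}_{qp}^{s+\mu-1/p,\beta}$. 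The main obstacle is carrying out this comparison rigorously: one must verify that $m_\beta$ behaves sufficiently uniformly on each anisotropic dyadic band for the Littlewood--Paley square function to be controlled by standard Fourier multiplier tools (cf.\ the appendix), and one must check that the mixed form $X_\beta^{s+1,q} = H_x^{(s+1)\beta/(\beta+1),q} \cap H_v^{(s+1)\beta,q}$ from Proposition \ref{prop:advbouchut} aligns band-by-band with the anisotropic dyadic decomposition. A fallback approach, which I would keep in reserve if the direct Littlewood--Paley computation becomes too delicate, is to argue abstractly via real interpolation after applying $\Gamma$, identifying the trace space as the real interpolation space between $X_\beta^{s,q}$ and an anisotropic Bessel-potential space and then invoking the known interpolation formula for anisotropic Besov spaces from \cite{triebel_theory_2006}.
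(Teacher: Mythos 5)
Your overall architecture does match half of the paper: the reduction of the trace-norm equivalence to a norm equivalence for the semigroup orbit via kinetic maximal regularity is exactly how the paper argues in the proof of Theorem \ref{thm:chartrace}, and for the direction ``Besov datum $\Rightarrow$ solution in $\BE_\mu$'' the anisotropic Littlewood--Paley plan you describe is what the paper carries out in Proposition \ref{prop:suffiv} via the model problem of Proposition \ref{prop:helpprob}. However, the step you label ``the main obstacle'' is not a uniformity check to be deferred --- it \emph{is} the proof. A two-sided pointwise bound $\abs{m_\beta(t,k,\xi)}\sim\exp(-ct\abs{\xi}^\beta-ct^{\beta+1}\abs{k}^\beta)$ gives nothing in $L^q$ for $q\neq 2$; what is needed are Mikhlin-type bounds, uniform in $t$ and in the dyadic index $j$, for the symbols localized to anisotropic blocks, with operator norm of size $t^{-(s+1)}\wedge 2^{j\frac{2\beta}{\beta+2}(s+1)}$, followed by a H\"older/geometric-series summation of the blocks against the temporal weight $t^{p-\mu p}$. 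This is precisely the content of Lemma \ref{lem:itsez} (a multiplier statement about $e_\beta$ divided by the model symbol, proved by controlling all derivatives), Lemma \ref{lem:basismult}, Proposition \ref{pro8p:locmik}, Corollary \ref{cor:m0mult} and the three-term estimate $I_1+I_2+I_3$ in Proposition \ref{prop:helpprob}; none of this is reproduced or replaced in your sketch.

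The converse direction (trace norm controls the kinetic Besov norm) is essentially unaddressed. Your band-by-band comparison would require inverting the symbol on each anisotropic band uniformly in $\B(L^q(\R^{2n}))$ and converting the weighted time integral into a lower bound for the dyadic sum; again pointwise bounds do not suffice, and no multiplier argument for the inverted, localized symbol is sketched. The paper sidesteps this with a genuinely different idea (Proposition \ref{prop:necivp}): the kernel $\Phi$ attached to $[D_x^{\frac{\beta}{\beta+1}(s+1)}+(-\Delta_v)^{\beta(s+1)/2}]$ applied to the homogeneous solution satisfies the exact anisotropic self-similarity $\hat\Phi(t,k,\xi)=\hat\Phi(1,t^{\alpha}(k,\xi))$, so after mollifying with a Schwartz function whose Fourier transform has annular anisotropic support it becomes an admissible kernel in the continuous characterization of $B^{\sigma,\alpha}_{qp}$, and Young's inequality then dominates the Besov norm by $\norm{u}_{\BE_\mu(0,T)}$ --- no multiplier theory at all in this direction. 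Two further points: your fallback via abstract real interpolation is not available, because the kinetic generator is not analytic and after applying $\Gamma$ the domain becomes time-dependent, so the classical ``trace space equals real interpolation space'' machinery does not apply (this failure is the raison d'\^etre of the whole paper); and your parenthetical claim that kinetic maximal $L^p_\mu(X^{s,q}_\beta)$-regularity for $s>0$ follows because the weight ``commutes with the multipliers'' of Theorem \ref{thm:kinmaxLpLq} is incorrect as stated --- the weight does not commute with the Fourier shear $\xi\mapsto\xi+(t-s)k$ coming from $\Gamma$ --- although you may simply cite Corollary \ref{coro:kinmaxsge0}, which the paper proves by differentiating the equation and interpolating.
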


\begin{prop} \label{prop:necivp}
	Let $\beta \in (0,2]$, $s \ge 0$, $p,q \in (1,\infty)$, $\mu \in (\frac{1}{p},1]$, $T \in (0,\infty)$ and let $u \in {\BE}_\mu((0,T);X_\beta^{s,q})$ be a solution of the homogeneous Kolmogorov equation given by equation \eqref{eq:fouriersol}, then $u(0) \in {^{\mathrm{kin}}B}_{qp}^{s+\mu-1/p,\beta}(\R^{2n})$. 
\end{prop}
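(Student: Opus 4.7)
My plan is to exploit the Fourier representation \eqref{eq:fouriersol} of a homogeneous solution. Since $f=0$ we have $u(t)=T_\beta(t)u_0$ with $u_0:=u(0)\in X_\beta^{s,q}$ (by Theorem \ref{thm:Xbsqemb}), and membership of $u$ in $\BE_\mu((0,T);X_\beta^{s,q})$ together with the equation itself yields
\[
\|u_0\|_{X_\beta^{s,q}}^p+\int_0^T t^{p(1-\mu)}\|T_\beta(t)u_0\|_{X_\beta^{s+1,q}}^p\,dt\lesssim\|u\|_{\BE_\mu}^p.
\]
The task therefore reduces to showing that the left hand side controls $\|u_0\|_{{^{\mathrm{kin}}B}_{qp}^{s+\mu-1/p,\beta}}$.

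To do so, I pass to the conjugated semigroup $\Gamma(t)T_\beta(t)$, which by the isometry of $\Gamma$ on $L^q$ has the same $L^q$-norm and has Fourier multiplier
\[
M_t(k,\xi)=\exp\Big(-\int_0^t|\xi-\tau k|^\beta\,d\tau\Big).
\]
Two elementary estimates on $M_t$ are available: on the region $|\xi|\gtrsim t|k|$ one has $M_t\le e^{-ct|\xi|^\beta}$, while on $|\xi|\lesssim t|k|$ one has $M_t\le e^{-ct^{\beta+1}|k|^\beta}$. Coupling these with the semigroup characterization of isotropic Besov spaces (using $-(-\Delta_v)^{\beta/2}$ in $v$, and $-(-\Delta_x)^{\beta/(2(\beta+1))}$ in $x$ after the time rescaling $t\mapsto t^{\beta+1}$), I can extract separately the $v$-regularity of order $(s+\mu-1/p)\beta$ and the $x$-regularity of order $(s+\mu-1/p)\beta/(\beta+1)$. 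Reassembling these by the anisotropic Besov identification
\[
{^{\mathrm{kin}}B}_{qp}^{a,\beta}(\R^{2n})\cong \bigcap_{r=1}^n B_{qp,x_r}^{a\beta/(\beta+1)}\cap B_{qp,v_r}^{a\beta}
\]
of \cite[Section 5.1.3]{triebel_theory_2006} then yields $u_0\in{^{\mathrm{kin}}B}_{qp}^{s+\mu-1/p,\beta}$.

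The main obstacle is the intermediate frequency regime $|\xi|\sim t|k|$, where neither of the two simple multiplier bounds for $M_t$ is comparable with the actual value of $\int_0^t|\xi-\tau k|^\beta\,d\tau$, and where the naive splitting breaks down. To handle this I expect to use an anisotropic Littlewood--Paley decomposition with dilation $\alpha=(\tfrac{2(\beta+1)}{\beta+2},\dots,\tfrac{2(\beta+1)}{\beta+2},\tfrac{2}{\beta+2},\dots,\tfrac{2}{\beta+2})$: on each dyadic shell $\{|(k,\xi)|_\alpha\sim 2^j\}$ the multiplier $M_t$ becomes of order one exactly when $t\sim 2^{-2j/(\beta+2)}$, so that the anisotropic Littlewood--Paley piece at scale $j$ applied to $u_0$ is controlled in $L^q$ by the $L^p_\mu$-norm of $t\mapsto\|T_\beta(t)u_0\|_{X_\beta^{s+1,q}}$ on a short time window around that scale. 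Summing the resulting $\ell^p_j$-sequence reproduces the Besov norm in the Littlewood--Paley definition of ${^{\mathrm{kin}}B}_{qp}^{s+\mu-1/p,\beta}$, which completes the argument.
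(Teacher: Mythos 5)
Your reduction to controlling $\norm{u_0}_{{^{\mathrm{kin}}B}_{qp}^{s+\mu-1/p,\beta}}$ by $\norm{u_0}_{X_\beta^{s,q}}^p+\int_0^T t^{p(1-\mu)}\norm{T_\beta(t)u_0}_{X_\beta^{s+1,q}}^p\,\dx t$ and the passage to the conjugated multiplier $M_t(k,\xi)=\exp(-\int_0^t\abs{\xi-\tau k}^\beta\dx\tau)$ are the right starting point, but the middle of your plan goes in the wrong direction. The two ``elementary estimates'' you quote are \emph{upper} bounds on $M_t$, and upper bounds on the evolution multiplier cannot produce regularity of the trace: to deduce that $u_0$ lies in a Besov space from the finiteness of a weighted norm of $(\text{prefactor})\cdot M_t\,\hat u_0$ you need the full multiplier to be \emph{invertible} (bounded below, with Mikhlin-type control of the reciprocal) on the frequency region matched to each time scale; otherwise decay of $M_t$ only makes the controlled quantity small without saying anything about $u_0$. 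In particular the step ``couple these with the semigroup characterization of isotropic Besov spaces'' is unjustified: comparing $\abs{\xi}^{(s+1)\beta}e^{-t\abs{\xi}^\beta}\hat u_0$ with the Kolmogorov quantity requires the ratio of the two symbols to be an $L^q$-multiplier, and that ratio contains $\exp(\int_0^t\abs{\xi-\tau k}^\beta\dx\tau-t\abs{\xi}^\beta)$, which is exponentially large off the region $t\abs{k}\lesssim\abs{\xi}$ (and not small even on it); so the splitting does not break down merely at intermediate frequencies, it is not available at all in this direction. Your fallback in the last paragraph — anisotropic Littlewood--Paley shells with a matched time window — is, by contrast, a genuinely workable route: on the shell $\abs{(k,\xi)}_\alpha\sim 2^j$ the correct window is $t\sim 2^{-2j\beta/(\beta+2)}$ (not $2^{-2j/(\beta+2)}$; for $\beta=2$ and $\abs{\xi}\sim 2^{j/2}$ one needs $t\sim 2^{-j}$), and there one can check that $\abs{k}^{\frac{(s+1)\beta}{\beta+1}}+\abs{\xi-tk}^{(s+1)\beta}\gtrsim 2^{\frac{2j\beta(s+1)}{\beta+2}}$ while $\int_0^t\abs{\xi-\tau k}^\beta\dx\tau\lesssim 1$, so the solution multiplier is invertible on the shell with the expected size; averaging over the window, the weight $t^{p(1-\mu)}$ produces exactly the factor $2^{-j\sigma p}$ with $\sigma=\frac{2\beta}{\beta+2}(s+\mu-1/p)$. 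What is missing, and is the real technical content, is the proof that $\varphi_j^\alpha\big/\big([\abs{k}^{\frac{(s+1)\beta}{\beta+1}}+\abs{\xi-tk}^{(s+1)\beta}]e_\beta(t,k,\xi-tk)\big)$ is an $L^q(\R^{2n})$-Fourier multiplier with norm $\lesssim 2^{-\frac{2j\beta(s+1)}{\beta+2}}$ uniformly in $j$ and in $t$ in the window — estimates of the type carried out in the appendix (Lemmas \ref{lem:basismult}, \ref{lem:itsez}, Proposition \ref{pro8p:locmik}) for the converse direction — together with the routine treatment of the finitely many low-frequency shells whose window exceeds $T$.

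For comparison, the paper avoids any multiplier inversion and any shell decomposition. It observes that, after the time rescaling $t\mapsto t^{\frac{\beta+2}{2\beta}}$, the controlled quantity $\int_0^T t^{p-\mu p}\norm{[D_x^{\frac{\beta}{\beta+1}(s+1)}+D_v^{\beta(s+1)}]u(t)}_q^p\dx t$ is literally a continuous anisotropic Besov norm $\int\big(t^{-\sigma}\norm{\Phi(t,\cdot)\ast u_0}_q\big)^p\frac{\dx t}{t}$, where $\hat\Phi(t,k,\xi)=\hat\Phi(1,t^\alpha(k,\xi))$ is built from the fundamental solution; since $\Phi(1,\cdot)$ itself is not an admissible test function, it convolves with an annulus-supported Schwartz function $\beta$, so that $\varphi:=\beta\ast\Phi(1,\cdot)$ satisfies the conditions of the continuous characterization and $\norm{\varphi_t\ast u_0}_q\le\norm{\beta}_1\norm{\Phi(t,\cdot)\ast u_0}_q$ by Young's inequality. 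This one-line Young argument replaces all of the shellwise lower bounds and Mikhlin estimates your plan would require, so if you pursue your route you should expect the bulk of the work to be exactly the uniform multiplier estimates you have currently left implicit.
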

	
\begin{proof}[Proof of Proposition \ref{prop:necivp}]
	 Basically, we use the fact that a function closely related to the fundamental solution of the Kolmogorov equation can be used as the function $\varphi$ in above equivalent norm of the anisotropic Besov space $B^{\sigma,\alpha}_{qp}$ with $\sigma = \frac{2\beta (s+\mu-1/p)}{\beta+2}$, i.e. on ${^{\mathrm{kin}}B}_{qp}^{s+\mu-1/p,\beta}(\R^{2n})$. We have
	\begin{align*}
		&\left( \int_0^T t^{p-p\mu}\norm{[D_x^{\frac{\beta}{\beta+1}(s+1)}+(-\Delta_v)^{\beta(s+1)/2}] u(t)}_q^p \dx t \right)^\frac{1}{p} \\
		&=  \left( \int_0^T t^{p-\mu p}\norm{\F^{-1}([\abs{k}^{\frac{\beta}{\beta+1}(s+1)}+ \abs{\xi-tk}^{\beta(s+1)}]e_\beta(t,k,\xi-tk)\hat{g}(k,\xi))}_q^p {\dx t} \right)^\frac{1}{p} \\
		&= c\left( \int_0^{T^\frac{\beta+2}{2\beta}} \frac{\norm{\F^{-1}(\hat{\Phi}(t,k,\xi)\hat{g}(k,\xi))}_q^p}{t^{\frac{2\beta}{\beta+2}(sp+\mu p-1) }} \frac{\dx t}{t} \right)^\frac{1}{p},
	\end{align*}
	with 
	\begin{equation*}
		\hat{\Phi}(t,k,\xi) = [t^{\frac{2\beta}{\beta+2}(s+1)}\abs{k}^{\frac{\beta}{\beta+1}(s+1)}+ t^{\frac{2\beta}{\beta+2}(s+1)}\abs{\xi-t^{\frac{2\beta}{\beta+2}}k}^{\beta(s+1)}]e_\beta(t^{\frac{2\beta}{\beta+2}},k,\xi-t^{\frac{2\beta}{\beta+2}}k).
	\end{equation*}
	From
	\begin{equation*}
		e_\beta(t,k,\xi-tk) = \exp(- \int_0^t \abs{\xi-rk}^\beta \dx r) = \exp(-t\int_0^1 \abs{\xi-rtk}^\beta \dx r)
	\end{equation*}
	we infer that $\hat{\Phi}(t,k,\xi) = \hat{\Phi}(1,t^\alpha (k,\xi))$, whence $\Phi(t,x,v) = t^{-2n} \Phi(1,t^{-\alpha}(x,v))$. However, $\Phi(1,\cdot,\cdot)$ cannot be used as a function in the definition of the equivalent Besov norm directly. We therefore choose a function $\beta \in \S(\R^{2n})$ such that $\beta(\cdot) = \beta(-\cdot)$ and such that the Fourier transform of $\beta $ satisfies $\supp \hat{\beta} = \{ k,\xi \in \R^{n} \colon \frac{1}{2} \le \abs{(k,\xi)}_\alpha \le 2 \}$ and $\hat{\beta}(\xi)>0$ for $\frac{1}{2} < \abs{(k,\xi)}_\alpha < 2$. Then, the function $\varphi = \beta \ast \Phi(1,\cdot)$ satisfies the conditions (1)-(3) above and
	\begin{equation*}
		\varphi_t = \beta_t \ast \Phi(t,\cdot,\cdot)
	\end{equation*}
	holds for all $t>0$. Using Young's inequality we deduce 
	\begin{equation*}
		\left\|\varphi_{t} * g\right\|_{{q}}=\left\|\beta_{t} * \Phi(t,\cdot,\cdot) * g\right\|_{{q}} \leq\left\|\beta_{t}\right\|_{{1}}\left\|\Phi(t,\cdot,\cdot) * g\right\|_{q}=\|\beta\|_{{1}}\left\|\Phi(t,\cdot,\cdot) * g\right\|_{{q}}.
	\end{equation*}
	Let $\varphi_0 \in \S$ be any function satisfying the conditions (1) and (2), then 
	\begin{align*}
		\norm{g |{^{\mathrm{kin}}B}_{qp}^{\mu-1/p,\beta}(\R^{2n})} &= \norm{\varphi_0 \ast g}_q + \left( \int_0^{T^\frac{\beta+2}{2\beta}} \left( \frac{\norm{\varphi_t \ast g}_q}{t^{\sigma}}  \right)^p \frac{\dx t}{t}\right)^\frac{1}{p} \\ 
		&\le \norm{\varphi_0}_1\norm{g}_q + \norm{\beta}_1\left( \int_0^{T^\frac{\beta+2}{2\beta}} \frac{\norm{\Phi(t,\cdot,\cdot) \ast g}_q^p}{t^{\sigma p}} \frac{\dx t}{t} \right)^\frac{1}{p} \\
		&\lesssim \norm{u}_{\BE_\mu(0,T)}.
	\end{align*}  
\end{proof}

\begin{prop} \label{prop:suffiv}
	Let $\beta \in (0,2]$, $s \ge 0$, $p,q \in (1,\infty)$, $\mu \in (\frac{1}{p},1]$, $T \in (0,\infty)$ and let $g \in {^{\mathrm{kin}}B}_{qp}^{s+\mu-1/p,\beta}(\R^{2n})$. Then, the solution of the homogeneous Kolmogorov equation given by equation \eqref{eq:fouriersol} satisfies $u \in {\BE}_\mu((0,T);X_\beta^{s,q})$.
\end{prop}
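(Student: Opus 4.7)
The plan is to show that the semigroup solution $u(t) = T_\beta(t) g$ of the homogeneous Kolmogorov equation already lies in $\BE_\mu((0,T); X_\beta^{s,q})$, by essentially reversing the computation in the proof of Proposition \ref{prop:necivp} and then controlling the resulting convolution integral against $\Phi$ by the Besov norm of $g$ via a Littlewood--Paley decomposition. First, it suffices to prove that $u \in L^p_\mu((0,T);X_\beta^{s+1,q})$: once this is known, the equation gives $(\partial_t + v\cdot \nabla_x)u = -(-\Delta_v)^{\beta/2} u \in L^p_\mu((0,T);X_\beta^{s,q})$, while $u \in L^p_\mu((0,T);X_\beta^{s,q})$ follows from the contractivity of $T_\beta$ on $L^q(\R^{2n})$ combined with integration in $t \in (0,T)$ for $T<\infty$, as in the proof of Theorem \ref{thm:kinmaxLpLq}. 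By Proposition \ref{prop:advbouchut}, the $X_\beta^{s+1,q}$-norm is equivalent to $\|D_v^{(s+1)\beta}\,\cdot\,\|_q + \|D_x^{(s+1)\beta/(\beta+1)}\,\cdot\,\|_q$.

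Next, applying the $L^q$-isometry $\Gamma$ together with the time substitution $t \mapsto t^{2\beta/(\beta+2)}$ exactly as in the proof of Proposition \ref{prop:necivp} yields the reduction
\begin{equation*}
\|u\|_{L^p_\mu((0,T);X_\beta^{s+1,q})}^p
\ \lesssim\
\int_0^{T^{(\beta+2)/(2\beta)}}
\frac{\|\Phi(t,\cdot)\ast g\|_q^{\,p}}{t^{\sigma p}}\,\frac{dt}{t},
\end{equation*}
with $\sigma = \tfrac{2\beta(s+\mu-1/p)}{\beta+2}$, and with the same scaling family $\Phi(t,x,v) = t^{-2n}\Phi(1,t^{-\alpha}(x,v))$ constructed there. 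Thus the whole proposition boils down to the reverse of the estimate used in Proposition \ref{prop:necivp}, namely that this $\Phi$-integral is dominated by $\|g\,|\,{^{\mathrm{kin}}B}_{qp}^{s+\mu-1/p,\beta}(\R^{2n})\|^{p}$.

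To achieve this, I would use an anisotropic Littlewood--Paley decomposition $1 = \hat\psi_0 + \sum_{j\geq 1} \hat\psi_j$ with $\hat\psi_j(k,\xi) = \hat\psi(2^{-j\alpha}(k,\xi))$ supported in the shell $\{|(k,\xi)|_\alpha \sim 2^j\}$, so that $g = \sum_{j\geq 0} \psi_j \ast g$. The key step is the uniform multiplier bound
\begin{equation*}
\|\Phi(t,\cdot)\ast \psi_j \ast g\|_q
\ \lesssim\
\bigl(t\cdot 2^{\beta j}\bigr)^{s+1}\exp\!\bigl(-c\, t\cdot 2^{\beta j}\bigr)\,\|\psi_j\ast g\|_q,
\end{equation*}
valid uniformly in $t>0$ and $j\geq 0$, which follows from the self-similar structure $\hat\Phi(t,k,\xi) = \hat\Phi(1,t^\alpha(k,\xi))$, the exponential decay built into $e_\beta$, and the anisotropic Mikhlin--Hörmander theorem (using Lemmas \ref{lem:dilkinmik}, \ref{lem:gammamult}, \ref{lem:itsez} from the appendix, in the same spirit as in the proof of Theorem \ref{thm:kinmaxLpLq}). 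Plugging this into the $\Phi$-integral, an application of Minkowski together with the elementary bound $\int_0^\infty (tr)^{(s+1)p}e^{-cptr}\tfrac{dt}{t^{1+\sigma p}} \lesssim r^{\sigma p}$ (applied to $r = 2^{\beta j}$) collapses the time integral scale by scale and produces the dyadic quantity $\bigl(\sum_{j} 2^{j\sigma p \cdot (\beta+2)/(2\beta)}\|\psi_j \ast g\|_q^p\bigr)^{1/p}$, which is equivalent to $\|g\,|\,{^{\mathrm{kin}}B}_{qp}^{s+\mu-1/p,\beta}(\R^{2n})\|$ by standard anisotropic Besov characterizations \cite{dachkovski_anisotropic_2003,triebel_theory_2006}.

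The main technical obstacle is the uniform-in-$(t,j)$ multiplier bound of the third paragraph: the symbol $\hat\Phi(t,\cdot)\hat\psi_j$ couples $k$ and $\xi$ nontrivially through the shift $\xi \mapsto \xi - t^{2\beta/(\beta+2)}k$ and through the integral defining $e_\beta$, so a direct product-type Mikhlin bound is not available. The resolution is to transfer derivative estimates from $\hat\Phi(1,\cdot)$ to $\hat\Phi(t,\cdot)$ via the anisotropic dilation $\alpha$, and then to verify the requisite Mikhlin-type symbol estimates on the dyadic shell using the exponential factor to absorb any polynomial loss, in close analogy with the multiplier analysis performed in Section \ref{sec:LpLq}.
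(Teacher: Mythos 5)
Your overall strategy is essentially the paper's: reduce via Proposition \ref{prop:advbouchut} (and the equation itself) to a weighted bound on the highest-order derivatives of $u$, pass through the isometry $\Gamma$ and the self-similar kernel $\Phi$ from Proposition \ref{prop:necivp}, decouple the symbol with Lemmas \ref{lem:dilkinmik}, \ref{lem:gammamult}, \ref{lem:itsez}, and then run an anisotropic Littlewood--Paley argument with shell-wise uniform multiplier bounds followed by a scale-by-scale time integration. The paper packages exactly this last block as Proposition \ref{prop:helpprob} (with the shell-wise Mikhlin bounds supplied by Lemma \ref{lem:basismult}, Proposition \ref{pro8p:locmik} and Corollary \ref{cor:m0mult}); the multiplier bound you flag as the ``main technical obstacle'' is precisely the content of those appendix results, so that part of your plan, while not carried out, is consistent with what is actually done. (Two small bookkeeping points: the equivalence $X_\beta^{s+1,q}=H_x^{(s+1)\beta/(\beta+1),q}\cap H_v^{(s+1)\beta,q}$ is by definition of the scale, not by Proposition \ref{prop:advbouchut}; and with your anisotropic blocks $\hat\psi_j(k,\xi)=\hat\psi(2^{-j\alpha}(k,\xi))$ the natural decay parameter on shell $j$ in the substituted time is $t2^{j}$, not $t2^{\beta j}$ --- your exponents correspond to a different normalization of the anisotropy, which must be fixed consistently so that the resulting dyadic weight is $2^{j\sigma}$, $\sigma=\tfrac{2\beta}{\beta+2}(s+\mu-1/p)$.)

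The genuine gap is in the final summation step. After the shell-wise bound and the elementary time integral, Minkowski's inequality only gives
\begin{equation*}
\Bigl(\int_0^{T'}\Bigl(\sum_j \norm{\Phi(t,\cdot)\ast\psi_j\ast g}_q\Bigr)^p \frac{\dx t}{t^{1+\sigma p}}\Bigr)^{1/p}
\;\le\; \sum_j \Bigl(\int_0^{T'}\norm{\Phi(t,\cdot)\ast\psi_j\ast g}_q^p \frac{\dx t}{t^{1+\sigma p}}\Bigr)^{1/p}
\;\lesssim\; \sum_j 2^{j\sigma}\norm{\psi_j\ast g}_q,
\end{equation*}
i.e.\ an $\ell^1$-in-$j$ quantity, which is the $B_{q1}^{\sigma,\alpha}$-norm of $g$, not the claimed $\ell^p$-quantity $\bigl(\sum_j 2^{j\sigma p}\norm{\psi_j\ast g}_q^p\bigr)^{1/p}$. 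Since $B_{qp}^{\sigma,\alpha}\not\hookrightarrow B_{q1}^{\sigma,\alpha}$ for $p>1$ (one loses an $\epsilon$ of smoothness, cf.\ Lemma \ref{lem:besovLq}), the stated argument does not bound the $\Phi$-integral by $\norm{g\,|\,{^{\mathrm{kin}}B}_{qp}^{s+\mu-1/p,\beta}}$. To land on the $\ell^p$-based norm you must exploit the off-diagonal decay in $t2^j$ before summing: for fixed $t$ only scales with $2^j\sim t^{-1}$ contribute, and one applies H\"older in $j$ with geometric weights $2^{\pm rj}$ separately in the regimes $t2^j\le 1$ and $t2^j>1$, then Fubini --- exactly the $I_1,I_2,I_3$ computation in the paper's proof of Proposition \ref{prop:helpprob}. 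This repair is standard, but as written the Minkowski step is where your proof would fail.
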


\begin{proof}[Proof]
	By Proposition \ref{prop:advbouchut} it suffices to show that $(-\Delta_v)^{\frac{\beta}{2}(s+1)}u \in L^p_\mu((0,T);L^q(\R^{2n}))$ and that $u \in L^p_\mu((0,T);L^q(\R^{2n}))$. We have
	\begin{align*}
		\int_0^Tt^{p-\mu p} \norm{(-\Delta_v)^{\frac{\beta}{2}(s+1)}u(t)}^p_q \dx t &= \int_0^T t^{p-\mu p} \norm{\Gamma(t)[(-\Delta_v)^{\frac{\beta}{2}(s+1)}u(t)]}^p_q \dx t \\
		&= \int_0^T t^{p-\mu p} \norm{\F^{-1}(\abs{\xi-tk}^{\beta(s+1)}e_\beta(t,k,\xi-tk)\hat{g}(k,\xi))}_q^p\dx t.
	\end{align*}
	Using that 
	\begin{equation*}
		m(k,\xi) = \frac{\abs{\xi-tk}^{\beta(s+1)}}{\abs{\xi}^{\beta(s+1)}+t^{\beta(s+1)} \abs{k}^{\beta(s+1)}}
	\end{equation*}
	is a bounded $L^q(\R^{2n})$-Fourier multiplier with constant independent of $t$, and employing Lemma \ref{lem:dilkinmik} and Lemma \ref{lem:itsez} we deduce
	\begin{align*}
		&\int_0^T t^{p-\mu p} \norm{(-\Delta_v)^{\frac{\beta}{2}(s+1)}u(t)}^p_q \dx t \\
		&\lesssim \int_0^T t^{p-\mu p} \norm{\F^{-1}([\abs{\xi}^{\beta(s+1)}+t^{\beta(s+1)} \abs{k}^{\beta(s+1)}]\exp(-ct\abs{\xi}^\beta/2-ct^{\beta+1}\abs{k}^\beta/2) \hat{g}(k,\xi))}_q^p \dx t
	\end{align*} 
	with $c$ as in Lemma \ref{lem:itsez}. But the latter expression is bounded by the ${^{\mathrm{kin}}B}_{qp}^{s+\mu-1/p,\beta}(\R^{2n})$-norm of $g$ as a consequence of Proposition \ref{prop:helpprob}. Finally, it is clear that 
	\begin{equation*}
		\norm{u}_{p,\mu,L^q(\R^{2n})} \le C(T,\mu) \norm{g}_{L^q(\R^{2n})} \lesssim \norm{g | {^{\mathrm{kin}}B}_{qp}^{s+\mu-1/p,\beta}(\R^{2n})},
	\end{equation*}
	due to the contractivity of the Kolmogorov semigroup and by Lemma \ref{lem:besovLq}.
\end{proof}

\begin{proof}[Proof of Theorem \ref{thm:chartrace}]
	The considerations in Proposition \ref{prop:necivp}, the linearity and uniqueness of solutions to the Kolmogorov equation show that the inclusion mapping
	\begin{equation*}
		\iota \colon \tr \left(\BE_\mu(0,T)\right) \to  {^{\mathrm{kin}}B}_{qp}^{s+\mu -1/p,\beta}(\R^{2n})
	\end{equation*}
	is a well-defined linear and bounded operator. Indeed, let $\epsilon>0$, then there is $u \in \BE_\mu(0,T)$ such that 
		$$\norm{u}_{\BE_\mu(0,T)} < \norm{g}_{\tr \left(\BE_\mu(0,T) \right)} + \epsilon.$$
	 Writing $\partial_t u +v \cdot \nabla_x u = -(-\Delta_v )^\frac{\beta}{2}u +f$ with $u(0) = g$, where $f = \partial_t u +v \cdot \nabla_x u +(-\Delta_v )^\frac{\beta}{2}u$ we have that $u$ is a solution of the Kolmogorov equation. For the homogeneous part $w$ of the solution we deduce
	 \begin{align*}
	 	\norm{g|{^{\mathrm{kin}}B}_{qp}^{s+\mu-1/p,\beta}(\R^{2n})} &\lesssim \norm{w}_{\BE_\mu(0,T)} \lesssim \norm{w-u}_{\BE_\mu(0,T)} +\norm{u}_{\BE_\mu(0,T)} \\
	 	&\lesssim \norm{f}_{p,\mu,X_\beta^{s,q}} +\norm{u}_{\BE_\mu(0,T)} \le  2\norm{u}_{\BE_\mu(0,T)} \\
	 	&\lesssim \norm{g}_{\tr \left(\BE_\mu(0,T) \right)} + \epsilon
	 \end{align*} 
	 by Proposition \ref{prop:necivp} and an approximation argument.
	
	Next, we are going to show that $\iota$ is surjective. To do so, given $g \in {^{\mathrm{kin}}B}_{qp}^{s+\mu-1/p,\beta}(\R^{2n})$ we need to construct a function $u \in \BE_\mu(0,T) $ such that $u(0) = g$. Here we can choose $u$ to be the solution of the homogeneous Kolmogorov equation with initial value $g$ as a consequence of Proposition \ref{prop:suffiv}. 
	This shows that there exists $u \in \BE_\mu(0,T)$ with $u(0) = g$.  We conclude the proof of Theorem \ref{thm:chartrace} by the open mapping Theorem.
\end{proof}

\begin{remark}
	Let us go back to the classical maximal $L^p$-regularity result that the trace space is given by a real interpolation space. We have shown that
	\begin{equation*}
		{^{\mathrm{kin}}B}_{qp}^{\mu-1/p,\beta}(\R^{2n}) \cong \tr \left( \BE_\mu((0,T);L^q(\R^{2n})) \right)
	\end{equation*} 
	and the space on the left-hand side can be viewed as the real interpolation space
	\begin{equation*}
\left(L^q(\R^{2n}),H_x^{\frac{\beta}{\beta+1},q}(\R^{2n}) \cap H_v^{\beta,q}(\R^{2n}) \right)_{\mu-1/p,p}.
	\end{equation*}
\end{remark}

\begin{coro} \label{cor:continbesovxv}
	Let $\beta \in (0,2]$, $s \ge 0$, $p,q \in (1,\infty)$, $ \mu \in (1/p,1]$ and $T \in (0,\infty)$. The embedding 
	\begin{equation*}
		\BE_\mu((0,T);X_\beta^{s,q}) \hookrightarrow C([0,T], {^{\mathrm{kin}}B}_{qp}^{s+\mu-1/p,\beta}(\R^{2n})) 
	\end{equation*}
	holds continuously.
\end{coro}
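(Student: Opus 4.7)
The corollary should follow by combining two results that are already in place: Theorem \ref{thm:trace}, which provides the continuous embedding $\BE_\mu((0,T);X_\beta^{s,q}) \hookrightarrow C([0,T];X_{\gamma,\mu})$ into the abstract trace space, and Theorem \ref{thm:chartrace}, which identifies the trace space $X_{\gamma,\mu} = \tr(\BE_\mu((0,T);X_\beta^{s,q}))$ with the anisotropic kinetic Besov space ${^{\mathrm{kin}}B}_{qp}^{s+\mu-1/p,\beta}(\R^{2n})$ via a topological isomorphism. Since the norms are equivalent (with constants independent of the particular function by the open mapping theorem argument used at the end of the proof of Theorem \ref{thm:chartrace}), replacing the target space $X_{\gamma,\mu}$ by the isomorphic Besov space preserves the continuity statement.

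Concretely, the plan is as follows. First, I would invoke Theorem \ref{thm:trace} to obtain, for any $u \in \BE_\mu((0,T);X_\beta^{s,q})$, that $u \in C([0,T];X_{\gamma,\mu})$ with $\norm{u}_{C([0,T];X_{\gamma,\mu})} \lesssim \norm{u}_{\BE_\mu(0,T)}$. Second, I would apply Theorem \ref{thm:chartrace}, which yields a bicontinuous identification $X_{\gamma,\mu} \cong {^{\mathrm{kin}}B}_{qp}^{s+\mu-1/p,\beta}(\R^{2n})$, so that
\begin{equation*}
\norm{u(t) \left| {^{\mathrm{kin}}B}_{qp}^{s+\mu-1/p,\beta}(\R^{2n}) \right.} \lesssim \norm{u(t)}_{X_{\gamma,\mu}}
\end{equation*}
uniformly in $t \in [0,T]$. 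Taking the supremum over $t$ and composing the two estimates gives the desired continuous embedding.

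The only thing to verify carefully is that continuity in $X_{\gamma,\mu}$ transfers to continuity in ${^{\mathrm{kin}}B}_{qp}^{s+\mu-1/p,\beta}$, but this is immediate once one has the norm equivalence, since a continuous curve in one Banach space is continuous in any isomorphic copy. There is no genuine obstacle here; the work has all been done in Theorem \ref{thm:chartrace}, and the corollary is simply a reformulation of Theorem \ref{thm:trace} using the concrete characterization of the trace space. I would write the proof as a two-line application of these two theorems.
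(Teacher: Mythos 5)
Your argument is correct and is exactly the route the paper intends: the corollary is stated without a separate proof precisely because it follows by composing the embedding of Theorem \ref{thm:trace} into $C([0,T];X_{\gamma,\mu})$ with the norm equivalence $X_{\gamma,\mu} \cong {^{\mathrm{kin}}B}_{qp}^{s+\mu-1/p,\beta}(\R^{2n})$ established in Theorem \ref{thm:chartrace}.
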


\section{Kinetic maximal $L^p_\mu$-regularity for the (fractional) Kolmogorov equation in spaces of higher regularity}

In this section we will deduce that the (fractional) Kolmogorov equation satisfies the kinetic maximal $L^p_\mu(X_\beta^{s,q})$-regularity property for all $s \ge 0$. By differentiation of the Kolmogorov equation we are able to show the following theorem. 

\begin{theorem}
	For all $s \in \N_0$, $p,q \in (1,\infty)$ and any $\mu \in (1/p,1]$ the operator $A = -(-\Delta_v)^\frac{\beta}{2}$ satisfies the kinetic maximal $L^p_\mu(X_2^{s/2,q})$- regularity property. 
\end{theorem}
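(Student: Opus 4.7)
The plan is to argue by induction on $s \in \N_0$, exploiting the translation invariance of the Kolmogorov equation in $x$ and $v$: differentiation in any direction reproduces the same equation with a controlled inhomogeneity, so higher-regularity a priori estimates follow from the lower-regularity ones. The base case $s=0$ is Corollary \ref{cor:kinmaxLpmuLqkol}, and the inductive step lifts the estimate by one level of the scale.

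Concretely, suppose the kinetic maximal $L^p_\mu$-regularity property is established at level $s$ and take $f$ in the $(s{+}1)$-th scale. By Lemma \ref{lem:criteriamaxkinreg} together with the dissipativity guaranteed by Corollary \ref{cor:dissoflapl}, it suffices to produce the a priori estimate for a dense class of data, so I would reduce to $f \in C_c^\infty((0,T) \times \R^{2n})$, where the solution $u$ obtained from the representation \eqref{eq:repKol} is smooth and decays fast enough to justify all manipulations. Differentiating $\partial_t u + v \cdot \nabla_x u + (-\Delta_v)^{\beta/2} u = f$ in $x_j$ yields
\[
(\partial_t + v \cdot \nabla_x)(\partial_{x_j} u) + (-\Delta_v)^{\beta/2}(\partial_{x_j} u) = \partial_{x_j} f,
\]
while differentiating in $v_j$ produces, via the commutator $[v \cdot \nabla_x, \partial_{v_j}] = \partial_{x_j}$,
\[
(\partial_t + v \cdot \nabla_x)(\partial_{v_j} u) + (-\Delta_v)^{\beta/2}(\partial_{v_j} u) = \partial_{v_j} f - \partial_{x_j} u.
\]
Both inhomogeneities belong to the $s$-th scale: for $\partial_{x_j} f$ and $\partial_{v_j} f$ this is immediate from $f$ being one level higher, and for the commutator term $\partial_{x_j} u$ it follows from the previously established $x$-differentiation step combined with Proposition \ref{prop:advbouchut}, which trades the available $v$-regularity for exactly the missing amount of $x$-regularity. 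Applying the inductive hypothesis to each gradient component then places $\partial_{x_j} u$ and $\partial_{v_j} u$ in ${_0\BE}_\mu((0,T); X_\beta^{s,q})$ with quantitative control.

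Summing over $j$ and combining these gradient estimates with the $s$-level estimate for $u$ itself upgrades $u$ to the next level of the scale with the full a priori inequality
\[
\norm{u}_{{_0\BE}_\mu((0,T); X_\beta^{s+1,q})} \lesssim \norm{f}_{p,\mu,X_\beta^{s+1,q}},
\]
and Lemma \ref{lem:criteriamaxkinreg} closes the induction. The main obstacle is precisely the commutator term $\partial_{x_j} u$ in the equation for $\partial_{v_j} u$: naively it would demand one more $x$-derivative than $f \in L^p_\mu(X_\beta^{s+1,q})$ provides. The resolution is Bouchut-type kinetic regularization, Proposition \ref{prop:advbouchut}, which recovers the missing $x$-regularity from the $v$-regularity already controlled at the previous step — this interplay between $x$- and $v$-smoothness is hard-wired into the definition of $X_\beta^{s,q}$ and is exactly what makes this induction scheme self-consistent.
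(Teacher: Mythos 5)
Your overall strategy (induction on $s$, differentiating the equation, exploiting the commutator $[v\cdot\nabla_x,\partial_{v_j}]=\partial_{x_j}$, reducing to smooth $f$ via Lemma \ref{lem:criteriamaxkinreg} and Corollary \ref{cor:dissoflapl}) is the same as the paper's, but there is a genuine gap in the $x$-differentiation step. You claim that $\partial_{x_j}f$ lies in the $s$-th level of the scale ``immediately from $f$ being one level higher.'' This is false for the anisotropic scale: passing from $X_\beta^{s,q}$ to $X_\beta^{s+1,q}$ (or from $X_2^{s/2,q}$ to $X_2^{(s+1)/2,q}$) gains a full $v$-derivative but only $\tfrac{\beta}{\beta+1}$ (resp.\ $\tfrac13$) of an $x$-derivative, so $\norm{\partial_{x_j}f}$ at level $s$ is not controlled by $\norm{f}$ at level $s+1$. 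Consequently the a priori bound you would obtain for $\partial_{x_j}u$ is in terms of an uncontrolled quantity, and the estimate required by Lemma \ref{lem:criteriamaxkinreg} does not close. Your proposed rescue via Proposition \ref{prop:advbouchut} also falls short: kinetic regularization converts $v$-regularity of order $\sigma\beta$ into $x$-regularity of order $\sigma\tfrac{\beta}{\beta+1}$, so from the level-$s$ solution one only recovers $x$-regularity $\tfrac{s+\beta}{\beta+1}$ for $u$, which misses the full derivative $\partial_{x_j}u\in X_\beta^{s/\beta,q}$ by exactly $\tfrac{1}{\beta+1}$.

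The paper closes this by differentiating the equation only by the fractional operator $D_x^{1/(\beta+1)}$ (for the stated scale, $D_x^{1/3}$), which is precisely the $x$-increment the scale affords, so that $D_x^{1/(\beta+1)}f$ is legitimately at level $s$. The level-$s$ maximal regularity then places $D_x^{1/(\beta+1)}u$ in $\BE_\mu$ at level $s$, whose domain part carries an additional $\tfrac{\beta}{\beta+1}$ of $x$-regularity; writing $D_x u = D_x^{1/(\beta+1)}D_x^{\beta/(\beta+1)}u$ yields the full derivative $\partial_{x_i}u$ at level $s$ with the correct bound. Only after this does one differentiate in $v$, where the commutator term $\partial_{x_i}u$ is now admissible, exactly as in your second equation. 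If you replace your full $\partial_{x_j}$-step by this fractional differentiation and the composition argument, your induction scheme becomes the paper's proof.
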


\begin{proof}

	Let $s  \ge 0$, then $X_\beta^{s/\beta,q} = H_x^{s\frac{1}{\beta+1},q}(\R^{2n}) \cap H_v^{s,q}(\R^{2n})$. For $s = 0$ the result has already been proven. We are going to prove the general case by induction. Let $f \in L^p((0,T);X_\beta^{(s+1)/\beta,q})$, then the assumed kinetic maximal $L^p_\mu(X_\beta^{s/\beta,q})$-regularity property implies the existence of a unique solution $u \in {_0 \BE}_\mu((0,T);X_\beta^{s/\beta,q})$ of the Kolmogorov equation with right hand-side $f$ and zero initial datum. 
	
	It remains to show that $u \in \BE_\mu((0,T);X_\beta^{(s+1)/\beta,q})$. Differentiating the Kolmogorov equation with respect to $x$ yields
	\begin{equation*}
		\partial_t D_x^\frac{1}{\beta+1}u +v \cdot \nabla_x D_x^\frac{1}{\beta+1}u = \Delta_v D_x^\frac{1}{\beta+1}u + D_x^\frac{1}{\beta+1} f,
	\end{equation*}
	which, by the kinetic maximal $L^p_\mu(X_\beta^{s/\beta,q})$-regularity property, shows that 
	$$D_x^\frac{1}{\beta+1}u \in \BE_\mu((0,T);X_\beta^{(s+1)/\beta,q}),$$
	whence $D_x u = D_x^\frac{1}{\beta+1}D_x^\frac{\beta}{\beta+1}u \in L^p((0,T);X_\beta^{s/\beta,q})$. Next, we differentiate with respect to $v$ to the result
	\begin{equation*}
		\partial_t \partial_{v_i} u + v \cdot \nabla_x \partial_{v_i} u = \Delta_v \partial_{v_i} u - \partial_{x_i} u + \partial_{v_i} f.
	\end{equation*}
	As $\partial_{x_i} u \in \BE_\mu((0,T);X_\beta^{s/\beta,q})$ the kinetic maximal $L^p_\mu(X_\beta^{s/\beta,q})$-regularity property implies $u \in \BE_\mu((0,T);X_\beta^{(s+1)/\beta,q})$.
\end{proof}

\begin{coro} \label{coro:kinmaxsge0}
	For all $\beta \in (0,2]$, $s \ge 0$, $p,q \in (1,\infty)$ and any $\mu \in (1/p,1]$ the operator $A = -(-\Delta_v)^\frac{\beta}{2}$ satisfies the kinetic maximal $L^p_\mu(X_\beta^{s,q})$-regularity property. 
\end{coro}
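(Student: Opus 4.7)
Proof proposal: The plan is to extend the result from the discrete grid $\{n/\beta : n\in\N_0\}$ supplied by the preceding theorem to all real $s\ge0$ by complex interpolation. The clean observation is that the solution map
\begin{equation*}
S\colon f\longmapsto u, \qquad u(t)=\int_0^t T_\beta(t-\tau)f(\tau)\,\dx \tau,
\end{equation*}
together with $(\partial_t+v\cdot\nabla_x)Sf$ and $A\, Sf$, is realised through fixed Fourier multiplier operators that do not depend on the regularity scale. It therefore suffices to show that these operators are bounded on $L^p_\mu((0,T);X_\beta^{s,q})$ for arbitrary $s\ge 0$.

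The first technical ingredient is the complex interpolation identity
\begin{equation*}
[X_\beta^{s_0,q},X_\beta^{s_1,q}]_\theta = X_\beta^{(1-\theta)s_0+\theta s_1,q},\qquad s_0,s_1\ge0,\ \theta\in(0,1).
\end{equation*}
Since $X_\beta^{s,q}$ is defined through the single Fourier multiplier $w(k,\xi)^s$ with $w(k,\xi)=(1+|k|^2)^{\beta/(2(\beta+1))}+(1+|\xi|^2)^{\beta/2}$, and the complex-analytic family $z\mapsto w^z$ obeys Mihlin-type bounds uniformly on vertical strips, the classical Stein / three-lines argument used for Bessel potential spaces applies verbatim. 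Combined with Bochner space interpolation (valid because $X_\beta^{s,q}$ is isomorphic to the UMD space $L^q(\R^{2n})$) this yields $[L^p_\mu((0,T);X_\beta^{s_0,q}),L^p_\mu((0,T);X_\beta^{s_1,q})]_\theta=L^p_\mu((0,T);X_\beta^{s,q})$.

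Given $s\ge 0$, I would pick $n\in\N_0$ and $\theta\in[0,1]$ with $s=(1-\theta)n/\beta+\theta(n+1)/\beta$. Instead of interpolating the intersection ${_0\BE}_\mu={_0\T}^p_\mu(X_\beta^{s,q})\cap L^p_\mu(X_\beta^{s+1,q})$ directly, I would interpolate the three component estimates
\begin{equation*}
\|u\|_{p,\mu,X_\beta^{s,q}}+\|(\partial_t+v\cdot\nabla_x)u\|_{p,\mu,X_\beta^{s,q}}+\|Au\|_{p,\mu,X_\beta^{s,q}} \lesssim \|f\|_{p,\mu,X_\beta^{s,q}}
\end{equation*}
separately. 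The preceding theorem provides each estimate at both endpoints $s=n/\beta$ and $s=(n+1)/\beta$; because the linear maps $f\mapsto u$, $f\mapsto(\partial_t+v\cdot\nabla_x)u$ and $f\mapsto Au$ are fixed Fourier multiplier operators, applying the Bochner space interpolation from the previous paragraph to each of them yields the estimate at any intermediate $s$. Uniqueness of the resulting solution inside ${_0\BE}_\mu((0,T);X_\beta^{s,q})$ follows from Theorem \ref{thm:uniqueweak}: $A=-(-\Delta_v)^{\beta/2}$ commutes with $\Gamma(t)$ and is dissipative on $L^q(\R^{2n})\supset X_\beta^{s,q}$, so Lemma \ref{lem:criteriamaxkinreg} produces the kinetic maximal $L^p_\mu(X_\beta^{s,q})$-regularity property.

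The main obstacle is the very first step, namely the verification that $w^z$ satisfies Mihlin's condition uniformly on vertical strips; the anisotropic structure of $w$ (the mixing of $|k|$ and $|\xi|$ with different homogeneities) makes this slightly more subtle than the classical Bessel potential case, but can be handled with the anisotropic Mihlin-type results already collected in the appendix. Once the multiplier bound is in place, the rest of the argument is routine interpolation.
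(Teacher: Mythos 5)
Your proposal is correct and follows essentially the same route as the paper, whose proof of this corollary is literally ``interpolation and the latter theorem'': you interpolate between the grid $\{n/\beta\}$ covered by the preceding theorem, using that the $X_\beta^{s,q}$-scale interpolates in the complex sense and that the solution map is a fixed multiplier operator, then recover uniqueness via Lemma \ref{lem:criteriamaxkinreg}. The only quibble is the phrase that $A$ ``commutes with $\Gamma(t)$'' — it does not, but the conjugated operator $\Gamma(t)A\Gamma^{-1}(t)$ is still dissipative since conjugation by an isometric isomorphism preserves dissipativity, so your conclusion stands.
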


\begin{proof}
	This follows by interpolation and the latter theorem. 
\end{proof}

\begin{coro} 
	Let $\beta \in (0,2]$, $s \ge 0$, $p,q \in (1,\infty)$, $\mu \in (1/p,1]$ and $g \in {^{\mathrm{kin}}B}_{qp}^{s+\mu-1/p,\beta}(\R^{2n})$. Then the solution $u$ to the homogeneous (fractional) Kolmogorov equation with initial value $g$ regularizes instantaneously, i.e. we have $u \in C^\infty((0,\infty)\times \R^{2n}) $. 
\end{coro}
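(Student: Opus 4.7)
The plan is a bootstrap iteration combining instantaneous regularization, the characterization of the trace space from Theorem \ref{thm:chartrace}, and the fact that $A = -(-\Delta_v)^{\beta/2}$ admits kinetic maximal $L^p_\nu(X_\beta^{r,q})$-regularity for every $r \ge 0$ and every $\nu \in (1/p, 1]$ (Corollary \ref{coro:kinmaxsge0}).

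First, given $g \in {^{\mathrm{kin}}B}_{qp}^{s+\mu-1/p,\beta}(\R^{2n})$, Theorem \ref{thm:acpwiv} yields a unique solution $u \in \BE_\mu((0,T); X_\beta^{s,q})$ with $u(0) = g$ for every $T<\infty$. Combining the chain $\BE_\mu((0,T); X_\beta^{s,q}) \hookrightarrow \BE_1((\delta,T); X_\beta^{s,q}) \hookrightarrow C([\delta,T]; X_{\gamma,1})$ (the instantaneous regularization noted after Theorem \ref{thm:trace}) with Theorem \ref{thm:chartrace} applied at $\mu = 1$, we obtain $u(\tau) \in {^{\mathrm{kin}}B}_{qp}^{s+1-1/p,\beta}(\R^{2n})$ for every $\tau>0$.

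Next, I would iterate. Pick $\nu \in (1/p, 1)$ and set $s_1 := s + 1 - \nu > s$; since $s_1 + \nu - 1/p = s + 1 - 1/p$, Theorem \ref{thm:chartrace} identifies $u(\tau)$ as an admissible initial datum for $\BE_\nu((\tau,T); X_\beta^{s_1,q})$. Applying Theorem \ref{thm:acpwiv} to the Kolmogorov equation on $(\tau,T)$ with this datum and invoking uniqueness via Corollary \ref{cor:dissoflapl} (both candidate solutions lie in the common class $\BE_1((\tau,T); X_\beta^{s,q})$), we conclude that $u$ itself belongs to $\BE_\nu((\tau, T); X_\beta^{s_1, q})$. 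A second application of instantaneous regularization then gives $u(\tau') \in {^{\mathrm{kin}}B}_{qp}^{s_1+1-1/p,\beta}$ for every $\tau'>\tau$. Iterating $k$ times, with parameters $\nu_1,\dots,\nu_k \in (1/p,1)$ bounded away from $1$ and a strictly increasing sequence of times $0 < \tau_1 < \cdots < \tau_k$, produces $s_k \to \infty$, and thus $u \in \BE_1((t_0, T); X_\beta^{r,q})$ for every $t_0>0$ and every $r \ge 0$.

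Since $X_\beta^{r,q} = H_x^{r\beta/(\beta+1),q}(\R^{2n}) \cap H_v^{r\beta,q}(\R^{2n})$ for $r \ge 0$, the classical Sobolev embedding yields $u(t,\cdot,\cdot) \in C^\infty(\R^{2n})$ for every $t > 0$, locally uniformly in $t$. Repeated differentiation of $\partial_t u = -v \cdot \nabla_x u - (-\Delta_v)^{\beta/2} u$ in $t$ then expresses each $\partial_t^j u$ as a finite sum of spatial operators applied to $u$, all of which remain in the smooth Bessel scale just produced, giving $u \in C^\infty((0,\infty) \times \R^{2n})$. The main obstacle is the uniqueness argument at each restart, ensuring that $u$ itself (and not merely \emph{some} solution with the same initial value) acquires the higher regularity; this reduces to Corollary \ref{cor:dissoflapl} together with the embedding of the two candidate solutions into a common $\BE_1$-class. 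A secondary subtlety, relevant only for $\beta \in (0,2)$, is the nonlocal nature of $(-\Delta_v)^{\beta/2}$ in the time-regularity step, which is why it is convenient to stay in the Bessel scale throughout rather than argue directly in $C^k$-classes.
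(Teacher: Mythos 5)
Your bootstrap is correct and is essentially the argument the paper intends: its proof simply invokes the companion $L^2$-case (Theorem 6.4 of \cite{niebel_kinetic_nodate}) together with Corollary \ref{coro:kinmaxsge0} and Theorem \ref{thm:chartrace}, i.e.\ exactly the iteration of higher-order kinetic maximal regularity, trace-space identification at $\mu=1$, uniqueness, and Sobolev/Besov embeddings that you carry out. One micro-correction: at each restart the two solutions should be compared in the weighted class $\BE_\nu((\tau,T);X_\beta^{s,q})$ rather than $\BE_1$ (the restarted solution is only known to lie in the weighted space near $t=\tau$), which is harmless since Corollary \ref{cor:dissoflapl} gives uniqueness for every $\mu\in(1/p,1]$.
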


\begin{proof}
	The proof follows along the lines of the proof of \cite[Theorem 6.4]{niebel_kinetic_nodate} using Corollary \ref{coro:kinmaxsge0} and Theorem \ref{thm:chartrace}.
\end{proof}

It is an interesting open question whether one can extend this technique also to the case where the base space is a Sobolev space of negative order, i.e. the case of weak solutions. However, the commutator term cannot be controlled as easily as in the above situation. A natural approach would be to apply the operator $J^s = (\id+D_x^\frac{2}{3}+D_v^2)^\frac{s}{2}$ to the equation, which results in
\begin{equation*}
	\partial_t J_v^s u + v \cdot \nabla_x J_v^s u = \Delta_v J_v^su -s\mathcal{F}^{-1}(\langle k, \xi \rangle (1+ \abs{k}^\frac{2}{3}+ \abs{\xi}^2)^{-1} \widehat{J_v^s u}) + J_v^s f.
\end{equation*}
The commutator is not of lower order which causes several problems. We emphasize that kinetic maximal $L^p_\mu(X_\beta^{s,q})$-regularity of $A = -(-\Delta_v)^\frac{\beta}{2}$ for $s<0$ is a very interesting open problem.

\section{The Kolmogorov equation with bounded and uniformly continuous coefficients}
\label{sec:kolcoeff} 
In this section we are going to show that the Kolmogorov equation with uniformly continuous and bounded coefficients satisfies the kinetic maximal $L^p_\mu(L^q)$-regularity property. A similar result for degenerate Ornstein-Uhlenbeck operators has already been proven in \cite{bramanti_global_2013} but only for $p = q$ and $\mu = 1$ and no lower order terms. We compare their result to ours in the remark after the following theorem.  
 
\begin{theorem} \label{thm:kinmaxLpmuLqvarcoeff}
 Let $T>0$, $a  \in L^\infty([0,T] \times \R^{2n}; \mathrm{Sym}(n))$ with $\Gamma a \in BUC([0,T] \times \R^{2n}; \mathrm{Sym}(n))$ such that $a(t,x,v) \ge \lambda \id $ for all $(t,x,v) \in [0,T] \times \R^{2n}$ in the sense of positive definite matrices for some constant $\lambda >0$. Furthermore, let $b \in L^\infty([0,T]\times \R^{2n};\R^{n})$ and $c \in L^\infty([0,T]\times \R^{2n};\R) $. For all $p,q \in (1,\infty)$ and $\mu \in (\frac{1}{p},1]$, the Cauchy problem \begin{equation*}
	\begin{cases}
		\partial_t u +v \cdot \nabla_x u = a \colon \nabla_v^2 u + b \cdot \nabla_v u + cu  +f, \\
		u(0) = g
	\end{cases}
\end{equation*}
admits a unique solution $u \in \BE_\mu(0,T) = \T^{p}_\mu((0,T);L^q(\R^{2n})) \cap L^p_\mu((0,T);H^{2/3,q}_x(\R^{2n}) \cap H^{2,q}_v(\R^{2n}))$ if and only if
\begin{enumerate}
	\item $f \in L^p_\mu((0,T);L^q(\R^{2n}))$ and
	\item $g \in {^{\mathrm{kin}}B}_{qp}^{\mu-1/p,2}(\R^{2n})$.
\end{enumerate}
In other words the operator $u \mapsto  a \colon \nabla_v^2 u + b \cdot \nabla_v u + cu$ satisfies the kinetic maximal $L^p_\mu(L^q)$-regularity property.
\end{theorem}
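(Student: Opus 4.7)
The necessity of the two conditions is immediate: if $u \in \BE_\mu(0,T)$, then each of $a\colon \nabla_v^2 u$, $b\cdot \nabla_v u$ and $cu$ lies in $L^p_\mu((0,T);L^q(\R^{2n}))$ by the boundedness of the coefficients, so $f$ does too; and $g=u(0)$ lies in the trace space $X_{\gamma,\mu} \cong {^{\mathrm{kin}}B}_{qp}^{\mu-1/p,2}(\R^{2n})$ by Theorem \ref{thm:chartrace}. For the converse, Theorem \ref{thm:acpwiv} reduces the task to showing the kinetic maximal $L^p_\mu(L^q)$-regularity of the family $A(t)u = a(t,\cdot,\cdot)\colon \nabla_v^2 u + b\cdot \nabla_v u + cu$, i.e. constructing, for every $f\in L^p_\mu((0,T);L^q)$, a unique solution $u\in {_0\BE}_\mu(0,T)$ of equation \eqref{eq:kinacp} with $g=0$.

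Uniqueness will be handled via Theorem \ref{thm:uniqueweak}. I would check that $\Gamma(t) A(t) \Gamma(t)^{-1}$ is dissipative in $L^q(\R^{2n})$. Using Proposition \ref{prop:TWtoWW} and the chain rule one finds $\Gamma(t)\partial_{v_i}\Gamma(t)^{-1} = \partial_{v_i}-t\partial_{x_i}$, so the transformed operator is a second-order differential operator in $(x,v)$ whose principal symbol is $(\Gamma(t)a)\colon (\xi - tk)^{\otimes 2}$ with transformed coefficient $\Gamma a$ bounded and positive definite, plus bounded first- and zero-order terms; dissipativity in $L^q$ is then standard.

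The core of the argument is existence plus the a priori estimate, which I would obtain by the classical method of frozen coefficients, transported to the kinetic setting via $\Gamma$. The lower-order perturbation $b\cdot\nabla_v + c$ is handled first: interpolation combined with the embedding from Theorem \ref{thm:kinmaxLpLp}-style estimates gives
\begin{equation*}
	\|b\cdot \nabla_v u + cu\|_{p,\mu,L^q} \le \epsilon \|\nabla_v^2 u\|_{p,\mu,L^q} + C_\epsilon \|u\|_{p,\mu,L^q},
\end{equation*}
and the $\|u\|_{p,\mu,L^q}$ term is absorbed by choosing $T$ small, the general case following by gluing via Theorem \ref{thm:acpwiv}. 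It thus remains to treat $A(t)u = a\colon \nabla_v^2 u$. For a reference point $(t_0,x_0,v_0)$, set $a_0 := a(t_0,x_0,v_0)$; by Corollary \ref{cor:coordlaplace}, the operator $\partial_t + v\cdot\nabla_x - a_0\colon\nabla_v^2$ is an isomorphism from ${_0\BE}_\mu(0,T)$ onto $L^p_\mu((0,T);L^q)$ with norm bounded independently of $T\in(0,T_0]$. The equation becomes
\begin{equation*}
	(\partial_t + v\cdot \nabla_x - a_0\colon \nabla_v^2)u = (a - a_0)\colon \nabla_v^2 u + f,
\end{equation*}
which I would solve by a fixed-point (Neumann series) argument as soon as the coefficient $(a-a_0)$ is small in sup-norm. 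Since this smallness only holds locally, one introduces a partition of unity $\{\eta_j\}$ in the kinetic variables $(x-tv,v)$, that is, cut-offs of the form $\Gamma^{-1}\tilde\eta_j\Gamma$ with $\tilde\eta_j\in C_c^\infty$, together with a subdivision of $[0,T]$. These commute with $\partial_t + v\cdot \nabla_x$, so they can be moved freely past the kinetic first-order part, and the precise hypothesis that $\Gamma a$ (not $a$) is uniformly continuous ensures that on each piece $\|(a-a_0)\Gamma^{-1}\tilde\eta_j\Gamma\|_\infty$ can be made as small as desired by refining the partition.

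The main obstacle is the bookkeeping of the commutators: localization in $(x-tv,v)$ mixes the $x$- and $v$-derivatives when it meets $\nabla_v^2$, producing commutator terms of the form $[\nabla_v^2,\Gamma^{-1}\tilde\eta_j\Gamma]u$ that involve $\nabla_x u$ and $\nabla_v u$ together. By the kinetic regularization Proposition \ref{prop:advbouchut}, $\nabla_x u$ is controlled at scale $\frac{\beta}{\beta+1}=\frac{2}{3}$ by $\BE_\mu(0,T)$; thus these commutators are genuinely of lower order compared to $\nabla_v^2 u$, and the same interpolation-plus-small-time absorption trick used for the $b\cdot\nabla_v$ term disposes of them. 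Combining the local solutions through Theorem \ref{thm:acpwiv} gives a global solution on $[0,T]$ and completes the proof.
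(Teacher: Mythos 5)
Your overall skeleton (freeze coefficients, use flow-adapted cutoffs, exploit uniform continuity of $\Gamma a$, absorb lower-order terms by interpolation and small time, glue via Theorem \ref{thm:acpwiv}) is the same as the paper's, but two steps as you have written them do not go through. First, uniqueness: you propose to verify dissipativity of $\Gamma(t)A(t)\Gamma(t)^{-1}$ in $L^q(\R^{2n})$ and invoke Theorem \ref{thm:uniqueweak}, calling this ``standard''. It is not: the transformed operator is in non-divergence form with coefficients that are only bounded and uniformly continuous, and it carries lower-order terms with merely bounded $b$ and $c$ (with $c$ possibly positive). Proving $L^q$-dissipativity of a non-divergence operator requires integrating by parts, i.e. differentiating the coefficients, which is unavailable here, and with a positive zero-order coefficient dissipativity is simply false. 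The paper does not use Theorem \ref{thm:uniqueweak} at all for this result; injectivity of $P=\partial_t+v\cdot\nabla_x-a\colon\nabla_v^2-b\cdot\nabla_v-c$ is obtained from the a priori estimate $\norm{Pu}_X\ge C\norm{u}_Z$ that the localization argument itself produces, and surjectivity then follows by the method of continuity connecting $P$ to $\partial_t+v\cdot\nabla_x-\lambda\Delta_v$ (using Corollary \ref{cor:coordlaplace} and Lemma \ref{lem:multmaxkinreg}), with the same uniform bound along the path. Your Neumann-series/parametrix alternative for existence can in principle be carried out, but you do not explain how the local smallness statements assemble into a globally invertible operator; with the a priori estimate in hand, the continuity method is the shorter route.

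Second, the commutators: since the cutoffs are transported by the kinetic flow, $\varphi_k(t,x,v)=\eta_k(x-tv,v)$, the commutator of multiplication by $\varphi_k$ with $a\colon\nabla_v^2$ equals $2\langle a\nabla_v\varphi_k,\nabla_v u\rangle+(a\colon\nabla_v^2\varphi_k)u$; the $x$-derivatives generated by the chain rule fall on the cutoff (producing bounded factors like $-t\nabla_x\eta_k$ on $[0,T]$), not on $u$. Hence the commutator involves only $\nabla_v u$ and $u$ with bounded coefficients, and is absorbed via $\norm{\nabla_v u}_X\le\epsilon\norm{u}_Z+C_\epsilon\norm{u}_X$ together with $\norm{u}_X\le\delta\norm{u}_Z$ on a short interval. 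Your claim that terms in $\nabla_x u$ appear and can be treated as lower order because Proposition \ref{prop:advbouchut} controls $x$-regularity ``at scale $2/3$'' would not rescue the argument if such terms were actually present: $\BE_\mu(0,T)$ only bounds $D_x^{2/3}u$, not the full first-order derivative $\nabla_x u$, so a genuine $\nabla_x u$ contribution could not be absorbed. The step is salvageable only because those terms do not occur; as formulated, this part of your plan rests on an estimate that is not available.
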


\begin{proof} 
For the sake of clearness of  the following argumentation we introduce the spaces $X = L^p_\mu((0,\delta);L^q(\R^{2n}))$ with the norm $\norm{\cdot}_X = \norm{\cdot}_{p,\mu,L^q(\R^{2n})}$ and $ Z = {_0 \BE}_\mu(0,\delta)$ equipped with the respective norm $\norm{\cdot}_{Z}$ for some $\delta >0$, which will be chosen at a later point. 

We are going to show that the operator
\begin{equation*}
		P \colon Z \to X, \; \partial_t + v \cdot \nabla_x - a \colon \nabla_v^2 - b \cdot \nabla_v u - cu
\end{equation*}
is an isomorphism for some $\delta >0$. As a consequence of Theorem \ref{thm:acpwiv} the result follows for non-zero initial value, too. As the constant $\delta>0$ will depend only on the modulus of continuity of $\Gamma a$ and other universal constants, by translation, we can iterate this argument to deduce the claim on the intervals $[\delta,2\delta]$, $[2\delta,3\delta], \dots$ and conclude the claim on the interval $[0,T]$ by gluing the separate solutions together.

First, let us show that $P$ satisfies $\norm{Pu}_X \ge C \norm{u}_Z$ for some constant $C = C(\lambda,T,p,q,\mu)$, which, in particular, implies that $P$ is injective. Let $\epsilon>0$, due to uniform continuity of $\Gamma a$ there exist $z_1=(x_1,v_1),z_2=(x_2,v_2),\dots \hphantom{} \in \R^{2n}$ and $\delta_0>0$ such that
	\begin{equation*}
		\abs{a(t,x+tv,v)-a(0,z_k)} \le \epsilon
	\end{equation*}
	for all $0 \le t\le\delta \le \delta_0$ and any $z \in \R^{2n}$ with $\abs{z-z_k} < \delta_0$. Furthermore $\delta_0$ should be chosen such that the sets $U_k = B_{\delta_0}(z_k)$, $k \in \N$ are a covering of $\R^{2n}$ with the property that $U_k \cap U_j \neq 0$ for at most a fixed number $M = M(n) \in \N$ of indices $j,k \in \N$. Let $(\eta_k)_{k \in \N} \subset C^\infty(\R^{2n})$ be a partition of unity such that $\sum_{k = 1}^\infty \eta_k = 1$, $0 \le \eta_k \le 1$ with $\supp \eta_k \subset U_k$. Additionally, we assume that $\norm{\nabla \eta_k}_\infty,\norm{\nabla \eta_k^2}_\infty \le C_1$. We define $\varphi_k(t,x,v) = [\Gamma(-t) \eta_k](x,v) =  \eta_k(x-tv,v)$, so that $\partial_t \varphi_k + v \cdot \nabla_x \varphi_k = 0$. Clearly, $(\varphi_k(t,\cdot))_{k \in \N}$ is still a partition of unity of $\R^{2n}$ for all $t \in [0,T]$ and $\norm{\nabla \varphi_k}_\infty,\norm{\nabla \varphi_k^2}_\infty \le C_1(T,n,\delta_0)$.  Furthermore, we have
	\begin{equation*}
		1 \le \sum_{k = 1}^\infty \mathds{1}_{(0,1]}(\varphi_k(t,x,v)) \le M
	\end{equation*}
	for all $(t,x,v) \in [0,T] \times \R^{2n}$.
	
 To simplify notation we introduce the following differential operators 
	\begin{equation*}
		Au = a(t,x,v) \colon \nabla_v^2u, \; A_k u = a(0,z_k) \colon \nabla_v^2 u \text{ and } Lu = b \cdot \nabla_v u + cu.
	\end{equation*}
	We localize the equation $Pu =: f$ by multiplying with the test function $\varphi_k$ 
	\begin{align} \label{eq:loceq}
		\partial_t (\varphi_k u) + v \cdot \nabla_x (\varphi_k u) &= A_k(\varphi_k u) + \varphi_k f \\
		&+ \varphi_k L u -2\langle a(0,z_k) \nabla_v \varphi_k ,\nabla u \rangle -(a(0,z_k) \colon \nabla_v^2 \varphi_k)u \nonumber \\ 
		&+\varphi_k \left( A( u)-A_k(u) \right). \nonumber
	\end{align}
	Let us investigate the frozen equation more closely. For all $k \in \N$, by Corollary \ref{cor:coordlaplace}, the equation 
	\begin{equation*}
		\begin{cases}
			\partial_t w + v \cdot \nabla_x w = A_kw + f \\
			w(0) = 0
		\end{cases}
	\end{equation*}
	possesses a unique solution $\mathcal{L}_k(f) = w \in Z$ if and only if $f \in X$. Moreover, there exists a constant $C_2=C_2(p,q,\mu,\lambda,T)>0$ independent of $k$ and $\delta \in [0,T]$ such that 
	\begin{equation*}
		\norm{\mathcal{L}_k(f)}_{Z} \le C_2 \norm{f}_X.
	\end{equation*}
	Here, the independence of the constant $C_2$ of $k$ follows from the uniform ellipticity assumption on the coefficient matrix $a$. 
	
	Applying the solution operator $\L_k \colon X \to Z$ to equation \eqref{eq:loceq} gives
	\begin{equation*}
		\norm{\varphi_k u}_Z \le  \norm{ \L_k(\varphi_k f)}_Z + \norm{\L_k(R_k(u))}_Z + \norm{ S_k(\varphi_k u)}_Z ,  
	\end{equation*}
	where 
	\begin{equation*}
		R_k(u) = \varphi_k L u -2\langle A\nabla_v \varphi_k ,\nabla u \rangle -(A \colon \nabla_v^2 \varphi_k)u
	\end{equation*}
	and 
	\begin{equation*}
		S_k(u) = \L_k\left[\varphi_k (A(u)-A_k(u))\right]. 
	\end{equation*}
	Let us first estimate
	\begin{align*}
		\norm{S_k (u)}_Z &= \norm{\L_k(\varphi_k(A(u)-A_k(u)))}_Z \le C_2 \norm{\varphi_k(A(u)-A_k(u))}_X \\
		&= C_2 \norm{\varphi_k (a(\cdot,\cdot)-a(0,z_k)) \colon \nabla_v^2 u}_X \le C_2\epsilon \norm{\varphi_k \nabla_v^2 u}_X, 
	\end{align*}
	where $C_3$ depends only on the bound $C_1$ on the derivatives of $\varphi_k$. Here, we have used uniform continuity of $\Gamma a$. Next, we estimate
	\begin{equation*}
		\sum_{k = 1}^\infty \norm{R_k(u)}_X \le C_ 4 \norm{\nabla_v u}_X + C_4\norm{u}_X
	\end{equation*}
	with $C_4 = C_4(\norm{a}_\infty,\norm{b}_\infty,\norm{c}_\infty,n,\delta_0,p)$. Consequently,
	\begin{align*}
		\norm{u}_Z &\le  \sum_{k = 1}^\infty {\norm{\varphi_k u}_Z} \le C_2C_3M\epsilon \norm{\nabla_v^2 u}_X+C_2 \sum_{k = 1}^\infty \norm{\varphi_k f}_{X} + \norm{R_k(u)}_X  \\
		&\le C_2C_3M\epsilon \norm{u}_Z+2C_2M \norm{f}_{X} +C_ 4 \norm{\nabla_v u}_X + C_4\norm{u}_X.
	\end{align*}
	By interpolation of Sobolev spaces we have
	\begin{equation*}
		 \norm{\nabla_v u}_X \le \epsilon \norm{u}_Z + C_5\norm{u}_X
	\end{equation*}
	with $C_5 = C_5(\epsilon) >0$. Combining these estimates we conclude
	\begin{align*}
		\norm{u}_Z &\le  2C_2M \norm{f}_{X} + (C_2C_3M\epsilon+C_ 4 \epsilon) \norm{u}_Z + (C_4 + C_4C_5) \norm{u}_X.
	\end{align*}	
	It can be easily shown that for all $w \in {_0 W}^{1,p}_\mu((0,\delta);L^p(\R^{2n}))$ we have
	\begin{equation*}
		\norm{w}_{p,\mu,L^q(\R^{2n})} \le \delta \norm{\partial_t w}_{p,\mu,L^q(\R^{2n})}.
	\end{equation*}
	Consequently, by using the mapping property of $\Gamma$ it follows
	\begin{equation*}
		\norm{w}_{X}= \norm{w}_{p,\mu,L^q(\R^{2n})} \le \delta \norm{w}_{{_0\T}^{p}_\mu((0,\delta);L^q(\R^{2n}))} \le \delta \norm{w}_Z
	\end{equation*}
	for all $w \in Z$. Choosing first $\epsilon \le \frac{1}{4}(C_4+C_2C_3M)^{-1}$ and then $\delta \le \min \{ \frac{1}{4} (C_2C_4(\epsilon))^{-1}, \delta_0 \}$ we deduce
	\begin{equation*}
		\norm{u}_Z \le 2C_2 \norm{f}_X = 2C_2\norm{Pu}.
	\end{equation*}		
	It remains to verify that $P$ is surjective. We are going to use the method of continuity to prove this. For $s \in [0,1]$ we define
	\begin{equation*}
		P(s) \colon Z \to X, \; u \mapsto (1-s) P+s(\partial_t +v \cdot \nabla_x u-\lambda \Delta_v u)
	\end{equation*}
	A direct estimation shows that $[0,1] \to \B(Z,X)$, $s \mapsto P(s)$ is well-defined and norm continuous. Moreover, as the ellipticity constant of $P(s)$ is still given by $\lambda$ we deduce that there exists a constant $C = C(p,q,\mu,\lambda,T,\delta_0)$ such that
	\begin{equation*}
		C\norm{u}_Z \le  \norm{P(s)u}_X
	\end{equation*}
	for all $s \in [0,1]$ and any $u \in Z$. Consequently, by the method of continuity, $P=P(0)$ must be surjective as it is already known from Theorem \ref{cor:kinmaxLpmuLqkol} and Lemma \ref{lem:multmaxkinreg} that $P(1)$ is surjective.  
\end{proof}

\begin{remark}
	Let us compare our result to the one given in \cite{bramanti_global_2013}. In \cite{bramanti_global_2013} it is only assumed that $a \in BUC([0,T]\times \R^{2n})$ and not that $\Gamma a \in BUC([0,T]\times \R^{2n})$. However, there seems to be an inconsistency hidden in the proof given in \cite{bramanti_global_2013}. To be more precise, we note that in the proof of \cite[Proposition 3.2]{bramanti_global_2013} they use the module of continuity of $a$ with respect to the euclidean norm. However, in the proof of their main result \cite[Theorem 3.1]{bramanti_global_2013} they need the local estimate of \cite[Proposition 3.2]{bramanti_global_2013} on balls with respect to the local quasi-symmetric quasi-distance $d(z,\xi) = \norm{\xi^{-1} \circ z}$ and hence one should use the module of continuity with respect to $d$ in the proof of \cite[Proposition 3.2]{bramanti_global_2013}, which is in our case equivalent to the assumption $\Gamma a \in BUC$. 
	
	Moreover, we note that while in \cite{bramanti_global_2013} they use a deep result on the covering of $[0,T] \times \R^{2n}$ with balls with respect to the quasi distance $d$, we directly give such a covering and our ansatz even extends to the more general case considered in \cite{bramanti_global_2013}.

Let us finally note that the conditions $\Gamma a \in BUC([0,T]\times \R^{2n})$ and $ a \in BUC([0,T]\times \R^{2n})$ are not equivalent. In fact, let $n = 1$, $a(t,x,v) = \max \{ \min\{ \abs{x},1 \}, 1/2 \}$, then $a \in BUC([0,T]\times \R^{2n})$. We assume that $\Gamma a \in BUC([0,T]\times \R^{2n})$. Then, there exists $\delta >0$ such that
\begin{equation*}
	\frac{1}{4}> \abs{a(0,0,v)-a(0,0+\delta/2 v,v)} = 1/2- \max \{ \min \{ \delta/2 \abs{v},1\}, 1/2 \}
\end{equation*} 
for all $v \in \R$. In the limit $v \to \infty$ this leads to a contradiction, whence $\Gamma a \notin BUC([0,T]\times \R^{2n})$. Noting that $a(t,x,v) = a(t,x+tv-tv,v)$ it is clear how to construct an example of a function $a$ such that $\Gamma a \in BUC([0,T]\times \R^{2n})$ and $a \notin BUC([0,T]\times \R^{2n})$.
\end{remark}

\begin{remark}
	As a next step one would like to prove a similar result for kernel operators of the form 
	\begin{equation*}
		[A(t) u](x,v) = \pv \int_{\R^n} ({u(x,v+w)-u(x,v)}) \frac{a(t,x,v,w)}{\abs{w}^{n+\beta}} \dx w.
	\end{equation*}
	If $a(t,x,v,w) = a(t,x,v)$ then this is rather straightforward. Here, one can prove that if $\Gamma a$ is uniformly continuous then kinetic maximal $L^p_\mu(L^q)$-regularity of $-(-\Delta_v)^{\frac{\beta}{2}}$ transfers to the family of operators $(A(t))_{t \in [0,T]}$. If $a(t,x,v,w)=a(w)$, then under the assumption that $a \in L^\infty(\R^{n};(\lambda,K))$, where $0 <\lambda < K$, is symmetric maximal $L^p_\mu(L^p)$-regularity follows by the work in \cite{chen_lp-maximal_2018} and the considerations in Section \ref{sec:kinmaxlpmulq}. We plan to treat the general case in the near future. 
\end{remark}

\section{Quasilinear Kolmogorov equations}
\label{sec:absquasi}
Let us now consider quasilinear kinetic partial differential equations of the type 
\begin{equation} \label{eq:quasilinkin}
	\begin{cases}
		\partial_t u + v \cdot \nabla_x u = A(u)u + F(u), t>0 \\
		u(0) = u_0.
	\end{cases}
\end{equation}
Let $\beta \in (0,2]$, $s \in \R$, $p,q \in (1,\infty)$, $\mu \in (1/p,1]$, $T \in (0,\infty]$ and $D$ be a Banach space, densely embedded in $X_\beta^{s,q}$. We are interested mainly in $L^p_\mu(X_\beta^{s,q})$ solutions, i.e. functions 
\begin{equation*}
	u \in \BE_\mu(0,T) := \T^{p}_\mu((0,T);X_\beta^{s,q}) \cap L^p_\mu((0,T);D).
\end{equation*}
 We recall $X_{\gamma,\mu} = \tr(\BE_\mu(0,T))$ and assume $V_\mu \subset X_{\gamma,\mu}$ to be an open subset. Furthermore, suppose that $(A,F) \colon V_\mu \to \B(D;X_\beta^{s,q})\times X_\beta^{s,q}$ and $u_0 \in V_\mu$.

 We are going to develop a solution theory for equations of this type which is similar to the theory of ordinary differential equations. This ansatz has proven to be useful in the context of maximal $L^p$-regularity. The theorem presented in this section and its proof are mainly based on the corresponding maximal $L^p$-regularity theory as given in \cite{pruss_moving_2016}. 

\begin{theorem} \label{thm:quasishorttime}
	Let $\beta \in (0,2]$, $s \in \R$, $p,q \in (1,\infty)$ and $\mu \in (\frac{1}{p},1]$. Let
	\begin{equation*}
		(A,F) \in C^{1-}_{\mathrm{loc}}(V_\mu ; \B(D,X_\beta^{s,q})\times X_\beta^{s,q})
	\end{equation*} 
	and $u_0 \in V_\mu$ such that $A(u_0)$ satisfies the kinetic maximal $L^p_\mu(X_\beta^{s,q})$-regularity property. There exists a time $T = T(u_0)>0$ and a radius $\epsilon = \epsilon(u_0) >0 $ with $\bar{B}_\epsilon(u_0)= \bar{B}^{X_{\gamma,\mu}}_\epsilon(u_0) \subset V_\mu$ such that the Cauchy problem \eqref{eq:quasilinkin} admits a unique solution 
	\begin{equation*}
		u(\cdot;u_1) \in \BE_\mu(0,T) \cap C([0,T];V_\mu)
	\end{equation*}
	on $[0,T]$ for any initial value $u_1 \in \bar{B}_\epsilon(u_0)$. Furthermore, there exists a constant $C = C(u_0)$ such that for all $u_1,u_2 \in \bar{B}_\epsilon(u_0)$ we have
	\begin{equation*}
		\norm{u(\cdot;u_1)-u(\cdot;u_2)}_{\BE_\mu(0,T)} \le C \norm{u_1-u_2}_{X_{\gamma,\mu}},
	\end{equation*}
	i.e. the solutions depend continuously on the initial data. Finally, the solution regularizes instantaneously, i.e. for all $\delta  \in (0,T)$ we have $u \in \BE_\mu(\delta,T) \hookrightarrow C([\delta,T];X_{\gamma,1})$.
\end{theorem}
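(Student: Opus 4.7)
The plan is a Banach fixed-point argument in ${_0\BE}_\mu(0,T)$ after linearizing at the constant state $u_0$. Set $L_0 := \partial_t + v \cdot \nabla_x - A(u_0)$; by hypothesis $A(u_0)$ admits kinetic maximal $L^p_\mu(X_\beta^{s,q})$-regularity, so by Theorem \ref{thm:acpwiv} the linear problem $L_0 u^* = F(u_0)$, $u^*(0) = u_1$ admits a unique reference solution $u^*(u_1) \in \BE_\mu(0,T) \cap C([0,T];X_{\gamma,\mu})$ depending affinely on $u_1$. Writing $u = u^* + w$ reduces the quasilinear problem to finding $w \in {_0\BE}_\mu(0,T)$ with
\begin{equation*}
L_0 w = N(w) := \bigl[A(u^* + w) - A(u_0)\bigr](u^* + w) + F(u^* + w) - F(u_0),
\end{equation*}
equivalently the fixed-point equation $w = L_0^{-1} N(w)$ on a closed ball $B_r \subset {_0\BE}_\mu(0,T)$, where $L_0^{-1}$ is the isomorphism supplied by kinetic maximal regularity.

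Choose $\epsilon > 0$ small with $\bar B^{X_{\gamma,\mu}}_{2\epsilon}(u_0) \subset V_\mu$ and let $L$ be a Lipschitz constant for $(A,F)$ on this set. The embedding $\BE_\mu(0,T) \hookrightarrow C([0,T];X_{\gamma,\mu})$ from Theorem \ref{thm:trace} gives the pointwise estimate
\begin{equation*}
\bigl\|A(u^*(t) + w(t)) - A(u_0)\bigr\|_{\B(D, X_\beta^{s,q})} \le L\bigl(\|u^*(t) - u_0\|_{X_{\gamma,\mu}} + \|w(t)\|_{X_{\gamma,\mu}}\bigr),
\end{equation*}
and analogous bounds for $F$ and for differences in $w$. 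Taking $L^p_\mu((0,T); X_\beta^{s,q})$-norms one arrives at estimates for $\|N(w)\|$ and $\|N(w_1)-N(w_2)\|$ whose key factors are (i) the pointwise defect $\|u^*(u_1) - u_0\|_{C([0,T]; X_{\gamma,\mu})}$, which is bounded by $\epsilon + o_T(1)$ uniformly for $u_1 \in \bar B_\epsilon(u_0)$ by continuity of $u^*$ at $t=0$ and the affine dependence on $u_1$, and (ii) the quantity $\|u^* + w\|_{L^p_\mu((0,T); D)}$, bounded by $r$ plus $\|u^*(u_1)\|_{L^p_\mu((0,T); D)}$, where the latter tends to $0$ as $T \to 0$ by absolute continuity of the integral, uniformly on $\bar B_\epsilon(u_0)$. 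Fixing first $\epsilon$ small (so that $L \cdot 2\epsilon < 1/4$) and $r = c\epsilon$, then $T = T(u_0)$ small enough, yields $\Phi := L_0^{-1} \circ N$ as a self-map and strict contraction on $B_r$; Banach's theorem produces the unique fixed point $w$, hence $u = u^* + w \in \BE_\mu(0,T)$ with $u(0) = u_1$. Continuous dependence follows from the same estimates applied to the difference: $u(\cdot;u_1) - u(\cdot;u_2)$ solves a linear $L_0$-equation whose right-hand side is Lipschitz in $u_1 - u_2$, and the kinetic maximal regularity estimate for $L_0$ converts this into the claimed bound.

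The membership $u \in C([0,T]; V_\mu)$ follows (after possibly shrinking $T$) from $u \in C([0,T]; X_{\gamma,\mu})$, $u(0) = u_1 \in V_\mu$, and openness of $V_\mu$, while instantaneous regularization is the embedding chain $\BE_\mu(0,T) \hookrightarrow \BE_1(\delta,T) \hookrightarrow C([\delta,T]; X_{\gamma,1})$ already recorded in the paper following Theorem \ref{thm:trace}. The principal difficulty is arranging the simultaneous smallness needed to contract the quadratic term $(A(u^*+w) - A(u_0))(u^*+w)$: the operator-norm factor can only be kept small by restricting $\epsilon, r$, while the $L^p_\mu(D)$-factor demands smallness of $\|u^*\|_{L^p_\mu((0,T);D)}$ as $T \to 0$. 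This last point is the true heart of the argument and rests on the fact that the homogeneous part of $u^*$ lies in $\BE_\mu(0,T_0)$ with a uniform bound for $u_1 \in \bar B_\epsilon(u_0)$, so $t \mapsto t^{p(1-\mu)}\|u^*(t)\|_D^p$ is an integrable function on $(0,T_0)$ whose integral over $(0,T)$ vanishes as $T \to 0$, uniformly in the initial datum.
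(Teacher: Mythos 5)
Your proposal is correct and follows essentially the same route as the paper's proof (the Pr\"uss--Simonett linearization at the frozen operator $A(u_0)$, a Banach fixed-point argument in a ball of $\BE_\mu(0,T)$ using the $T$-independent trace embedding on ${_0\T}^p_\mu$, the same order of choices $\epsilon$, then $r$, then $T$, and the same treatment of continuous dependence and instantaneous regularization); shifting by the reference solution $u^*(u_1)$ and working in ${_0\BE}_\mu(0,T)$ instead of centering the ball at the homogeneous solution $u_0^*$ with trace $u_1$ is only a cosmetic reparametrization. One small caveat: $\norm{u^*(u_1)}_{L^p_\mu((0,T);D)}$ need not vanish as $T \to 0$ uniformly over $u_1 \in \bar{B}_\epsilon(u_0)$ (only the part with datum $u_0$ does, the remainder being merely bounded by $C\norm{u_1-u_0}_{X_{\gamma,\mu}} \le C\epsilon$), but this is harmless since in your scheme $\epsilon$ is fixed before $T$ and the resulting bound $o_T(1)+C\epsilon$ is all that the contraction estimates require.
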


\begin{proof}
 	The proof follows closely the argument in \cite[Section II.5]{pruss_moving_2016} and only some small changes are needed in the kinetic setting. Let $u_0 \in V_\mu$, then, there exist $\epsilon_0>0$ and a Lipschitz constant $L>0$ such that $\bar{B}_{\epsilon_0}(u_0) \subset V_\mu$ and that the estimates
	\begin{equation*}
		\norm{A(w_1)v-A(w_2)v}_{X_\beta^{s,q}} \le L \norm{w_1-w_2}_{X_{\gamma,\mu}} \norm{v}_D
	\end{equation*}
	and
	\begin{equation*}
		\norm{F(w_1)-F(w_2)}_{X_\beta^{s,q}} \le L \norm{w_1-w_2}_{X_{\gamma,\mu}} 
	\end{equation*}
	hold for all $w_1,w_2 \in \bar{B}_{\epsilon_0}(u_0)$ and any $v \in D$. By assumption, the frozen operator $A(u_0)$ satisfies the kinetic maximal $L^p_\mu(X_\beta^{s,q})$-regularity property. In particular for all $T>0$ there exists a constant $C_1=C_1(T,u_0)>0$ such that 
	\begin{equation*}
		\norm{u}_{\BE_\mu(0,T)} \le C_1 \norm{\partial_t u + v \cdot \nabla_x u - A(u_0)u}_{p,\mu,X_\beta^{s,q}} + C_1 \norm{u(0)}_{X_{\gamma,\mu}}
	\end{equation*}
	for all $u \in \BE_\mu(0,T)$. Moreover, the trace Theorem \ref{thm:trace} together with a standard extension argument implies the existence of a constant $C_2>0$ independent of $T$ such that
	\begin{equation*}
		\norm{u}_{\infty,X_{\gamma,\mu}} \le C_2 \norm{u}_{{\BE}_{\mu}(0,T)}
	\end{equation*}
	for all $u \in {_0\BE}_{\mu}(0,T)$. 
	
	Let $T>0$ and $u_0^*  \in \BE_\mu(0,T)	$ be the unique solution of the linear problem $\partial_t w + v \cdot \nabla_x w = A(u_0) w$, $w(0) = u_0$. For $u_1 \in X_{\gamma,\mu}$ and $r \in (0,1]$ we define
	\begin{equation*}
		\BB_{r,T,u_1} = \{ w \in \BE_\mu(0,T) \colon w(0) = u_1 \text{ and } \norm{w-u_{0}^*}_{\BE_\mu(0,T)} \le r\}. 
	\end{equation*}
	\textbf{Claim 1:} Let $\epsilon \in (0,\epsilon_0]$ and $u_1 \in \bar{B}_{\epsilon}(u_0)$. Provided that $r,T,\epsilon>0$ are chosen sufficiently small, it follows that for all $w \in \BB_{r,T,u_1}$ we have $w(t) \in \bar{B}_{\epsilon_0}(u_0)$ for any time $t \in [0,T]$. \newline
	
	Let $u_1^*$ be the unique solution of the linear problem $\partial_t w +v \cdot \nabla_x w = A(u_0)w$ with initial value $w(0) = u_1$. Let $w \in \BB_{r,T,u_1}$, then 
	\begin{equation*}
		\norm{w-u_0}_{\infty,X_{\gamma,\mu}} \le \norm{w-u_1^*}_{\infty,X_{\gamma,\mu}} +  \norm{u_1^*-u_0^*}_{\infty,X_{\gamma,\mu}} +  \norm{u_0^*-u_0}_{\infty,X_{\gamma,\mu}}.
	\end{equation*} 
	By the continuity of $u_0^*$ in the trace space, there exists $T_0 >0$ such that $ \norm{u_0^*-u_0}_{\infty,X_{\gamma,\mu}} \le \epsilon_0/3$ for all $T \le T_0$. Furthermore, there exists a constant $C_\gamma = C_\gamma(T_0)$ such that 
	\begin{equation*}
		\norm{u}_{\infty,X_{\gamma,\mu}} + \norm{u}_{\BE_\mu(0,T)} \le C_\gamma \norm{u(0)}_{X_{\gamma,\mu}}
	\end{equation*}
	for all $u \in \BE_\mu(0,T)$ with $\partial_tu +v \cdot \nabla_x u = A(u_0)u$ and any $T \in (0,T_0]$.
	
	 Combining this information with the kinetic maximal $L^p_\mu(X_\beta^{s,q})$-regularity estimate, we deduce
	\begin{align} \label{eq:quasi1}
		\norm{w-u_0}_{\infty,X_{\gamma,\mu}} &\le C_2 \norm{w-u_1^*}_{\BE_\mu(0,T)} +C_\gamma\norm{u_1-u_0}_{X_{\gamma,\mu}} +  \norm{u_0^*-u_0}_{\infty,X_{\gamma,\mu}} \\ \nonumber
		&\le C_2( \norm{w-u_0^*}_{\BE_\mu(0,T)}+\norm{u_0^*-u_1^*}_{\BE_\mu(0,T)}) + C_\gamma \epsilon + \norm{u_0^*-u_0}_{\infty,X_{\gamma,\mu}} \\ \nonumber
		&\le C_2 r + C_\gamma(C_2+1) \epsilon + \norm{u_0^*-u_0}_{\infty,X_{\gamma,\mu}}.
	\end{align}
	Choosing $r \le r_0:= \epsilon_0/(3C_2) $, $T \le T_0$ and $\epsilon \le  \epsilon_1 = (3C_\gamma(C_2+1))^{-1} \epsilon_0 $ we deduce \linebreak $\norm{w(t)-u_0}_{X_{\gamma,\mu}} \le \epsilon_0$ for all $t \in [0,T]$, i.e. the validity of Claim 1. \newline
	
	From now on we assume $r \le r_0$, $T \in (0,T_0]$ and $\epsilon \le \epsilon_1(T)$. For $u_1 \in \bar{B}_\epsilon(u_0)$ we are going to show that the mapping 
	\begin{equation*}
		G_{u_1} \colon \BB_{r,T,u_1} \mapsto \BE_{\mu}(0,T), \; G_{u_1}(w) := u, 
	\end{equation*}
	where $u$ is the unique solution of the linear problem 
	\begin{equation*}
	\begin{cases}
		\partial_t u + v \cdot \nabla_x u = A(u_0 ) u + F(w) + (A(w)-A(u_0))w \\
		u(0) = u_1,
	\end{cases}
	\end{equation*}
	admits a unique fixed point under the assumption that $r,T,\epsilon>0$ are chosen sufficiently small.
	\newline
	
	\textbf{Claim 2:} Provided that the parameters $r,T,\epsilon>0$ are chosen sufficiently small we claim that $G_{u_1}$ is a self-mapping, i.e. $G_{u_1}(\BB_{r,T,u_1}) \subset \BB_{r,T,u_1}$. \newline
	
	Let $w \in \BB_{r,T,u_1}$, then $G_{u_1}(w) - u_0^*$ is a solution to 
	\begin{equation*}
		\begin{cases}
			\partial_t (G_{u_1}(w) - u_0^*) + v \cdot \nabla_x (G_{u_1}(w) - u_0^*) = A(u_0) (G_{u_1}(w) - u_0^*) + F(w) + (A(w)-A(u_0))w \\
			(G_{u_1}(w) - u_0^*)(0) = u_1-u_0,
		\end{cases}
	\end{equation*}
	whence
	\begin{equation*}
		\norm{G_{u_1}(w)-u_0^*}_{\BE_{\mu}(0,T)} \le C_1\norm{u_1-u_0}_{X_{\gamma,\mu}} + C_1\norm{F(w) + (A(w)-A(u_0))w}_{p,\mu,X_\beta^{s,q}}.
	\end{equation*}
	We have
	\begin{align*}
		\norm{[A(w)-A(u_0)]w}_{p,\mu,X_\beta^{s,q}} &\le L \norm{w-u_0}_{\infty,X_{\gamma,\mu}} \norm{w}_{\BE_\mu(0,T)} \\
		&\le L \norm{w-u_0}_{\infty,X_{\gamma,\mu}} (r+ \norm{u_0^*}_{\BE_\mu(0,T)})
	\end{align*}
	and
	\begin{align*}
		\norm{F(w)}_{p,\mu,X_\beta^{s,q}} &\le \norm{F(w)-F(u_0)}_{p,\mu,X_\beta^{s,q}} + \norm{F(u_0)}_{p,\mu,X_\beta^{s,q}} \\
		&\le C_3(T) (L \norm{w-u_0}_{\infty,X_{\gamma,\mu}} + \norm{F(u_0)}_{X_\beta^{s,q}}),
	\end{align*}
	where $C_3(T,p) = (1+(1-\mu)p)^{-1/p}T^{1/p+1-\mu}$. We recall the estimate in equation \eqref{eq:quasi1} 
	\begin{align*}
		\norm{w-u_0}_{\infty,X_{\gamma,\mu}} \le C_2r+C_\gamma(C_2+1)\epsilon+\norm{u_0^*-u_0}_{\infty,X_{\gamma,\mu}} \le \epsilon_0.
	\end{align*}
	 We deduce
	\begin{align*}
		\norm{G_{u_1}(w)-u_0^*}_{\BE_{\mu}(0,T)} &\le C_1\norm{u_1-u_0}_{X_{\gamma,\mu}} +L \norm{w-u_0}_{\infty,X_{\gamma,\mu}} \left(r+ \norm{u_0^*}_{\BE_\mu(0,T)}\right) \\
		&\hphantom{=}+C_3(T) (L \norm{w-u_0}_{\infty,X_{\gamma,\mu}} + \norm{F(u_0)}_{X_\beta^{s,q}}) \\
		&\le C_1\epsilon +L \left(C_2r+C_\gamma(C_2+1)\epsilon+\norm{u_0^*-u_0}_{\infty,X_{\gamma,\mu}} \right) \left(r+ \norm{u_0^*}_{\BE_\mu(0,T)} \right) \\
		&\hphantom{=}+C_3(T) (L \epsilon_0 + \norm{F(u_0)}_{X_\beta^{s,q}}). 
	\end{align*}
	Choosing first $r \in (0,r_0]$ such that $LC_2r \le 1/8$, then $T=T(r) \in (0,T_0]$ such that
	\begin{equation*}
		\norm{u_0^*-u_0}_{\infty,X_{\gamma,\mu}} \le (8L)^{-1}, \; C_3(T) \le (8L \epsilon_0 + 8\norm{F(u_0)}_{X_\beta^{s,q}})^{-1} \text{ and } \norm{u_0^*}_{\BE_\mu(0,T)} \le r
	\end{equation*}
	and finally $\epsilon = \epsilon(r,T) \in (0,\epsilon_1]$ such that $ \epsilon \le (8C_1(T))^{-1}r$ and $\epsilon \le (8LC_\gamma(C_2+1))^{-1}$ we conclude
	\begin{equation*}
		\norm{G_{u_1}(w)-u_0^*}_{\BE_{\mu}(0,T)} \le r,
	\end{equation*}
	hence $G_{u_1}(w) \in \BB_{r,T,u}$. This shows Claim 2. \newline  
	
	\textbf{Claim 3:} We can choose $r,T,\epsilon >0$ small enough such that for all $u_1,u_2 \in \bar{B}_\epsilon(u_0)$ and any $w_1 \in \BB_{r,T,u_1}$, $w_2 \in \BB_{r,T,u_2}$ we have
	\begin{equation*}
		\norm{G_{u_1}(w_1)-G_{u_2}(w_2)}_{\BE_{\mu}(0,T)} \le \frac{1}{2} \norm{w_1-w_2}_{\BE_\mu(0,T)} + c \norm{u_1-u_2}_{X_{\gamma,\mu}}.
	\end{equation*}\newline
	
	The function $z:= G_{u_1}(w_1)-G_{u_2}(w_2)$ solves the linear problem 
	\begin{equation*}
		\begin{cases}
			\partial_t z + v \cdot \nabla_x z = &A(u_0)z + (F(w_1)-F(w_2)) \\
			&+ (A(w_1)-A(u_0))(w_1-w_2)+(A(w_1)-A(w_2))w_2 \\
			z(0) = u_1-u_2
		\end{cases}
	\end{equation*}
	and hence
	\begin{align*}
		\norm{G_{u_1}(w_1)-G_{u_2}(w_2)}_{\BE_{\mu}(0,T)} &\le C_1 \norm{u_1-u_2}_{X_{\gamma,\mu}} + C_1\norm{F(w_1)-F(w_2)}_{p,\mu,X_\beta^{s,q}} \\
		&+ C_1\norm{(A(w_1)-A(u_0))(w_1-w_2)}_{p,\mu,X_\beta^{s,q}} \\
		&+ C_1\norm{(A(w_1)-A(w_2))w_2}_{p,\mu,X_\beta^{s,q}}.
	\end{align*}
	The estimate
	\begin{equation*}
		\norm{F(w_1)-F(w_2)}_{p,\mu,X_\beta^{s,q}} \le C_3(T)L \norm{w_1-w_2}_{\infty,X_{\gamma,\mu}}
	\end{equation*}
	combined with 
	\begin{align*}
		\norm{w_1-w_2}_{\infty,X_{\gamma,\mu}} &\le \norm{w_1-w_2-(u_1^*-u_2^*)}_{\infty,X_{\gamma,\mu}} +\norm{u_1^*-u_2^*}_{\infty,X_{\gamma,\mu}} \\
		&\le C_2\norm{w_1-w_2-(u_1^*-u_2^*)}_{\BE_\mu(0,T)} + C_\gamma\norm{u_1-u_2}_{X_{\gamma,\mu}} \\
		&\le C_2\norm{w_1-w_2}_{\BE_\mu(0,T)} + 2C_\gamma\norm{u_1-u_2}_{X_{\gamma,\mu}}
	\end{align*}
	yields
	\begin{equation*}
		\norm{F(w_1)-F(w_2)}_{p,\mu,X_\beta^{s,q}} \le C_3(T)L \left(C_2\norm{w_1-w_2}_{\BE_\mu(0,T)} + 2C_\gamma\norm{u_1-u_2}_{X_{\gamma,\mu}} \right).
	\end{equation*}

	An application of the local Lipschitz estimate for $A$ together with Claim 1 and the estimate in equation \eqref{eq:quasi1} results in 
	\begin{align*}
		&\norm{(A(w_1)-A(u_0))(w_1-w_2)}_{p,\mu,X_\beta^{s,q}} + \norm{(A(w_1)-A(w_2))w_2}_{p,\mu,X_\beta^{s,q}} \\
		&\le L(\norm{w_1-u_0}_{\infty,X_{\gamma,\mu}} \norm{w_1-w_2}_{\BE_\mu(0,T)} + \norm{w_1-w_2}_{\infty, X_{\gamma,\mu}}\norm{w_2}_{\BE_\mu(0,T)}) \\
		&\le (LC_2 r + LC_\gamma(C_2+1) \epsilon+L\norm{u_0^*-u_0}_{\infty,X_{\gamma,\mu}}+C_2Lr+C_2L\norm{u_0^*}_{\BE_\mu(0,T)})\norm{w_1-w_2}_{\BE_\mu(0,T)} \\
		&\hphantom{=}+2LC_\gamma(r+\norm{u_0^*}_{\BE_\mu(0,T)})\norm{u_1-u_2}_{X_{\gamma,\mu}} ,
	\end{align*}
	where we have estimated
	\begin{equation*}
		\norm{w_2}_{\BE_\mu(0,T)} \le \norm{w_2-u_0^*}_{\BE_\mu(0,T)} + \norm{u_0^*}_{\BE_\mu(0,T)} \le r + \norm{u_0^*}_{\BE_\mu(0,T)}.
	\end{equation*}
	Let us now reduce $r>0$ even more, so that $r\le (12C_2L)^{-1}$, let $T >0$ be so small that $C_3(T) \le (12LC_2)^{-1}$, $ \norm{u_0^*}_{\BE_\mu(0,T)} \le (12C_2L)^{-1}$, $ \norm{u_0^*-u_0}_{\infty,X_{\gamma,\mu}} \le (12L)^{-1}$ and choose $\epsilon >0$ with $\epsilon \le (12LC_\gamma(C_2+1))^{-1}$, then for this particular choice of $r,T,\epsilon$ we deduce that
	\begin{equation} \label{eq:gencontquasil}
		\norm{G_{u_1}w_1-G_{u_2}w_2}_{\BE_{\mu}(0,T)} \le \frac{1}{2} \norm{w_1-w_2}_{\BE_\mu(0,T)}+ (C_\gamma/(2C_2)+C_1) \norm{u_1-u_2}_{X_{\gamma,\mu}}.
	\end{equation}
	This completes the proof of Claim 3. We note that we can choose $r,T,\epsilon>0$ such that the conditions imposed on $r,T,\epsilon$ in Claim 1, Claim 2 and Claim 3 hold true simultaneously. This is due to the fact that we always choose them in the same order with the same dependence.  \newline 
	
	 Choosing $u_2 = u_1$ in the statement of Claim 3 it follows that the mapping $G_{u_1}$ is a contractive self-mapping of closed subsets of a Banach space. Applying Banach's fixed point theorem we obtain a unique fixed point $u \in \BB_{r,T,u_1}$ of $G_{u_1}$, i.e. $G_{u_1}(u) = u$. In particular, it follows that $u$ is the unique local solution of equation \eqref{eq:quasilinkin}. The second assertion of the theorem follows by applying the estimate in equation \eqref{eq:gencontquasil} to the solutions $u(\cdot;u_1),u(\cdot;u_2)$ with respective initial values $u_1,u_2 \in \bar{B}_{\epsilon}(u_0)$. The instantaneous regularization has already been proven to hold in Section \ref{sec:kinmaxreg}. 
\end{proof}

\begin{lemma} \label{lem:maxivexis}
	Under the assumptions of Theorem \ref{thm:quasishorttime} additionally suppose that for all $w \in V_\mu$ the operator $A(w)$ satisfies the kinetic maximal $L^p_\mu(X_\beta^{s,q})$-regularity property. Then, the solution $u(t)$ of equation \eqref{eq:quasilinkin} has a maximal interval of existence $[0,t^+(u_0))$, which is characterized by either one of the following conditions:
	\begin{enumerate}
		\item global existence, i.e. $t^+(u_0) = \infty$
		\item $t^+(u_0)<\infty$, $\liminf\limits_{t \to t^+(u_0)} \mathrm{dist}_{X_{\gamma,\mu}}(u(t),\partial V_\mu) = 0$
		\item $t^+(u_0)<\infty$, $\liminf\limits_{t \to t^+(u_0)} \mathrm{dist}_{X_{\gamma,\mu}}(u(t),\partial V_\mu) > 0$ and $\lim\limits_{t \to t^+(u_0)} u(t)$ does not exist in $X_{\gamma,\mu}$.
	\end{enumerate} 
\end{lemma}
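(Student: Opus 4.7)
My first step is to define
$$t^+(u_0) := \sup\bigl\{T > 0 : \text{problem \eqref{eq:quasilinkin} has a solution } u_T \in \BE_\mu(0,T) \cap C([0,T]; V_\mu) \bigr\}.$$
Theorem \ref{thm:quasishorttime} guarantees $t^+(u_0) > 0$. To glue local solutions into a maximal one, I would verify that any two solutions $u_1, u_2$ starting from $u_0$ coincide on the intersection of their intervals of existence. The set $I = \{t : u_1(t) = u_2(t)\}$ is closed by continuity and contains $0$; to show it is open I apply Theorem \ref{thm:quasishorttime} at $u(t_0)$ for each $t_0 \in I$, which is legitimate precisely because the standing hypothesis of the lemma gives kinetic maximal regularity of $A(u(t_0))$ for every $t_0$. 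A connectedness argument then yields equality on the full overlap. Consequently, there is a unique maximal solution $u \in C([0,t^+);V_\mu)$ with $u \in \BE_\mu(0,T')$ for every $T' < t^+(u_0)$.

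\textbf{Excluding the fourth case.} It suffices to derive a contradiction under the assumption that (1) and (2) both fail and that the limit in (3) exists. Set $T_* := t^+(u_0) < \infty$, $\eta := \liminf_{t \nearrow T_*} \mathrm{dist}_{X_{\gamma,\mu}}(u(t), \partial V_\mu) > 0$ and $u^* := \lim_{t \nearrow T_*} u(t)$ in $X_{\gamma,\mu}$. Since $\mathrm{dist}_{X_{\gamma,\mu}}(u(t), \partial V_\mu) \geq \eta/2$ for $t$ sufficiently close to $T_*$, continuity of the distance function yields $\mathrm{dist}_{X_{\gamma,\mu}}(u^*, \partial V_\mu) \geq \eta/2 > 0$, and therefore $u^* \in V_\mu$.

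\textbf{Extending past $T_*$.} By hypothesis, $A(u^*)$ admits kinetic maximal $L^p_\mu(X_\beta^{s,q})$-regularity, so Theorem \ref{thm:quasishorttime} applied at $u^*$ produces $\tau > 0$ and $\epsilon > 0$ such that for every $u_1 \in \bar B^{X_{\gamma,\mu}}_\epsilon(u^*)$ the Cauchy problem with initial value $u_1$ admits a unique solution on $[0,\tau]$. I choose $t_1 \in (T_* - \tau/2, T_*)$ so that $\norm{u(t_1) - u^*}_{X_{\gamma,\mu}} < \epsilon$; as \eqref{eq:quasilinkin} is autonomous, translating the obtained solution in time gives $\tilde u \in \BE_\mu \cap C([t_1, t_1+\tau]; V_\mu)$ with $\tilde u(t_1) = u(t_1)$. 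Uniqueness on the common interval $[t_1, T_*)$, again via Theorem \ref{thm:quasishorttime} applied at $u(t_1)$, forces $\tilde u = u$ on $[t_1, T_*)$. Concatenating $u|_{[0,t_1]}$ with $\tilde u|_{[t_1, t_1 + \tau]}$ yields a solution of \eqref{eq:quasilinkin} on $[0, t_1 + \tau]$, and $t_1 + \tau > T_*$ contradicts the maximality of $T_* = t^+(u_0)$.

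\textbf{Main obstacle.} The delicate point is verifying that the concatenation lies in the weighted class $\BE_\mu(0, t_1 + \tau)$ rather than merely in a piecewise sense. The temporal weight $t^{1-\mu}$ only matters near $t = 0$, so on $[t_1, t_1+\tau]$ one works with the unweighted space $\BE_1$; the instantaneous regularization observed after Theorem \ref{thm:trace} ensures $u(t_1) \in X_{\gamma,1}$, which is exactly the regularity needed to start an unweighted solution at time $t_1$. The matching at $t_1$ then reduces to a standard check that the two pieces, both continuous into $X_{\gamma,\mu}$ with matching trace $u(t_1)$, together define an element of $\BE_\mu(0, t_1+\tau)$; this follows from Corollary \ref{cor:EmuE} and the definition of $\T_\mu^p$.
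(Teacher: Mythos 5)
Your argument is correct and follows essentially the same route as the paper: assume $t^+(u_0)<\infty$, that the distance to $\partial V_\mu$ stays positive and that the limit $u^*$ exists, conclude $u^*\in V_\mu$, and re-solve via Theorem \ref{thm:quasishorttime} near $t^+(u_0)$ to extend the solution beyond $t^+(u_0)$, contradicting maximality. The only real difference is cosmetic: you invoke the theorem once at the limit point $u^*$ and use its ball $\bar{B}_\epsilon(u^*)$ of admissible initial data, whereas the paper obtains a uniform re-solving time by a compactness argument along the whole trajectory $u([0,t^+(u_0)])$; both work, and your gluing discussion (the weight only matters at $t=0$, uniqueness on the overlap, instantaneous regularization) fills in details the paper leaves implicit.
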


\begin{proof}
	For $u_0 \in V_\mu$ we define
	\begin{equation*}
		t^+(u_0) = \sup \{ \delta>0 \colon \eqref{eq:quasilinkin} \text{ admits a unique solution on } [0,\delta ] \}.
	\end{equation*}
	We suppose that $t^+(u_0) < \infty$, $\mathrm{dist}_{X_{\gamma,\mu}}(u(t),\partial V_\mu)                                                                                                                                                                                                                                                              \ge \eta$ for some $\eta >0$ and all $t \in [0,t^+(u_0))$ and assume that $u(t)$ converges to $u_1 := u(t^+(u_0)) \in V_\mu$ as $t \to t^{+}(u_0)$. The set $u([0,t^+(u_0)]) \subset V_\mu$ is compact in $X_{\gamma, \mu}$, whence by Theorem \ref{thm:quasishorttime} and a compactness argument we find a $\delta >0$ such that for all $s \in [0,t^+(u_0)]$ the problem $\partial_t w + v \cdot \nabla_x w = A(w)w+F(w)$, $w(0) = u(s)$ admits a unique solution $w \in \BE_\mu(0,\delta)$. For $s_0 = t^+(u_0)-\delta/2 $ the corresponding solution coincides with $u$ on $[s_0,t^+(u_0)]$ and extends $u$ beyond $t^+(u_0)$, a contradiction. This shows that only one of the conditions (1)-(3) can hold as they are mutually exclusive. 
\end{proof}

\begin{coro} \label{cor:critglobal}
	Under the assumptions of Theorem \ref{thm:quasishorttime} we suppose that $V_\mu = X_{\gamma,\mu}$ and that for all $w \in V_\mu$ the operator $A(w)$ satisfies the kinetic maximal $L^p_\mu(X_\beta^{s,q})$-regularity property. Assume that $ \norm{u}_{\BE_\mu(0,t^+(u_0))}< \infty$, then $t^+(u_0) = \infty$.
\end{coro}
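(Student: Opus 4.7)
The plan is to argue by contradiction using the trichotomy of Lemma \ref{lem:maxivexis}. Suppose $t^+(u_0) < \infty$; I will derive a contradiction by ruling out alternatives (2) and (3) of that lemma, forcing alternative (1) and hence $t^+(u_0) = \infty$.

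First I would dispense with alternative (2). Since $V_\mu = X_{\gamma,\mu}$, the subset $V_\mu$ is the whole ambient Banach space, so $\partial V_\mu = \emptyset$ and $\mathrm{dist}_{X_{\gamma,\mu}}(u(t),\partial V_\mu) = +\infty$ for every $t$. Consequently the $\liminf$ in condition (2) is $+\infty$, not $0$, and condition (2) cannot hold.

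Next I would rule out alternative (3) using the standing hypothesis $\norm{u}_{\BE_\mu(0,t^+(u_0))} < \infty$, which means $u \in \BE_\mu(0,t^+(u_0))$ with $t^+(u_0) < \infty$. Theorem \ref{thm:trace} then yields
\begin{equation*}
u \in \BE_\mu\bigl((0,t^+(u_0));X_\beta^{s,q}\bigr) \hookrightarrow C\bigl([0,t^+(u_0)];X_{\gamma,\mu}\bigr),
\end{equation*}
so in particular the limit $\lim_{t \to t^+(u_0)} u(t)$ exists in $X_{\gamma,\mu}$. This is exactly the negation of the defining condition in alternative (3), so (3) is ruled out as well.

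Having eliminated (2) and (3), only alternative (1) of Lemma \ref{lem:maxivexis} remains, i.e.\ $t^+(u_0) = \infty$, contradicting the assumption that $t^+(u_0) < \infty$. This contradiction proves the corollary. The argument is short because all of the work has been absorbed into Lemma \ref{lem:maxivexis} (which in turn relies on Theorem \ref{thm:quasishorttime} and a compactness/continuation argument) and the trace embedding Theorem \ref{thm:trace}; the only subtle point is observing that $V_\mu = X_{\gamma,\mu}$ has empty boundary in the ambient space, which I do not expect to be an obstacle.
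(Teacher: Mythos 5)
Your proposal is correct, and it reaches the conclusion by a genuinely more direct route than the paper. You rule out alternatives (2) and (3) of Lemma \ref{lem:maxivexis} using only the hypothesis $\norm{u}_{\BE_\mu(0,t^+(u_0))}<\infty$ together with the unconditional trace embedding of Theorem \ref{thm:trace} applied on the interval $(0,t^+(u_0))$: membership in $\BE_\mu(0,t^+(u_0))$ already forces $u \in C([0,t^+(u_0)];X_{\gamma,\mu})$, so the limit at $t^+(u_0)$ exists, while $\partial V_\mu = \emptyset$ kills alternative (2). The paper instead re-reads $u$ as the solution of the \emph{linear} problem $\partial_t u + v\cdot\nabla_x u = A(u_0)u + f$ with $f(t) = (A(u(t))-A(u_0))u(t) + F(u(t))$, checks that the finiteness of the $\BE_\mu$-norm gives $f \in L^p_\mu((0,t^+(u_0));X_\beta^{s,q})$, and then invokes the kinetic maximal $L^p_\mu(X_\beta^{s,q})$-regularity of the frozen operator $A(u_0)$ (via Theorem \ref{thm:acpwiv}, whose continuity assertion itself rests on Theorem \ref{thm:trace}) to conclude $u \in C([0,t^+(u_0)];X_{\gamma,\mu})$, before finishing with Lemma \ref{lem:maxivexis} exactly as you do. Your shortcut buys brevity and avoids having to verify the integrability of the nonlinear remainder $f$ (which in the paper's route implicitly uses local boundedness of $A$ and $F$ along the trajectory); the paper's route buys a structure parallel to the classical Pr\"uss--Simonett argument and exhibits $u$ as a maximal-regularity solution of a frozen linear problem up to $t^+(u_0)$, which is the form in which such continuation criteria are usually quoted. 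The only points worth making explicit in your write-up are that $\norm{u}_{\BE_\mu(0,t^+(u_0))}<\infty$ indeed means $u \in \BE_\mu(0,t^+(u_0))$, and that the continuous representative furnished by Theorem \ref{thm:trace} agrees with $u$ on $[0,t^+(u_0))$, so that the existing local solution itself converges in $X_{\gamma,\mu}$ as $t \to t^+(u_0)$; both are immediate.
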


\begin{proof}
	Suppose that $t^{+}(u_0) < \infty$. The local solution $u \in \BE_\mu(0,t^+(u_0))$ satisfies
	\begin{equation*}
	\begin{cases}
		\partial_t u +v \cdot \nabla_x u = A(u_0)u + f \\
		u(0) = u_0,
	\end{cases}
	\end{equation*}
	where $f(t) = (A(u(t))-A(u_0))u(t) + F(u(t))$. Due to the assumption on $u$, we deduce $f \in L^p_\mu((0,t^+(u_0));X_\beta^{s,q})$. By the kinetic maximal $L^p_\mu(X_\beta^{s,q})$-regularity property, we conclude $u \in C([0,t^+(u_0)],X_{\gamma,\mu})$, whence Lemma \ref{lem:maxivexis} implies $t^+(u_0) = \infty$.  
\end{proof}

\section{Local existence of strong $L^p_\mu(L^q)$-solutions to a quasilinear kinetic diffusion equation}
\label{sec:quasilindiff}

	In this section we are going to study local existence of strong $L^p_\mu(L^q)$ solutions of a quasilinear kinetic diffusion equation. Let $p,q \in (1,\infty)$, $\mu \in (\frac{1}{p},1]$ and let $\kappa \in C^2_b(\R;\mathrm{Sym}(n))$ such that $\langle \kappa(z) \xi,\xi \rangle \ge \lambda \abs{\xi}^2$ for all $z \in \R, \xi \in \R^n$ and a constant $\lambda>0$. We are interested in the existence of local $L^p_\mu(L^q)$-solutions of the quasilinear kinetic diffusion problem
	\begin{equation} \label{eq:quasilinkol}
		\begin{cases}
			\partial_t u + v \cdot \nabla_x u = \nabla_v \cdot (\kappa(u) \nabla_v u)  \\
			u(0) = u_0.
		\end{cases}
	\end{equation}
	The right space for strong $L^p_\mu(L^q)$-solutions of equation \eqref{eq:quasilinkol} is obviously 
	$$\BE_\mu(0,T) = \T^{p}_\mu((0,T);L^q(\R^{2n})) \cap L^p_\mu((0,T);H^{2,q}_v(\R^{2n})).$$
	We choose $V_\mu = X_{\gamma,\mu} = {^{\mathrm{kin}}B}_{qp}^{\mu-1/p,2}(\R^{2n}) $ and consider the operators
	\begin{equation*}
		A \colon X_{\gamma,\mu} \mapsto \B(H^{2,q}_v(\R^{2n});L^q(\R^{2n})), \; A(u)w = \kappa(u) \colon \nabla_v^2 w
	\end{equation*}
	and $F \colon X_{\gamma,\mu} \to L^q(\R^{2n})$ given by 
	\begin{equation*}
		F(u) = \langle \kappa'(u) \nabla_v u , \nabla_v u \rangle, 
	\end{equation*}
	which shows that equation \eqref{eq:quasilinkol} is of the form of equations considered in Section \ref{sec:absquasi}. We define the spaces
	\begin{equation*}
		{C}_0(\R^{2n}) := \{ u \in C(\R^{2n}) \colon \forall \epsilon>0 \colon \exists K \subset \R^{2n} \text{ compact } \colon \abs{u(x)} \le \epsilon \; \forall x \in K^c \}
	\end{equation*}
	and
	\begin{equation*}
		{_vC}_0^1(\R^{2n}) := \{ u \in C^1_v(\R^{2n}) \colon u, \partial_{v_1}u, \dots, \partial_{v_n} u \in C_0(\R^{2n}) \}.
	\end{equation*}

	 To assure, that $A$ and $F$ are well-defined and locally Lipschitz continuous we furthermore assume that
	\begin{equation*}
		\mu-\frac{1}{p}>\frac{1}{2}+\frac{2n}{q} >\frac{2n}{q}>0
	\end{equation*}
 	Under this assumption the following embedding
	\begin{equation*}
		X_{\gamma,\mu} = {^{\mathrm{kin}}B}_{qp}^{\mu-1/p,2}(\R^{2n}) \hookrightarrow {_vC}_0^1(\R^{2n})
	\end{equation*}
	holds continuously by Lemma \ref{lem:contembed} and Corollary \ref{cor:gradvembed}. Moreover, we have the embedding $X_{\gamma,\mu} \hookrightarrow H^{1,q}_v(\R^{2n})$ by Lemma \ref{lem:besovLq} and Lemma \ref{lem:laplacianbesov}.

	 Let us first check that $A$ is well-defined and locally Lipschitz continuous. As $u \in X_{\gamma,\mu} \subset C_0(\R^{2n})$ the image of $u$ is the subset of a compact set. Hence, $\kappa(u)$ is bounded as a consequence of the continuity of $\kappa$. This shows that $A$ is well-defined. Let $u_0 \in X_{\gamma,\mu}$ and $\epsilon>0$, then there exists a compact set $K = K(\epsilon,u_0) \subset \R$ such that $u(x) \in K$ for any $x \in \R^{2n}$ and all $u \in B^{X_{\gamma,\mu}}_\epsilon(u_0)$. In particular $\abs{\kappa'(u(x))} \le C(u_0,\epsilon)$ for all $u \in B^{X_{\gamma,\mu}}_\epsilon(u_0)$ and all $x \in \R^{2n}$. We deduce
	\begin{align*}
		\norm{A(u)w-A(v)w}_q &\le \norm{\kappa(u)-\kappa(v)}_\infty \norm{\nabla_v^2 w}_q \le C(u_0,\epsilon) \norm{u-v}_\infty \norm{w}_{H^{2,q}_v} \\
		&\le C \norm{u-v}_{X_{\gamma,\mu}} \norm{w}_{H^{2,q}_v}  
	\end{align*}
	as a consequence of the mean value theorem. Next, we consider the mapping $F$. We have
	\begin{equation*}
		\norm{F(u)}_q \le \norm{\kappa'(u)}_\infty \norm{\nabla_v u}_\infty \norm{\nabla_v u}_q  \le C(\kappa')  \norm{u}_{X_{\gamma,\mu}}^2.
	\end{equation*}
	and 
	\begin{align*}
		\norm{F(u)-F(w)}_q &\le \norm{\kappa'(u)-\kappa'(w)}_q \norm{\nabla_v u}_\infty^2 + \norm{\kappa'(w)}_\infty \norm{\nabla_v u+\nabla_v w}_\infty \norm{\nabla_v u-\nabla_v w}_q \\
		&\le C(u_0,\epsilon,\kappa'') \norm{u-w}_{q} + C(u_0,\epsilon,\kappa') \norm{\nabla_v u-\nabla_v w}_q \\
		&\le C(u_0,\epsilon,\kappa',\kappa'') \norm{u-w}_{X_{\gamma,\mu}}
	\end{align*} 
	for all $u,w \in B^{X_{\gamma,\mu}}_\epsilon(u_0)$.
	If $u_0 \in {^{\mathrm{kin}}B}_{qp}^{\mu-1/p,2}(\R^{2n})$, then it follows $\kappa(u_0(x+tv,v)) \in BUC([0,T] \times \R^{2n})$ by Lemma \ref{lem:C0gamabuc} as a consequence of the fact that $u_0 \in C_0(\R^{2n})$ together with the uniform continuity of $\kappa$. Consequently, by Theorem \ref{thm:kinmaxLpmuLqvarcoeff} the frozen operator $A(u_0)$ admits kinetic maximal $L^p_\mu(L^q)$-regularity. We conclude by Theorem \ref{thm:quasishorttime} that there exists a $T = T(u_0)$ such that the Cauchy problem \eqref{eq:quasilinkol} admits a unique solution $u \in \BE_\mu(0,T)$. Moreover, we have $u \in BUC([0,T] \times \R^{2n})$. 
	
\appendix
\section{Appendix}

\begin{lemma} \label{lem:genYoung}
	Let $f,g \colon \R^{2n} \to \R$ be measurable functions such that $f \in L^1(\R^{2n})$ and $g \in L^q(\R^{2n})$ for some $q \in [1,\infty)$. Given any $t \ge 0$, we define
	\begin{align*}
		(f \ast_t g)(x,v) &= \int_{\R^{2n}} f(x-\tilde{x}-t \tilde{v},v-\tilde{v}) g(\tilde{x},\tilde{v}) \dx \tilde{v} \dx \tilde{x} \\ 
		&= \int_{\R^{2n}} f(\tilde{x},\tilde{v}) g(x-\tilde{x}-t(v- \tilde{v}),v-\tilde{v})  \dx \tilde{v} \dx \tilde{x}.
	\end{align*}
	We have
	\begin{equation*}
		\norm{f \ast_t g}_{q} \le \norm{f}_{1} \norm{g}_{q}.
	\end{equation*}
\end{lemma}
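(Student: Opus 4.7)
The plan is to reduce the twisted convolution $\ast_t$ to the ordinary convolution on $\R^{2n}$ (or equivalently to apply Minkowski's integral inequality directly) and exploit the fact that the transformation $(x,v) \mapsto (x-\tilde{x}-t(v-\tilde{v}), v-\tilde{v})$ has Jacobian determinant one. The underlying structural reason is that $\ast_t$ is precisely the convolution associated to the group law on $\R^{2n}$ coming from the characteristics of $\partial_t + v \cdot \nabla_x$, for which Lebesgue measure is Haar measure.

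The cleanest route is via Minkowski's integral inequality applied to the second representation of $f \ast_t g$. Writing
\begin{equation*}
    (f \ast_t g)(x,v) = \int_{\R^{2n}} f(\tilde{x},\tilde{v})\, g\bigl(x-\tilde{x}-t(v-\tilde{v}), v-\tilde{v}\bigr)\dx \tilde{x}\dx \tilde{v},
\end{equation*}
Minkowski's inequality yields
\begin{equation*}
    \norm{f\ast_t g}_q \le \int_{\R^{2n}} \abs{f(\tilde{x},\tilde{v})}\, \norm{g\bigl(\cdot - \tilde{x} - t(\cdot - \tilde{v}), \cdot - \tilde{v}\bigr)}_{L^q_{x,v}(\R^{2n})}\dx \tilde{x}\dx \tilde{v}.
\end{equation*}
For fixed $(\tilde{x},\tilde{v})$ the change of variables $(y,w) = (x-\tilde{x}-t(v-\tilde{v}), v-\tilde{v})$ is an affine shear of $\R^{2n}$ with Jacobian determinant equal to one, hence the inner $L^q$-norm equals $\norm{g}_q$, and integration in $(\tilde{x},\tilde{v})$ produces $\norm{f}_1$.

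Alternatively, one may argue via $\Gamma$: the substitution $\tilde{y} = \tilde{x}+t\tilde{v}$ in the first integral representation gives the identity $f \ast_t g = f \ast (\Gamma(-t) g)$, where $\ast$ denotes the standard convolution on $\R^{2n}$. The classical Young inequality together with the isometry property of $\Gamma(-t)$ on $L^q(\R^{2n})$ stated in Proposition \ref{prop:GamLpisom} then yields the claim. There is no real obstacle in either route: measurability of the integrand is standard by Fubini, and the key point is simply the invariance of the Lebesgue measure under the relevant affine change of variables.
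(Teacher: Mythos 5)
Your proposal is correct and matches the paper's argument: the paper's proof is exactly your second route, namely the identity $f \ast_t g = (\Gamma(-t)g) \ast f$ combined with the classical Young inequality and the isometry of $\Gamma(-t)$ on $L^q(\R^{2n})$ from Proposition \ref{prop:GamLpisom}. Your primary Minkowski-plus-shear argument is just this same idea unpacked (Young with exponent $1$ is Minkowski's integral inequality plus measure invariance), so both routes are fine.
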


\begin{proof}
	From $ (f \ast_t g)(x,v) = \Gamma(-t)g \ast f $	we deduce the result by the well-known Young integral inequality and the isometry property of $\Gamma(-t)$ in $L^q(\R^{2n})$.
\end{proof}

\begin{remark}
	We note that the last statement is nontrivial due to the fact, that $\ast_t$ is a noncommutative convolution. In other words the underlying group is noncommutative. The fact that the Young inequality continues to hold is a consequence of the translation group $(\Gamma(-t))_{t \in \R}$ being an isometry in $L^q(\R^{2n})$ for all $p \in [1,\infty)$.
\end{remark}

\begin{theorem} \label{thm:kinmkihlin} 
	Let $n_1,n_2 \in \N$ and $m \colon \R^{n_1+n_2} \to \R$ be a smooth function in $\{ (z,\xi) \in \R^{n_1+n_2} \colon z =0 \text{ or } \xi = 0 \}^c$, such that for all multi-indices $\alpha \in \N^{n_1}_0, \beta \in \N^{n_2}_0$ with $\abs{\alpha}+ \abs{\beta} \le  \left[ \frac{n_1+n_2}{2} \right]+1$ we have
	\begin{equation*}
		\abs{z}^{\abs{\alpha}}\abs{\xi}^{\abs{\beta}} \abs{\partial_z^\alpha \partial_\xi^\beta m(z,\xi)} \le C(\alpha,\beta).
	\end{equation*}
	Then $m$ defines a bounded Fourier multiplier on $L^{q_1}(\R^{n_1};L^{q_2}(\R^{n_2}))$ for all $q_1,q_2 \in (1,\infty) $ and the norm of $m$ depends linearly on the constants $C(\alpha,\beta)$.  
\end{theorem}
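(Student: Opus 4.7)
The plan is to view the mixed-norm space $L^{q_1}(\R^{n_1}; L^{q_2}(\R^{n_2}))$ as a Bochner space with values in the UMD Banach space $L^{q_2}(\R^{n_2})$ and to apply an iterated Mikhlin-type argument. The structure is: freeze the frequency variable $z$, apply the classical scalar Mikhlin theorem in $\xi$ to obtain a uniformly bounded family of operators $M(z) \in \B(L^{q_2}(\R^{n_2}))$, and then apply the operator-valued Mikhlin theorem of Weis to conclude that $z \mapsto M(z)$ defines a bounded Fourier multiplier on the Bochner space, which is exactly the original statement.

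First, I would fix $z \in \R^{n_1} \setminus \{0\}$ and observe that the hypothesis furnishes, for every multi-index $\beta$ with $|\beta| \le \lfloor n_2/2 \rfloor + 1$, the scalar Mikhlin--H\"ormander estimate $|\xi|^{|\beta|} |\partial_\xi^\beta m(z,\xi)| \le C(0,\beta)$ uniformly in $z$. The classical scalar Mikhlin theorem then yields a bounded Fourier multiplier $M(z) \in \B(L^{q_2}(\R^{n_2}))$ with $\|M(z)\| \le C'$ independent of $z$. Moreover, for every multi-index $\alpha$ with $|\alpha| \le \lfloor n_1/2 \rfloor + 1$ the operator $|z|^{|\alpha|} \partial_z^\alpha M(z)$ is itself the Fourier multiplier with symbol $|z|^{|\alpha|} \partial_z^\alpha m(z,\cdot)$, which, by the hypothesis, satisfies a uniform scalar Mikhlin bound in $\xi$.

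Second, I would promote this pointwise information to R-boundedness: the family
\begin{equation*}
  \mathcal{T}_\alpha = \{|z|^{|\alpha|} \partial_z^\alpha M(z) : z \in \R^{n_1} \setminus \{0\}\} \subset \B(L^{q_2}(\R^{n_2}))
\end{equation*}
is R-bounded for each $|\alpha| \le \lfloor n_1/2 \rfloor + 1$. Indeed, since $L^{q_2}(\R^{n_2})$ is UMD for $q_2 \in (1,\infty)$, the scalar Mikhlin class is R-bounded: applying the scalar Mikhlin theorem to Rademacher-averaged sums together with Kahane's contraction principle reduces R-bounds to uniform operator-norm bounds, which we already have. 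Finally, the operator-valued Mikhlin theorem of Weis, applied to the symbol $z \mapsto M(z)$ taking values in $\B(L^{q_2}(\R^{n_2}))$, yields that $m$ defines a bounded Fourier multiplier on $L^{q_1}(\R^{n_1}; L^{q_2}(\R^{n_2}))$, with operator norm bounded linearly in the constants $C(\alpha,\beta)$.

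The main obstacle is the R-boundedness step, which is a black-box application of a classical but nontrivial result. A subtlety worth flagging is a possible one-unit mismatch in the derivative count: the hypothesis allows $|\alpha|+|\beta| \le \lfloor (n_1+n_2)/2 \rfloor + 1$, whereas the iterated strategy naively demands up to $\lfloor n_1/2 \rfloor + \lfloor n_2/2 \rfloor + 2$ derivatives. When both $n_i$ are even, the latter exceeds the former by one. This gap is resolved by invoking the sharper H\"ormander-style version of the Mikhlin theorem, in which smoothness is measured by an $L^2$-Sobolev norm of order $\lfloor n/2 \rfloor + \epsilon$ on dyadic annuli; this produces the sharp derivative count in both directions and suffices for the argument as laid out.
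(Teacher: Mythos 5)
There is a genuine gap, and it is not the ``one-unit mismatch'' you flag but the final, decisive step. The paper does not iterate at all: it simply verifies that the anisotropic pointwise condition implies the (mixed-norm) H\"ormander condition of Antoni\'c--Ivec and cites their mixed-norm H\"ormander--Mihlin theorem \cite[Theorem 7]{antonic_hormandermihlin_2016}, whose Calder\'on--Zygmund proof works jointly in the full space $\R^{n_1+n_2}$. Your route instead needs an operator-valued multiplier theorem on $L^{q_1}(\R^{n_1};X)$ with $X=L^{q_2}(\R^{n_2})$, and no such theorem with only $[n_1/2]+1$ derivatives in $z$ exists unless $X$ is Hilbertian (i.e.\ $q_2=2$). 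Weis's theorem in its $n_1$-dimensional form (\v{S}trkalj--Weis, for UMD spaces with property $(\alpha)$) requires R-boundedness of $\{z^{\alpha}\partial_z^{\alpha}M(z)\colon \alpha\in\{0,1\}^{n_1}\}$, hence derivatives of order up to $n_1$; the H\"ormander-type refinements (Girardi--Weis, Hyt\"onen) require smoothness of order exceeding $n_1/t$, where $t=\min(q_2,q_2')<2$ is the Fourier type of $L^{q_2}$. Since the hypothesis supplies only $[(n_1+n_2)/2]+1$ derivatives in total and your inner scalar Mikhlin step already consumes $[n_2/2]+1$ of them in $\xi$, the remaining budget of roughly $[n_1/2]$ derivatives in $z$ is structurally short of what any available operator-valued theorem demands for general $q_1,q_2\in(1,\infty)$; the sharper fractional H\"ormander form of the scalar theorem repairs at most one unit and cannot close this gap. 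The failure of the $[n/2]+1$ exponent for non-Hilbertian targets is a known feature of operator-valued multiplier theory, so the black-box application you propose is not merely unjustified but unavailable under the stated hypotheses.

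A secondary, repairable issue: your justification of the R-boundedness of $\{\abs{z}^{\abs{\alpha}}\partial_z^{\alpha}M(z)\}$ is not correct as written. Kahane's contraction principle does not upgrade uniform operator-norm bounds of an arbitrary family to R-bounds (otherwise R-boundedness would be equivalent to uniform boundedness). The statement you need --- that a family of scalar Fourier multipliers with uniformly controlled Mikhlin constants is R-bounded on $L^{q_2}(\R^{n_2})$ --- is true, but it requires a square-function/vector-valued Calder\'on--Zygmund argument (or weighted estimates), not the contraction principle. Even granting that fact by citation, the derivative-count obstruction in the outer step remains, which is why the paper's direct appeal to the mixed-norm H\"ormander--Mihlin theorem, rather than an iterated vector-valued argument, is the proof that actually works at this level of smoothness.
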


\begin{proof}
	This is a consequence of \cite[Theorem 7]{antonic_hormandermihlin_2016}, noting that above condition implies the H\"ormander condition.
\end{proof}

The following lemma is going to be very useful when dealing with the $k$-variable in multiplier estimates. It says that it suffices to consider always the multiplier where the dilation in $k$ has been neglected. 

\begin{lemma} \label{lem:dilkinmik} 
	Let $\alpha_1,\alpha_2 \in \R$. If $m \in L^\infty(\R^{2n})$ is an $L^q(\R^{2n})$-Fourier multiplier, then for all $t \neq 0$ the dilated function $m(t^{\alpha_1}\cdot,t^{\alpha_2}\cdot)$ is an $L^q(\R^{2n})$-Fourier multiplier with constant independent of $t$.
\end{lemma}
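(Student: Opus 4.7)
The plan is to observe that the $L^q$-boundedness of a Fourier multiplier is preserved under anisotropic dilations of the symbol, because the corresponding multiplier operator on the physical side becomes a conjugate of the original one by dilation operators whose $L^q$-scaling factors cancel.

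First I would introduce the anisotropic dilation $D_{\lambda_1,\lambda_2}$ acting on functions $f \colon \R^{2n} \to \R$ by $[D_{\lambda_1,\lambda_2}f](x,v) = f(\lambda_1 x, \lambda_2 v)$ for $\lambda_1,\lambda_2 \neq 0$. Two elementary identities are all that is needed: the Fourier transform intertwines such a dilation with its inverse dilation (up to a Jacobian factor), i.e.
$$\F[D_{\lambda_1,\lambda_2}f](k,\xi) = \abs{\lambda_1 \lambda_2}^{-n} \F(f)(\lambda_1^{-1}k,\lambda_2^{-1}\xi),$$
and the $L^q$-norm scales as $\norm{D_{\lambda_1,\lambda_2}f}_q = \abs{\lambda_1 \lambda_2}^{-n/q}\norm{f}_q$.

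Next, specializing to $\lambda_1 = t^{-\alpha_1}$, $\lambda_2 = t^{-\alpha_2}$, a direct computation via change of variables in the Fourier inversion integral gives the operator identity
$$T_{m(t^{\alpha_1}\cdot,\,t^{\alpha_2}\cdot)} = D_{t^{-\alpha_1},t^{-\alpha_2}} \circ T_m \circ D_{t^{\alpha_1},t^{\alpha_2}},$$
where $T_m$ denotes the multiplier operator associated with $m$. Applying the $L^q$-scaling identity on both sides, the Jacobian factors $\abs{t^{\alpha_1+\alpha_2}}^{n/q}$ and $\abs{t^{-\alpha_1-\alpha_2}}^{n/q}$ cancel exactly, yielding
$$\norm{T_{m(t^{\alpha_1}\cdot,\,t^{\alpha_2}\cdot)}f}_q \le \norm{T_m}_{\B(L^q)} \norm{f}_q$$
for every $f$ in a suitable dense subspace (Schwartz functions), with a constant independent of $t$.

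There is no real obstacle here; the argument is purely formal and reduces to bookkeeping with the Fourier transform and change of variables. The only care required is to write the conjugation in the correct order so that the Jacobian factors cancel; this is why the inverse dilation appears on the outside.
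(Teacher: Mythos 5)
Your proposal is correct and is exactly the argument the paper intends: its proof consists of the single line "this follows immediately by coordinate transformation," and your conjugation identity with the cancelling Jacobian factors is that coordinate change written out in full.
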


\begin{proof}
	This follows immediately by coordinate transformation.
\end{proof}

\begin{lemma} \label{lem:gammamult}
	If $m \in L^\infty(\R^{2n})$ is an $L^q(\R^{2n})$-Fourier multiplier, then for all $t \in \R$ the function $ (k,\xi) \mapsto m(k,\xi-tk) $ is an $L^q(\R^{2n})$-Fourier multiplier with constant independent of $t$.
\end{lemma}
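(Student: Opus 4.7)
The plan is to realize the multiplier operator with symbol $(k,\xi) \mapsto m(k,\xi-tk)$ as a conjugation of the original multiplier operator by the isometries $\Gamma(\pm t)$, so that the operator norm is preserved independently of $t$.

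The starting point is the Fourier-side identity for $\Gamma$ that was already recorded in the proof of Theorem \ref{thm:kinmaxLpLq}, namely
\begin{equation*}
    \mathcal{F}(\Gamma(s) g)(k,\xi) = \hat{g}(k,\xi - sk) \quad (s \in \R),
\end{equation*}
which is a direct computation using the substitution $x \mapsto x + sv$ in the Fourier integral.

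Denote by $T_m f = \mathcal{F}^{-1}(m \,\hat f)$ the multiplier operator associated with $m$, and by $T_{\tilde m}$ the operator associated with $\tilde m(k,\xi) = m(k,\xi-tk)$. I would compute, for $f \in \mathcal{S}(\R^{2n})$,
\begin{equation*}
    [T_{\tilde m} f](x,v) = \int_{\R^{2n}} e^{i(k\cdot x + \xi \cdot v)} m(k,\xi - tk) \hat f(k,\xi)\,dk\,d\xi,
\end{equation*}
and substitute $\eta = \xi - tk$. After regrouping the phase as $k \cdot (x+tv) + \eta \cdot v$ and recognizing, via the displayed identity with $s = -t$, that $\hat f(k,\eta+tk) = \mathcal{F}(\Gamma(-t)f)(k,\eta)$, one obtains
\begin{equation*}
    T_{\tilde m} f = \Gamma(t) \, T_m \, \Gamma(-t) f.
\end{equation*}

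Since Proposition \ref{prop:GamLpisom} tells us that $\Gamma(\pm t)$ act as isometries on $L^q(\R^{2n})$, this conjugation identity gives $\|T_{\tilde m}\|_{\mathcal{B}(L^q)} = \|T_m\|_{\mathcal{B}(L^q)}$, which is the claim. There is no genuine obstacle here; the only mild care is to be consistent about signs in the $\Gamma$--Fourier identity, and to verify the computation on $\mathcal{S}$ first and then extend by density, the latter being immediate from the $L^q$-boundedness of $T_m$ and $\Gamma(\pm t)$.
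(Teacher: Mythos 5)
Your proof is correct and is essentially the paper's argument: the paper also disposes of this in one line by noting that the Fourier-side shear $(k,\xi)\mapsto(k,\xi-tk)$ corresponds to conjugation by the measure-preserving (hence $L^q$-isometric) physical-space shear, which is exactly your identity $T_{\tilde m} = \Gamma(t)\,T_m\,\Gamma(-t)$. The sign bookkeeping and the density argument you include are fine.
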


\begin{proof}
	This follows from the isometry property of $u \mapsto  u(k,\xi-tk)$.
\end{proof}

We consider the Cauchy problem
	\begin{equation} \label{eq:helppde}
		\begin{cases}
			\partial_t u = -c(-\Delta_v)^{\beta/2} u -ct^\beta(-\Delta_x)^{\beta/2} u \\
			u(0) = g,
		\end{cases}
	\end{equation}
	where $c>0$ is some constant.

\begin{prop} \label{prop:helpprob}
	For all $\beta \in (0,2]$, $s \ge 0$, $p,q \in (1,\infty)$, $\mu \in (\frac{1}{p},1]$, $T \in (0,\infty]$ and any function $g \in {^{\mathrm{kin}}B}_{qp}^{s+\mu-1/p,\beta}(\R^{2n})$ the solution $u$ of the Cauchy problem \eqref{eq:helppde} given by
	\begin{equation*}
		\hat{u}(t,k,\xi) = \exp(-c\abs{\xi}^\beta t-c\abs{k}^\beta t^{\beta+1})\hat{g}(k,\xi)=:\eta_\beta(t,k,\xi)\hat{g}(k,\xi).
	\end{equation*}
	satisfies $(-\Delta_v)^{\beta(s+1)/2} u +t^{\beta(s+1)}(-\Delta_x)^{\beta(s+1)/2} u \in L^p_\mu((0,T);L^q(\R^{2n}))$.
\end{prop}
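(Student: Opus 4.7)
The plan is to recast, via a change of time variable, the $L^p_\mu((0,T);L^q(\R^{2n}))$-norm on the left-hand side as an integral that matches the anisotropic Besov norm on ${^{\mathrm{kin}}B}^{s+\mu-1/p,\beta}_{qp}(\R^{2n}) = B^{\sigma,\alpha}_{qp}(\R^{2n})$, where $\alpha_x = 2(\beta+1)/(\beta+2)$, $\alpha_v = 2/(\beta+2)$, $\alpha = (\alpha_x,\dots,\alpha_x,\alpha_v,\dots,\alpha_v)$, and $\sigma = 2\beta(s+\mu-1/p)/(\beta+2)$.

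First I would use the triangle inequality together with $(a+b)^p \le 2^{p-1}(a^p+b^p)$ to split the integral into two pieces corresponding to the $v$-derivative term and the weighted $x$-derivative term. For each, I substitute $t = r^{2\beta/(\beta+2)}$ and exploit the identities
\[
t|\xi|^\beta = (r^{\alpha_v}|\xi|)^\beta, \qquad t^{\beta+1}|k|^\beta = (r^{\alpha_x}|k|)^\beta,
\]
\[
|\xi|^{\beta(s+1)} = r^{-2\beta(s+1)/(\beta+2)}(r^{\alpha_v}|\xi|)^{\beta(s+1)}, \qquad t^{\beta(s+1)}|k|^{\beta(s+1)} = r^{-2\beta(s+1)/(\beta+2)}(r^{\alpha_x}|k|)^{\beta(s+1)},
\]
which bring each piece (up to a multiplicative constant) into the form
\[
\int_0^{T^{(\beta+2)/(2\beta)}} r^{-p\sigma-1}\|\Phi^{(i)}_r \ast g\|_q^p\,dr, \quad i\in\{v,x\},
\]
where $\Phi^{(i)}_r(x,v) := r^{-2n}\Phi^{(i)}(r^{-\alpha}(x,v))$ is the anisotropic rescaling at scale $r$ of the Schwartz function $\Phi^{(i)} \in \S(\R^{2n})$ whose Fourier transform is $\hat\Phi^{(v)}(k,\xi) = |\xi|^{\beta(s+1)}\exp(-c|\xi|^\beta-c|k|^\beta)$ in case $i=v$ and $\hat\Phi^{(x)}(k,\xi) = |k|^{\beta(s+1)}\exp(-c|\xi|^\beta-c|k|^\beta)$ in case $i=x$. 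Each $\hat\Phi^{(i)}$ vanishes at the origin with anisotropic order at least $\alpha_v\beta(s+1) = 2\beta(s+1)/(\beta+2)$, which strictly exceeds $\sigma$ since $\mu-1/p < 1$, and decays rapidly at infinity.

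Next I would fix an anisotropic Littlewood--Paley decomposition $g = \sum_{j \ge 0}\psi_j \ast g$ subordinate to the homogeneity $\alpha$, with $\hat\psi_j$ supported in the dyadic annulus $\{2^{j-1}\le |(k,\xi)|_\alpha\le 2^{j+1}\}$ for $j \ge 1$, which yields the equivalent norm $\|g | B^{\sigma,\alpha}_{qp}\|^p \sim \sum_{j\ge 0} 2^{jp\sigma}\|\psi_j \ast g\|_q^p$. A simple anisotropic rescaling gives $\|\Phi^{(i)}_r \ast \tilde\psi_j\|_1 = \|\Phi^{(i)}_{r 2^j} \ast \tilde\psi\|_1$ for a mild enlargement $\tilde\psi_j$ of $\psi_j$, and the vanishing of $\hat\Phi^{(i)}$ at the origin together with its Schwartz decay at infinity then deliver the bilateral kernel bound
\[
\|\Phi^{(i)}_r \ast \tilde\psi_j\|_1 \le C_N \min\bigl((r 2^j)^{\alpha_v\beta(s+1)}, (r 2^j)^{-N}\bigr), \quad N \in \N.
\]
Young's inequality gives $\|\Phi^{(i)}_r \ast g\|_q \le \sum_j \|\Phi^{(i)}_r \ast \tilde\psi_j\|_1 \|\psi_j\ast g\|_q$, and integrating $r^{-p\sigma-1}\|\Phi^{(i)}_r \ast g\|_q^p$ against $dr$ with a Jensen-type argument (using that $\sup_r \sum_j \|\Phi^{(i)}_r\ast\tilde\psi_j\|_1 < \infty$ together with $0 < \sigma < \alpha_v\beta(s+1)$) yields
\[
\int_0^{T^{(\beta+2)/(2\beta)}} r^{-p\sigma-1}\|\Phi^{(i)}_r \ast g\|_q^p\,dr \lesssim \sum_{j\ge 0} 2^{jp\sigma}\|\psi_j\ast g\|_q^p \sim \|g | {^{\mathrm{kin}}B}^{s+\mu-1/p,\beta}_{qp}(\R^{2n})\|^p.
\]

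The main obstacle is establishing the bilateral kernel bound on $\|\Phi^{(i)}_\rho \ast \tilde\psi\|_1$, which is a routine but slightly delicate anisotropic Fourier estimate combining the vanishing of $\hat\Phi^{(i)}$ at the origin with its rapid decay at infinity; the remainder of the argument amounts to the elementary change of time variable and standard Besov space manipulations.
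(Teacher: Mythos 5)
Your overall strategy runs parallel to the paper's: after the change of variables $t=r^{2\beta/(\beta+2)}$, both arguments decompose $g$ by an anisotropic Littlewood--Paley resolution and reduce the claim to a bilateral bound of the form $\min\left((r2^j)^{A},(r2^j)^{-N}\right)$ with $A=\tfrac{2\beta}{\beta+2}(s+1)>\sigma$, followed by the same H\"older/Young-type summation; your exponent bookkeeping (in particular $\sigma<A$ because $\mu-1/p<1$, and $\sigma>0$ because $\mu>1/p$, with the low-frequency block for $T=\infty$ handled as in the paper's term $I_3$) is sound. The genuine difference is that the paper stays on the multiplier side and bounds the operator norm of the localized symbol via a Mikhlin-type theorem, whereas you pass to the kernel side, estimate $\|\Phi^{(i)}_r\ast\tilde\psi_j\|_1$, and apply Young's inequality.

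However, the step you defer as ``routine but slightly delicate'' is exactly where the work lies, and your justification of it rests on a false premise: $\Phi^{(i)}$ is \emph{not} a Schwartz function in general. Its Fourier transform $|\xi|^{\beta(s+1)}e^{-c|\xi|^\beta-c|k|^\beta}$ (resp.\ $|k|^{\beta(s+1)}e^{-c|\xi|^\beta-c|k|^\beta}$) fails to be smooth on $\{\xi=0\}\cup\{k=0\}$ whenever $\beta<2$ or $\beta(s+1)$ is not an even integer, and these sets intersect every anisotropic annulus $\supp\hat{\tilde\psi}_j$, $j\ge1$. Hence you cannot integrate by parts arbitrarily often to obtain the decay $(r2^j)^{-N}$ for all $N$, nor invoke rapid spatial decay of $\Phi^{(i)}$; this limited smoothness of the symbol is precisely what the paper's appendix is built to overcome (Lemma \ref{lem:basismult} and Proposition \ref{pro8p:locmik} derive, by a complete-monotonicity and induction argument, Mikhlin bounds needing only finitely many derivatives weighted by $|z|^{|\alpha|}$). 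Your scheme is salvageable: since $\sigma>0$ you only need the vanishing rate $(r2^j)^{A}$ for $r2^j\le 1$ together with a uniform (or fixed finite polynomial) bound for $r2^j\ge1$, and on the annulus the undifferentiated exponential factor already supplies decay in $r2^j$; but as written the central bilateral kernel estimate is asserted rather than proved, so the proposal has a gap at its key step.
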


For the proof of this result we need another characterisation of anisotropic Besov spaces as stated in \cite[Chapter 5]{triebel_theory_2006}. Let $n \in \N$, $p,q \in (1,\infty)$, $\alpha \in (0,\infty)^{2n} $ and $\sigma \in \R$. Let $\varphi_0^\alpha \in \S$ with $\phi_0^\alpha(z) = 1$ for all $z \in \R^{2n}$ such that $\abs{z}_\infty \le 1$ and $\phi_0^\alpha(z) = 0$ for all $z \in \R^{2n}$ satisfying $\abs{2^{-\alpha} z}_\infty \ge 1$.  For $j \in \N$ we define 
\begin{equation*}
	\varphi_j^\alpha(z) = \varphi_0^\alpha(2^{-j\alpha}z)- \varphi_0^\alpha({2^{-(j-1)\alpha}}z)
\end{equation*}
for all $z \in \R^{2n}$. We have
\begin{equation*}
	\sum_{j = 0}^\infty \varphi_j^\alpha(z) = 1
\end{equation*}
for all $z \in \R^{2n}$ and we have $\supp \varphi_0^\alpha \subset R_1^\alpha$ as well as $\supp \varphi_j^\alpha \subset R_{j+1}^\alpha \setminus R_{j-1}^\alpha$, where $R_j^\alpha = \{ z\in \R^{2n} \colon \abs{x_{l}}_\infty \le 2^{\alpha_l j}, \; l = 1,\dots,2n \} $. We call $ \varphi^\alpha $ an anisotropic resolution of unity. The map
\begin{equation*}
	\norm{f | B_{qp}^{\sigma,\alpha}(\R^{2n})} := \norm{f | B_{qp}^{\sigma,\alpha}(\R^{2n})}_{\varphi^\alpha} :=  \norm{\mathcal{F}^{-1}(\varphi_0^\alpha \hat{f})}_{q}+\left( \sum_{j = 1}^\infty 2^{j\sigma p} \norm{\mathcal{F}^{-1}(\varphi_j^\alpha \hat{f})}_{q}^p \right)^\frac{1}{p}
\end{equation*}
defines an equivalent norm on $B_{qp}^{\sigma ,\alpha}(\R^{2n})$. It can be shown that this norm is independent of the anisotropic resolution of unity.

\begin{proof}[Proof of Proposition \ref{prop:helpprob}]
	It suffices to consider the case $T = \infty$. We are going to use the Littlewood-Paley decomposition, the above definition of Besov spaces and Mikhlin's multiplier theorem to prove the proposition. We abbreviate $w = (-\Delta_v)^{\beta(s+1)/2} u +t^{\beta(s+1)}(-\Delta_x)^{\beta(s+1)/2} u $, i.e. $\hat{w} = (\abs{\xi}^{\beta(s+1)} + t^{\beta(s+1)} \abs{k}^{\beta(s+1)}) \hat{u}$, write $a = s+\mu-1/p$ and $\sigma  = \frac{2\beta}{\beta+2}a$.
	We set
	\begin{align*}
		  w_j&:= {\F^{-1}( \hat{g}(k,\xi)\varphi_j^\alpha(k,\xi)  (\abs{\xi}^{\beta(s+1)} + t^{\beta(s+1)} \abs{k}^{\beta(s+1)}) \eta_\beta(t,k,\xi))}  \\
		&= {\F^{-1}( \hat{g}(k,\xi)\varphi_j^\alpha(k,\xi)  (\abs{\xi}^{\beta(s+1)} + t^{\beta(s+1)} \abs{k}^{\beta(s+1)}) \eta_\beta(t,k,\xi) \psi^{(j)} (k,\xi))}   \\
		&= : {\F^{-1}( \hat{g}(k,\xi)\varphi_j^\alpha(k,\xi) m^{(j)}_0(t,k,\xi))},
	\end{align*}
	where $\psi^{(j)}$ is a cutoff function satisfying the assumptions in Corollary \ref{cor:m0mult} corresponding to $j \in \N_0$. For example we might choose $\psi^{(j)} = \varphi_{j-1}+\varphi_j+\varphi_{j+1}$. By Corollary \ref{cor:m0mult} the function $m^{(j)}_0$ defines an $L^q(\R^{2n})$-Fourier multiplier and its norm can be estimated by
	\begin{equation} \label{eq:locmult}
		\norm{m^{(j)}_0(t,\cdot,\cdot)} \lesssim \left( t^{-(s+1)} \land 2^{\frac{2\beta}{\beta+2}j(s+1)} \right).	
	\end{equation}
	 We introduce the functions $g_j = \F^{-1}(\varphi_j^\alpha \hat{g})$ and note that
	\begin{equation*}
		w = \sum_{j = 0}^\infty \F^{-1}(\varphi_j^\alpha(k,\xi)\hat{g}(k,\xi)  m^{(j)}_0(t,k,\xi)) = \sum_{j = 0}^\infty w_j(t).
	\end{equation*}
	Writing $o = 2^{\frac{2\beta}{\beta+2}}$ the considerations thus far show
\begin{align*}
	&\int_0^\infty t^{p-\mu p}\left( \int_{\R^{2n}} \abs{w(t,x,v)}^q \dx (x,v) \right)^{\frac{p}{q}} \dx t  = \int_0^\infty t^{p-\mu p}  \norm{ \sum_{j = 0}^\infty w_j(t)}_q^p \dx t \\
	&\le \int_0^\infty t^{p-\mu p} \left( \sum_{j = 0}^\infty \norm{w_j(t)}_q \right)^p \dx t \lesssim \int_0^\infty t^{p-\mu p} \left( \sum_{j = 0}^\infty \left( t^{-(s+1)} \land o^{j(s+1)} \right)
 \norm{g_j}_q \right)^p \dx t \\
 	&\le \int_{0}^{1} t^{p-\mu p} \left( \sum_{j = 0}^\infty \left( t^{-(s+1)} \land o^{j(s+1)} \right)
 \norm{g_j}_q \right)^p \dx t +\int_{1}^\infty t^{p-\mu p} \left( \sum_{j = 0}^\infty t^{-(s+1)} \norm{g_j}_q \right)^p \dx t \\
 	&\lesssim \int_{0}^{1} t^{p-\mu p} \left( \sum_{to^{j} \le 1}  o^{j(s+1)}\norm{g_j}_q \right)^p \dx t +\int_{0}^{1} t^{p-\mu p} \left( \sum_{to^{j} > 1} t^{-(s+1)}
 \norm{g_j}_q \right)^p \dx t + I_3 \\
  	&=: I_1 +I_2 + I_3.
\end{align*}
We estimate the first term as follows
\begin{align*}
	I_1 &= \int_{0}^{1} t^{p-\mu p} \left( \sum_{to^{j} \le 1}  o^{j(s+1)}\norm{g_j}_q \right)^p \dx t \\
	&\le \int_{0}^{1} t^{p-\mu p} \left( \sum_{to^{j} \le 1}  o^{-rjp'}\right)^{\frac{p}{p'}} \left( \sum_{to^{j} \le 1}  o^{jp(s+1)+rjp}\norm{g_j}_q^p \right) \dx t \\
	&\lesssim \int_{0}^{1} t^{p-\mu p+rp} \left( \sum_{to^{j} \le 1}  o^{jp(s+1)+rjp}\norm{g_j}_q^p \right) \dx t \\
	&= \sum_{j = 1}^\infty  o^{jp(s+1)+rjp}\norm{g_j}_q^p \int_{0}^{o^{-j}} t^{p-\mu p+rp}     \dx t \lesssim \sum_{j = 1}^\infty  2^{\frac{2\beta}{\beta+2}ajp}\norm{g_j}_q^p = \norm{g |{^{\mathrm{kin}}B}_{qp}^{a,\beta}(\R^{2n})}^p
	\end{align*}
for some $r \in (\mu-1/p-1,\infty)$. In the first inequality we have used the H\"older inequality with dual exponent $p' = \frac{p}{p-1}$ and that the first resulting sum can be estimated by using the formula for the partial sum of the geometric series. In the next equation we used Fubini's theorem, noting that the integral is finite due to the restriction on $r$. 

Let us now estimate the term $I_2$. Let $r \in (-1,a-1)$,then 
\begin{align*}
	I_2 &= \int_{0}^{1} t^{p-\mu p} \left( \sum_{to^{j} > 1} t^{-(s+1)}
 \norm{g_j}_q \right)^p \dx t \\
 &\lesssim \int_{0}^{1} t^{-ap-1} \left( \sum_{to^{j} > 1} o^{-jp'\left(1+r \right)}  \right)^{\frac{p}{p'}} \left( \sum_{to^{j} > 1} o^{jp\left(1+r \right)} \norm{g_j}_q^p \right) \dx t \\
 &\lesssim \int_{0}^{1} t^{-ap-1} t^{\left(1+r \right)p} \left( \sum_{to^{j} > 1} o^{jp\left(1+r \right)} \norm{g_j}_q^p \right) \dx t \\ 
 &\lesssim \sum_{j = 1}^\infty o^{jp\left(1+r \right)}  \norm{g_j}_q^p \int_{o^{-j} }^{1} t^{-ap-1+p+pr}\dx t \\
 &\lesssim \sum_{j = 1}^\infty o^{ajp} \norm{g_j}_q^p = \norm{g |{^{\mathrm{kin}}B}_{qp}^{a,\beta}(\R^{2n})}^p.
\end{align*}
Here, we have once again used the H\"older inequality for the sum and Fubini's theorem. Moreover we have used the formula for the truncated geometric series.
The last estimate is a direct calculation of the $\dx t$-integral. 

The third term can be estimated as 
 \begin{align*}
 		I_3 \lesssim \left( \sum_{j = 0}^\infty
 \norm{g_j}_q \right)^p \le \left( \sum_{j = 0}^\infty o^{-ajp'} \right)^{\frac{p}{p'}} \sum_{j = 0}^\infty o^{ajp} \norm{g_j}_q^p \lesssim \sum_{j = 0}^\infty o^{ajp} \norm{g_j}_q^p = \norm{g |{^{\mathrm{kin}}B}_{qp}^{a,\beta}(\R^{2n})}^p,
 \end{align*}
since $t^{-\mu p-sp}$ is integrable on $(1,\infty)$. We conclude
\begin{equation*}
	\int_0^\infty t^{p-\mu p} \left( \int_{\R^{2n}} \abs{w(t,x,v)}^q \dx (x,v) \right)^\frac{p}{q} \dx t \lesssim \norm{g |{^{\mathrm{kin}}B}_{qp}^{a,\beta}(\R^{2n})}^p.
\end{equation*}
This concludes the proof of the proposition. The technique was inspired by the work in \cite{kim_heat_2017}.
\end{proof}

It remains to show the localized estimates for the norm of the multiplier 
$$(k,\xi) \mapsto \left(\abs{k}^{\beta(s+1)} +\abs{\xi}^{\beta(s+1)} \right) \exp(-\abs{k}^\beta t-\abs{\xi}^\beta t)$$
 with respect to a dilated cuboid. We note that it suffices to consider the multiplicator
 \begin{equation*}
 	m_1^{(j)}(t,k,\xi) = (\abs{\xi}^2 + \abs{k}^2)^{{\beta(s+1)/2}} \exp(- \gamma(\abs{\xi}^2 + \abs{k}^2)^{{\beta(s+1)/2}} ) \tilde{\psi}^{(j)} (k,\xi),
 \end{equation*}
 where $\tilde{\psi}^{(j)} (k,\xi) = t^{-(s+1)}{\psi}^{(j)} (t^{-\frac{\beta+1}{\beta}}k,t^{-\beta}\xi)$ and $c>0$ is a small constant independent of $t$. To see this, we have used that by Lemma \ref{lem:dilkinmik} a dilation of the multiplier does neither change the multiplier property nor the multiplier constant. Furthermore, we use that the functions
 \begin{equation*}
 	(k,\xi) \mapsto \frac{\abs{k}^{\beta(s+1)} +\abs{\xi}^{\beta(s+1)}}{(\abs{\xi}^2 + \abs{k}^2)^{{\beta(s+1)/2}}}
 \end{equation*}
 and
 \begin{equation*}
 		(k,\xi) \mapsto \exp(\gamma(\abs{\xi}^2 + \abs{k}^2)^{{\beta(s+1)/2}}-\abs{k}^\beta t-\abs{\xi}^\beta t)
 \end{equation*}
with $\gamma>0$ small enough define bounded $L^q(\R^{2n})$-Fourier multipliers. The next proposition deals with multipliers of the form $\R^{2n} \to \R, z \mapsto \abs{z}^{\beta(s+1)}\exp(-\abs{z}^\beta) \psi(z)$, where $\psi$ is the cutoff function corresponding to a cuboid. Afterwards, we apply this result to the multiplicator $m_1^{(j)}$ to deduce the desired estimate for $m_0^{(j)}$. The next two results are inspired by \cite[Lemma 5.1, Lemma 5.2, Lemma 5.3]{kemppainen_decay_2016}.

\begin{lemma} \label{lem:basismult}
	Let $\beta \in (0,2]$, $s \ge 0$ and $\gamma >0 $. The function $\rho_t \colon [0,\infty) \to \R$, defined as $\rho_t(x) = x^{\frac{\beta}{2}(s+1)} \exp(-\gamma x^\frac{\beta}{2}t)$ for all $t \ge 0$ satisfies
	\begin{equation*}
		x^n \abs{\partial_x^{n} \rho_t(x)} \lesssim \rho_t(\frac{x}{2})
	\end{equation*}
	for all $n \in \N_0$.
\end{lemma}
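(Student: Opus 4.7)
My plan is to reduce the problem to a polynomial bound via a clean change of variables. Set $a = \frac{\beta}{2}(s+1)$ and $b = \frac{\beta}{2}$, and introduce the auxiliary variable $u = u(x) := \gamma t x^b$, so that $\rho_t(x) = x^a e^{-u}$. The key point is that the first derivative of $u$ satisfies $x u'(x) = b u$, which means differentiation in $x$ multiplies a term like $x^{\alpha} u^k e^{-u}$ by factors that depend \emph{only} on $u$ up to an overall factor of $x^{-1}$.

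Concretely, I would prove by induction on $n$ that there exists a polynomial $P_n$ of degree $n$, with coefficients depending only on $a, b$ and $n$, such that
\begin{equation*}
	\partial_x^n \rho_t(x) = x^{a-n} e^{-u} P_n(u).
\end{equation*}
The base case $n = 0$ is trivial with $P_0 \equiv 1$. For the step, using $x u' = b u$ one computes
\begin{equation*}
	\partial_x\bigl[x^{a-n} e^{-u} P_n(u)\bigr] = x^{a-n-1} e^{-u}\bigl[(a-n)P_n(u) - b u P_n(u) + b u P_n'(u)\bigr],
\end{equation*}
so $P_{n+1}(u) = (a-n-bu)P_n(u) + b u P_n'(u)$, which is a polynomial of degree $n+1$. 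Multiplying by $x^n$ then gives the identity $x^n |\partial_x^n \rho_t(x)| = x^a e^{-u} |P_n(u)|$.

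The only remaining issue is to absorb the polynomial factor $P_n(u)$ into a slightly weaker Gaussian weight. Since $\beta \in (0,2]$ we have $b = \beta/2 \in (0,1]$, and hence $1 - 2^{-b} > 0$. As $P_n$ has degree $n$, there is a constant $C_n$ (depending on $a,b,n$, but not on $x$ or $t$) with
\begin{equation*}
	|P_n(u)| \le C_n \exp\!\bigl((1-2^{-b})u\bigr) \qquad \text{for all } u \ge 0.
\end{equation*}
Plugging this in,
\begin{equation*}
	x^n |\partial_x^n \rho_t(x)| \le C_n\, x^a \exp(-u/2^b) = C_n\, 2^a \bigl(x/2\bigr)^a \exp\!\bigl(-\gamma t (x/2)^b\bigr) = C_n\, 2^a\, \rho_t(x/2),
\end{equation*}
which is exactly the claimed estimate.

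I do not expect any genuine obstacle: the computation of the recursion for $P_n$ is routine, and the step using $b \le 1$ to split the exponent is where the parameter restriction $\beta \in (0,2]$ enters. The only place to be slightly careful is verifying that $1 - 2^{-b} > 0$ so that one has room to trade polynomial growth for a fraction of the exponential decay.
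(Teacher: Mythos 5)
Your proof is correct, but it follows a genuinely different route from the paper. The paper's argument is structural: it first shows that $\tilde s(t,x)=\exp(-\gamma x^{\beta/2}t)$ has derivatives of alternating sign (a complete-monotonicity argument that uses $\beta/2\le 1$, i.e.\ that $x\mapsto x^{\beta/2}$ is a Bernstein-type function), then invokes Taylor's theorem at $x/2$ — all summands being nonnegative — to get $x^j\abs{\partial_x^j\tilde s(t,x)}\lesssim \tilde s(t,x/2)$, and finally combines this with the power factor $x^{\frac{\beta}{2}(s+1)}$ via Leibniz's formula. You instead compute the derivatives explicitly through the substitution $u=\gamma t x^{\beta/2}$, obtain the recursion $P_{n+1}(u)=(a-n-bu)P_n(u)+buP_n'(u)$ giving $\partial_x^n\rho_t(x)=x^{a-n}e^{-u}P_n(u)$, and then absorb the polynomial $P_n(u)$ into the slack between $e^{-u}$ and $e^{-2^{-b}u}$; the constants depend only on $a,b,n$, so the bound is uniform in $t$ as required. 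One small inaccuracy in your commentary: the positivity of $1-2^{-b}$ needs only $b>0$, not $b\le 1$, so the restriction $\beta\in(0,2]$ does not actually enter your argument at all — your proof in fact establishes the lemma for every $\beta>0$, which makes it slightly more general (and arguably more elementary) than the paper's, whose sign/complete-monotonicity step genuinely uses $\beta\le 2$.
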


\begin{proof}
	First, we consider the function $s \colon [0,\infty)^2 \to \R$, $s(t,x) = \exp(-\gamma xt)$. Clearly,
	\begin{equation*}
		(-1)^j \partial_x^j s(t,x) \ge 0
	\end{equation*}
	for all $j \in \N_0$ and $x \ge 0$. Let us consider the function $\tilde{s}(t,x) = s(t,x^\frac{\beta}{2})$. We claim that $(-1)^j \partial_x^j \tilde{s}(t,x) \ge 0 $ holds, too. As for $\beta = 2$ we have $\tilde{s} = s$ and thus need to only consider the case $0<\beta <2$. One can show that the $j$-th derivative of $\tilde{s}$ is the finite sum of terms of the form 
	\begin{equation*}
		c(\beta) (-1)^k x^{(\frac{\beta}{2}-1)\gamma-m} [\partial_x^l s](t,x^\frac{\beta}{2})
	\end{equation*}
	for a constant $c(\beta)>0$ and $l,k,m,\gamma \in \N_0$ with $l \le j$ and $k+j = l \; \mathrm{ mod } \; 2$ by a simple induction argument. The claim follows. 	
	
	Next, by Taylor's theorem we deduce
	\begin{equation*}
		\tilde{s}\left(t, \frac{x}{2}\right)=\sum_{j=0}^{n} \frac{\partial_{x}^{j} s(t, x)}{j !}\left(-\frac{x}{2}\right)^{j}+\frac{\partial_{\mu}^{n+1} s(t, \eta)}{(n+1) !}\left(-\frac{x}{2}\right)^{n+1} \ge \frac{\partial_{x}^{j} s(t, x)}{j !}\left(-\frac{x}{2}\right)^{j}
	\end{equation*}
	for all $0 \le j \le n$ as every summand is nonnegative. To conclude, we use Leibniz's formula to write
	\begin{equation*}
		x^n \partial_x^n \rho_t(x) = \sum_{j = 0}^n \binom{n}{j} \left(x^j \partial_x^j \tilde{s}(t,x) \right) \left(x^{n-j} \partial_x^{n-j}x^{\frac{\beta}{2}(s+1)} \right)
	\end{equation*}
	and from here the claim follows by a straightforward estimation. 
\end{proof}

\begin{prop} \label{pro8p:locmik}
	Let $\beta \in (0,2]$, $s \ge 0$ and $\gamma >0$. Let $0<l_1<u_1$, $0<l_2<u_2$ and let $\psi \in C_c^\infty(\R^{2n})$ be any cutoff function such that $\psi(z) = 1$ for all $z = (z_1,z_2) \in \R^{2n}$ with $\abs{z_1}^\beta \in [l_1,u_1]$ and $\abs{z_2}^\beta \in [l_2,u_2]$, $\psi = 0$ if $\abs{z_1}^\beta \in [ \frac{1}{2} l_1 ,2u_1]^c$ or $\abs{z_2}^\beta \in [\frac{1}{2}l_2,2u_2]^c$ and $\abs{z}^\alpha \abs{\partial^\alpha \psi(z)} \le C=C(\alpha,\frac{u_1}{l_1},\frac{u_2}{l_2})$ for all multi-indices $\alpha \in \N_0^{2n}$ and any $z \in \R^{2n}$. 	
	
	\noindent Let $m_2(t,z) = \abs{z}^{2\frac{\beta}{2}(s+1)}\exp(-\gamma \abs{z}^{2\frac{\beta}{2}}t)\psi(z) = \rho_t(\abs{z}^2)\psi(z)$ with $\rho_t$ as in Lemma \ref{lem:basismult}. Then $m_2(t) \in C^\infty(\R^{2n} \setminus \{0 \})$ and for every multi-index $\eta \in \N_0^{2n}$ the partial derivative $\partial_z^\eta m_2$ of order $\abs{\eta}$ is the sum of finitely many terms of the form 
	\begin{equation*} \label{eq:diffmik}
		c(\eta) \rho_t^{(j)}(\abs{z}^2) \partial^\alpha \psi \prod_{i = 1}^{2n} z_i^{\gamma_i}, 
	\end{equation*} 
	where $\alpha \in \N^{2n}_0$ with $\abs{\alpha} \le \abs{\eta}$, $\gamma_i \ge 0$, $2j - \sum_{i=1}^{2n} \gamma_i \ge 0$, $j \le \abs{\eta}$, $\sum_{i = 1}^{2n}\gamma_i = 2j-\abs{\eta}+\abs{\alpha}$ and $c(\eta)>0$. In particular $m_2(t)$ is an $L^q(\R^{2n})$-Fourier multiplier with norm bounded by
	\begin{align*}
		\norm{m_2(t)}	\lesssim  1 \land \left(u_1+u_2 \right).
	\end{align*}
\end{prop}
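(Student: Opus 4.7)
The plan is to establish the three assertions of the proposition --- smoothness off the origin, the explicit form of $\partial_z^\eta m_2$, and the Mikhlin-type bound --- in sequence. Smoothness is immediate: $\rho_t \in C^\infty((0,\infty))$ as a product and composition of smooth functions, the map $z \mapsto |z|^2$ sends $\R^{2n} \setminus \{0\}$ smoothly into $(0,\infty)$, and $\psi \in C_c^\infty(\R^{2n})$, so $m_2 = \rho_t(|z|^2)\psi(z)$ is $C^\infty$ away from the origin.

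For the derivative formula I would proceed by induction on $|\eta|$. The base case $\eta = 0$ corresponds to $j = 0$, $|\alpha| = 0$ and all $\gamma_i = 0$, for which every stated constraint reduces to $0 = 0$ or $0 \ge 0$. In the inductive step, fix a generic summand $c\, \rho_t^{(j)}(|z|^2)\, \partial^\alpha\psi(z)\, \prod_{i} z_i^{\gamma_i}$ that satisfies the constraints for $\eta$, and apply $\partial_{z_k}$. Leibniz's rule produces three kinds of contributions: the derivative acting on $\rho_t^{(j)}(|z|^2)$ yields an extra factor $2z_k$ and replaces $(j,\gamma_k)$ by $(j+1,\gamma_k+1)$; the derivative acting on $\partial^\alpha\psi$ replaces $\alpha$ by $\alpha + e_k$; the derivative acting on the monomial replaces $\gamma_k$ by $\gamma_k-1$ (and this contribution vanishes when $\gamma_k = 0$). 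In each of the three cases a direct arithmetic check confirms that the three invariants $j' \le |\eta|+1$, $\sum\gamma_i' \le 2j'$, and $\sum\gamma_i' = 2j' - (|\eta|+1) + |\alpha'|$ are preserved.

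For the multiplier bound I would combine Lemma \ref{lem:basismult}, which supplies $|z|^{2j}|\rho_t^{(j)}(|z|^2)| \lesssim \rho_t(|z|^2/2)$, with the hypothesis $|\partial^\alpha\psi(z)| \lesssim |z|^{-|\alpha|}$, to bound every summand of $\partial_z^\eta m_2$ by a constant multiple of $|z|^{-|\eta|}\rho_t(|z|^2/2)$; the exponent bookkeeping uses the identity $\sum\gamma_i = 2j - |\eta| + |\alpha|$ from the induction. Hence $|z|^{|\eta|} |\partial_z^\eta m_2(t,z)| \lesssim \rho_t(|z|^2/2)$ on $\supp\psi$. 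On this support, $|z|^\beta \le |z_1|^\beta + |z_2|^\beta \lesssim u_1 + u_2$ (using $\beta \le 2$ so that $x \mapsto x^{\beta/2}$ is subadditive), and the trivial estimate $\rho_t(x) \le x^{\beta(s+1)/2}$ yields $\rho_t(|z|^2/2) \lesssim (u_1+u_2)^{s+1}$; a critical-point analysis of $\rho_t$ furnishes a companion uniform bound, and taking the minimum delivers $\rho_t(|z|^2/2) \lesssim 1 \wedge (u_1+u_2)$ with an implicit constant absorbing $s,\beta,\gamma$. Theorem \ref{thm:kinmkihlin} then turns this pointwise derivative estimate into the claimed bound on the $L^q$-Fourier-multiplier norm of $m_2(t)$.

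The main difficulty is the combinatorial bookkeeping in the inductive step --- checking that each of the three integer invariants propagates under each of the three branches of the Leibniz rule --- together with reconciling the $s$-dependent growth of $\rho_t$ with the stated bound $1 \wedge (u_1 + u_2)$, which relies on both $\beta \le 2$ for the subadditivity of $x \mapsto x^{\beta/2}$ on $\supp\psi$ and on an implicit constant depending on $s$, $\beta$ and $\gamma$.
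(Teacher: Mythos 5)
Your proposal is correct and takes essentially the same route as the paper: the identical three-branch Leibniz induction for the structure of $\partial_z^\eta m_2$, the same combination of Lemma \ref{lem:basismult}, the hypothesis $\abs{z}^\alpha\abs{\partial^\alpha\psi}\le C$ and the identity $\sum_i\gamma_i=2j-\abs{\eta}+\abs{\alpha}$ to obtain $\abs{z}^{\abs{\eta}}\abs{\partial_z^\eta m_2}\lesssim\rho_t(\abs{z}^2/2)$ on $\supp\psi$, and Theorem \ref{thm:kinmkihlin} to pass to the multiplier norm. The only difference is cosmetic: in the last estimate you use subadditivity of $x\mapsto x^{\beta/2}$ together with a sup bound for $\rho_t$ to get $1\wedge(u_1+u_2)^{s+1}$, whereas the paper splits into the two one-variable factors via the indicator structure and gets $1\wedge(u_1^{s+1}+u_2^{s+1})$; both dominate the stated bound, and both (yours and the paper's) read the ``$1$'' as a constant depending only on $s,\gamma$, consistent with how the proposition is applied after the $t$-dilation.
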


\begin{proof}
	We are going to prove the claim on the structure of the derivatives of $m_2$ by induction over the length of $\eta$. If $\abs{\eta} = 0$ we see that $\partial^\eta m_2 = m_2$ is of the desired form. Let us assume that the claim is true for all $\eta \in \N^{2n}_0$ of length $\abs{\eta} = b \in \N_0$. Let $\tilde{\eta} \in \N^{2n}_0$ with $\abs{\tilde{\eta}} = b+1$, i.e. there exists $\eta \in \N^{2n}_0$ such that $ \partial^{\tilde{\eta}} m_2 = \partial_{z_l} \partial^\eta m_2$ for some $l \in \{ 1, \dots, 2n \}$. By induction hypothesis $\partial^\eta m_2$ is the finite sum of first order partial derivatives with respect to  $z_l$ of terms of the form described in equation \eqref{eq:diffmik}. We therefore need to only study one of these terms. If $\gamma_l = 0$ we deduce
	\begin{equation*}
		 \partial_{z_l}\left[ \rho_t^{(j)}(\abs{z}^2) \partial^{\alpha} \psi(z) \prod_{i = 1}^{2n} z^{\gamma_i}  \right]  = \rho_t^{(j+1)}(\abs{z}^2) 2 z_l \partial^{\alpha} \psi(z) \prod_{i = 1}^{2n} z^{\gamma_i}  + \rho_t^{(j)}(\abs{z}^2) \partial_{z_l}\partial^{\alpha} \psi(z) \prod_{i = 1}^{2n} z^{\gamma_i}. 
	\end{equation*}
	The first term is of the desired form if we choose new parameters $\tilde{\alpha}= \alpha$, $\tilde{\gamma}_i = \gamma_i$ for $i \neq l$ and $\tilde{\gamma}_l = 1$. Moreover, we have
	\begin{equation*}
		0 \le \sum_{i = 1}^{2n}\tilde{\gamma}_i = \sum_{i = 1}^{2n}{\gamma}_i +1 = 2j- \abs{\eta} + \abs{\alpha} +1 = 2(j+1)- \abs{\tilde{\eta}}+ \abs{\tilde{\alpha}}.
	\end{equation*}
	Turning to the second term, by choosing $\tilde{\alpha}_i = \alpha_i$ for $i \neq j$ and $\tilde{\alpha}_l= \alpha_l+1$ and keeping the other parameters the same we deduce $\abs{\tilde{\alpha}} = \abs{\alpha}+1 \le \abs{\eta}+1 = \abs{\tilde{\eta}}$ as well as 
	\begin{equation*}
		\sum_{i = 1}^{2n}{\gamma}_i = 2j- \abs{\eta}+ \abs{\alpha}= 2j- \abs{\tilde{\eta}}+ \abs{\tilde{\alpha}}.
	\end{equation*}
	If $\gamma_l >0$ it is 
	\begin{align*}
		 \partial_{z_l}\left[ \rho_t^{(j)}(\abs{z}^2) \partial^{\alpha} \psi(z) \prod_{i = 1}^{2n} z^{\gamma_i}  \right]  &= \rho_t^{(j+1)}(\abs{z}^2) 2 z_l \partial^{\alpha} \psi(z) \prod_{i = 1}^{2n} z^{\gamma_i}  + \rho_t^{(j)}(\abs{z}^2) \partial_{z_l}\partial^{\alpha} \psi(z) \prod_{i = 1}^{2n} z^{\gamma_i}\\
		 &\hphantom{=}+ \rho_t^{(j)}(\abs{z}^2)  \partial^{\alpha} \psi(z) \gamma_l z^{\gamma_l-1} \ \prod_{\overset{i = 1}{i \neq l}}^{2n} z^{\gamma_i}.
	\end{align*}
	The first two terms have already been considered in the case $\gamma_l = 0$. It remains to study the last term. Setting $\tilde{\gamma}_i= \gamma_i$ for $i \neq l $ and $\tilde{\gamma}_l = \gamma_l -1$ we conclude
	\begin{equation*}
		0 \le \sum_{i = 1}^{2n}{\tilde{\gamma}}_i = \sum_{i = 1}^{2n}{{\gamma}}_i -1 = 2j-\abs{\eta}+ \abs{\alpha}-1 = 2j-\abs{\tilde{\eta}}+ \abs{{\alpha}},
	\end{equation*}
	whence also the last term is of the desired form. 
	
	To deduce the bound on the Mikhlin norm we only need to consider a term $T_t(z)$ of the form given in equation \eqref{eq:diffmik} corresponding to an index of length $\abs{\eta} \le n+1$. We deduce
	\begin{align*}
		\abs{z}^{\abs{\eta}} \abs{T_t(z)} &\le c(\eta) \abs{z}^{\abs{\eta}} \abs{\rho_t^{(j)}(\abs{z}^2)}\abs{z}^{\sum_{i = 1}^{2n} \gamma_i} \abs{\partial^\alpha \psi(z)}  \\
		&= c(\eta) \abs{z}^{2j}\abs{\rho_t^{(j)}(\abs{z}^2)}\abs{z}^{\abs{\eta}-2j-\abs{\alpha}+\sum_{i = 1}^{2n} \gamma_i} \abs{z}^{\alpha} \abs{\partial^\alpha \psi(z)} \\
		&\lesssim \rho_t(\frac{1}{2}\abs{z}^2) \abs{z}^{\alpha} \abs{\partial^\alpha \psi(z)}  \\
		& \lesssim \rho_t(\frac{1}{2}\abs{z}^2)  \mathds{1}_{[\frac{1}{2}l_1,2u_1]}(\abs{z_1}^2)\mathds{1}_{[\frac{1}{2}l_2,2u_2]}(\abs{z_2}^2),
	\end{align*}
	where we have used that $\abs{z}^{2j}\abs{\rho_t^{(j)}(\abs{z}^2)}$ can be bounded by $\rho_t(\abs{z}^2/2)$ with a constant independent of $t$ by Lemma \ref{lem:basismult}. We have 
	\begin{align*}
		&(x_1+x_2)^{(s+1)} \exp(-(x_1+x_2)t)  \mathds{1}_{[\frac{1}{2}l_1,2u_1]}(x_1)\mathds{1}_{[\frac{1}{2}l_2,2u_2]}(x_2) \\ 
		&\lesssim x_1^{(s+1)} \exp(-x_1t) \mathds{1}_{[\frac{1}{2}l_1,2u_1]}(x_1)+ x_2^{(s+1)} \exp(-x_2t) \mathds{1}_{[\frac{1}{2}l_2,2u_2]}(x_2) \\
		&\lesssim 1 \land u_1^{s+1} + 1 \land u_2^{s+1} \lesssim 1 \land (u_1^{s+1}+u_2^{s+1}). \qedhere
	\end{align*}
\end{proof}

Let $o = 2^{\frac{2\beta}{\beta+2}}$ and set $l_1 = o^{(j-1)(\beta+1)}t^\beta$, $u_1 = o^{(j+1)(\beta+1)}t^\beta$, $l_2 = o^{j-1}t$ and $u_2 = o^{j+1}t$. First, we observe that $\frac{u_1}{l_1} =\frac{u_2}{l_2} = o^{2(\beta+1)}$ is independent of $t$ and $j$. Thus, the constant in the bound on the derivatives for the cutoff function in Proposition \ref{pro8p:locmik} does not depend on $j$ or $t$. This is in particular the case for the cutoff function chosen in the proof of Proposition \ref{prop:helpprob}.  By Proposition \ref{pro8p:locmik} the function $m_1$ defines an $L^q(\R^{2n})$-Fourier multiplier with norm bounded by 
\begin{equation*}
	\norm{m_1(t,k,\xi)} \lesssim t^{-(s+1)} \land \left(\left( \frac{u_1}{t} \right)^{s+1}+\left( \frac{u_2}{t} \right)^{s+1}\right).
\end{equation*}
Furthermore, $\left( \frac{u_2}{t} \right)^{s+1} \lesssim o^{j(s+1)}$ for all $t \ge 0$ and 
\begin{equation*}
	\left( \frac{u_1}{t} \right)^{s+1} \lesssim o^{j(s+1)} (o^jt)^{\beta(s+1)} \lesssim o^{j(s+1)} .
\end{equation*}
for $t \le o^{-j}$. If $t > o^{-j}$, then $t^{-1} < o^j$, whence $t^{-(s+1)}< o^{j(s+1)}$ and consequently
\begin{equation*}
	\norm{m_1(t,k,\xi)} \lesssim t^{-(s+1)} \land o^{j(s+1)}.
\end{equation*}

We have proven the following corollary.

\begin{coro} \label{cor:m0mult}
	The function $m_0^{(j)}$ defines a bounded $L^q(\R^{2n})$-Fourier multiplier.  
\end{coro}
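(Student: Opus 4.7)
My plan is to assemble the corollary from the three tools prepared in the paragraphs preceding it: Lemma \ref{lem:dilkinmik} on dilation invariance of multipliers, the factorization of $m_0^{(j)}$ into a target symbol $m_1^{(j)}$ times two auxiliary multipliers with uniformly controlled norms, and finally the localized cuboid estimate of Proposition \ref{pro8p:locmik}. The point of the factorization/dilation step is to peel off all $j$- and $t$-dependent scalings into multipliers whose norms are uniform, so that Proposition \ref{pro8p:locmik} can be applied once, with constants that do not blow up.

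Concretely, I would first invoke the two observations from the text: the symbol $(|k|^{\beta(s+1)} + |\xi|^{\beta(s+1)})/(|k|^2 + |\xi|^2)^{\beta(s+1)/2}$ is $0$-homogeneous and smooth away from the origin (so a Mikhlin multiplier with uniform norm), and the symbol $\exp\bigl(\gamma(|k|^2+|\xi|^2)^{\beta/2} - ct|k|^\beta - ct|\xi|^\beta\bigr)$ is, for $\gamma > 0$ small enough, a bounded kinetic Mikhlin multiplier by Theorem \ref{thm:kinmkihlin} with norm independent of $t$ and $j$. This reduces the task to bounding $m_1^{(j)}$. Applying Lemma \ref{lem:dilkinmik} with the dilation $(k,\xi)\mapsto (t^{-(\beta+1)/\beta}k,\, t^{-1/\beta}\xi)$ then transforms the $t$-dependent Gaussian factor into a $t$-independent one and turns $\psi^{(j)}$ into a rescaled cutoff $\tilde{\psi}^{(j)}$ supported on a cuboid with parameters $l_1,u_1,l_2,u_2$ as in the paragraph preceding the corollary.

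The crucial observation is that these cuboid parameters satisfy $u_1/l_1 = u_2/l_2 = 2^{4\beta(\beta+1)/(\beta+2)}$, a constant independent of $j$ and $t$. Hence the derivative bounds on $\tilde{\psi}^{(j)}$ required by Proposition \ref{pro8p:locmik} hold with a constant that does not depend on either parameter, and the proposition yields
$$\|m_1^{(j)}(t,\cdot,\cdot)\| \lesssim 1 \wedge \bigl((u_1/t)^{s+1} + (u_2/t)^{s+1}\bigr).$$
To finish, I would unpack this minimum: one has $(u_2/t)^{s+1} \lesssim 2^{2\beta j(s+1)/(\beta+2)}$ unconditionally, while $(u_1/t)^{s+1} \lesssim 2^{2\beta j(s+1)/(\beta+2)}(2^{2\beta j/(\beta+2)}t)^{\beta(s+1)}$, which is $\lesssim 2^{2\beta j(s+1)/(\beta+2)}$ in the range $t \leq 2^{-2\beta j/(\beta+2)}$, while in the opposite range $t^{-(s+1)} \leq 2^{2\beta j(s+1)/(\beta+2)}$ and the $t^{-(s+1)}$ branch of the minimum dominates. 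In both cases one arrives at $t^{-(s+1)} \wedge 2^{2\beta j(s+1)/(\beta+2)}$ as claimed.

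The main technical obstacle is the uniformity of the Mikhlin-type bounds on $\tilde{\psi}^{(j)}$ across the families indexed by $j$ and $t$; this is resolved precisely by the $j$- and $t$-independence of the aspect ratios $u_i/l_i$. Everything else is a routine bookkeeping exercise of tracking the dilation and the two auxiliary factors through the multiplier norm.
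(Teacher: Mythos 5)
Your argument reproduces the paper's own route for Corollary \ref{cor:m0mult}: the same factorization of $m_0^{(j)}$ into the $0$-homogeneous quotient symbol, the exponential compensation factor, and the localized piece $m_1^{(j)}$, the same use of Lemma \ref{lem:dilkinmik} to normalize in $t$, the observation that the aspect ratios $u_1/l_1=u_2/l_2$ are independent of $j$ and $t$ so that Proposition \ref{pro8p:locmik} applies with uniform constants, and the same case split $t\le o^{-j}$ versus $t> o^{-j}$ yielding the bound $t^{-(s+1)}\wedge o^{j(s+1)}$ as in \eqref{eq:locmult}. This is essentially identical to the paper's proof (your bookkeeping of the anisotropic dilation is the consistent version of what the paper writes), so nothing further is needed.
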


\begin{lemma} \label{lem:itsez}
	There exists a constant $c>0$ such that
	\begin{equation*}
		c\abs{\xi}^\beta+c\abs{k}^\beta \le  \int_0^1 \abs{\xi+rk}^\beta \dx r
	\end{equation*}
	for all $k,\xi \in \R^n$. The function
	\begin{equation*}
		h(t,k,\xi) = \exp(-t \int_0^1 \abs{\xi-rk}^\beta \dx r+\frac{c}{2}t\abs{\xi}^\beta+\frac{c}{2}t\abs{k}^\beta)
	\end{equation*}
	defines a bounded $L^q$-multiplier for all $t \ge 0$ with constant $C= C(\beta,n)$ independent of $t$.
\end{lemma}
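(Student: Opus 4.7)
I would treat the two assertions separately.

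For the lower bound, I would use a homogeneity and compactness argument. Both sides of the claimed inequality are $\beta$-homogeneous under $(k,\xi) \mapsto (\lambda k, \lambda \xi)$, so it suffices to bound $\phi(k,\xi) := \int_0^1 \abs{\xi + rk}^\beta \dx r$ from below on the compact sphere $\{(k,\xi): \abs{k}^\beta + \abs{\xi}^\beta = 1\}$. Since the map $r \mapsto \xi + rk$ is affine, the integrand can vanish on a set of positive $r$-measure only if $\xi = k = 0$; thus $\phi$ is continuous and strictly positive on $\R^{2n} \setminus \{0\}$, and attains a positive minimum $c$ on the sphere.

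For the multiplier assertion, the scaling identity
\[
h(t,k,\xi) = h\bigl(1,\, t^{1/\beta} k,\, t^{1/\beta} \xi\bigr),
\]
which is immediate from the $\beta$-homogeneity of the exponent, together with Lemma \ref{lem:dilkinmik} applied with $\alpha_1 = \alpha_2 = 1/\beta$, reduces the problem to showing that $m(k,\xi) := h(1,k,\xi) = e^{-a(k,\xi)}$ is a bounded $L^q$-multiplier, where
\[
a(k,\xi) := \int_0^1 \abs{\xi - rk}^\beta \dx r - \tfrac{c}{2}\bigl(\abs{k}^\beta + \abs{\xi}^\beta\bigr).
\]
By Part~1 applied with $k$ replaced by $-k$, the function $a$ is $\beta$-homogeneous and satisfies $a(k,\xi) \geq \tfrac{c}{2}\bigl(\abs{k}^\beta + \abs{\xi}^\beta\bigr) \geq 0$.

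I would then verify the anisotropic Mihlin-type conditions of Theorem \ref{thm:kinmkihlin} for $m$. By Faà di Bruno's formula, $\partial_k^\alpha \partial_\xi^\gamma e^{-a} = e^{-a} \sum_\pi c_\pi \prod_{B \in \pi} \bigl( -\partial_k^{\alpha_B}\partial_\xi^{\gamma_B} a\bigr)$, summed over set-partitions $\pi$ of the combined multi-index $(\alpha,\gamma)$ into non-empty blocks, so $|\alpha_B| + |\gamma_B| \geq 1$ for every block. The $\beta$-homogeneity of $a$ should yield $|\partial_k^{\alpha_B}\partial_\xi^{\gamma_B} a(k,\xi)| \lesssim \abs{(k,\xi)}^{\beta - |\alpha_B| - |\gamma_B|}$, hence
\[
\abs{k}^{|\alpha_B|} \abs{\xi}^{|\gamma_B|} \bigl|\partial_k^{\alpha_B}\partial_\xi^{\gamma_B} a(k,\xi)\bigr| \lesssim \abs{(k,\xi)}^\beta \lesssim a(k,\xi),
\]
where the last inequality uses Part~1. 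Multiplying such factors over $B$ and using $e^{-a(k,\xi)} a(k,\xi)^{|\pi|} \leq C_{|\pi|}$ yields $\abs{k}^{|\alpha|}\abs{\xi}^{|\gamma|} \abs{\partial_k^\alpha \partial_\xi^\gamma m} \leq C(\alpha,\gamma)$ for every multi-index with $|\alpha| + |\gamma| \leq n+1$, which is precisely the hypothesis of Theorem \ref{thm:kinmkihlin}.

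The main obstacle lies in justifying the derivative bounds on $a$. The terms $\tfrac{c}{2}(\abs{k}^\beta + \abs{\xi}^\beta)$ are $C^\infty$ on $\{k \neq 0, \xi \neq 0\}$ and the bounds follow at once from homogeneity. The integral $\int_0^1 \abs{\xi - rk}^\beta \dx r$ is $C^\infty$ off the lower-dimensional set $\Sigma := \{(k,\xi) : \xi = rk \text{ for some } r \in [0,1]\}$, but for $\beta \in (0,2)$ second- and higher-order derivatives can blow up along $\Sigma$. To circumvent this, I would regularize $\abs{y}^\beta$ by $\sigma_\epsilon(y) := (\epsilon + \abs{y}^2)^{\beta/2}$, run the preceding Faà di Bruno argument for the resulting globally smooth symbols $m_\epsilon = e^{-a_\epsilon}$ to obtain the Mihlin bounds with constants uniform in $\epsilon > 0$, and pass to the limit $\epsilon \to 0^+$ using dominated convergence on Schwartz test functions to conclude that $m$ itself defines an $L^q$-multiplier with norm depending only on $\beta$ and $n$.
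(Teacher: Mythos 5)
Your first part is fine (homogeneity plus compactness on the sphere $\{\abs{k}^\beta+\abs{\xi}^\beta=1\}$; the paper simply cites its companion paper for this), and your reduction to $t=1$ via the scaling $h(t,k,\xi)=h(1,t^{1/\beta}k,t^{1/\beta}\xi)$ and Lemma \ref{lem:dilkinmik} is exactly what the paper does. The gap is in the derivative estimates, which are the entire content of the multiplier assertion. Your claim that $\beta$-homogeneity of $a$ yields $\abs{\partial_k^{\alpha_B}\partial_\xi^{\gamma_B}a(k,\xi)}\lesssim\abs{(k,\xi)}^{\beta-\abs{\alpha_B}-\abs{\gamma_B}}$ presupposes that $a$ is $C^m$ away from the origin with derivatives continuous on the unit sphere; it is not, and the bound is genuinely false near the cone $\Sigma=\{\xi=rk:\ r\in[0,1]\}$. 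For instance, with $\abs{k}=1$, $\xi=\tfrac12 k+de$, $e\perp k$, a direct computation gives $\partial_d^2\int_0^1\abs{\xi-rk}^\beta\,\dx r\sim c_\beta\,d^{\beta-1}$ as $d\to0$ (no cancellation; $c_\beta>0$), and more generally derivatives of order $m>\beta+1$ blow up like $d^{\beta+1-m}$. Since Theorem \ref{thm:kinmkihlin} requires pointwise bounds up to order $n+1$, this destroys the argument for every $\beta\in(0,2)$ when $n\ge2$, and already at second order when $\beta<1$. The exponential factor cannot help, because at such points $\abs{k}\sim\abs{\xi}\sim1$ and $e^{-a}\sim1$. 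The proposed mollification does not repair this: at distance $d$ from $\Sigma$ the derivatives of $a_\epsilon$ converge to those of $a$, so $\sup_{\epsilon>0}\abs{\partial^m a_\epsilon}\gtrsim d^{\beta+1-m}$ there (and $\sim\epsilon^{(\beta+1-m)/2}$ on $\Sigma$ itself); hence no Mikhlin constants uniform in $\epsilon$ exist, and the dominated-convergence passage to the limit has nothing uniform to carry over.

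By contrast, the paper does not bound each derivative of the exponent by crude homogeneity. It differentiates $h(1,\cdot,\cdot)$ directly and, by an induction as in Proposition \ref{pro8p:locmik}, records the exact structure of every term: products $k^a\xi^b\abs{\xi}^{c\beta-\alpha}\abs{k}^{d\beta-\gamma}$ times the intact integrals $\int_0^1 r^{\tau_j}(\xi-rk)^{\delta^{(j)}}\abs{\xi-rk}^{w_j\beta-\theta_j}\dx r$, with a homogeneity balance, and only then invokes $\sup_{x>0}x^re^{-x}<\infty$ together with the lower bound from the first part. Keeping the $r$-integrals un-estimated (they gain one order of regularity from the integration in $r$, which your pointwise bound on the integrand discards) is precisely the finer bookkeeping your argument would have to reproduce; as it stands, the step "homogeneity $\Rightarrow$ Mikhlin bounds for $a$, uniformly after regularization" is the missing — and false — link.
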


\begin{proof}
	The first claim has been proven in \cite[Lemma 2.1]{niebel_kinetic_nodate}. To prove the latter we note that by a scaling argument we can reduce the argument to the case $t = 1$. By an induction argument similar to that in the proof of Proposition \ref{pro8p:locmik} one can show that the derivative $\partial_k^\eta \partial_\xi^\rho h(1,k,\xi)$ with multi-indices $\eta,\rho \in \N^n_0$ is the sum of finitely many terms of the form 
	\begin{equation*}
		c_0 k^a \xi^b \abs{\xi}^{c\beta-\alpha} \abs{k}^{d\beta-\gamma} \prod_{j = 1}^\lambda \left( \int_0^1 r^{\tau_j} (\xi-rk)^{\delta^{(j)}} \abs{\xi-rk}^{w_j \beta-\theta_j} \dx r \right)^{\mu_j} m(1,k,\xi),
	\end{equation*}
	where $a,b,\delta^{(1)},\dots, \delta^{(\lambda)} \in \N_0^n$, $c_0 \in \R$, $\lambda,\alpha,d,\gamma,\tau_1,\dots,\tau_\lambda,w_1,\dots,w_\lambda,\theta_1,\dots,\theta_\lambda, \mu_1 \dots, \mu_\lambda \in \N_0$ with 
	\begin{equation*}
		(c+d)\beta+\abs{a}+ \abs{b}-\alpha-\gamma + \sum_{j = 1}^\lambda \mu_j (w_j \beta + \abs{\delta^{(j)}}-\theta_j) + \abs{\eta}+ \abs{\rho} \ge 0.
	\end{equation*}
	We omit the details. From here it is clear that $m(1,k,\xi)$ satisfies the condition in Mikhlin's theorem as $\sup_{x>0} x^r \exp(-x) < \infty$ for all $r \ge 0$.
\end{proof}

We collect some useful embeddings for anisotropic Besov spaces. 

\begin{lemma} \label{lem:besovLq}
	Let $\alpha \in (0,\infty)^{2n}$, $p,q \in (1,\infty)$, $s>0$ and $0<\epsilon<s$. Then, 
	\begin{equation*}
		B_{qp}^{s,\alpha}(\R^{2n})\hookrightarrow B^{s,\alpha}_{q\infty}(\R^{2n})  \hookrightarrow  	B_{q1}^{s-\epsilon,\alpha}(\R^{2n}) \hookrightarrow 	B_{q1}^{0,\alpha}(\R^{2n}) \hookrightarrow L^q(\R^{2n}).
	\end{equation*}
\end{lemma}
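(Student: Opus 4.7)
The plan is to work directly with the dyadic characterization of the anisotropic Besov norm recalled just before Theorem~\ref{thm:chartrace}, namely
$$\norm{f\,|\,B^{\sigma,\alpha}_{qp}(\R^{2n})}=\norm{\F^{-1}(\varphi_0^\alpha\hat f)}_q+\left(\sum_{j=1}^\infty 2^{j\sigma p}\norm{\F^{-1}(\varphi_j^\alpha\hat f)}_q^p\right)^{1/p},$$
with the standard $\ell^\infty$-interpretation when the microscopic parameter is $\infty$. All four embeddings then reduce to elementary inequalities applied termwise to the sequence $a_j:=\norm{\F^{-1}(\varphi_j^\alpha\hat f)}_q$, so I will simply verify them in order.

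First, $B^{s,\alpha}_{qp}\hookrightarrow B^{s,\alpha}_{q\infty}$ follows from the obvious inequality $\sup_{j\ge 1}b_j\le\bigl(\sum_{j\ge 1}b_j^p\bigr)^{1/p}$ applied to $b_j=2^{js}a_j$. Second, for $B^{s,\alpha}_{q\infty}\hookrightarrow B^{s-\epsilon,\alpha}_{q1}$ I factor out the supremum:
$$\sum_{j=1}^\infty 2^{j(s-\epsilon)}a_j=\sum_{j=1}^\infty 2^{-j\epsilon}\bigl(2^{js}a_j\bigr)\le\Bigl(\sup_{j\ge 1}2^{js}a_j\Bigr)\sum_{j=1}^\infty 2^{-j\epsilon},$$
and the geometric series converges because $\epsilon>0$. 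Third, for $B^{s-\epsilon,\alpha}_{q1}\hookrightarrow B^{0,\alpha}_{q1}$ I use that $s-\epsilon>0$ implies $2^{j(s-\epsilon)}\ge 1$ for every $j\ge 1$, so $\sum_{j\ge 1}a_j\le\sum_{j\ge 1}2^{j(s-\epsilon)}a_j$.

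Finally, for $B^{0,\alpha}_{q1}\hookrightarrow L^q(\R^{2n})$ I use the partition-of-unity property $\sum_{j=0}^\infty\varphi_j^\alpha\equiv 1$ recalled in the construction: writing $f=\sum_{j=0}^\infty\F^{-1}(\varphi_j^\alpha\hat f)$ (with convergence in $\S'$, and in $L^q$ once the sum is finite, which holds by the $B^{0,\alpha}_{q1}$-hypothesis), the triangle inequality gives $\norm{f}_q\le\norm{\F^{-1}(\varphi_0^\alpha\hat f)}_q+\sum_{j=1}^\infty a_j=\norm{f\,|\,B^{0,\alpha}_{q1}(\R^{2n})}$. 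The only mildly delicate point is the justification of this last series representation for a general tempered distribution $f\in B^{0,\alpha}_{q1}$, but convergence in $L^q$ of the partial sums is immediate from their Cauchy property in $L^q$, and the limit agrees with $f$ in $\S'$ by construction of $\varphi^\alpha$; so there is no real obstacle. Each embedding constant is explicit and depends only on $\epsilon,s,p$ (through $\sum 2^{-j\epsilon}$).
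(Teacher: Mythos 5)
Your proof is correct and follows essentially the same route as the paper: the paper simply delegates the first embeddings to Triebel and Sawano (whose cited results amount to exactly your termwise $\ell^p\hookrightarrow\ell^\infty$, geometric-series and monotonicity estimates on $2^{j\sigma}\norm{\F^{-1}(\varphi_j^\alpha\hat f)}_q$) and proves the last embedding via the same Littlewood--Paley representation $f=\sum_{j\ge 0}\F^{-1}(\varphi_j^\alpha\hat f)$ that you use. Writing these standard inequalities out explicitly, including the $\S'$-versus-$L^q$ convergence point, is a fine self-contained substitute for the citations.
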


\begin{proof}
	The first three embeddings follow from \cite[Theorem 5.28]{triebel_theory_2006} combined with \cite[Proposition 2.2]{sawano_theory_2018} and a straightforward calculation. The third embedding is clear. And the last embedding follows from the representation
	\begin{equation*}
			g = \F^{-1}(\varphi_0 \hat{g})+\sum_{j = 1}^\infty \F^{-1}(\varphi_j \hat{g})
	\end{equation*}
	 as in the proof of \cite[Proposition 2.1]{sawano_theory_2018}.
\end{proof}

\begin{lemma} \label{lem:contembed}
	Let $\alpha \in (0,\infty)^{2n}$ be any anisotropy, $s \in \R$ and $p,q \in (1,\infty)$ such that $s>\frac{2n}{q}$, then
	\begin{equation*}
		B^{s,\alpha}_{qp}(\R^{2n})  \hookrightarrow B_{\infty,1}^{0,\alpha}(\R^{2n}) \hookrightarrow BUC(\R^{2n})
	\end{equation*}
	and consequently
	\begin{equation*}
			B^{s,\alpha}_{qp}(\R^{2n}) \hookrightarrow C_0(\R^{2n}).
	\end{equation*}
\end{lemma}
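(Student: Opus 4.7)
The plan is to work with the anisotropic Littlewood--Paley decomposition and prove the chain of embeddings by essentially the same argument as in the isotropic case, adapted to the dilation induced by $\alpha$. Throughout, let $Q = \sum_{i=1}^{2n} \alpha_i$ denote the homogeneous dimension associated with $\alpha$; in the normalization used in this paper one has $Q = 2n$, so the condition $s > 2n/q$ reads $s > Q/q$ and is the natural anisotropic Sobolev embedding threshold.

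First I would handle $B^{s,\alpha}_{qp}(\R^{2n}) \hookrightarrow B^{0,\alpha}_{\infty 1}(\R^{2n})$ using an anisotropic Nikolskii inequality: for a Schwartz function $g$ whose Fourier transform is supported in the anisotropic rectangle $R_j^\alpha$, the bound
\begin{equation*}
    \| g \|_\infty \lesssim 2^{jQ/q} \| g \|_q
\end{equation*}
holds, with the implicit constant independent of $j$. Applying this to the blocks $g_j := \F^{-1}(\varphi_j^\alpha \hat f)$ (noting that $\supp \varphi_j^\alpha \subset R_{j+1}^\alpha$) and summing yields
\begin{equation*}
    \| f \mid B^{0,\alpha}_{\infty 1} \| = \sum_{j \geq 0} \| g_j \|_\infty \lesssim \sum_{j \geq 0} 2^{jQ/q} \| g_j \|_q = \sum_{j \geq 0} 2^{-j(s - Q/q)} \cdot 2^{js} \| g_j \|_q.
\end{equation*}
Hölder's inequality in $j$ with exponents $p$ and $p' = p/(p-1)$ then gives the bound by $\| f \mid B^{s,\alpha}_{qp} \|$ precisely because $s > Q/q$ ensures that the series $\sum_j 2^{-jp'(s - Q/q)}$ converges. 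The main technical point here is to derive the anisotropic Nikolskii inequality, which can be obtained by rescaling via the dilation $z \mapsto 2^{j\alpha} z$ from the standard Nikolskii inequality on the fixed-scale block, using that $\hat{\varphi}_j^\alpha$ is a fixed Schwartz function after rescaling.

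Next I would deduce $B^{0,\alpha}_{\infty 1}(\R^{2n}) \hookrightarrow BUC(\R^{2n})$. Each block $g_j$ is Schwartz, hence bounded and uniformly continuous. The absolute convergence $\sum_j \| g_j \|_\infty < \infty$ from the definition of the $\ell^1$ summation then shows that $\sum_j g_j$ converges in $L^\infty$ and therefore the limit $f$ inherits the $BUC$ property, since $BUC(\R^{2n})$ is closed in $L^\infty(\R^{2n})$.

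Finally, for $B^{s,\alpha}_{qp}(\R^{2n}) \hookrightarrow C_0(\R^{2n})$, I would argue by density: since $p,q \in (1,\infty)$, the Schwartz space $\S(\R^{2n})$ is dense in $B^{s,\alpha}_{qp}(\R^{2n})$ (this is standard and can be read off the resolution-of-unity characterization by truncating the dyadic sum and mollifying). Schwartz functions lie in $C_0(\R^{2n})$, which is a closed subspace of $BUC(\R^{2n})$; combining density with the continuous embedding into $BUC$ established above yields the embedding into $C_0$. The main obstacle in the whole argument is the anisotropic Nikolskii inequality together with verifying that it comes with a constant depending only on $\alpha$ and $q$ (independent of $j$); once this is in place, all remaining steps are routine summations and a density argument.
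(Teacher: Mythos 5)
Your argument is, in substance, the proof the paper delegates to its references: the first embedding is exactly the anisotropic Nikolskii/Bernstein-plus-H\"older argument underlying the cited proofs of Sawano (Proposition 2.5) and Triebel (Theorem 5.28), the second is the telescoping Littlewood--Paley sum as in Sawano's Proposition 2.4, and the final embedding is the same density-plus-closedness argument (the paper approximates by $C_c^\infty$ where you use $\S(\R^{2n})$), so this is essentially the paper's route written out self-containedly. Two remarks. First, your observation that the threshold should be read as $Q/q$ with $Q=\sum_{i}\alpha_i$ is correct and in fact needed: the statement with $2n/q$ for an arbitrary anisotropy implicitly presumes the normalization $\sum_i\alpha_i=2n$, which does hold for the anisotropies the paper actually uses. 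Second, there is a small slip in your proof of $B^{0,\alpha}_{\infty,1}(\R^{2n})\hookrightarrow BUC(\R^{2n})$: the blocks $g_j=\F^{-1}(\varphi_j^\alpha\hat f)$ of a general tempered distribution are not Schwartz (take, e.g., $f(z)=e^{iz\cdot\zeta_0}$ with $\zeta_0$ in the support of $\varphi_j^\alpha$, for which $g_j$ is a nonzero bounded exponential). What is true, and all that is needed, is that each $g_j$ is a bounded band-limited function, hence smooth with $\norm{\nabla g_j}_\infty\lesssim 2^{j\max_l\alpha_l}\norm{g_j}_\infty<\infty$ by a Bernstein-type estimate, so $g_j\in BUC(\R^{2n})$; with this repair your argument (absolute convergence of the series in $L^\infty$ and closedness of $BUC$ in $L^\infty$) goes through unchanged.
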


\begin{proof}
	A proof for the first embedding follows from \cite[Theorem 5.28]{triebel_theory_2006} and \cite[Proof of Proposition 2.5]{sawano_theory_2018}. The second embedding follows from the equality
	\begin{equation*}
		g = \F^{-1}(\varphi_0 \hat{g})+\sum_{j = 1}^\infty \F^{-1}(\varphi_j \hat{g})
	\end{equation*}
	 as in the proof of \cite[Proposition 2.4]{sawano_theory_2018}. Finally, as $C_c^\infty(\R^{2n})$ is dense in $B^{s,\alpha}_{qp}(\R^{2n})$ the last embedding follows by an approximation argument. 
\end{proof}

\begin{lemma} \label{lem:laplacianbesov}
	We consider the anisotropy $\alpha = (\frac{3}{2},\dots,\frac{3}{2},\frac{1}{2},\dots,\frac{1}{2})  \in (0,\infty)^{2n}$. For all $p,q \in (1,\infty)$ and any $s \in \R$ we have that
	\begin{equation*}
		(-\Delta_v)^{\frac{\gamma}{2}} \colon B_{qp}^{s,\alpha}(\R^{2n}) \to B_{qp}^{s-\gamma/2,\alpha}(\R^{2n})
	\end{equation*}
	is continuous.
\end{lemma}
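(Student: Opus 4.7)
The plan is to use the dyadic Littlewood--Paley characterization of $B_{qp}^{s,\alpha}(\R^{2n})$ with the anisotropic resolution of unity $(\varphi_j^\alpha)_{j\ge 0}$ recalled just before the lemma, and to reduce the boundedness to a uniform Fourier multiplier bound on each dyadic block. Assume $\gamma\ge 0$ (the case $\gamma=0$ is trivial). Setting $f_j:=\F^{-1}(\varphi_j^\alpha\hat f)$, the function
$$(-\Delta_v)^{\gamma/2}f_j=\F^{-1}\bigl(|\xi|^\gamma\varphi_j^\alpha\hat f\bigr)$$
is precisely the $j$-th Littlewood--Paley block of $(-\Delta_v)^{\gamma/2}f$. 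Hence the target estimate will follow, after summing with weight $2^{j(s-\gamma/2)p}$, once one establishes the uniform bound
$$\|(-\Delta_v)^{\gamma/2}f_j\|_{L^q(\R^{2n})}\le C\cdot 2^{j\gamma/2}\|f_j\|_{L^q(\R^{2n})},\qquad j\ge 0,\qquad(*)$$
which gives $\sum_{j\ge 0} 2^{j(s-\gamma/2)p}\|(-\Delta_v)^{\gamma/2}f_j\|_q^p\lesssim \sum_{j\ge 0} 2^{jsp}\|f_j\|_q^p\sim \|f\,|\,B_{qp}^{s,\alpha}\|^p$.

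To prove $(*)$, I would introduce the fattened cutoff $\tilde\varphi_j^\alpha:=\varphi_{j-1}^\alpha+\varphi_j^\alpha+\varphi_{j+1}^\alpha$ (with $\varphi_{-1}^\alpha:=0$), which is identically $1$ on $\supp\varphi_j^\alpha$. Since $\widehat{f_j}$ vanishes outside $\supp\varphi_j^\alpha$, one has $(-\Delta_v)^{\gamma/2}f_j=\F^{-1}\bigl(m_j\cdot\widehat{f_j}\bigr)$ with
$$m_j(k,\xi):=|\xi|^\gamma\,\tilde\varphi_j^\alpha(k,\xi).$$
It then suffices to show that $m_j$ is an $L^q(\R^{2n})$-Fourier multiplier with norm controlled by $C\cdot 2^{j\gamma/2}$ uniformly in $j$, which I would verify via the Mikhlin-type Theorem \ref{thm:kinmkihlin}. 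On $\supp\tilde\varphi_j^\alpha$ one has $|k|\lesssim 2^{3j/2}$ and $|\xi|\lesssim 2^{j/2}$, so in particular $|\xi|^\gamma\lesssim 2^{j\gamma/2}$. Because $\tilde\varphi_j^\alpha$ is (a finite sum of terms coming from) the rescaling of a fixed $C_c^\infty$ function by $2^{j\alpha}$, the chain rule gives $|k|^{|\alpha|}|\xi|^{|\beta|}|\partial_k^\alpha\partial_\xi^\beta\tilde\varphi_j^\alpha|\le C_{\alpha,\beta}$ uniformly in $j$. Homogeneity of $|\xi|^\gamma$ of degree $\gamma$ on $\R^n\setminus\{0\}$ yields $|\partial_\xi^\beta|\xi|^\gamma|\le C_\beta|\xi|^{\gamma-|\beta|}$, whence $|\xi|^{|\beta|}|\partial_\xi^\beta|\xi|^\gamma|\lesssim|\xi|^\gamma\lesssim 2^{j\gamma/2}$ on $\supp\tilde\varphi_j^\alpha$. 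Combining these facts via Leibniz,
$$|k|^{|\alpha|}|\xi|^{|\beta|}\bigl|\partial_k^\alpha\partial_\xi^\beta m_j(k,\xi)\bigr|\le C_{\alpha,\beta}\cdot 2^{j\gamma/2}$$
for every $(k,\xi)$ with $k\ne 0$ and $\xi\ne 0$, and Theorem \ref{thm:kinmkihlin} delivers $(*)$.

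The main technical subtlety is the apparent incompatibility between the non-smoothness of $|\xi|^\gamma$ at $\xi=0$ and the fact that $\supp\tilde\varphi_j^\alpha$, for large $j$, contains the set $\{\xi=0,\,|k|\sim 2^{3j/2}\}$. This is reconciled by two complementary observations: Theorem \ref{thm:kinmkihlin} only requires the Mikhlin-type estimates on $\{k\ne 0\}\cap\{\xi\ne 0\}$, which is exactly the set where $m_j$ is smooth; and the homogeneity bound $|\xi|^{|\beta|}|\partial_\xi^\beta|\xi|^\gamma|\lesssim|\xi|^\gamma$ precisely absorbs the pointwise blow-up of the derivatives of $|\xi|^\gamma$ against the $|\xi|^{|\beta|}$ weight in the Mikhlin condition, rendering the resulting multiplier bound uniform in $j$.
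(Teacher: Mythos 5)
Your proof is correct and follows essentially the same route as the paper: the paper's proof simply invokes the isotropic calculation of \cite[Theorem 2.12]{sawano_theory_2018} together with the observation that $|\xi|\lesssim 2^{j/2}$ on the support of $\varphi_j^\alpha$, which is exactly your blockwise bound $\|(-\Delta_v)^{\gamma/2}f_j\|_q\lesssim 2^{j\gamma/2}\|f_j\|_q$ summed over the anisotropic Littlewood--Paley decomposition. Your explicit handling of the non-smoothness of $|\xi|^\gamma$ at $\xi=0$ via the product-type condition of Theorem \ref{thm:kinmkihlin} merely fills in a detail that the paper delegates to the cited reference.
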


\begin{proof}
	The calculation in the isotropic setting can be found in \cite[Theorem 2.12]{sawano_theory_2018}. These calculations continue to hold in the anisotropic case. However, as we have $2^{(j-1)/2} \le \abs{\xi} \le 2^{(j+1)/2}$ in the support of a function $\varphi_j^\alpha$ it follows that the regularity index decreases only by $\gamma/2$. 
\end{proof}

\begin{coro} \label{cor:gradvembed}
	Let $p,q \in (1,\infty)$ and $\mu \in (\frac{1}{p},1]$, then
	\begin{equation*}
		{^{\mathrm{kin}}B}_{qp}^{\mu-1/p,2}(\R^{2n}) \hookrightarrow {_v C}^1_0(\R^{2n})
	\end{equation*}
	if $\mu-1/p>1/2+2n/q$.
\end{coro}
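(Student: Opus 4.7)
Recall that ${^{\mathrm{kin}}B}_{qp}^{\mu-1/p,2}(\R^{2n})$ is by definition the anisotropic Besov space $B_{qp}^{\sigma,\alpha}(\R^{2n})$ with $\alpha=(\tfrac32,\ldots,\tfrac32,\tfrac12,\ldots,\tfrac12)$ and $\sigma=\mu-1/p$, so the task is to prove
\[
B_{qp}^{\mu-1/p,\alpha}(\R^{2n})\hookrightarrow {_v C}^1_0(\R^{2n})
\]
whenever $\mu-1/p>1/2+2n/q$. The plan is to handle the function itself and each of the $v$-derivatives separately, in both cases invoking the embedding of Lemma \ref{lem:contembed} into $C_0(\R^{2n})$.

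First, since $\mu-1/p>1/2+2n/q>2n/q$, Lemma \ref{lem:contembed} directly yields the continuous embedding
\[
B_{qp}^{\mu-1/p,\alpha}(\R^{2n})\hookrightarrow C_0(\R^{2n}),
\]
which settles the zeroth-order statement. For the first-order part I would show that for each $i=1,\dots,n$ the operator $\partial_{v_i}$ maps $B_{qp}^{\mu-1/p,\alpha}(\R^{2n})$ boundedly into $B_{qp}^{\mu-1/p-1/2,\alpha}(\R^{2n})$. The obvious route is to write $\partial_{v_i}=R_i\,(-\Delta_v)^{1/2}$, where $R_i$ denotes the Riesz transform in $v$, and to combine Lemma \ref{lem:laplacianbesov} (applied with $\gamma=1$, giving a regularity loss of $1/2$ in the scale $B_{qp}^{\,\cdot\,,\alpha}$ because of the exponent $1/2$ in the anisotropy in the $v$-slots) with the fact that the Riesz transform in $v$ is bounded on each anisotropic Besov space of this type. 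This latter boundedness in turn follows from the standard Littlewood–Paley characterisation of $B_{qp}^{\,\cdot\,,\alpha}$ and Mikhlin's theorem (Theorem \ref{thm:kinmkihlin}), since the symbol $i\xi_i/|\xi|$ of $R_i$ satisfies the required bounds uniformly on each anisotropic dyadic block.

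Once this is established, note that
\[
\mu-\tfrac1p-\tfrac12 \,>\, \tfrac{2n}{q},
\]
so a second application of Lemma \ref{lem:contembed} to the target space gives
\[
B_{qp}^{\mu-1/p-1/2,\alpha}(\R^{2n})\hookrightarrow C_0(\R^{2n}),
\]
and composing yields $\partial_{v_i}u\in C_0(\R^{2n})$ for every $u\in B_{qp}^{\mu-1/p,\alpha}(\R^{2n})$ and every $i$. Together with the zeroth-order embedding this is exactly the definition of ${_vC}^1_0(\R^{2n})$.

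The only nontrivial step is the boundedness of $\partial_{v_i}$ as a map $B_{qp}^{\mu-1/p,\alpha}\to B_{qp}^{\mu-1/p-1/2,\alpha}$; the remainder is bookkeeping. I expect this to be the main technical obstacle, but as indicated it can be deduced from Lemma \ref{lem:laplacianbesov} together with a Mikhlin-type multiplier estimate for the Riesz transform in $v$ on anisotropic Besov spaces (or, alternatively, by repeating the proof of Lemma \ref{lem:laplacianbesov} with the multiplier $i\xi_i$ in place of $|\xi|^\gamma$, using that on the support of $\varphi_j^\alpha$ one has $|\xi_i|\lesssim 2^{j/2}$).
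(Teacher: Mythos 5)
Your proposal is correct and takes essentially the same route as the paper, whose entire proof is the one-liner that Lemma \ref{lem:laplacianbesov} and Lemma \ref{lem:contembed} give $(-\Delta_v)^{1/2}u \in {^{\mathrm{kin}}B}_{qp}^{\mu-1/p-1/2,2}(\R^{2n}) \hookrightarrow C_0(\R^{2n})$ under $\mu-1/p-1/2-2n/q>0$. Your extra step of converting $(-\Delta_v)^{1/2}$ into $\partial_{v_i}$ at the Besov level (via the Riesz transform with Theorem \ref{thm:kinmkihlin}, or directly via $|\xi_i|\lesssim 2^{j/2}$ on each anisotropic block as in the proof of Lemma \ref{lem:laplacianbesov}) before embedding into $C_0$ is exactly the right way to make explicit what the paper leaves implicit, since Riesz transforms are not bounded on $C_0$ itself.
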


\begin{proof}
	By Lemma \ref{lem:contembed} and Lemma \ref{lem:laplacianbesov} we deduce $(-\Delta_v)^{\frac{1}{2}} u \in {^{\mathrm{kin}}B}_{qp}^{\mu-1/p-1/2,2}(\R^{2n}) \hookrightarrow C_0(\R^{2n})$ if $\mu-1/p-1/2-2n/q>0$. 
\end{proof}

\begin{lemma} \label{lem:C0gamabuc}
	Let $u\in C_0(\R^{2n})$ and $T \in (0,\infty)$. Then, $\Gamma u \colon [0,T] \times \R^{2n} \to \R$ is uniformly continuous. 
\end{lemma}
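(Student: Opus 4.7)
Since $u \in C_0(\R^{2n})$, it is in fact bounded and uniformly continuous on $\R^{2n}$. The essential difficulty is that the map $(t,x,v) \mapsto (x+tv,v)$ is \emph{not} Lipschitz on $[0,T] \times \R^{2n}$: the term $|t_1 v_1 - t_2 v_2|$ contains a factor $|t_1 - t_2|\,|v|$ that blows up as $|v| \to \infty$. The decay of $u$ at infinity is exactly what will compensate this defect.

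Fix $\epsilon > 0$. By uniform continuity of $u$ there exists $\delta_1 > 0$ such that $|u(y) - u(y')| < \epsilon$ whenever $|y - y'| < \delta_1$, and since $u \in C_0$ there exists $R > 0$ such that $|u(y)| < \epsilon/2$ whenever $|y| > R$. I will show that $\eta := \min\{1,\delta_1/(R+T+3)\}$ works as a modulus: that is, $|(t_1,x_1,v_1) - (t_2,x_2,v_2)| < \eta$ implies $|[\Gamma u](t_1,x_1,v_1) - [\Gamma u](t_2,x_2,v_2)| < \epsilon$.

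The argument splits into two cases. If $|v_1| > R + \eta$, then also $|v_2| \ge |v_1| - |v_1 - v_2| > R$, and since the second coordinate of $(x_i + t_i v_i, v_i)$ is $v_i$, both points lie outside the ball of radius $R$; hence each term is bounded by $\epsilon/2$ and the difference is at most $\epsilon$. If instead $|v_1| \le R + \eta$, then $|v_2| \le R + 2\eta$, and the estimate
\[
|t_1 v_1 - t_2 v_2| \le |t_1 - t_2|\,|v_1| + t_2\,|v_1 - v_2| \le \eta(R+\eta) + T\eta
\]
combined with $|x_1 - x_2| + |v_1 - v_2| \le 2\eta$ yields
\[
|(x_1 + t_1 v_1, v_1) - (x_2 + t_2 v_2, v_2)| \le \eta(R + T + \eta + 2) \le \delta_1,
\]
so uniform continuity of $u$ gives the bound $\epsilon$.

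The main (and really only) obstacle is the unboundedness of the coefficient $v$ in the characteristic map; everything else is routine. The plan above handles this by splitting according to whether $|v|$ is larger or smaller than $R$, using the $C_0$ decay in the former region and the uniform Lipschitz behavior of the map on $\{|v| \le R+2\eta\}$ in the latter. Since $\eta$ depends only on $\epsilon$, $R$, $T$ and the modulus of continuity of $u$ (and not on the points chosen), uniform continuity of $\Gamma u$ on $[0,T] \times \R^{2n}$ follows.
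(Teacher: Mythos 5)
Your proof is correct and follows essentially the same route as the paper: both split according to whether the velocity variable is near or far from the region where $u$ is non-negligible, using the fact that the $v$-coordinate is preserved by $(t,x,v)\mapsto(x+tv,v)$ together with the $C_0$-decay in the far region, and the effectively Lipschitz behavior of the characteristic map on bounded velocities (with constant of order $R+T$) in the near region. The only cosmetic differences are the choice of cutoff ($|v_1|>R+\eta$ versus the paper's $|v|\ge 2R$ with $\delta<R$) and the explicit modulus $\eta=\min\{1,\delta_1/(R+T+3)\}$; the substance is identical.
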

\begin{proof}
	Let $\epsilon>0$, then there exists $\delta_0>0$ and $R>0$ such that 
	\begin{equation*}
		\abs{u(x,v)-u(y,w)} < \epsilon
	\end{equation*}
	for all $x,y,v,w \in \R^n$ with $\abs{x-y}+\abs{v-w} < \delta_0$ and $\abs{u(x,v)}< \epsilon/2$ for all $x,v \in \R^n$ with $\abs{x}+\abs{v} \ge R$. We choose $\delta < \min \{ R, (1+2R+T)^{-1}\delta_0 \}$, then for $(t,x,v) \in [0,T] \times \R^n \times B_{2R}(0)$ and $(s,y,w) \in [0,T] \times \R^{2n}$ with $\abs{t-s}+\abs{x-y}+\abs{v-w} < \delta$ we deduce
	\begin{equation*}
		\abs{x+tv-y-sw} \le \abs{x-y}+\abs{t-s}\abs{v}+\abs{s}\abs{v-w} < \delta_0,
	\end{equation*}
	whence $\abs{u(x+tv,v)-u(y+sw,w)} < \epsilon$. If $(t,x,v) \in [0,T] \times \R^n \times B_{2R}^c(0)$ and $(s,y,w) \in [0,T] \times \R^{2n}$ with $\abs{t-s}+\abs{x-y}+\abs{v-w} < \delta$ we deduce $w \in B_R(0)^c$. Hence, we have
	\begin{equation*}
		\abs{u(x+tv,v)-u(y+sw,w)} \le \abs{u(x+tv,v)}+\abs{u(y+sw,w)} < \epsilon
	\end{equation*}
	as $\abs{x+tv}+\abs{v} \ge R$ and $\abs{y+sw} +\abs{w} \ge R$.
\end{proof}

\bibliographystyle{amsplain}

\providecommand{\bysame}{\leavevmode\hbox to3em{\hrulefill}\thinspace}
\providecommand{\MR}{\relax\ifhmode\unskip\space\fi MR }
\providecommand{\MRhref}[2]{%
  \href{http://www.ams.org/mathscinet-getitem?mr=#1}{#2}
}
\providecommand{\href}[2]{#2}

\end{document}